\newcommand{\ga}{\gamma}
\newcommand{\e}{\varepsilon}
\newcommand{\la}{\lambda}
\newtheorem{theorem}{Theorem}[section]
\newtheorem{lemma}[theorem]{Lemma}
\newtheorem{remark}[theorem]{Remark}
\newtheorem{proposition}[theorem]{Proposition}
\newtheorem{corollary}[theorem]{Corollary}
\newtheorem{conjecture}[theorem]{Conjecture}
\numberwithin{equation}{section}
\begin{document}
\title{
Concentration profile, energy, and weak limits of radial solutions to semilinear elliptic equations with Trudinger-Moser critical nonlinearities
}
\author{Daisuke Naimen\footnote{naimen@mmm.muroran-it.ac.jp}}
\date{\small{\textit{Muroran Institute of Technology, 27-1, Mizumoto-cho, Muroran-shi, Hokkaido, 0508585, JAPAN}}}
\maketitle

\begin{abstract}
We investigate the next Trudinger-Moser critical equations, 
\[
\begin{cases}
-\Delta u=\lambda ue^{u^2+\alpha|u|^\beta}&\text{ in }B,\\
u=0&\text{ on }\partial B,
\end{cases}
\]
where $\alpha>0$, $(\la,\beta)\in(0,\infty)\times(0,2)$ and $B\subset \mathbb{R}^2$ is the unit ball centered at the origin. We classify the asymptotic behavior of energy bounded sequences of radial solutions. Via the blow--up analysis and a scaling technique, we deduce the limit profile, energy, and several asymptotic formulas of concentrating solutions together with precise information of the weak limit.  In particular, we obtain a new necessary condition on the amplitude of the weak limit at the concentration point. This gives a proof  of the conjecture by Grossi-Mancini-Naimen-Pistoia \cite{GMNP} in 2020 in the radial case. Moreover, in the case of $\beta\le1$, we show that any sequence carries at most one bubble.  This allows a new proof of the nonexistence of low energy  nodal radial solutions for $(\la,\beta)$ in a suitable range. Lastly, we discuss several counterparts of our classification result. Especially, we  prove the existence of a sequence of solutions which carries multiple bubbles and weakly converges to a sign-changing solution. 

\end{abstract}

\section{Introduction}\label{intro}
We study the following Trudinger-Moser critical equations,
\begin{equation}\label{p}
\begin{cases}
-\Delta u=\lambda ue^{u^2+\alpha|u|^\beta}&\text{ in }B,\\
u=0&\text{ on }\partial B,
\end{cases}
\end{equation}
where $B\subset \mathbb{R}^2$ is the unit ball centered at the origin, $\alpha>0$ and $(\lambda,\beta)\in(0,\infty)\times(0,2)$. Our aim is to investigate the asymptotic behavior of any energy bounded radial solutions. Especially, we are interested in the concentrating behavior of them. 

The study of \eqref{p} is motivated by the Trudinger-Moser inequality (\cite{P}, \cite{T} and \cite{M}). It gives the critical embedding of the Sobolev space in dimension two. Indeed, Moser \cite{M} proves that for any bounded domain $\Omega\subset \mathbb{R}^2$, it holds that
\begin{equation}\label{TM}
\sup_{u\in H^1_0(\Omega),\ \int_{\Omega}|\nabla u|^2dx\le1}\int_{\Omega} e^{a u^2}dx
\begin{cases}<\infty\text{ if }a\le 4\pi,\\
=\infty\text{ otherwise}.
\end{cases}
\end{equation}
A surprising fact is that the maximizer exists even in the critical case $\alpha=4\pi$. This is first proved by Carleson-Chang  when $\Omega=B$ in their celebrated paper \cite{CC}. The generalization of the domain  and  the dimension are given by \cite{F} and \cite{L} respectively. More recently, the sharp form of the inequality \eqref{TM} is discussed in \cite{Th}, \cite{IMNS}, and \cite{H}.

\eqref{TM} suggests that critical nonlinearities of semilinear elliptic problems in dimension two have the exponential growth. This  leads us to investigate the problem,
\begin{equation}\label{p0}
\begin{cases}
-\Delta u=\lambda h(u)e^{u^2}&\text{ in }\Omega,\\
u=0&\text{ on }\partial \Omega,
\end{cases}
\end{equation}
where $\la>0$, $\Omega\subset \mathbb{R}^2$ is a smooth bounded domain and $h:\mathbb{R}\to \mathbb{R}$ is a continuous function which has the subcritical growth at infinity,   $\lim_{|t|\to \infty}h(t)/e^{at^2}=0$ for any $a>0$, and satisfies $h(0)=0$ and some appropriate conditions.  Due to the lack of the compactness of the critical case of \eqref{TM}, \eqref{p0} holds the  non-compact phenomena. In this point of view, we can regard \eqref{p0} as the two dimensional counterpart of the Brezis-Nirenberg problem \cite{BN} in higher dimension. Due to the exponential nonlinearity, which is much stronger than the polynomial one in higher dimension, \eqref{p0} seems to contain new phenomena and difficulties. For example, as discussed in \cite{AP} and \cite{CT}, Palais-Smale sequences for the energy functional of \eqref{p0} may admit more complicated bahavior. 

Now, we start our discussion with the previous study of positive solutions of \eqref{p0}.  For simplicity, we only consider the typical case $h(t)=t$. Certain generalization is given in the following results. First, Adimurthi (\cite{A0}, \cite{A}) proves that \eqref{p} admits at least one positive solution for all $\la\in(0,\Lambda_1)$ where $\Lambda_1>0$ is the first eigenvalue of $-\Delta$ on $\Omega$ with the Dirichlet boundary condition. After that, in \cite{AS}, \cite{AD} and references therein, the authors investigate the asymptotic behavior of low energy positive solutions. By their results, we see that the least energy solution, obtained in \cite{A}, exhibits the single concentration and its full energy converges to $2\pi$ as $\la\to 0$. Moreover, we also observe in those results that the limit profile of the concentration is described by the Liouville equation via a suitable scaling. Furthermore, in \cite{D} and \cite{DT}, the classification of the asymptotic behavior of energy bounded sequences of positive solutions is accomplished. Especially, in \cite{D}, Druet proves that the limit full energy of any sequence is given by the sum of the energy of the weak limit and $2\pi N$ with a number $N\in \mathbb{N}\cup\{0\}$. Additionally, in \cite{DT}, Druet-Thizy show that the concentration occurs if and only if $\la\to0$. This implies that the weak limit of any concentrating solutions must be zero under their setting. Furthermore, they also give precise information of the location of concentration points and prove that they must be distinct. On the other hand, del Pino-Musson-Ruf \cite{DMR} obtain a sufficient condition for the existence of sequences of solutions carrying multiple bubbles. It gives a counterpart of the classification result by \cite{D} and \cite{DT}.  

More recently, the concentration behavior of sign-changing solutions is investigated. In this case, in view of the results in \cite{AY1} and \cite{AY2}, it is reasonable to consider a stronger perturbation $h(t)=te^{|t|^{\beta}}$ with $\beta\in(0,2)$. (We are still considering only the typical case for simplicity.) In fact, Adimrthi-Yadava \cite{AY2} show that \eqref{p0} admits at least one pair of sign-changing solutions for any $\la\in(0,\Lambda_1)$ and $\beta\in(1,2)$. They also prove that if $\Omega=B$, there exist infinitely many radial sign-changing solutions under the same assumption for $\la$ and  $\beta$. This stronger assumption is essential in the sense of the non-existence result by the same authors \cite{AY2}. In fact, they prove that if $\Omega=B$ and $\beta\le 1$, there exists a constant $\lambda_{\text{AY}}(\beta)>0$ such that \eqref{p0} permits no radial nodal solutions for all $\la\in(0,\lambda_{\text{AY}}(\beta))$. 

Motivated by this result, in \cite{GN}, the authors attack the blow--up analysis of  low energy nodal radial solutions. They investigate the behavior by fixing $\la\in(0,\lambda_{\text{AY}}(1))$ and taking the limit $\beta\downarrow 1$. Interestingly, the solutions sequence exhibits the multiple concentration at the origin and weakly converges to a nontrivial sign-definite solution of \eqref{p} with $\la\not=0$ and $\beta=1$. This behavior is very different from that in the positive case explained above. After that Grossi-Mancini-Naimen-Pistoia \cite{GMNP} construct a family of sign-changing solutions which concentrates at a $C^1$ stable critical point of a nontrivial residual mass. We remark that, in their construction, they choose the residual mass to be a sign-definite solution and assume that it is  nondegenerate and its amplitude is larger than $1/2$ at the concentration point. See (A1) and (A2) in their paper for more precise statements. Moreover, the authors conjecture that the largeness condition on the residual mass is essential for the existence of the concentrating solutions in the limit $\beta\downarrow 1$. See Remark 1.2 in their paper for the detail.

We lastly refer to some interesting results in the radial case. In \cite{MM1} and \cite{MM2}, the precise asymptotic expansion of concentrating positive solutions is obtained. It allows the proof of the multiplicity and nonexistence of critical points of the Trudinegr-Moser functional in the super critical case. Under another setting, Manicini-Thizy \cite{MT} construct concentrating radial solutions which weakly converge to a radial eigenfunction.  On the other hand, in two dimensional problem with the power type nonlinearity, Grossi-Grumiau-Pacella \cite{GGP} find that the singular limit profile appears in the asymptotic behavior of the sign-changing solutions. In higher dimensional problems with nearly Sobolev critical growth, Grossi-Salda\~{n}a-Hugo \cite{GSH} obtain sharp concentration estimates of nodal radial solutions for both of the Dirichlet and Neumann boundary value problems. 

In this paper, we proceed with our blow--up analysis of \eqref{p0} and answer some questions raised in the previous works. Moreover, we would like to find new concentration phenomena on \eqref{p0} in the sign-changing case. To this end, inspired by the previous works \cite{GN} and \cite{GMNP}, we focus on the strong perturbation problem \eqref{p}. Motivated also by \cite{MM1}, \cite{MM2} and \cite{GSH}, we focus on the radial case and establish the explicit and sharp estimates. 

More precisely, our first aim is to determine the limit profile and energy of every concentrating sequence. Especially, we would like to answer if the singular limit profile  (which was actually observed in the power type problem by \cite{GGP}) may appear or not in the sign changing case. We have already asked and negatively answered this question in the previous work \cite{GN}. But, in the crucial step to avoid the singular limit profile, we used the low energy characterization of the solutions. (See Case 2 in the proof of Lemma 4.3 in \cite{GN}.) Hence there remains a question if such characterization is essential for the conclusion. In the present paper, we will complete the answer for any energy bounded  sequence of radial solutions.  

Furthermore, our second goal is to deduce precise information of the weak limit of each concentrating sequence. The first question in this direction arose in the positive case by \cite{DT}. They asked if concentrating solutions could weakly converge to a nontrivial solution. Their answer was ``no" under their setting as noted above.  On the other hand, the result in \cite{GN} implies that  we may have a different answer in the sign-changing case. Moreover, a new question has arisen in Remark 2.1 of \cite{GMNP} about the relation between the concentration phenomenon and the amplitude of the weak limit at the concentration point. We will give an answer to this question in the radial case. 

As a consequence of our new calculation, we arrive at the next classification result with precise concentration estimates. It answers all the questions above. 
\subsection{Main theorems}\label{main}

Let us give our main results. Throughout this paper, we fix $\alpha>0$ and regard $(\la,\beta)$ as a parameter. Then, for any $(\la,\beta)\in(0,\infty)\times(0,2)$, we define the energy functional associated to \eqref{p},
\[
I_{\la,\beta}(u)=\frac12\int_B|\nabla u|^2dx-\la \int_BF_\beta(u)dx
\]
for all $u\in H_0^1(B)$ where $F_\beta(t)=\int_0^tte^{s^2+\alpha|s|^{\beta}}ds$. Then $I_{\la,\beta}$ is well-defined and $C^1$ functional on $H_0^1(B)$ by \eqref{TM}. Furthermore, the standard argument shows that every critical point of $I_{\la,\beta}$ corresponds to each solution of \eqref{p}. Moreover, let $\mathcal{N}_{\la,\beta}$ be the Nehari manifold defined by 
\[
\mathcal{N}_{\la,\beta}:=\left\{u\in H^1_0(B)\ | \text{ $u$ is radially symmetric and $\langle I_{\la,\beta}'(u),u\rangle=0$}\right\}.
\]
Using this, we put for any $k\in\{0\}\cup\mathbb{N}$,
\[
\begin{split}
c_{k,\la,\beta}:=\inf\Big\{&I_{\la,\beta}(u)\ |\ \text{$u\in\mathcal{N}_{\la,\beta}$, }\exists r_i\in(0,1) \text{ such that}\ \\
&0=r_0<r_1<\cdots<r_{k+1}=1, \ (0=r_0<r_1=1\text{ if }k=0,)\\
&u(x)=0\text{ if }|x|=r_i,\ u_i:=u|_{\{r_{i-1}<|x|<r_{i}\}},\ (-1)^{i-1}u_i>0,\text{ and } \\& u_i\in \mathcal{N}_{\la,\beta}\text{ (by zero extension), for any }\ 1\le i\le k+1\Big\}.
\end{split}
\]
In addition, we define the set of radial solutions as follows.
\[
\begin{split}
S_{k,\la,\beta}:=\Big\{&\text{$u\in C^2(B)\cap C^0(\overline{B})$: a radial solution of \eqref{p} } |\ \exists r_i\in(0,1)\ \text{ such}\\
&\text{that }0=r_0<r_1<\cdots<r_{k+1}=1,\ (0=r_0<r_1=1\text{ if $k=0$},)\\
&u(x)=0\text{ if }|x|=r_i,\ u_i:=u|_{\{r_{i-1}<|x|<r_{i}\}},\ \text{sgn}{(u(0))}(-1)^{i-1}u_i>0,\\
&\text{for all }1\le i\le k+1\Big\}.
\end{split}
\]
We remark that for any $(k,\la,\beta)\in\{0\}\times(0,\Lambda_1)\times (0,2)\cup\mathbb{N}\times(0,\Lambda_1)\times (1,2)$, there exists an element $u\in S_{k,\la,\beta}$ such that $I_{\la,\beta}(u)=c_{k,\la,\beta}$ by \cite{A} and \cite{AY1}. We call any element $u\in S_{k,\la,\beta}$ a nodal radial solution if $k\not=0$ and a positive (negative) solution  or  a sign-definite solution if  $k=0$ and $u(0)>0$ ($<0$ respectively). Note that the result in \cite{GNN} shows  that any element $u \in S_{0,\la,\beta}$ satisfies $\max_{x\in \overline{B}}|u(x)|=|u(0)|$. Lastly, for any $k\in \mathbb{N}$, let $\Lambda_k$ be the eigenvalue of $-\Delta$ on $B$ with the Dirichlet boundary condition which corresponds to the radial eigenfunction $\varphi_k$ which has  numbers $0=\tau_0<\tau_1<\cdots<\tau_k=1$ such that $\varphi_k(x)=0$ if $|x|=\tau_i$ and $(-1)^{i-1}\varphi_k>0$ on $(\tau_{i-1},\tau_i)$ for all $i=1,\cdots,k$ and is normalized by $\varphi_k(0)=1$. Our main result is the following.
\begin{theorem}[Classification result]\label{a1}
Assume $(k,\la_*,\beta_*)\in \{0\}\times[0,\infty)\times(0,2)\cup \mathbb{N}\times[0,\infty)\times(0,3/2)$ and let $\{(\la_n,\beta_n)\}\subset(0,\infty)\times(0,2)$ be a sequence of values such that $(\la_n,\beta_n)\to(\la_*,\beta_*)$ as $n\to \infty$. Furthermore, we suppose $(u_n)$ is a sequence of solutions of \eqref{p} such that $u_n\in S_{k,\la_n,\beta_n}$ and  $u_n(0)>0$ for all $n\in \mathbb{N}$. In addition, if $k\not=0$, we assume,
\[
\int_B|\nabla u_n|^2dx\text{ is uniformly bounded  for all $n\in \mathbb{N}$}.
\] 
Then, after subtracting a suitable subsequence if necessary, we have a function $u_0$ and  a number $N\in\{0,1,\cdots,k+1\}$ such that $u_n \to u_0$ in $C^{2}_{\text{loc}}(\overline{B}\setminus\{0\})$,
\[
I_{\la_n,\beta_n}(u_n)\to 2\pi N+I_{\la_*,\beta_*}(u_0),
\]
and
\[
\int_B|\nabla u_n|^2dx\to 4\pi N+\int_B|\nabla u_0|^2dx,
\] 
as $n\to \infty$. Moreover, if $\max_{x\in\overline{B}}|u_n(x)|\to \infty$ as $n\to \infty$, then, up to a subsequence, either one of the next assertions holds,
\begin{enumerate}
\item[(i)] $\la_*=0$, $\beta_n>1$ for all $n\in \mathbb{N}$,  $N=k+1$, and $u_0=0$, or
\item[(ii)] $\la_*=0$, $\beta_n\le1$ for all $n\in \mathbb{N}$,  $k=0$, $N=1$, and $u_0=0$, or
\item[(iii)] $\la_*\not=0$, $\beta_n\downarrow1$, $k\not=0$, $0<N<k+1$, and $u_0\in S_{k-N,\la_*,1}$ with $(-1)^Nu_0(0)\ge\alpha/2$, or 
\item[(iv)] $\la_*\not=0$, $\beta_n=1$ for all $n\in \mathbb{N}$, $k\not=0$, $N=1$,  and $u_0\in S_{k-1,\la_*,1}$ with $-u_0(0)=\alpha/2$ or
\item[(v)] $\la_*\not=0$, $\beta_n\uparrow 1$ for all $n\in \mathbb{N}$, $k\not=0$, $N=1$,  and $u_0\in S_{k-1,\la_*,1}$ with $0< -u_0(0)\le\alpha/2$, or otherwise
\item[(vi)] $\la_*=\Lambda_k$, $\beta_n<1$ for all  $n\in \mathbb{N}$, $k\not=0$, $N=1$,  and $u_0=0$.
\end{enumerate}
On the other hand, if $\max_{x\in\overline{B}}|u_n(x)|$ is uniformly  bounded for all $n\in \mathbb{N}$, then
\begin{enumerate}
\item[(vii)] $\la_*\not=0$, $N=0$, $u_n \to u_0$ in  $C^{2}(\overline{B})$,  $u_0\in S_{k,\la_*,\beta_*}\cup\{0\}$,  and further, we get $u_0=0$ only if $\la_*= \Lambda_{k+1}$. 
\end{enumerate}
In particular, (i) ((ii)) happens if and only if $\la_*=0$ ($k=0$ and $\la_*=0$ respectively) and (vii) occurs if $k=0$ and $\la_*\not=0$, or $k\ge1$, $\la_*\not=0$, and $\beta_*>1$, or $k\ge1$, $0<\la_*\not=\Lambda_k$, and $\beta_*<1$. 
\end{theorem}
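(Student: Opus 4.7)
The plan is to combine a blow-up / concentration-compactness analysis with a sharp matching of the inner Liouville bubble profile against the outer weak limit, and then to read off (i)--(vii) from the resulting amplitude condition at the origin. First I would exploit radial symmetry to reduce \eqref{p} to a one-dimensional ODE on each nodal annulus $\{r_{i-1}^{(n)}<|x|<r_i^{(n)}\}$. Since $u_n(0)>0$, the result of \cite{GNN} cited in the paper gives $\max_{\overline{B}}|u_n|=|u_n(0)|$, so the only possible concentration point is the origin. On compact subsets of $\overline{B}\setminus\{0\}$, the energy bound, radial monotonicity on each annulus, and elliptic regularity yield a uniform $L^\infty$ bound and hence $C^2_{\text{loc}}(\overline{B}\setminus\{0\})$ precompactness; after extracting a subsequence, the nodal radii converge and $u_n\to u_0$ in $C^2_{\text{loc}}(\overline{B}\setminus\{0\})$ to a radial classical solution of \eqref{p} for $(\la_*,\beta_*)$ away from $0$. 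If in addition $\|u_n\|_\infty=O(1)$, elliptic regularity directly yields the $C^2(\overline{B})$ convergence of (vii), and when $u_0=0$ dividing $u_n$ by $\|u_n\|_\infty$ identifies the limit with a radial Dirichlet eigenfunction carrying $k+1$ nodal annuli, forcing $\la_*=\Lambda_{k+1}$.

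Assume henceforth $M_n:=u_n(0)\to\infty$. Setting
\begin{equation*}
\mu_n^{-2}=\la_n M_n^2 e^{M_n^2+\alpha M_n^{\beta_n}},\qquad v_n(y)=2 M_n\bigl(u_n(\mu_n y)-M_n\bigr),
\end{equation*}
the rescaled equation converges on $\R^2$ (up to a harmless constant absorbed into $V$) to the Liouville equation $-\Delta V=e^V$, whose classical radial classification gives $V(y)=\log\tfrac{1}{(1+|y|^2/8)^2}$ with $\int_{\R^2}e^V=8\pi$ and $\int_{\R^2}|\nabla V|^2=16\pi$. Translating back, one bubble contributes exactly $2\pi$ to $I_{\la_n,\beta_n}(u_n)$ and $4\pi$ to the Dirichlet energy. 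Iterating this scaling on the inner nodal annuli whose radii collapse to $0$ (at most one bubble per annulus by the radial ODE) yields an integer $N\in\{0,\dots,k+1\}$, and a standard neck analysis on $\{\mu_n R<|x|<\rho\}$ shows that no residual energy is lost in the transition, producing the two quantization formulas.

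The core novelty is the amplitude constraint on $u_0(0)$, which I would derive by matching the inner and outer expansions at an intermediate scale. Inside the bubble, $u_n(\mu_n y)=M_n+V(y)/(2M_n)+o(M_n^{-1})$; outside the tower of bubbles, $u_n(x)\to u_0(x)$ with $u_0(0)=(-1)^N|u_0(0)|$, the sign factor arising because the $N$ collapsed nodal radii reverse the ambient sign. Matching the exponent of the nonlinearity across the transition yields
\begin{equation*}
M_n^2+\alpha M_n^{\beta_n}=\bigl(M_n-(-1)^N u_0(0)+o(1)\bigr)^2+O(1),
\end{equation*}
and after cancelling $M_n^2$ one reads off $\alpha M_n^{\beta_n-1}=2(-1)^N u_0(0)+o(1)$. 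When $\beta_n\to 1$ this forces $(-1)^N u_0(0)\to\alpha/2$, with the one-sided inequalities in (iii)-(v) dictated by the sign of $\beta_n-1$; when $\beta_n\not\to 1$ the only consistent balance forces $u_0\equiv 0$, which combined with $\la_*=0$ gives (i)-(ii), and combined with $\beta_n<1$ and a normalized-limit argument identifying the single surviving profile with an eigenfunction gives $\la_*=\Lambda_k$ in (vi).

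The main obstacle I anticipate is making this inner/outer matching fully rigorous: one needs a sharp expansion of $e^{u_n^2+\alpha|u_n|^{\beta_n}}$ at the transition scale together with a uniform neck estimate absorbing the cross terms arising in a tower of Liouville bubbles, and the restriction $\beta_*<3/2$ in the nodal case very likely enters here to keep $\alpha M_n^{\beta_n}$ strictly of lower order than $M_n^2$ while still dominating the error in the Pohozaev-type identity underpinning the matching. The ``in particular'' clause then follows by eliminating (i)-(vi) case by case under the stated parameter assumptions.
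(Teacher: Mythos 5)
Your high-level scheme — rescale to a Liouville bubble, count bubbles, match inner and outer expansions to constrain $u_0(0)$ — does track the paper's strategy in outline, but the steps that actually carry the argument are either incorrect or missing. First, $\max_{\overline B}|u_n|=|u_n(0)|$ via \cite{GNN} is only stated for $S_{0,\la,\beta}$; a nodal solution has a local maximum $\rho_{i,n}$ in each annulus, a priori bounded away from $0$. When you rescale around such an interior maximum and $\rho_{i,n}/\ga_{i,n}\to l>0$, the limit is a \emph{singular} Liouville profile, not the regular one — this is precisely the possibility the paper flags as the central difficulty (cf.~Proposition \ref{dd} and the reference to \cite{GGP}), and it is only excluded through an inductive argument in which the hypotheses \eqref{a4}, \eqref{a7} proved at step $i-1$ (i.e.\ the conclusions of Proposition \ref{f0}) are used to derive the contradictory limits \eqref{d7} and \eqref{d777}. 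Your proposal never addresses this. Likewise, ``at most one bubble per annulus by the radial ODE'' and ``a standard neck analysis shows no energy is lost'' each compress the whole of Lemma \ref{le70} and Propositions \ref{e00}, \ref{f5} into a sentence; these estimates are delicate (they require the contraction-mapping construction of the corrector $\psi_n$) and they are exactly where the restriction $\beta_*<3/2$ enters — not, as you guess, to make $\alpha\mu^{\beta}\ll\mu^2$ (true for any $\beta<2$), but to guarantee $\rho_{i,n}/\ga_{i,n}=o(\mu_{i,n}^{-(2-\beta_n)})$ via \eqref{e1}--\eqref{e2}, which needs $3-2\beta_n>0$.

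The ``core novelty'' step is also not correct as stated. The relation
\[
M_n^2+\alpha M_n^{\beta_n}=\bigl(M_n-(-1)^N u_0(0)+o(1)\bigr)^2+O(1)
\]
is not derived from anything, and expanding the right side gives $\alpha M_n^{\beta_n-1}=-2(-1)^Nu_0(0)+o(1)$ — the opposite sign from your stated conclusion. More fundamentally, the quantity you plug in, $M_n=u_n(0)=\mu_{1,n}$, is separated from $u_0$ by $N-1$ intermediate bubbles; the quantity that actually matches the outer profile is the \emph{last} blowing-up amplitude $\mu_{N,n}$, and the paper establishes $\mu_{N,n}^{\beta_n-1}\to 2|u_0(0)|/\alpha$ (the $i=N$ case of \eqref{aa5}) from Lemma \ref{lem:id}, Lemma \ref{le70}, Proposition \ref{f0}, and Lemmas \ref{b21}--\ref{lem:g6}, while going back to the origin produces the iterated exponents $(\beta_n-1)^{N-i+1}$ of \eqref{aa5} via the amplitude law \eqref{f11}, not a single power $\beta_n-1$. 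The one-sided inequalities in (iii)--(v) then come from distinguishing the sign of $\beta_n-1$ in $\mu_{N,n}^{\beta_n-1}\to 2|u_0(0)|/\alpha$; with your formula one cannot reach them. Finally, the dichotomy ``$\beta_n\not\to 1\Rightarrow u_0\equiv 0$'' that you use to cover (i), (ii) and (vi) is not correct in that form: what rules out $\beta_*>1$ in case (ii) of Theorem \ref{a10} is \eqref{g6}, what forces $u_0=0$ when $\beta_n<1$ is \eqref{aa5} together with $\mu_{N,n}\to\infty$, and the identification $\la_*=\Lambda_{k-N+1}$ in (vi) needs the separate normalization argument in part (b) of Theorem \ref{a10}, together with the $(\beta_n)\subset(0,1]\Rightarrow N=1$ claim from the proof of Theorem \ref{a31} to pin down $\Lambda_k$.
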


Theorem \ref{a1} implies that the full (Dirichlet) energy of the sequence is decomposed by $2\pi N$ ($4\pi N$ respectively) for a number $N\in \{0,1,\cdots,k+1\}$ and the energy of the weak limit $u_0$. This is the typical energy quantization phenomenon observed also in the previous works. In our theorem, (i)-(vi) describe the non-compact behavior and (vii) corresponds to the compact one. In the former case, there are three situations. The first one is found in (i) and (ii) which means that the $(k+1)$-concentration occurs with the zero weak limit. This phenomenon happens  if and only if $\la_*=0$. The second one is observed in (iii), (iv), (v). It shows the $N(<k+1)$-concentration happens with the nontrivial weak limit. (Notice that the weak limit is possibly sign-changing.) This behavior yields $\la_*\not=0$ and $\beta_*=1$. Moreover, the sum of the number $N$ of bubbles, and the number $k-N+1$ of nodal domains of the weak limit is always given by $N+(k-N+1)=k+1$. It comes from the fact that, in this case, if we focus on the behavior of the solution on each nodal domain, it weakly converges to zero if and only if it blows up if and only if it exhibits the single concentration. See the next theorem for the detail.  The third one is observed in (vi). This means that the $N(=1<k+1)$-concentration occurs with the zero weak limit. This behavior requires $\la_*=\Lambda_k$ and $\beta_n<1$ for all $n\in \mathbb{N}$. As we will see in the next theorems, in this case, only the local maximum value $u_n(0)$ at the origin diverges to  infinity and the other ones converge to zero. 

Moreover, a remarkable result is found in the final assertions in (iii)-(vi). It gives the necessary condition on  the amplitude $|u_0(0)|$ of the weak limit  at the origin. Especially, (iii) gives a proof of the conjecture in Remark 1.2 of \cite{GMNP} in the radial case. It ensures that if the concentration occurs in the limit $\la_n\to \la_*\not=0$ and $\beta_n\downarrow1$, then $|u_0(0)|$ needs to be  greater than or equal to $\alpha/2$. Notice that our necessary condition is valid in any case of $\la_*>0$ as far as the concentration occurs as $\beta_n\downarrow 1$ while the previous results in \cite{GN} and \cite{GMNP} focus on the case of small $\la_*>0$. Moreover, in the cases $\beta_n=1$ for all $n\in \mathbb{N}$, $\beta_n\uparrow1$, and $\beta_*<1$, we deduce new necessary conditions, $|u_0(0)|=\alpha/2$, $0\le|u_0(0)|\le\alpha/2$, and $u_0(0)=0$ respectively. (A related result is observed for a radial positive sequence in another setting by Theorem 0.3 in \cite{MT}.) These conditions will be useful to detect new concentrating sequences of solutions.  See Section \ref{sec:cor} for more discussion. We will discuss several counterparts of assertions in the previous theorem later.

In addition, we remark that there is  a striking difference between the cases $(\beta_n)\subset(1,2)$ ((i) and (iii)) and $(\beta_n)\subset(0,1]$ ((ii), (iv), (v), and (vi)). In the former case, $N$ can be greater than $1$ while in the latter case it has to be equal to one. This is a consequence of our blow-up analysis in Sections \ref{le} and \ref{energy}. See Remark \ref{rmk:e1} for more explanation. Then, we notice that Theorem \ref{a1} contains the following nonexistence result. 
\begin{corollary}\label{cor:ne}
For any number $k\in \mathbb{N}$ and sequence $\{(\la_n,\beta_n)\}\subset(0,\infty)\times(0,1]$ such that $(\la_n,\beta_n)\to (\la_*,\beta_*)\in\{0\}\times(0,1]$, there exists no sequence of solutions $(u_n)$ such that $u_n\in S_{k,\la_n,\beta_n}$ and $\int_B|\nabla u_n|^2dx$ is uniformly bounded for all $n\in \mathbb{N}$.
\end{corollary}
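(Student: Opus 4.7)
My plan is to obtain Corollary \ref{cor:ne} as an immediate consequence of the classification Theorem \ref{a1}, by ruling out every asymptotic alternative. I would argue by contradiction: suppose there exist $k\in\mathbb{N}$, a sequence $(\la_n,\beta_n)\in(0,\infty)\times(0,1]$ with $(\la_n,\beta_n)\to(\la_*,\beta_*)\in\{0\}\times(0,1]$, and solutions $u_n\in S_{k,\la_n,\beta_n}$ with $\int_B|\nabla u_n|^2\,dx$ uniformly bounded.

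Before invoking Theorem \ref{a1}, I would arrange $u_n(0)>0$ by the sign symmetry of \eqref{p}: since the nonlinearity $\la u e^{u^2+\alpha|u|^{\beta}}$ is odd in $u$ and the class $S_{k,\la,\beta}$ is stable under $u\mapsto -u$, we may replace $u_n$ by $-u_n$ whenever $u_n(0)<0$. Because $(k,\la_*,\beta_*)$ lies in $\mathbb{N}\times[0,\infty)\times(0,3/2)$ and the energy bound hypothesis of Theorem \ref{a1} holds, we may extract a subsequence for which exactly one of (i)--(vii) occurs.

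I would then eliminate each alternative in turn. Cases (iii), (iv), (v), and (vii) are ruled out because each explicitly requires $\la_*\neq 0$. Case (vi) is ruled out because it requires $\la_*=\Lambda_k>0$, again inconsistent with $\la_*=0$. Case (i) is ruled out because it requires $\beta_n>1$, contradicting $\beta_n\in(0,1]$. Case (ii) is ruled out because it requires $k=0$, contradicting $k\in\mathbb{N}$. Exhausting the seven alternatives yields the desired contradiction and hence the nonexistence claim.

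I do not anticipate any essential obstacle here: the content of the corollary is entirely encoded in the case structure of Theorem \ref{a1}. The real difficulty lies upstream, in proving the ``at most one bubble'' phenomenon when $\beta_n\le 1$ (visible across cases (ii), (iv), (v), (vi)) together with the absence of any alternative that simultaneously allows $\la_*=0$, $k\ge 1$, and $\beta_n\le 1$. That dichotomy—which is exactly what forces every concentration scenario in the subcritical regime $\beta\le 1$ to require either $k=0$ or $\la_*>0$—is precisely the substantive consequence of the blow-up analysis, and the present corollary is just its combinatorial shadow.
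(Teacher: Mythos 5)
Your proof is correct and follows essentially the same route as the paper: argue by contradiction, normalize $u_n(0)>0$ by the odd symmetry of the nonlinearity, apply Theorem \ref{a1}, and observe that $\la_*=0$ and $\beta_n\le 1$ force case (ii), which requires $k=0$, contradicting $k\ge 1$. The paper compresses the elimination of the remaining cases into a single appeal to the ``in particular'' clause at the end of Theorem \ref{a1}, whereas you unpack it case by case, but the argument is the same.
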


This conclusion allows a partial proof of the nonexistence result by \cite{AY2} via a different approach. 
\begin{corollary}\label{cor:final} Choose any $\beta\in(0,1]$. Then for each value $E>0$, there exists a constant $\hat{\lambda}=\hat{\lambda}(\beta,E)>0$ such that for all $\la\in(0,\hat{\la})$, \eqref{p} admits no radial nodal solution $u$ with $\int_B|\nabla u|^2dx\le E$. In particular, for each number $k\in \mathbb{N}$, there is a value $\hat{\la}_0=\hat{\la}_0(\beta,k)>0$ such that for all  $\la\in(0,\hat{\la}_0)$, there exists no solution $u\in S_{k,\la,\beta}$ which attains $c_{k,\la,\beta}$.
\end{corollary}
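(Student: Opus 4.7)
The plan is to reduce Corollary \ref{cor:final} to Corollary \ref{cor:ne} via contradiction. In the main claim, the bridge is a uniform bound on the number of nodal domains of any $H^1$-bounded nodal sequence with $\la_n\to 0$; in the particular claim, it is a uniform upper bound on $c_{k,\la,\beta}$ as $\la\to 0$.

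For the main claim, suppose for contradiction that there exist $E>0$, a sequence $\la_n\to 0^+$, and radial nodal solutions $u_n\in S_{k_n,\la_n,\beta}$ with $\int_B|\nabla u_n|^2\,dx\le E$. Let $u_n^{(i)}$ denote the restriction of $u_n$ to its $i$-th nodal annulus extended by zero to $B$, and set $t_n^{(i)}:=\bigl(\int_B|\nabla u_n^{(i)}|^2\,dx\bigr)^{1/2}$. Testing the PDE against $u_n^{(i)}$ yields the Nehari identity
\[
(t_n^{(i)})^2=\la_n\int_B (u_n^{(i)})^2 e^{(u_n^{(i)})^2+\alpha|u_n^{(i)}|^\beta}\,dx.
\]
Combining H\"older with exponents $(2,2)$, the Sobolev embedding $H^1_0(B)\hookrightarrow L^4(B)$, the bound $\alpha|s|^\beta\le\alpha+\alpha s^2$ (valid since $\beta\le 2$), and the Trudinger-Moser inequality \eqref{TM} applied to $e^{2(1+\alpha)(u_n^{(i)})^2}$ in the regime $(t_n^{(i)})^2\le 2\pi/(1+\alpha)$, one derives $\int_B (u_n^{(i)})^2 e^{(u_n^{(i)})^2+\alpha|u_n^{(i)}|^\beta}\,dx\le C(\alpha)(t_n^{(i)})^2$. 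Inserted in the Nehari identity, this forces $\la_n\ge 1/C(\alpha)$ in that regime, contradicting $\la_n\to 0$. Hence $(t_n^{(i)})^2>2\pi/(1+\alpha)$ for every $i$ once $n$ is large, and summing over $i$ gives $(k_n+1)\cdot 2\pi/(1+\alpha)\le E$. After passing to a subsequence with $k_n=k\in\mathbb N$ constant, Corollary \ref{cor:ne} yields the contradiction.

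For the particular claim, assume for contradiction that there are $\la_n\to 0^+$ and minimizers $u_n\in S_{k,\la_n,\beta}$ realizing $c_{k,\la_n,\beta}$. I would first show $c_{k,\la_n,\beta}\le 2\pi(k+1)+o(1)$ by constructing a test function: on $k+1$ disjoint radial annuli, place normalized Moser functions with alternating signs and rescale each to lie in $\mathcal N_{\la_n,\beta}$; the assumption $\beta<2$ ensures that $e^{\alpha|u|^\beta}$ is subleading compared to $e^{u^2}$ in the concentration regime, so the classical $2\pi$-per-bubble estimate goes through on each piece. Then, combining the energy identity on the Nehari manifold
\[
2c_{k,\la_n,\beta}=\la_n\int_B\bigl[u_n F_\beta'(u_n)-2F_\beta(u_n)\bigr]\,dx,
\]
with the asymptotics $sF_\beta'(s)-2F_\beta(s)\sim s^2 e^{s^2+\alpha|s|^\beta}$ as $|s|\to\infty$ (obtained by integration by parts on $F_\beta$) and the Nehari identity $\int_B|\nabla u_n|^2\,dx=\la_n\int_B u_n^2 e^{u_n^2+\alpha|u_n|^\beta}\,dx$, one extracts a uniform bound on $\int_B|\nabla u_n|^2\,dx$. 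The first part then applies and completes the contradiction.

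The main obstacle is the lower bound $(t_n^{(i)})^2\ge 2\pi/(1+\alpha)$. Since a priori $(t_n^{(i)})^2$ could be as large as $E$, possibly exceeding the Trudinger-Moser threshold $4\pi$, the inequality \eqref{TM} cannot be applied directly to $e^{(u_n^{(i)})^2}$; however, it suffices to rule out only the regime $t_n^{(i)}\to 0$, where the rescaled form of \eqref{TM} applies with arbitrarily small exponent. A secondary difficulty is the uniform upper bound for $c_{k,\la_n,\beta}$, where one must verify that the cross term $e^{\alpha|u|^\beta}$ does not spoil the $2\pi$ per-bubble estimate; this is precisely where the subcritical assumption $\beta<2$ is essential.
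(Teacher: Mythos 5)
Your overall architecture is the same as the paper's: both parts are reduced to Corollary \ref{cor:ne} by contradiction, part one by bounding the number of nodal domains through a uniform lower bound on the Dirichlet energy of each Nehari piece, and part two by a uniform upper bound on the level followed by the standard Nehari coercivity estimate. Your inline Trudinger--Moser/H\"older computation for the lower bound $(t_n^{(i)})^2>2\pi/(1+\alpha)$ is correct and is exactly the content of the paper's Lemma \ref{b1} (the paper's version is slightly more refined so as to hold for all $\la\le\Lambda_0<\Lambda_1$, but your cruder splitting suffices since you only need $\la_n\to0$). Part one is therefore fine.

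In part two there is one concrete problem with your test-function construction. Placing ``normalized Moser functions'' on $k+1$ disjoint radial annuli does not give $2\pi+o(1)$ per piece if the annuli stay bounded away from the origin: by the Radial Lemma (Lemma \ref{RL}), a radial $H^1_0$ function with bounded Dirichlet norm is uniformly bounded on $\{|x|\ge\delta\}$, so no Trudinger--Moser concentration is possible there, and for a bounded profile $v$ the Nehari maximum $\sup_t I_{\la_n}(tv)$ behaves like $\tfrac12\log(1/\la_n)\to\infty$. This is consistent with Theorem \ref{a10}, which shows that every blow-up in this radial problem occurs at the origin. The correct construction is a tower of $k+1$ nested sign-alternating bubbles all concentrating at the origin at different scales; this is precisely what the paper's Lemma \ref{crl1} (Lemma 2.1 of \cite{GN}, applied with the constant sequence $\la_n=\la$) provides, yielding $c_{k,\la,\beta}\le 2\pi k+c_{0,\la,\beta}<2\pi(k+1)$ uniformly in $\la\in(0,\Lambda_1)$. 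With that substitution your deduction of the $H^1$ bound via $I_{\la,\beta}(u)-\e\langle I_{\la,\beta}'(u),u\rangle$ and the inequality $\e f(t)t-F_\beta(t)\ge0$ for $|t|$ large is exactly the paper's argument, and the rest of your proof goes through. The secondary worry you raise about the perturbation $e^{\alpha|u|^\beta}$ is not the real obstacle; the geometry of where the bubbles can sit is.
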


We finally remark on the additional condition on $\beta_*$ in the previous theorem.
\begin{remark}\label{rmk:a1}
In the case of $k\ge1$, we additionally assumed $\beta_*<3/2$. This condition will first appear in \eqref{e2} of Lemma \ref{lem:e} below. This does not seem simply a technical assumption. As discussed in Remark \ref{rmk:e2}, in the case $k\ge1$ and $\beta_*\ge3/2$, we would have different formulas of the asymptotic energy expansion in Theorem \ref{a10} and also the concentration estimates in Theorem \ref{a31} below. Since the proof for the former case has already used many pages, we leave the latter case for our next works.  
We here restrict ourselves to only conjecture that, in the sign-changing case, the stronger perturbation ($\beta\ge3/2$ in \eqref{p}) would delicately affects the precise asymptotic formulas of concentrating solutions.  
\end{remark}

Next, we shall check the detail of the behavior stated above. The previous theorem is a direct consequence of the next two theorems.
\begin{theorem}[Limit profile and energy]\label{a10} Let values $k,\la_*,\beta_*$ and sequences $\{(\la_n,\beta_n)\}$, $(u_n)$ be chosen as in the assumption of Theorem \ref{a1} and assume $u_0\in H^1_0(B)$ is the weak limit of $(u_n)$ by subtracting a subsequence if necessary. Furthermore, we write $u_n=u_n(|x|)$ and $u_0=u_0(|x|)$ for $x\in \overline{B}$ and then define values  $0=r_{0,n}<r_{1,n}<\cdots<r_{k+1,n}=1$ ($0=r_{0,n}<r_{1,n}=1$ if $k=0$) so that $u_n(r_{i,n})=0$ and  $(-1)^{i-1}u_n(r)>0$ if $r_{i-1,n}<r<r_{i,n}$ for all $1\le i\le k+1$.  In addition, we set $u_{i,n}=u_n|_{[r_{i-1,n},r_{i,n}]}$ and define numbers $\rho_{i,n}\in[r_{i-1,n},r_{i,n})$ and $\mu_{i,n}>0$ by $\mu_{i,n}=|u_{i,n}(\rho_{i,n})|=\max_{r\in [r_{i-1,n},r_{i,n}]}|u_{i,n}(r)|$ for all $1\le i\le k+1$.  Finally for each number $i=1,\cdots,k+1$ such that $\mu_{i,n}\to \infty$, we set $\ga_{i,n}>0$ so that 
\[
1=2\lambda_n  \mu_{i,n}f_n(\mu_{i,n})\ga_{i,n}^2
\]
where $f_n(t)=te^{t^2+|t|^{\beta_n}}$, and define
\[
z_{i,n}(r):=2\mu_{i,n}(|u_{i,n}(\gamma_{i,n} r+\rho_{i,n})|-\mu_{i,n})
\text{ for all }
r\in\left[\frac{r_{i-1,n}-\rho_{i,n}}{\ga_{i,n}},\frac{r_{i,n}-\rho_{i,n}}{\ga_{i,n}}\right],
\] 
and $z(r):=\log{(64/(8+r^2)^2)}$ which  is a solution of the Liouville equation
\[
-z''-\frac{1}{r}z'=e^z\text{ in }(0,\infty),\ z(0)=0=z'(0),\  \int_0^\infty e^zrdr<\infty.
\]
Then if $\max_{r\in[0,1]}|u_n(r)|\to \infty$, either one of the next  assertions (i) and (ii) holds up to a subsequence.
\begin{enumerate}
\item[(i)] For all $i=1,\cdots,k+1$, we have $\mu_{i,n}\to \infty$, $\rho_{k+1,n}\to 0$, $\ga_{i,n}\to0$, $z_{i,n}\to z\text{ in }C^2_{\text{loc}}((0,\infty))\cap C^1_{\text{loc}}([0,\infty))$,
\[
\int_{r_{i-1,n}}^{r_{i,n}}u_{i,n}'(r)^2rdr= 2-\frac{\alpha \beta_*}{\mu_{i,n}^{2-\beta_n}}+o\left(\frac{1}{\mu_{i,n}^{2-\beta_n}}\right),
\]
\[
\mu_{i,n}\int_{r_{i-1,n}}^{r_{i,n}}\la_n f_n(u_{i,n})rdr= 2-\frac{\alpha \beta_*}{\mu_{i,n}^{2-\beta_n}}+o\left(\frac{1}{\mu_{i,n}^{2-\beta_n}}\right),
\]
and further, $\la_*=0$ and  $u_n\to u_0=0$ in $C^2_{\text{loc}}((0,1])$.
\item[(ii)] There exists a number $N\in\{1,\cdots,k\}$ such that for all $i=1,\cdots,N$, we have $\mu_{i,n}\to \infty$, $r_{i,n}\to 0$, $\ga_{i,n}\to0$, $z_{i,n}\to z\text{ in }C^2_{\text{loc}}((0,\infty))\cap C^1_{\text{loc}}([0,\infty))$, and 
\[
\int_{r_{i-1,n}}^{r_{i,n}}u_{i,n}'(r)^2rdr= 2-\frac{\alpha \beta_*}{\mu_{i,n}^{2-\beta_n}}+o\left(\frac{1}{\mu_{i,n}^{2-\beta_n}}\right),
\] 
\[
\mu_{i,n}\int_{r_{i-1,n}}^{r_{i,n}}\la_n f_n(u_{i,n})rdr= 2-\frac{\alpha \beta_*}{\mu_{i,n}^{2-\beta_n}}+o\left(\frac{1}{\mu_{i,n}^{2-\beta_n}}\right),
\]
while for all $i=N+1,\cdots,k+1$, there exist values $\mu_{i}\ge0$, $r_i\in(0,1]$ and $\rho_i\in[0,1)$ such that $\mu_{i,n}\to \mu_{i}$, $r_{i,n}\to r_i$, $\rho_{i,n}\to \rho_i$, and $0=\rho_{N+1}<r_{N+1}<\cdots<\rho_{k+1}<r_{k+1}=1$ if $N<k$ and $0=\rho_{k+1}<r_{k+1}=1$ if $N=k$, and further, it holds that $\la_*\not=0$, $u_n|_{[r_{N,n},1]}\to u_0$ in $C^2_{\text{loc}}((0,1])$, $\lim_{r\to 0+0}(-1)^Nu_0(r)=\mu_{N+1}$, and 
\[
\int_{r_{N,n}}^{1}u_{n}'(r)^2rdr\to \int_{0}^{1}u_0'(r)^2rdr.
\]
Moreover, either one of the next assertions holds,
\begin{enumerate}
\item[(a)] $\mu_{N+1}>0$, $u_0(r_i)=0$, and $(-1)^{i-1}u_0>0$ on $(r_{i-1},r_i)$, for all $i=N+1,\cdots,k+1$, or
\item[(b)] $\mu_{N+1}=0$, $u_0=0$, and further, $u_n|_{[r_{N,n},1]}/\mu_{N+1}\to(-1)^N\varphi_{k-N+1}$ in $C^2_{\text{loc}}((0,1])$ and $\la_*=\Lambda_{k-N+1}$.
\end{enumerate}
\end{enumerate}
On the other hand, if $\max_{r\in[0,1]}|u_n(r)|$ is uniformly bounded, choosing a subsequence if necessary, we get that,
\begin{enumerate}
\item[(iii)] for any $i=1,\cdots,k+1$, there exist values $\mu_i\ge0$, $r_i\in(0,1]$ and $\rho_i\in[0,1)$ such that $\mu_{i,n}\to \mu_i$, $r_{i,n}\to r_{i}$, $\rho_{i,n}\to \rho_i$, and $0= \rho_1<r_1<\cdots <\rho_{k+1}<r_{k+1}=1$ if $k\ge1$,  and further, it holds that $\la_*\not=0$, $u_n\to u_0$ in $C^2([0,1])$, and  
\[
\int_{0}^{1}u_{n}'(r)^2rdr\to \int_0^{1}u_0'(r)^2rdr.
\]
In addition, either one of the next assertions is true,
\begin{enumerate}
\item[(a)] $u_0(0)>0$, $u_0(r_i)=0$, and $(-1)^{i-1}u_0>0$ on $(r_{i-1},r_i)$, for all $i=1,\cdots,k+1$, or
\item[(b)] $u_0=0$, $u_n/u_{n}(0)\to \varphi_{k+1}$ in $C^2([0,1])$, and $\la_*=\Lambda_{k+1
}$.
\end{enumerate}
\end{enumerate}
\end{theorem}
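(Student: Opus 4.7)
The plan is to run a piecewise analysis along the nodal annuli $[r_{i-1,n}, r_{i,n}]$ and then assemble the pieces. On each annulus, $u_{i,n}$ solves the same radial equation with zero Dirichlet data, and $(\mu_{i,n})$ either stays bounded or diverges. In the bounded case, standard elliptic estimates yield $C^2$ compactness of $u_{i,n}$ on its annulus. In the divergent case $\mu_{i,n}\to\infty$, I perform a blow-up analysis at the peak $\rho_{i,n}$ at scale $\ga_{i,n}$, chosen precisely so that the rescaled nonlinearity has unit amplitude at $r=0$. This dichotomy on each piece organizes the whole proof.

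For the blow-up step, substituting $|u_{i,n}(\ga_{i,n} r + \rho_{i,n})| = \mu_{i,n} + z_{i,n}/(2\mu_{i,n})$ into the equation yields, to leading order,
\[
-z_{i,n}'' \;-\; \frac{1}{r + \rho_{i,n}/\ga_{i,n}}\, z_{i,n}' \;=\; e^{z_{i,n}}\,(1 + o(1)),
\]
so $z_{i,n}\to z$ follows from the Chen--Li radial classification together with three ingredients: (a) $\rho_{i,n}/\ga_{i,n}\to\infty$ whenever $\rho_{i,n}>0$, which matches the drift coefficient to $1/r$ in the limit; (b) $|u_{i,n}|^{\beta_n}-\mu_{i,n}^{\beta_n}\to 0$ on bounded rescaled sets, controlling the $\alpha$-perturbation in the exponent; (c) uniform local integrability of $e^{z_{i,n}} r\, dr$, inherited from the energy bound. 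A bubble centered at a positive radius would yield a one-dimensional profile incompatible with the 2-D Liouville equation, so $\rho_{i,n}\to 0$, and through a Pohozaev-type neck estimate also $r_{i,n}\to 0$, for every blowing index.

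The heart of the argument is the refined energy expansion
\[
\int_{r_{i-1,n}}^{r_{i,n}} (u_{i,n}')^2\, r\, dr \;=\; 2 \;-\; \frac{\alpha \beta_*}{\mu_{i,n}^{2-\beta_n}} \;+\; o\!\left(\mu_{i,n}^{\beta_n - 2}\right),
\]
together with the matching expansion for $\mu_{i,n}\int \la_n f_n(u_{i,n})\, r\, dr$. The leading $2$ is the standard Dirichlet bubble energy ($4\pi$ per bubble divided by the $2\pi$ from radial integration). For the correction, I expand $|u_{i,n}|^{\beta_n} = \mu_{i,n}^{\beta_n}\bigl(1 + \beta_n z_{i,n}/(2\mu_{i,n}^2) + O(\mu_{i,n}^{-4})\bigr)$ to second order and integrate the resulting $\alpha\beta_n\mu_{i,n}^{\beta_n-2} z_{i,n}/2$ factor against $e^z$ on the bubble core, combined with a Pohozaev identity (multiply the equation by $r u_{i,n}'$) to control the neck contribution. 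This second-order step is exactly where the restriction $\beta_* < 3/2$ enters, since for larger $\beta_*$ further subleading terms become comparable to the stated correction.

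Assembly is then routine. If all pieces blow up, the neck bounds force $r_{i,n}\to 0$ uniformly and $u_n\to 0$ on $[\de,1]$ for every $\de>0$, giving case (i). If only the first $N$ pieces blow up, passing to the limit in the equation on compact subsets of $(0,1]$ produces a limit $u_0$ on $[r_N,1]$ with boundary value $\lim_{r\to 0+}(-1)^N u_0(r) = \mu_{N+1}$ read off from the neck: when $\mu_{N+1}>0$ the limit is a genuine solution with the stated nodal structure (alternative (a)); when $\mu_{N+1}=0$ the renormalized sequence $u_n|_{[r_{N,n},1]}/\mu_{N+1}$ converges to the eigenfunction $(-1)^N\varphi_{k-N+1}$, forcing $\la_*=\Lambda_{k-N+1}$ (alternative (b)). Case (iii) is the purely compact case, handled analogously with no blow-up piece. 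The principal obstacle throughout is the sharp second-order expansion yielding the $\alpha\beta_*/\mu_{i,n}^{2-\beta_n}$ correction, which requires a delicate matching between the $\alpha|u|^{\beta_n}$ perturbation and the Gaussian core of the exponential inside each bubble.
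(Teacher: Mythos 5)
Your high-level architecture (annulus-by-annulus dichotomy, rescaling at the peak, second-order expansion, assembly) mirrors the paper's, but there is a concrete error at the heart of the blow-up step that, as written, would break the argument. You claim ``(a) $\rho_{i,n}/\ga_{i,n}\to\infty$ whenever $\rho_{i,n}>0$, which matches the drift coefficient to $1/r$ in the limit.'' This is backwards on two counts. After rescaling, the drift term in the equation for $z_{i,n}$ is $\dfrac{1}{r+\rho_{i,n}/\ga_{i,n}}$; if $\rho_{i,n}/\ga_{i,n}\to\infty$ this tends to $0$ (giving the one-dimensional $-z''=e^z$, not the radial Liouville equation), and if $\rho_{i,n}/\ga_{i,n}\to l\in(0,\infty)$ you get a shifted drift, which is exactly the singular-profile scenario that must be excluded. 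What the paper actually proves is that $\rho_{i,n}/\ga_{i,n}\to 0$. Establishing this is the central technical difficulty: Lemma \ref{ap1} rules out $l=\infty$ using the energy bound and the Strauss radial lemma, and the $l\in(0,\infty)$ case is ruled out in the proof of Proposition \ref{dd} by an induction: the already-established concentration formulas \eqref{a4} and \eqref{a7} (equivalently Proposition \ref{f0}) from the previous index $i-1$ are combined with Lemma \ref{lem:id} to derive a contradiction with the growth rate of $\mu_{i,n}$. Your sketch has no mechanism to replace this; knowing $\rho_{i,n}\to 0$ alone (which you do infer correctly) does not decide the finer limit of $\rho_{i,n}/\ga_{i,n}$. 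Indeed the whole point of the paper's induction is that the earlier work \cite{GN} used a low-energy variational characterization to kill the $l\in(0,\infty)$ case, which is not available under the present hypothesis of mere energy boundedness.

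Two further points. First, you propose to control the neck contribution via a Pohozaev identity (multiplying by $r u'$). The paper explicitly remarks that the Pohozaev identity does not work well here and instead uses the $r\log r$ multiplier identity of Lemma \ref{lem:id}, both for the profile analysis and for the concentration formulas in Proposition \ref{f0}; you should check whether your Pohozaev computation actually closes for this exponential nonlinearity, since it does not appear to. Second, your second-order expansion sketch (expand $|u|^{\beta_n}$, integrate $\alpha\beta_n\mu^{\beta_n-2}z/2$ against $e^z$) captures the bubble-core contribution but silently omits the tail: the paper needs the sharp pointwise upper bound of Lemma \ref{le70} (proved by a contraction mapping, following Malchiodi--Martinazzi) to show that the region beyond a suitably chosen $s_n$ contributes $o(\mu_{i,n}^{-(2-\beta_n)})$ to the energy (Lemma/Proposition \ref{f5}, claim \eqref{e12}). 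Without this estimate the expansion is not justified, and it is also where the condition $\beta_*<3/2$ enters through \eqref{e2}, consistent with your informal comment but not actually derived in your sketch. The assembly and the bounded-case analysis (items (ii) and (iii)) are in the right spirit, though the identification $\la_*=0$ in case (i) comes from \eqref{f1} applied at $i=k+1$ with $r_{k+1,n}=1$, not from ``neck bounds forcing $u_n\to 0$ on $[\delta,1]$'' directly.
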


This theorem shows the behavior on every part $u_{i,n}$ between neighboring two zero points $r_{i-1,n}<r_{i,n}$. From the behavior in (i) and (ii), we see that if $u_{i,n}$ blows up, it concentrates at the origin. Especially, the local maximum point $\rho_{i,n}$ converges to the origin. Actually, this is the reasonable and the only way for any blowing up solution to ensure the uniform boundedness of the energy. Furthermore, the limit profile is determined uniquely by the classical solution $z$ of the Liouville equation and the limit energy  is  just equal to $2$. This implies  that  neither the singular limit profile, observed in the power type problem in \cite{GGP}, nor the multiple concentration, occurs on any $u_{i,n}$.

Notice also that due to our strong perturbation, the second term of the right hand side of the energy expansion in (i) and (ii) is very different from that in the case of $\alpha=0$  in view of its sign and the exponent on $\mu_{i,n}$. (See Theorem 1 in \cite{MM2}.) 

Finally, we obtain precise concentration estimates  in terms of $(\la_n,\beta_n)$.
\begin{theorem}[Concentration estimates]\label{a31}
We assume as in Theorem \ref{a10} and either one of (i) and (ii) in the theorem occurs. First suppose (i) happens. Then we have that $(\beta_n)\subset(0,1]$ yields $k=0$. Moreover, if $k\in \{0\}\cup \mathbb{N}$, we get that
\begin{align}
&\lim_{n\to \infty}\frac{\log{\frac1{\la_n}}}{\mu_{k+1,n}^{\beta_n}}=\alpha \left(1-\frac{\beta_*}{2}\right),\label{aaa1}
\end{align}
and if $k\ge1$, we obtain for all $1\le i\le k$ that
\begin{align}
&\lim_{n\to \infty}\frac{\log{\frac1{\la_n}}}{\mu_{i,n}^{\beta_n(\beta_n-1)^{k-i+1}}}=\left\{\alpha \left(1-\frac{\beta_*}{2}\right)\right\}^{\frac{2-\beta_*(\beta_*-1)^{k-i+1}}{2-\beta_*}},\label{aa1}
\end{align}
Furthermore, if $k\in \{0\}\cup \mathbb{N}$, we have that
\begin{align}
&\lim_{n\to \infty}\left(\log{\frac1{\la_n}}\right)^{\frac1{\beta_n}}|u_{k+1,n}'(1)|=2\left\{\alpha\left(1-\frac{\beta_*}{2}\right)\right\}^{\frac1{\beta_*}},
\label{aa44}
\end{align}
and if $k\ge1$, we get for all $1\le i\le k$ that
\begin{equation}
\begin{split}
\lim_{n\to \infty}&\frac{\log{\frac1{\la_n}}}{\left(\log{\frac{1}{r_{i,n}}}\right)^{(\beta_n-1)^{k-i+1}}}\\
&\ \ \ \ \ \ \ \ \ \ \ \ \ \ \ \ =2^{(\beta_*-1)^{k-i+1}}\left\{\alpha \left(1-\frac{\beta_*}{2}\right)\right\}^{\frac{2-2(\beta_*-1)^{k-i+1}}{2-\beta_*}},
\end{split}\label{aa2}
\end{equation}
and
\begin{equation}
\begin{split}
\lim_{n\to \infty}&\frac{\log{\frac1{\la_n}}}{(\log{|u_{i,n}'(r_{i,n})|})^{(\beta_n-1)^{k-i+1}}}\\
&\ \ \ \ \ \ \ \ \ \ \ \ \ \ \ \ =2^{(\beta_*-1)^{k-i+1}}\left\{\alpha \left(1-\frac{\beta_*}{2}\right)\right\}^{\frac{2-2(\beta_*-1)^{k-i+1}}{2-\beta_*}}.
\end{split}\label{aa4}
\end{equation}
In addition, if $k\ge2$, we have for all $i=2,\cdots,k$, that 
\begin{equation}
\begin{split}
\lim_{n\to \infty}&\frac{\log{\frac{1}{\la_n}}}{\left(\log{\frac{1}{\rho_{i,n}}}\right)^{\frac{\beta_n(\beta_n-1)^{k-i+1}}{2}}}\\
&\ \ \ \ \ \ \ \ \ \ \ \ \ \ \ \ =2^{\frac{\beta_*(\beta_*-1)^{k-i+1}}{2}}\left\{\alpha\left(1-\frac{\beta_*}{2}\right)\right\}^{\frac{2-\beta_*(\beta_*-1)^{k-i+1}}{2-\beta_*}}.
\end{split}\label{aa3}
\end{equation}
Lastly, suppose $k\ge1$. Then if there exists a number $L\ge0$ such that
\begin{equation}\label{ab11}
\lim_{n\to \infty}\frac{\log{\log{\frac1{\la_n}}}}{(\beta_n-1)\left(\log{\frac1{\la_n}}\right)^{\frac{2}{\beta_n}}}= L,
\end{equation}
we get
\begin{equation}\label{ab1}
\begin{split}
\lim_{n\to \infty}\frac{\log{\frac1{\la_n}}}{\left(\log{\frac{1}{\rho_{k+1,n}}}\right)^{\frac{\beta_n}{2}}}&=2^{\frac{\beta_*}{2}}\alpha \left(1-\frac{\beta_*}{2}\right)\\
&\ \ \ \times\left[1+L\left\{\alpha \left(1-\frac{\beta_*}{2}\right)\right\}^{\frac{2}{\beta_*}}\right]^{-\frac{\beta_*}{2}},
\end{split}
\end{equation}
and, on the other hand, if
\[
\lim_{n\to \infty}\frac{\log{\log{\frac1{\la_n}}}}{(\beta_n-1)\left(\log{\frac1{\la_n}}\right)^{\frac{2}{\beta_n}}}=\infty,
\]
we necessarily have $\beta_*=1$ and obtain
\begin{equation}\label{ab2}
\lim_{n\to \infty}\frac{\log{\log{\frac1{\la_n}}}}{(\beta_n-1)\log{\frac{1}{\rho_{k+1,n}}}}=2.
\end{equation}

Next, we suppose (ii) occurs. Then we get $\beta_*\le1$ and that $(\beta_n)\subset (0,1]$ implies $N=1$. Moreover, if $1\le N\le k$, either one of the next assertions (a) and $(b)$ is true.
\begin{enumerate}
\item[(a)] $\mu_{N+1}>0$, $\beta_*=1$, and 
for all $1\le i\le N$, it holds that
\begin{align}
&\lim_{n\to \infty}\mu_{i,n}^{(\beta_n-1)^{N-i+1}}=\frac{2\mu_{N+1}}{\alpha},\label{aa5}\\
&\lim_{n\to \infty}\left(\log{\frac1{r_{i,n}}}\right)^{(\beta_n-1)^{N-i+1}}=\frac{2\mu_{N+1}}{\alpha},\label{aa6}\\
&\lim_{n\to \infty}\left(\log{|u_{i,n}'(r_{i,n})|}\right)^{(\beta_n-1)^{N-i+1}}=\frac{2\mu_{N+1}}{\alpha},
\label{aa7}
\end{align}
and
\begin{align}
&\lim_{n\to \infty}u_{N+1,n}'(r_{N+1,n})=-\frac{1}{r_{N+1}}\la_* \int_0^{r_{N+1}}f_*(u_0)rdr\label{aa10},\\
&\lim_{n\to \infty}\rho_{N+1,n}^{\beta_n-1}= \sqrt{\frac{\alpha}{2\mu_{N+1}}},\label{aa9}
\end{align}
where $f_*(t)=te^{t^2+\alpha|t|}$. Especially, $2\mu_{N+1}/\alpha>1$ ($\in (0,1)$) implies $\beta_n>1$ ($<1$ respectively) for all $n\in \mathbb{N}$. On the other hand, $\beta_n>1$ ($=1$, $<1$) for all $n\in \mathbb{N}$ requires $2\mu_{N+1}/\alpha\ge 1$ ($=1$, $\le1$ respectively). Finally, if $1<N\le k$, (which yields $k\ge2$, $\beta_n>1$ for all $n\in \mathbb{N}$, and $2\mu_{N+1}/\alpha\ge1$), assuming $2\mu_{N+1}/\alpha>1$, we get for all $2\le i\le N$,
\begin{align}
&\lim_{n\to \infty}\left(\log{\frac1{\rho_{i,n}}}\right)^{(\beta_n-1)^{N-i+1}}=\left(\frac{2\mu_{N+1}}{\alpha}\right)^{2}.
\label{aa8}
\end{align}
\item[(b)] $\mu_i=0$ for all $i=N+1,\cdots,k+1$ and $\beta_n<1$ for all $n\in \mathbb{N}$. 
\end{enumerate}
\end{theorem}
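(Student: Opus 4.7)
The proof combines three ingredients: the master scaling identity (obtained by taking logarithms of $1=2\la_n\mu_{i,n}f_n(\mu_{i,n})\ga_{i,n}^2$)
\[
\log\frac{1}{\la_n}=\mu_{i,n}^2+\alpha\mu_{i,n}^{\beta_n}+2\log(\mu_{i,n}\ga_{i,n})+\log 2;
\]
the integral expansions of Theorem \ref{a10} coupled with the radial Pohozaev identity
\[
(r_{i,n}u_n'(r_{i,n}))^2-(r_{i-1,n}u_n'(r_{i-1,n}))^2=4\la_n\int_{r_{i-1,n}}^{r_{i,n}}F_{\beta_n}(u)r\,dr;
\]
and the asymptotic matching across each transition annulus via the Liouville tail $z(\xi)=-4\log\xi+\log 64+o(1)$, exploiting the fact that $ru_n'(r)$ is essentially constant on each transition (since $\la_n f_n(u_n)$ is negligible there). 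The fine structure is that the rescaled profile $z_{i,n}$ satisfies, in suitable coordinates, a perturbed Liouville equation $-z''-z'/\xi=e^{z(1+\epsilon_n)}$ with $\epsilon_n=\alpha\beta_n\mu_{i,n}^{\beta_n-2}/2$; its first-order correction modifies the outer tail of $z$ and is responsible both for the $-\alpha\beta_*/\mu_{i,n}^{2-\beta_n}$ term in Theorem \ref{a10} and for the multiplicative factor $(1-\beta_*/2)$ in \eqref{aaa1}-\eqref{ab2}.

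I will first treat the outermost bubble $i=k+1$ in case (i). The condition $u_n(1)=0$ combined with $\rho_{k+1,n}\to 0$ and the $C^1_{\text{loc}}$-convergence $z_{k+1,n}\to z$ gives, to leading order, $\mu_{k+1,n}^2+2\log\ga_{k+1,n}=O(1)$; including the next-order correction from the perturbed Liouville equation refines this to $\mu_{k+1,n}^2+2\log\ga_{k+1,n}=-\alpha\beta_*\mu_{k+1,n}^{\beta_n}/2+O(\log\mu_{k+1,n})$. Substituting into the master identity produces
\[
\log\frac{1}{\la_n}=\alpha\mu_{k+1,n}^{\beta_n}(1-\tfrac{\beta_*}{2})+O(\log\mu_{k+1,n}),
\]
which is \eqref{aaa1}. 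Formula \eqref{aa44} then follows from \eqref{aaa1} combined with $2\mu_{k+1,n}|u_{k+1,n}'(1)|\to 4$, which encodes the total Liouville mass $\int_0^\infty e^z r\,dr=4$.

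For the inner bubbles ($1\le i\le k$) I argue by reverse induction on $i$. The same matching of the outer tail of bubble $i$ at its zero $r_{i,n}$ (now with $r_{i,n}\to 0$) gives, including the $(1-\beta_*/2)$ correction,
\[
\alpha\mu_{i,n}^{\beta_n}(1-\tfrac{\beta_*}{2})=\log(1/\la_n)+2\log(1/r_{i,n})+O(\log\mu_{i,n}).
\]
On the other hand, the constancy of $ru_n'(r)$ on the transition annulus between bubbles $i$ and $i+1$, together with the two Liouville-tail computations of $ru_n'$ at $r_{i,n}$ from either side, yields a flux-matching condition that links $r_{i,n}$ to the position $\rho_{i+1,n}$ of the next bubble, which in turn is controlled by $\mu_{i+1,n}$ via the analogous outer expansion $\mu_{i+1,n}^2+2\log\ga_{i+1,n}=2\log(\rho_{i+1,n}-r_{i,n})+O(1)$. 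Plugging the inductive estimate \eqref{aa1} at level $i+1$ into these relations allows to solve for $\log(1/r_{i,n})$ in terms of $\mu_{i,n}$, which via the previous display yields \eqref{aa1} at level $i$; the recursion effectively maps $\mu_{i+1,n}\sim\mu_{i,n}^{\beta_n-1}$, so that $\mu_{i+1,n}^{\beta_n(\beta_n-1)^{k-i}}\sim\mu_{i,n}^{\beta_n(\beta_n-1)^{k-i+1}}$, producing both the iterated exponent $(\beta_n-1)^{k-i+1}$ and the compound constant $[\alpha(1-\beta_*/2)]^{(2-\beta_*(\beta_*-1)^{k-i+1})/(2-\beta_*)}$ after $(k-i+1)$ iterations. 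The companion formulas \eqref{aa2}, \eqref{aa4} and \eqref{aa3} follow by substituting the estimate of $\mu_{i,n}$ back into the expressions for $\log(1/r_{i,n})$, $\log|u_n'(r_{i,n})|$ and $\log(1/\rho_{i,n})$, all determined through the Liouville profile. The refined formulas \eqref{ab1} and \eqref{ab2} for $\rho_{k+1,n}$ arise from a second-order expansion of the outer bubble's matching, with the two regimes $L<\infty$ and $L=\infty$ of \eqref{ab11} corresponding to whether $\log\log(1/\la_n)$ or $(\beta_n-1)(\log(1/\la_n))^{2/\beta_n}$ dominates the next-order correction.

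Case (ii) follows the same scheme for the $N$ concentrating bubbles, with the critical new ingredient that bubble $N$ matches against the nontrivial weak limit $u_0$ (with $\lim_{r\to 0^+}(-1)^Nu_0(r)=\mu_{N+1}$) rather than against zero. The matching at the interface between the bubble's outer tail and the weak limit involves both the bubble amplitude $\mu_{N,n}$ and the background value $\mu_{N+1}$, and combined with the master identity and the Pohozaev identity applied to the transition annulus, forces $\alpha\mu_{N,n}^{\beta_n-1}\to 2\mu_{N+1}$ (the factor $2$ arising from the integrated flux balance between the bubble peak and the background). This necessarily requires $\beta_*=1$ (sub-case (a)) unless $\mu_{N+1}=0$ (sub-case (b), for which $\beta_n<1$ is forced because $\mu_{N,n}^{\beta_n-1}\to 0$ as $\mu_{N,n}\to\infty$). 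This establishes \eqref{aa5} for $i=N$, and iterating the inner recursion yields \eqref{aa5}-\eqref{aa8} for $1\le i<N$. Formula \eqref{aa10} is immediate from the $C^2$-convergence $u_n\to u_0$ on $[r_{N,n},1]$ of Theorem \ref{a10}(ii), and \eqref{aa9} follows from expanding the stationarity condition $u_n'(\rho_{N+1,n})=0$ against the profile of $u_0$ near the origin. The three sign regimes $\beta_n\gtreqless 1$ with the corresponding inequalities $2\mu_{N+1}/\alpha\gtreqless 1$ come from the monotonicity of $s\mapsto s^{\beta_n-1}$ near $s=1$. The main obstacle is the rigorous propagation of the asymptotic estimates through the chain of $(k-i+1)$ successive bubble-bubble matchings with uniform control of the logarithmic corrections and the first-order perturbation of the Liouville profile, so as to preserve both the precise constants and the iterated exponent structure $(\beta_n-1)^{k-i+1}$.
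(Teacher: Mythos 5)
Your sketch reproduces the correct skeleton — take logarithms of the defining relation $1=2\la_n\mu_{i,n}f_n(\mu_{i,n})\ga_{i,n}^2$, match the outer tail of each rescaled bubble against the Liouville profile, use flux balance across the transition annuli, and iterate by reverse induction from the outermost bubble inward so that $\mu_{i+1,n}\sim\mu_{i,n}^{\beta_n-1}$ generates the compounded exponent $(\beta_n-1)^{k-i+1}$. Case (ii) is also correctly reduced to matching bubble $N$ against the nontrivial boundary value $\mu_{N+1}$. That much agrees with the paper's strategy (Propositions \ref{dd}, \ref{e00}, \ref{f0}, Lemmas \ref{lem:e}, \ref{b21}, \ref{lem:g6}, followed by the induction in the proof of Theorem \ref{a31}).

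However, there is a genuine gap at exactly the hard point, and one of your three declared ingredients is wrong. You list the radial Pohozaev identity
$(r_{i,n}u_n'(r_{i,n}))^2-(r_{i-1,n}u_n'(r_{i-1,n}))^2=4\la_n\int_{r_{i-1,n}}^{r_{i,n}}F_{\beta_n}(u)r\,dr$
as a key tool, but the paper explicitly remarks (end of Section~\ref{intro}) that the Pohozaev identity does \emph{not} work well here. The reason is structural: Pohozaev only yields flux-squared balances and integrals of $F_{\beta_n}$; it does not produce the logarithmic quantities $\log(1/\la_n r_{i,n}^2)$ that \eqref{aaa1}--\eqref{aa3} are about. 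What does the job in the paper is the $\log$-weighted Green's function identity of Lemma~\ref{lem:id},
$u_{i,n}(\rho_{i,n})=\int_{\rho_{i,n}}^{r_{i,n}}\la_n f_n(u_{i,n})\,r\log\tfrac{r_{i,n}}{r}\,dr$,
whose $\log(r_{i,n}/r)$ weight converts the concentration scale into a logarithm, and which, combined with the $C^0$ pointwise tail estimate \eqref{e70} (proved by the contraction argument in Lemma~\ref{le70}) and the energy expansion of Proposition~\ref{e00}, produces the \emph{two-sided} bound in Proposition~\ref{f0} giving $\lim \log(1/(\la_n r_{i,n}^2))/\mu_{i,n}^{\beta_n}=\alpha(1-\beta_*/2)$. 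Your phrase ``including the next-order correction from the perturbed Liouville equation refines this to $\mu^2+2\log\ga=-\alpha\beta_*\mu^{\beta_n}/2+O(\log\mu)$'' asserts exactly the conclusion of Proposition~\ref{f0} without a mechanism to prove it: the upper bound \eqref{f10} there requires the $r\log r$ identity \eqref{f8} and the energy expansion \eqref{f6}, and the lower bound \eqref{f9} requires the uniform pointwise estimate on $z_{i,n}$ over the \emph{entire} interval $[R,(r_{i,n}-\rho_{i,n})/\ga_{i,n}]$, not merely the $C^1_{\text{loc}}$ convergence. Similarly, in case (ii) the forced relation $\alpha\mu_{N,n}^{\beta_n-1}\to 2\mu_{N+1}$ is obtained in the paper from Lemmas~\ref{b21} and \ref{lem:g6}, again via Lemma~\ref{lem:id} applied on the transition annulus, not via Pohozaev. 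Until you replace the Pohozaev ingredient with a log-weighted identity (or some other device that produces $\log$-scale information) and supply the uniform tail control, the precise constant $\alpha(1-\beta_*/2)$ in \eqref{aaa1}--\eqref{ab2} and the value $2\mu_{N+1}/\alpha$ in \eqref{aa5}--\eqref{aa8} remain unjustified.
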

\begin{remark} We assumed $2\mu_{N+1}/\alpha>1$ for \eqref{aa8}. This can be verified, for example, when  $N=k$ and $\la_*\in(0,\Lambda_1)$ is small enough  by Lemma \ref{posi} below. 
\end{remark}

This theorem describes the speed of the divergence or convergence of $\mu_{i,n}$, $r_{i,n}$, $\rho_{i,n}$ and $u_{i,n}'(r_{i,n})$ in terms of the parameter $(\la_n,\beta_n)$. Especially, in the case of (i), thanks to \eqref{aaa1} and \eqref{aa1}, we get that $\mu_{k+1,n}=(\alpha(1-\beta_*/2)+o(1))^{-1/\beta_*} (\log{(1/\la_n)})^{1/\beta_n}$ and, if $k\ge1$, that $\mu_{k+1,n}/\mu_{i,n}\to0$ as $n\to \infty$ for all $i=1,\cdots,k$. Then, combining this together with the asymptotic energy formula in Theorem \ref{a10}, we can also write the energy expansion in terms of $(\la_n,\beta_n)$ as follows.
\begin{corollary}\label{expansion} Assume as in Theorem \ref{a1} and suppose (i) or (ii) of the theorem occurs. Then we get 
\[
\int_B|\nabla u_n|^2dx=4\pi(k+1)-\frac{2\pi\alpha^{\frac{2}{\beta_*}}\beta_*\left(1-\frac{\beta_*}{2}\right)^{\frac{2-\beta_*}{\beta_*}}}{\left(\log\frac1{\la_n}\right)^{\frac{2-\beta_n}{\beta_n}}}+o\left(\frac1{{\left(\log\frac1{\la_n}\right)^{\frac{2-\beta_n}{\beta_n}}}}\right)
\] 
as $n\to \infty$.
\end{corollary}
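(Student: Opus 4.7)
The strategy is to combine the per--nodal--piece energy expansion provided by Theorem \ref{a10}(i) with the sharp estimate for $\mu_{k+1,n}$ coming from \eqref{aaa1}, and to isolate the dominant correction using the comparison $\mu_{k+1,n}/\mu_{i,n}\to 0$ for $i\le k$. Since both cases (i) and (ii) of Theorem \ref{a1} force $\la_*=0$, $u_0=0$, and $\max_{[0,1]}|u_n|\to\infty$ with every nodal component blowing up ($N=k+1$ in (i); $k=0$, $N=1$ in (ii)), the situation is covered by case (i) of Theorem \ref{a10}, so the asymptotic formulas of Theorem \ref{a31}(i) are at my disposal.

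By radial symmetry, plugging the expansion of Theorem \ref{a10}(i) into
\[
\int_B|\nabla u_n|^2\,dx=2\pi\sum_{i=1}^{k+1}\int_{r_{i-1,n}}^{r_{i,n}}u_{i,n}'(r)^2 r\,dr,
\]
I obtain
\[
\int_B|\nabla u_n|^2\,dx=4\pi(k+1)-2\pi\alpha\beta_*\sum_{i=1}^{k+1}\frac{1}{\mu_{i,n}^{2-\beta_n}}+o\!\left(\sum_{i=1}^{k+1}\frac{1}{\mu_{i,n}^{2-\beta_n}}\right).
\]
The paragraph following Theorem \ref{a31} records the comparison, derived from \eqref{aaa1} and \eqref{aa1}, that $\mu_{k+1,n}/\mu_{i,n}\to 0$ for every $i=1,\dots,k$. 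Because $\beta_*<2$ the exponent $2-\beta_n$ is bounded below by a positive constant for large $n$, so $\mu_{i,n}^{-(2-\beta_n)}=o(\mu_{k+1,n}^{-(2-\beta_n)})$ for $i\le k$ and the entire sum collapses to $(1+o(1))\mu_{k+1,n}^{-(2-\beta_n)}$.

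It only remains to substitute $\mu_{k+1,n}$. From \eqref{aaa1} one has $\mu_{k+1,n}^{\beta_n}=[\alpha(1-\beta_*/2)]^{-1}\log(1/\la_n)\,(1+o(1))$, and raising to the power $(2-\beta_n)/\beta_n$ gives
\[
\mu_{k+1,n}^{2-\beta_n}=\frac{(\log(1/\la_n))^{(2-\beta_n)/\beta_n}}{[\alpha(1-\beta_*/2)]^{(2-\beta_n)/\beta_n}}(1+o(1)).
\]
Using the algebraic identity $\alpha\cdot\alpha^{(2-\beta_n)/\beta_n}=\alpha^{2/\beta_n}$ together with the continuous--exponent limits $\alpha^{2/\beta_n}\to\alpha^{2/\beta_*}$ and $(1-\beta_*/2)^{(2-\beta_n)/\beta_n}\to(1-\beta_*/2)^{(2-\beta_*)/\beta_*}$ (both valid because $\beta_n\to\beta_*\in(0,2)$), I find
\[
\frac{2\pi\alpha\beta_*}{\mu_{k+1,n}^{2-\beta_n}}=\frac{2\pi\alpha^{2/\beta_*}\beta_*(1-\beta_*/2)^{(2-\beta_*)/\beta_*}+o(1)}{(\log(1/\la_n))^{(2-\beta_n)/\beta_n}},
\]
which yields the announced expansion. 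The main obstacle is purely bookkeeping: the floating exponent $(2-\beta_n)/\beta_n$ must be kept on the large factor $\log(1/\la_n)$ (because that factor diverges and the precise speed matters in the remainder), whereas it can be replaced by its limit $(2-\beta_*)/\beta_*$ inside the slowly varying constants $\alpha^{\cdot}$ and $(1-\beta_*/2)^{\cdot}$, the resulting multiplicative $1+o(1)$ being then absorbed into the additive error $o((\log(1/\la_n))^{-(2-\beta_n)/\beta_n})$ of the statement.
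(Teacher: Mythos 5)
Your proof is correct and follows essentially the same route as the paper: reduce to case (i) of Theorems \ref{a10} and \ref{a31}, sum the per-piece energy expansions, use $\mu_{k+1,n}/\mu_{i,n}\to0$ (from \eqref{aaa1} and \eqref{aa1}, or \eqref{d77}) to show only the $i=k+1$ term survives, and then substitute the formula for $\mu_{k+1,n}$ from \eqref{aaa1}. You spell out slightly more of the exponent bookkeeping, but the argument is the paper's.
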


Moreover, in the case of (i), we observe a delicate behavior when $(\la_*,\beta_*)=(0,1)$ by the formulas \eqref{ab1} and \eqref{ab2}. They show that the asymptotic behavior $\rho_{k+1,n}\to0$ of the local maximum point is described by either one of two different formulas \eqref{ab1} and \eqref{ab2}. The choice is determined by  the balance of the speed of two limits $\la_n\to 0$ and $\beta_n\to1$. If former one is quicker than the latter one in the sense of \eqref{ab11}, we have \eqref{ab1} and otherwise we get \eqref{ab2}. Actually, in the latter formula, the effect of the limit $\beta_n\to1$ appears more clearly. This phenomenon comes from the combined effect of the two different behavior, the $(k+1)$-concentration with the zero weak limit in the case $\la_n\to0$, and the $k$-concentration with the nontrivial weak limit in the case $0<\la_*\ll 1$ and $\beta_*=1$ (which is observed in \cite{GN} and (b) of Proposition \ref{cr:1} below). 

Finally, in the case of (ii), one of the most  important results is \eqref{aa5}. This proves the necessary condition, explained before, in the final assertions in (iii)-(v) of Theorem \ref{a1}. 

In the following sections, we give the proof of our main theorems.
\subsection{Outline of the proof}
We carry out the blow--up analysis based on a scaling technique. We begin with studying the limit profile of the concentration part as in \cite{GN}. The first difficulty arises here since we do not have the low energy characterization of solutions (Lemma 2.1 in \cite{GN}). In order to admit our wider setting, we change the proof and argue  by induction. Our idea is to use useful assumptions (\eqref{a4} and \eqref{a7}) which are ensured by the previous step of the induction argument (Proposition \ref{f0}). Utilizing this idea, we succeed in avoiding the case of the singular limit profile. See the proof of Proposition \ref{dd}. 

Next, we will determine the energy of each concentrating part as in Theorem \ref{a10}. An important step is to ensure that only the single concentration occurs in each nodal domain. The point wise estimate in Lemma \ref{le70} will work for it. This is an extension of Lemma 5 in \cite{MM1} (see also Lemma 13 in  \cite{MM2}) to our setting which allows the strong perturbation and also the sign-changing case. Using this and arguing as in Proof of Theorem 1 in \cite{MM2}, we obtain the energy expansion in Proposition \ref{e00}. To accomplish these proofs, some careful remarks are needed. In particular, the estimate \eqref{e2} for the error term is a key for the proof in the sign-changing case. 

Finally, we will obtain the precise concentration estimate in Theorem \ref{a31}.  The proof is inspired by the argument in \cite{GSH} for the power type problem in higher dimension. A new difficulty comes from the fact that, of course, the form of the nonlinearity is very different. In particular, the Pohozaev identity does not seem work well for our aim.  In our proof, utilizing the useful identity in Lemma \ref{lem:id}, instead of the Pohozaev identity, with the energy expansion in Proposition \ref{e00}, we get the key assertions in Proposition \ref{f0}. This is also crucial to proceed with our argument by induction.

We lastly remark that our approach mentioned above allows the proof without quoting the uniqueness of solutions which has not been completed for \eqref{p} yet except for large positive solutions (\cite{AKG}).
\subsection{Organization, definitions and notations}\label{org}
This paper consists of 6 sections. We begin with two sections, Sections  \ref{le} and \ref{energy}, which are mainly devoted to obtain the limit equation and the limit energy of  concentrating solutions respectively. Next in Section \ref{more}, we analyze the behavior of non-concentrating parts of solutions. This is important to deduce the precise information  of the weak limit of solutions. Next, in Sections \ref{proof}, we complete the proof of main theorems. Finally, in Section \ref{sec:cor}, we discuss some counterparts of our classification result. Additionally, the proof of Lemma \ref{ap1} is given in Appendix \ref{ap} for the readers' convenience. 

Throughout these  sections, we assume $\{(\la_n,\beta_n)\}\subset (0,\infty)\times (0,2)$, $(\la_*,\beta_*)\in [0,\infty)\times (0,2)$ and $(\la_n,\beta_n)\to (\la_*,\beta_*)$ as $n\to \infty$.  We will impose more conditions on $\la_n,\beta_n,\la_*,\beta_*$ when  needed. Moreover, we choose any $k\in\{0\}\cup \mathbb{N}$ and consider a sequence of solutions $(u_n)$ such that $u_n\in S_{k,\la_n,\beta_n}$ for all $n\in \mathbb{N}$. We set $f_n(t)=te^{t^2+\alpha |t|^{\beta_n}}$ and $f_*(t)=te^{t^2+\alpha |t|^{\beta_*}}$. 
 
Furthermore, we define the norm in $H^1_0(B)$ by $\|\cdot\|_{H^1_0(B)}:=\left(\int_B|\nabla \cdot|^2dx\right)^{1/2}$. Moreover, 
 we denote the first kind Bessel function of order zero by $J_0$ which is defined by 
\[
J_0(r):=\sum_{j=0}^{\infty}\frac{(-1)^j}{(j!)^2}\left(\frac r2\right)^{2j} \ (r\in \mathbb{R}).
\]
For any $k\in \mathbb{N}$, let $\Lambda_k$ and $\varphi_k$ be the eigenvalue and radial eigenfunction of $-\Delta$ on $B$ with the Dirichlet boundary condition defined above. Then letting $0<t_1<t_2<\cdots$ be all the zeros of $J_0$ on $(0,\infty)$,  we have that $\Lambda_k=t_k^2$ and $\varphi_k(x)=J_0(t_k |x|)$.

Finally, in the proofs, we often use the same character $C$ to denote several constants when the explicit value is not very important.
\section{Limit profile} \label{le}
Let us start the proof of main theorems. In the following, we refer to Radial Lemma in \cite{S}. In our two dimensional setting, it is reduced to the following.
\begin{lemma}[\cite{S}]\label{RL}
There exists a constant $c>0$ such that  every radial function $u\in H_0^1(B)$ is almost everywhere equal to a function $\tilde{u}(x)$,  continuous for $x\not=0$, such that 
\[
|\tilde{u}(x)|\le c|x|^{-\frac{1}{2}}\|u\|_{H^1_0(B)}\ (x\in B\setminus\{0\}).
\]
\end{lemma}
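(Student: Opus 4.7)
The plan is to prove the pointwise bound first for smooth, compactly supported radial test functions and then extend to all radial $u\in H^1_0(B)$ by a density argument. For $u\in C^\infty_c(B)$ radial, writing $u(r):=u(|x|)$ with $r=|x|$, the boundary condition $u(1)=0$ lets me apply the fundamental theorem of calculus:
\[
u(r)^2=-\int_r^1 \frac{d}{ds}[u(s)^2]\,ds=-2\int_r^1 u(s)u'(s)\,ds.
\]
Multiplying by $r$ and using that $r\le s$ on the interval of integration, I obtain
\[
r\,u(r)^2\le 2\int_r^1 |u(s)||u'(s)|\,s\,ds.
\]

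Next I would apply the Cauchy-Schwarz inequality to get
\[
r\,u(r)^2\le 2\left(\int_0^1 u(s)^2\,s\,ds\right)^{1/2}\left(\int_0^1 u'(s)^2\,s\,ds\right)^{1/2}.
\]
Passing back to polar coordinates, the second factor equals $(2\pi)^{-1/2}\|u\|_{H^1_0(B)}$, while the first factor is a weighted $L^2$ norm. Here the membership $u\in H^1_0(B)$ is essential: by the Poincar\'e inequality on the unit disc I have $\int_B u^2\,dx\le C\int_B|\nabla u|^2\,dx$, which in radial coordinates reads $\int_0^1 u(s)^2 s\,ds\le C\int_0^1 u'(s)^2 s\,ds$ for an absolute constant $C>0$. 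Combining the two bounds yields $r\,u(r)^2\le c^2\|u\|_{H^1_0(B)}^2$, i.e.\ $|u(r)|\le c\,r^{-1/2}\|u\|_{H^1_0(B)}$ on $(0,1)$ for an absolute $c>0$.

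Finally, to pass to an arbitrary radial $u\in H^1_0(B)$, I would approximate by smooth compactly supported radial functions $u_n\to u$ in $H^1_0(B)$; such approximations exist because mollification and cut-off by radial functions both preserve radial symmetry. Applying the estimate from the previous paragraph to $u_n-u_m$ shows that $(u_n)$ is Cauchy in $L^\infty$ on every annulus $\{\delta\le|x|\le 1\}$ with $\delta>0$, hence converges uniformly on compact subsets of $B\setminus\{0\}$ to a continuous function $\tilde u$ that agrees with $u$ almost everywhere and inherits the pointwise bound by passing to the limit. I do not foresee a serious technical obstacle; the key conceptual input, and the step that specifically distinguishes the two-dimensional case from the stronger estimate in dimensions $N\ge3$, is the use of Poincar\'e to convert the weighted $L^2$ norm into the Dirichlet norm, which is what prevents a purely scaling-based proof and forces the appearance of the $H^1_0$ (rather than just $\dot H^1$) norm on the right-hand side.
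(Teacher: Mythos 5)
Your proof is correct and is the standard Strauss-type argument, which is what the paper's cited reference [S] contains: the identity $u(r)^2=-2\int_r^1 uu'\,ds$, the trick of multiplying by $r$ and using $r\le s$ on the integration domain to insert the weight $s\,ds$, Cauchy--Schwarz, and then Poincar\'e to absorb the $L^2$ factor so that only the Dirichlet norm $\|u\|_{H^1_0(B)}$ appears on the right. Since the paper merely quotes the lemma from [S] and notes the reduction to two dimensions, your adaptation to the unit disc (using $u(1)=0$ in place of decay at infinity, plus Poincar\'e) is exactly the proof one would supply.
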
 

Now, assume $k\in \{0\}\cup\mathbb{N}$. For any $1\le i\le k+1$, we define  $u_{i,n}$, $r_{i-1,n}$, $r_{i,n}$, $\rho_{i,n}$, and  $\mu_{i,n}$ as in Theorem \ref{a10}.  Then we have 
\begin{equation}\label{rad}
\begin{cases}
-u_{i,n}''-\frac{1}{r}u_{i,n}'=\la_n f_n(u_{i,n}),\ (-1)^{i-1}u_{i,n}>0\text{ in }(r_{i-1,n},r_{i,n}),\\
u_{i,n}(r_{i,n})=0=u_{i,n}'(\rho_{i,n}),\\
u_{i,n}(r_{i-1,n})=0\text{ if }i\ge 2,
\end{cases}
\end{equation}
where $f_n(t):=te^{t^2+\alpha|t|^{\beta_n}}$.  We often use the next identity.
\begin{lemma}\label{lem:id} For any $i=1,\cdots,k+1$, we have 
\[
u_{i,n}(\rho_{i,n})=\int_{\rho_{i,n}}^{r_{i,n}}\la_nf_n(u_{i,n})r\log{\frac{r_{i,n}}{r}}dr.
\]
Moreover, if $i\not=1$, we get
\[
u_{i,n}(\rho_{i,n})=\int_{r_{i-1,n}}^{\rho_{i,n}}\la_nf_n(u_{i,n})r\log{\frac{r}{r_{i-1,n}}}dr
\]
\end{lemma}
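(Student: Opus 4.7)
The proof is a direct consequence of writing the radial equation \eqref{rad} in divergence form and integrating twice, then applying Fubini to produce the logarithmic kernel.

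First I would rewrite the ODE in \eqref{rad} as $-(ru_{i,n}')' = \lambda_n f_n(u_{i,n})\,r$. To obtain the first identity, I integrate this from $\rho_{i,n}$ to $r \in [\rho_{i,n},r_{i,n}]$ and use the condition $u_{i,n}'(\rho_{i,n}) = 0$ to get
\[
u_{i,n}'(r) = -\frac{1}{r}\int_{\rho_{i,n}}^{r}\lambda_n f_n(u_{i,n}(s))\,s\,ds.
\]
Then I integrate once more from $\rho_{i,n}$ to $r_{i,n}$ and use the boundary condition $u_{i,n}(r_{i,n}) = 0$, yielding
\[
-u_{i,n}(\rho_{i,n}) = -\int_{\rho_{i,n}}^{r_{i,n}}\frac{1}{r}\int_{\rho_{i,n}}^{r}\lambda_n f_n(u_{i,n}(s))\,s\,ds\,dr.
\]
Swapping the order of integration by Fubini and computing the inner integral $\int_s^{r_{i,n}}\frac{dr}{r} = \log(r_{i,n}/s)$ gives the first claim.

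For the second identity, assuming $i\ge 2$, the argument is symmetric on the left side of $\rho_{i,n}$. Integrating $(ru_{i,n}')' = -\lambda_n f_n(u_{i,n})r$ from $r \in [r_{i-1,n},\rho_{i,n}]$ up to $\rho_{i,n}$ and again invoking $u_{i,n}'(\rho_{i,n}) = 0$ produces
\[
u_{i,n}'(r) = \frac{1}{r}\int_{r}^{\rho_{i,n}}\lambda_n f_n(u_{i,n}(s))\,s\,ds.
\]
Integrating from $r_{i-1,n}$ to $\rho_{i,n}$ and using $u_{i,n}(r_{i-1,n}) = 0$ (which is where $i\neq 1$ is needed), then applying Fubini with the inner integral $\int_{r_{i-1,n}}^s \frac{dr}{r} = \log(s/r_{i-1,n})$, yields the second identity.

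There is no real obstacle: both identities are textbook Green's-function representations for the radial Laplacian on an annulus with one Dirichlet endpoint and one Neumann endpoint. The sign conventions work out automatically since $f_n(u_{i,n})$ and $u_{i,n}(\rho_{i,n})$ share the sign $(-1)^{i-1}$. The only care needed is to keep track of which endpoint supplies the vanishing boundary term ($u_{i,n}(r_{i,n}) = 0$ for the first identity, $u_{i,n}(r_{i-1,n}) = 0$ for the second) and which supplies the vanishing derivative ($u_{i,n}'(\rho_{i,n}) = 0$ in both).
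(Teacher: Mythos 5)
Your proof is correct and is essentially the paper's calculation in a slightly different dress: the paper multiplies the ODE by $r\log r$ and integrates by parts once, while you integrate the ODE twice and swap the order of integration by Fubini — the two are dual versions of the same elementary computation, using the same boundary data $u_{i,n}'(\rho_{i,n})=0$, $u_{i,n}(r_{i,n})=0$ (and $u_{i,n}(r_{i-1,n})=0$ for the second identity).
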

\begin{proof} Let us show the first formula. Fix any $i=1,\cdots,k+1$. Multiplying the equation in \eqref{rad} by $r\log{r}$, and integrating by parts from  $\rho_{i,n}$ to $r_{i,n}$, we get
\[
\begin{split}
\int_{\rho_{i,n}}^{r_{i,n}}\la_n f_n(u_{i,n})r\log{r}dr&=\int_{\rho_{i,n}}^{r_{i,n}}(-ru_{i,n}'(r))'\log{r}dr\\
&=\int_{\rho_{i,n}}^{r_{i,n}}\la_n f_n(u_{i,n})rdr\log{r_{i,n}}-u_{i,n}(\rho_{i,n}),
\end{split}
\]
where we used $r_{i,n}u_{i,n}'(r_{i,n})=-\int_{\rho_{i,n}}^{r_{i,n}}\la_n f_n(u_{i,n})rdr$ for the last equality. This shows the first formula. Assuming $i\not=1$, the second assertion is similarly obtained by integrating by parts from $r_{i-1,n}$ to $\rho_{i,n}$. 
\end{proof}

We also use the next assertion.
\begin{lemma}\label{newl1}
If $\int_{r_{i-1,n}}^{r_{i,n}}u_{i,n}'(r)^2rdr\to 0$, then $u_{i,n}\to 0$ in $C([0,1])$. In particular, if $\limsup_{n\to \infty} \mu_{i,n}>0$, then by taking a subsequence if necessary, we get a constant $K_0>0$ such that $\int_{r_{i-1,n}}^{r_{i,n}}u_{i,n}'(r)^2rdr\ge K_0$ for all $n\in\mathbb{N}$.
\end{lemma}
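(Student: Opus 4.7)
My plan is to reduce the lemma to showing that $E_n := \int_{r_{i-1,n}}^{r_{i,n}} u_{i,n}'(r)^2 r \, dr \to 0$ forces $\mu_{i,n} \to 0$. Since $u_{i,n}$ is zero outside $[r_{i-1,n}, r_{i,n}]$ and $\|u_{i,n}\|_{C([0,1])}=\mu_{i,n}$, this immediately yields $u_{i,n} \to 0$ in $C([0,1])$. The ``in particular'' assertion is then just the contrapositive: if $\limsup_n \mu_{i,n}>0$, passing to a subsequence with $\mu_{i,n}\ge c>0$ prevents $E_n\to 0$, so a lower bound $K_0>0$ can be extracted along a further subsequence.

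To establish $\mu_{i,n}\to 0$, I will combine a pointwise Cauchy--Schwarz bound with Lemma \ref{lem:id}. First, using $u_{i,n}(r_{i,n})=0$ and weighted Cauchy--Schwarz, I get for every $r\in [\rho_{i,n},r_{i,n}]$,
\[
|u_{i,n}(r)|^2\le E_n \log(r_{i,n}/r).
\]
Next, applying the elementary inequality $x^{\beta_n/2}\le 1+x$ for $x\ge 0$ (valid since $\beta_n<2$) gives
\[
u_{i,n}(r)^2+\alpha |u_{i,n}(r)|^{\beta_n}\le (1+\alpha) E_n\log(r_{i,n}/r)+\alpha,
\]
so that
\[
|f_n(u_{i,n}(r))|\le e^{\alpha}\sqrt{E_n\log(r_{i,n}/r)}\,(r_{i,n}/r)^{(1+\alpha)E_n}.
\]
This is the key Trudinger--Moser-type bound: the nonlinearity is effectively subcritical in the regime $E_n\to 0$.

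Substituting this estimate into the first identity of Lemma \ref{lem:id} and rescaling $t=s/r_{i,n}$ yields
\[
\mu_{i,n}\le \lambda_n e^{\alpha}\sqrt{E_n}\,r_{i,n}^{2}\int_{\rho_{i,n}/r_{i,n}}^{1}(\log(1/t))^{3/2}\,t^{1-(1+\alpha)E_n}\,dt.
\]
For $n$ large enough that $(1+\alpha)E_n\le 1/2$, the last integral is bounded uniformly by a finite constant depending only on $\alpha$. Since $\lambda_n\to\lambda_*\in[0,\infty)$ is bounded and $r_{i,n}\le 1$, I conclude $\mu_{i,n}=O(\lambda_n\sqrt{E_n})\to 0$, as required.

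The main delicate point is that a priori we have neither a uniform upper bound on $\mu_{i,n}$ nor a lower bound on $\rho_{i,n}$: a concentration scenario with $\mu_{i,n}\to\infty$ and $\rho_{i,n}\to 0$ is not excluded by the hypothesis $E_n\to 0$ alone. This is what prevents a direct application of the Radial Lemma, which only controls $|u_{i,n}|$ away from the origin. The exponential estimate above, which converts the a priori unbounded factor $e^{u^2+\alpha|u|^{\beta_n}}$ into an integrable singularity $(r_{i,n}/r)^{o(1)}$, is what makes Lemma \ref{lem:id} close the argument.
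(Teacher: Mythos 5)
Your argument is correct, and it follows the same overall skeleton as the paper's proof --- both start from the identity of Lemma \ref{lem:id}, which bounds $\mu_{i,n}$ by $\bigl|\int_{\rho_{i,n}}^{r_{i,n}}\la_n f_n(u_{i,n})\,r\log(r_{i,n}/r)\,dr\bigr|$ --- but the way you control the exponential factor is genuinely different. The paper applies the H\"older inequality with exponents $(4,4,2)$ and then invokes the Trudinger--Moser inequality \eqref{TM} (legitimately subcritical because the normalized exponent $4(1+\alpha)2\pi A_n^2\to 0$) together with the Sobolev embedding to kill the $L^4$ factor. You instead derive the pointwise decay $|u_{i,n}(r)|^2\le E_n\log(r_{i,n}/r)$ from weighted Cauchy--Schwarz, which converts $e^{u_{i,n}^2+\alpha|u_{i,n}|^{\beta_n}}$ into the explicit power $e^{\alpha}(r_{i,n}/r)^{(1+\alpha)E_n}$ with vanishing exponent, and then integrate directly. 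Your route is more elementary (it quotes neither \eqref{TM} nor Sobolev), and it yields the quantitative rate $\mu_{i,n}=O(\sqrt{E_n})$, which the paper's proof does not state; you also correctly identify why the Radial Lemma alone is insufficient (no a priori control of $\rho_{i,n}$ near the origin). All the individual steps check out: the inequality $|t|^{\beta_n}\le 1+t^2$ is valid for $\beta_n<2$, the change of variables gives exactly the integral you display, and $\int_0^1(\log(1/t))^{3/2}t^{1/2}\,dt<\infty$ furnishes the uniform bound once $(1+\alpha)E_n\le 1/2$; the ``in particular'' clause is indeed just the contrapositive after extracting a subsequence.
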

\begin{proof} We put $A_n^2:=\int_{r_{i-1,n}}^{r_{i,n}}u_{i,n}'(r)^2rdr$. Then, from Lemma \ref{lem:id} and the H\"{o}lder inequality, we get a constant $C>0$ such that
\[
\begin{split}
\mu_{i,n}&\le \left|\int_{\rho_{i,n}}^{r_{i,n}}\la_nf(u_n)r\log{\frac{r_{i,n}}{r}}dr\right|\\
&\le C\left(\int_{\rho_{i,n}}^{r_{i,n}} u_{i,n}^4rdr\right)^{\frac14}\left(\int_{\rho_{i,n}}^{r_{i,n}} e^{4(1+\alpha)2\pi A_n^2\left(\frac{u_{i,n}}{\sqrt{2\pi}A_n}\right)^2} rdr\right)^{\frac14} \\
&\ \ \ \ \times\left(\int_{\rho_{i,n}}^{r_{i,n}}r\log^2{\frac{1}{r}}dr\right)^\frac12
\end{split}
\]
Then noting $A_n\to0$, we use the  Trudinger-Moser \eqref{TM} and Sobolev inequalities to obtain that the right hand side converges to zero. This finishes the proof.
\end{proof}

Let us begin our main discussion of this section. We study the limit equation of concentrating solutions. To this end, we fix a number $1\le i\le k+1$ and suppose
\begin{equation}\label{infty}
\mu_{i,n}\to \infty\text{ as }n\to \infty.
\end{equation}
Moreover, if $i\not=1$, we also assume that
\begin{align}
&\sup_{n\in \mathbb{N}}\int_{r_{i-1,n}}^{r_{i,n}}u_{i,n}'(r)^2rdr<\infty,\label{bdd}\\
&\lim_{n\to \infty}\frac{\log{\frac{1}{\la_nr_{i-1,n}^2}}}{\mu_{i-1,n}^{\beta_n}}=\alpha\left(1-\frac{\beta_*}2\right),\label{a4}
\end{align}
and
\begin{align}
&\lim_{n\to \infty}\mu_{i-1,n}r_{i-1,n} |u_{i-1,n}'(r_{i-1,n})|=2.\label{a7}
\end{align}
Our goal  is to prove the following.
\begin{proposition}\label{dd}
Assume \eqref{infty}-\eqref{a7}, put $\ga_{i,n}>0$ so that 
\[
1=2\la_n \mu_{i,n} f_n(\mu_{i,n})\ga_{i,n}^2,
\] 
and define 
\[
z_{i,n}(r):=2\mu_{i,n}(|u_{i,n}(\ga_{i,n} r+\rho_{i,n})|-\mu_{i,n})\ \ \left(r\in\left[\frac{r_{i-1,n}-\rho_{i,n}}{\ga_{i,n}},\frac{r_{i,n}-\rho_{i,n}}{\ga_{i,n}}\right]\right),
\]
for all $n\in \mathbb{N}$. Then we have $\ga_{i,n}\to0$, $(r_{i-1,n}-\rho_{i,n})/\ga_{i,n}\to0$, $(r_{i,n}-\rho_{i,n})/\ga_{i,n}\to\infty$, and further, $z_{i,n}\to z$ in $C^2_{\text{loc}}((0,\infty))\cap C^1_{\text{loc}}([0,\infty))$ where 
\begin{equation}\label{d00}
z(r)=\log{\frac{64}{(8+r^2)^2}}
\end{equation}
 which satisfies 
\begin{equation}\label{d01}
-z''-\frac{1}{r}z'=e^z\text{ in }(0,\infty),\ z(0)=0=z'(0) \text{ and }\int_0^\infty e^zrdr=4.
\end{equation}
\end{proposition}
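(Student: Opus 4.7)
The plan is to perform the radial blow--up rescaling of $u_{i,n}$ around its local maximum $\rho_{i,n}$, show that the rescaled profile $z_{i,n}$ satisfies a perturbed Liouville equation that passes to the limit, and use Arzel\`a--Ascoli together with classical ODE uniqueness to identify the limit with $z$ in \eqref{d00}. The novelty compared with the pure $e^{u^2}$ analysis is the extra factor $e^{\alpha|u|^{\beta_n}}$ and, for $i\geq 2$, the annular geometry of the nodal region; the latter is where the inductive hypotheses \eqref{a4}--\eqref{a7} enter in an essential way.

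First, I would reduce to positive data by setting $w_n:=(-1)^{i-1}u_{i,n}\geq 0$, which still satisfies the radial equation in \eqref{rad} by the oddness of $f_n$. With $v_n(r):=w_n(\ga_{i,n}r+\rho_{i,n})$, the function $z_{i,n}=2\mu_{i,n}(v_n-\mu_{i,n})$ satisfies
\[
-z_{i,n}''(r)-\frac{1}{r+\rho_{i,n}/\ga_{i,n}}\,z_{i,n}'(r)=\frac{f_n(v_n(r))}{f_n(\mu_{i,n})}
\]
on $I_n:=[(r_{i-1,n}-\rho_{i,n})/\ga_{i,n},(r_{i,n}-\rho_{i,n})/\ga_{i,n}]$, with $z_{i,n}\leq 0$, $z_{i,n}(0)=0$ and $z_{i,n}'(0)=0$. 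A Taylor expansion of the exponent in $f_n$, together with $\beta_n<2$, gives
\[
\frac{f_n(v_n)}{f_n(\mu_{i,n})}=e^{z_{i,n}}(1+o(1))
\]
locally uniformly on sets where $z_{i,n}$ stays bounded, so the right-hand side converges to $e^{z_{i,n}}$ in the limit. The claim $\ga_{i,n}\to 0$ is then immediate from the definition, the boundedness of $\la_n$, and $\mu_{i,n}f_n(\mu_{i,n})\to\infty$.

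The heart of the proof is to show (a) $\rho_{i,n}/\ga_{i,n}\to 0$, so that the drift coefficient converges to $1/r$, and (b) the intervals $I_n$ fill out $[0,\infty)$, i.e.\ $(r_{i,n}-\rho_{i,n})/\ga_{i,n}\to\infty$ and $(\rho_{i,n}-r_{i-1,n})/\ga_{i,n}\to 0$. For $i=1$ both are automatic since $\rho_{1,n}=r_{0,n}=0$. For $i\geq 2$ I would compare the logarithmic orders
\[
\log\tfrac{1}{\ga_{i,n}^2}=\mu_{i,n}^2+\alpha\mu_{i,n}^{\beta_n}+O(\log\mu_{i,n}),\qquad \log\tfrac{1}{\la_n r_{i-1,n}^2}=\alpha\bigl(1-\tfrac{\beta_*}{2}\bigr)\mu_{i-1,n}^{\beta_n}(1+o(1)),
\]
the latter being \eqref{a4}; since $\mu_{i,n}\to\infty$ and $\beta_n<2$, the first dominates, forcing $r_{i-1,n}=o(\ga_{i,n})$. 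The outer side of (b), as well as the fact that $\rho_{i,n}$ stays close to $r_{i-1,n}$ on the $\ga_{i,n}$-scale, I would extract from the logarithmic identity of Lemma \ref{lem:id} applied on both $[r_{i-1,n},\rho_{i,n}]$ and $[\rho_{i,n},r_{i,n}]$, combined with the normalization \eqref{a7}: if $(r_{i,n}-\rho_{i,n})/\ga_{i,n}$ stayed bounded, the integral in Lemma \ref{lem:id} could not produce the divergence $\mu_{i,n}\to\infty$, and a parallel inner-side argument, using \eqref{a7}, prevents the bubble from drifting away from $r_{i-1,n}$. Once (a)--(b) hold, the equation gives local $C^2$ bounds on $z_{i,n}$; Arzel\`a--Ascoli extracts a subsequential limit $z_\infty\in C^1_{\text{loc}}([0,\infty))\cap C^2_{\text{loc}}((0,\infty))$ solving $-z_\infty''-z_\infty'/r=e^{z_\infty}$ with $z_\infty(0)=z_\infty'(0)=0$ and $z_\infty\leq 0$, and classical radial ODE uniqueness identifies it with \eqref{d00}; the total mass $\int_0^\infty e^z r\,dr=4$ is then a direct computation.

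The main obstacle is exactly (a)--(b) for $i\geq 2$. In the absence of \eqref{a4}--\eqref{a7}, two spurious regimes are compatible with the rescaled equation: a 1D Liouville profile $-2\log\cosh(r/\sqrt{2})$ when $\rho_{i,n}/\ga_{i,n}\to\infty$, and a shifted 2D Liouville solution when $\rho_{i,n}/\ga_{i,n}$ stays bounded below. Both would violate the asserted form of the limit. Ruling them out is precisely the inductive step promised in the introduction, replacing the low-energy characterization used in \cite{GN}; extracting the necessary scale comparisons from \eqref{a4}--\eqref{a7} and Lemma \ref{lem:id} is the delicate part of the argument.
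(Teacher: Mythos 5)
Your proposal correctly identifies the overall structure (rescaling, perturbed Liouville equation, compactness, reduction to showing $l:=\lim(\rho_{i,n}-r_{i-1,n})/\ga_{i,n}=0$), the right tools (Lemma \ref{lem:id}, \eqref{a4}, \eqref{a7}), and the obstacle for $i\ge 2$ — the shifted Liouville limit when $l>0$. However, the argument has one step stated backwards and leaves the decisive contradiction unjustified.

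\textbf{The logarithmic comparison is the wrong way around.} You claim that since $\mu_{i,n}\to\infty$ and $\beta_n<2$, the exponent $\mu_{i,n}^2+\alpha\mu_{i,n}^{\beta_n}$ in $\log(1/\ga_{i,n}^2)$ dominates $\alpha(1-\beta_*/2)\mu_{i-1,n}^{\beta_n}$ from \eqref{a4}, "forcing $r_{i-1,n}=o(\ga_{i,n})$." Writing out both sides one finds
\[
\log\frac{\ga_{i,n}}{r_{i-1,n}}=\frac12\Big[\alpha\big(1-\tfrac{\beta_*}{2}\big)\mu_{i-1,n}^{\beta_n}(1+o(1))-\mu_{i,n}^2-\alpha\mu_{i,n}^{\beta_n}+O(\log\mu_{i,n})\Big],
\]
so $r_{i-1,n}=o(\ga_{i,n})$ requires the $\mu_{i-1,n}^{\beta_n}$ term to dominate $\mu_{i,n}^2$, i.e.\ your "first" term to be the \emph{smaller} one. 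This in turn presupposes $\mu_{i,n}\ll\mu_{i-1,n}$, which is not an a priori fact: it is close to what the whole argument must produce. In the paper the fact $l=m$ (hence $r_{i-1,n}/\ga_{i,n}\to0$) is already part of Lemma \ref{ap1}, proved by energy bounds, the Strauss-type radial lemma, and a mean value argument, with no use of \eqref{a4} at all.

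\textbf{The step $l=0$ is only gestured at.} You say the inner-side identity of Lemma \ref{lem:id} together with \eqref{a7} "prevents the bubble from drifting away from $r_{i-1,n}$," and that "extracting the necessary scale comparisons ... is the delicate part of the argument" — but this defers precisely the step that has to be supplied. The paper's argument is a specific two-pronged contradiction: assuming $l>0$, it (i) rescales the inner-side identity of Lemma \ref{lem:id}, obtains $\lim 2\mu_{i,n}^2/\log(\ga_{i,n}/r_{i-1,n})=\sqrt{2l^2+4}-2$, converts this via the definition of $\ga_{i,n}$ into $\lim\log(1/\la_n r_{i-1,n}^2)/\mu_{i,n}^2\in(0,\infty)$, and uses \eqref{a4} together with $\mu_{i,n}\to\infty$ to conclude $\mu_{i,n}/\mu_{i-1,n}\to 0$; and (ii) computes the same ratio $\mu_{i,n}/\mu_{i-1,n}$ by matching fluxes at $r_{i-1,n}$, using \eqref{a7} and the blow-up profile, to get $\mu_{i,n}/\mu_{i-1,n}\to(\sqrt{2l^2+4}-2)/4>0$, a contradiction. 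Without spelling out this (or an equivalent) contradiction, the proposal has not actually ruled out $l\in(0,\infty)$.
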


Before staring the proof, note that $z_{i,n}$ satisfies
\begin{equation}\label{d1}
\begin{cases}
-z_{i,n}''-\frac1{r+\frac{\rho_{i,n}}{\ga_{i,n}}}z_{i,n}'=\left(\frac{z_{i,n}}{2\mu_{i,n}^2}+1\right)\\
\ \ \ \ \ \ \ \ \ \ \ \ \ \ \ \ \ \ \ \ \ \ \ \ \ \ \ \ \ \times  e^{z_{i,n}+\frac{z_{i,n}^2}{4\mu_{i,n}^2}+\alpha \mu_{i,n}^{\beta_{n}}\left\{\left(\frac{z_{i,n}}{2\mu_{i,n}^2}+1\right)^{\beta_n}-1\right\}},\\
z_{i,n}\le0,\ \ \ \ \ \ \ \ \ \ \ \ \ \ \ \ \ \ \ \ \ \ \ \ \ \ \ \ \ \ \ \ \ \  \ \ \ \ \ \hbox{ in }\left(\frac{r_{i-1,n}-\rho_{i,n}}{\ga_{i,n}},\frac{r_{i,n}-\rho_{i,n}}{\ga_{i,n}}\right),\\
z_{i,n}(0)=0=z_{i,n}'(0),\ z_{i,n}\left(\frac{r_{i,n}-\rho_{i,n}}{\ga_{i,n}}\right)=-2\mu_{i,n}^2,\\
z_{i,n}\left(\frac{r_{i-1,n}-\rho_{i,n}}{\ga_{i,n}}\right)=-2\mu_{i,n}^2\ (i\not=1).
\end{cases}
\end{equation}
Put $l:=\lim_{n\to \infty}(\rho_{i,n}-r_{i-1,n})/\ga_{i,n}$. Then as in the proof of Lemma 4.3 in \cite{GN}, the crucial step is to deduce $l=0$. Hence the case $i=1$ is easier. In the case $i>1$, we have to exclude the cases $l=\infty$ and $l\in(0,\infty)$. As a first step, we can prove by \eqref{bdd} that the case $l=\infty$ does not occur. 
\begin{lemma}\label{ap1} Assume \eqref{infty} and \eqref{bdd} and define $\ga_{i,n}$ and $z_{i,n}$ as in the previous proposition. Moreover, put $l,m\in[0,\infty]$ so that 
\[
l=\lim_{n\to \infty}\frac{\rho_{i,n}-r_{i-1,n}}{\ga_{i,n}}\text{ and }m=\lim_{n\to \infty}\frac{\rho_{i,n}}{\ga_{i,n}},
\] 
by subtracting a subsequence if necessary. Then we get that $\lim_{n\to \infty}\rho_{i,n}/r_{i,n}=0$, $\lim_{n\to \infty}(r_{i,n}-\rho_{i,n})/\ga_{i,n}=\infty$, $0\le l=m<\infty$ and further, 
 there exists a function $z$ such that $z_{i,n} \to z$ in $C^2_{\text{loc}}((-l,\infty))$ (in $C^2_{\text{loc}}((0,\infty))\cap C^1_{\text{loc}}([0,\infty))$ if $l=0$). 
\end{lemma}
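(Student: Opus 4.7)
The plan is to prove the four conclusions (a) $\rho_{i,n}/r_{i,n}\to 0$, (b) $(r_{i,n}-\rho_{i,n})/\ga_{i,n}\to\infty$, (c) $0\le l=m<\infty$, and (d) subsequential $C^2_{\mathrm{loc}}$ convergence of $z_{i,n}$ in sequence. For (a), on the interval $(\rho_{i,n}, r_{i,n})$ the function $u_{i,n}$ drops in modulus from $\mu_{i,n}$ to $0$, so Cauchy-Schwarz with the weight $r$ yields $\mu_{i,n}^2 = \bigl(\int_{\rho_{i,n}}^{r_{i,n}} u_{i,n}'\,dr\bigr)^2 \le \bigl(\int_{\rho_{i,n}}^{r_{i,n}} u_{i,n}'^2 r\,dr\bigr) \log(r_{i,n}/\rho_{i,n})$; together with \eqref{bdd} this forces $\rho_{i,n}/r_{i,n} \le e^{-\mu_{i,n}^2/C}\to 0$. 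The identical argument on the left interval, applicable when $i\ge 2$, gives $r_{i-1,n}/\rho_{i,n}\to 0$ at the same super-exponential rate. For (b), Lemma \ref{lem:id} combined with the pointwise bound $|\la_n f_n(u_{i,n})|\le \la_n f_n(\mu_{i,n}) = 1/(2\mu_{i,n}\ga_{i,n}^2)$ and $\int_0^{r_{i,n}} r\log(r_{i,n}/r)\,dr = r_{i,n}^2/4$ yields $r_{i,n}/\ga_{i,n}\ge\sqrt{8}\,\mu_{i,n}\to\infty$; combining with (a) gives $(r_{i,n}-\rho_{i,n})/\ga_{i,n}\ge r_{i,n}/(2\ga_{i,n})\to\infty$.

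The heart of the lemma is (c). Since $l\le m$ automatically and the case $i=1$ is trivial ($\rho_{1,n}=r_{0,n}=0$), for $i\ge 2$ it suffices to prove that $M_n:=\rho_{i,n}/\ga_{i,n}$ is bounded. Setting $v(t):=z_{i,n}(t-M_n)$, the function $v$ satisfies $-v''-v'/t = g_n(v)$ on $(r_{i-1,n}/\ga_{i,n},r_{i,n}/\ga_{i,n})$ with the peak data $v(M_n)=0=v'(M_n)$, and obeys the pointwise bound $g_n(v)\le 1$ coming from $|\la_n f_n(u)|\le \la_n f_n(\mu_{i,n})$. I would then compare $v$ with the explicit radial solution $w$ of $-w''-w'/t = 1$, $w(M_n)=0=w'(M_n)$, namely $w(t)=-(t^2-M_n^2)/4+(M_n^2/2)\log(t/M_n)$. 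An elementary calculus check gives $w(t)\ge -1/2$ on $[M_n,M_n+1]$ uniformly in $M_n\ge 0$. Since $(t(v-w)')' = -t(g_n(v)-1)\ge 0$ and $t(v-w)'$ vanishes at $t=M_n$, one obtains $v\ge w\ge -1/2$ on $[M_n,M_n+1]$, so $g_n(v)\ge e^{-1/2}/2$ there for $n$ large. Plugging this into the rescaled version of Lemma \ref{lem:id},
\[
2\mu_{i,n}^2 = \int_{M_n}^{r_{i,n}/\ga_{i,n}} g_n(v(t))\,t\log\!\bigl((r_{i,n}/\ga_{i,n})/t\bigr)\,dt,
\]
gives the one-sided bound $2\mu_{i,n}^2 \ge (M_n/(2e))\bigl[\log(r_{i,n}/\rho_{i,n})-O(1)\bigr]\ge (M_n/(2e))\bigl[\mu_{i,n}^2/C-O(1)\bigr]$; this forces $M_n=O(1)$. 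Once $M_n$ is bounded, the super-exponential decay established in (a) gives $r_{i-1,n}/\ga_{i,n} = M_n\cdot r_{i-1,n}/\rho_{i,n}\to 0$, hence $l=m<\infty$.

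Finally, for (d) a standard ODE compactness argument suffices: on any compact subinterval of $(-l,\infty)$ the drift $1/(s+M_n)$ is uniformly bounded (since $M_n\to m$ and the compact lies away from $-m$), and $g_n(z_{i,n})\le 1$ is uniformly bounded. An elementary bootstrap integrating the ODE twice, anchored at the Cauchy data $z_{i,n}(0)=0=z_{i,n}'(0)$, produces uniform $C^2_{\mathrm{loc}}$ bounds, and Arzel\`a-Ascoli plus ODE bootstrapping give the stated convergence. When $l=0$ the limiting drift $1/s$ is singular at the origin, which is why one only obtains $C^2_{\mathrm{loc}}((0,\infty))\cap C^1_{\mathrm{loc}}([0,\infty))$. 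The main obstacle is step (c): the Cauchy-Schwarz bounds in (a) are scale-insensitive (they only control ratios like $\rho_{i,n}/r_{i,n}$), so the scaling parameter $\ga_{i,n}$ must be reintroduced, and the lower comparison $v\ge w$ together with Lemma \ref{lem:id} is the mechanism that accomplishes this.
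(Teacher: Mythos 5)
Your proof is correct, and the way you handle the central point ($0\le l=m<\infty$) is genuinely different from the paper's. The paper proceeds by contradiction twice: it first rules out $l=\infty$ by passing to the one--dimensional Liouville profile $z(r)=\log\bigl(4e^{\sqrt2 r}/(1+e^{\sqrt2 r})^2\bigr)$ and showing, via Fatou's lemma, that the Dirichlet energy would then be bounded below by a quantity proportional to $\rho_{i,n}/\ga_{i,n}\to\infty$, contradicting \eqref{bdd}; it then shows $l=m$ by a mean value theorem argument (the drop of $z_{i,n}$ by $2\mu_{i,n}^2$ over an interval of bounded length forces $z_{i,n}'$ to blow up, contradicting the a priori gradient bound \eqref{d2}). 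You instead bound $M_n=\rho_{i,n}/\ga_{i,n}$ directly and quantitatively: the comparison of $v(t)=z_{i,n}(t-M_n)$ with the explicit solution $w$ of $-w''-w'/t=1$ (valid since $0\le g_n(v)\le 1$ by monotonicity of $f_n$) pins $v\ge -1/2$ on $[M_n,M_n+1]$, and feeding the resulting lower bound $g_n(v)\ge e^{-1/2}/2$ into the rescaled identity of Lemma \ref{lem:id}, together with the Cauchy--Schwarz estimate $\log(r_{i,n}/\rho_{i,n})\ge\mu_{i,n}^2/C$, yields $M_n=O(1)$ with an explicit constant; the identity $l=m$ then falls out of $r_{i-1,n}/\ga_{i,n}=M_n\cdot(r_{i-1,n}/\rho_{i,n})\to 0$ rather than requiring a separate contradiction. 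Your preliminary steps also differ in flavor but not in substance: you get $\rho_{i,n}/r_{i,n}\to0$ from the capacity-type Cauchy--Schwarz bound (super-exponential rate) where the paper uses the Radial Lemma (polynomial rate), and you get $r_{i,n}/\ga_{i,n}\to\infty$ from Lemma \ref{lem:id} with the pointwise bound $\la_nf_n(u_{i,n})\le 1/(2\mu_{i,n}\ga_{i,n}^2)$ where the paper uses Lemma \ref{newl1} plus Poincar\'e. The compactness step is the same in both. The only place to be slightly more careful is the ``$-O(1)$'' inside your bracket, which is $-\log(1+1/M_n)$ and degenerates as $M_n\to0$; the harmless dichotomy ($M_n\le1$, done; $M_n\ge1$, the bracket is $\ge\mu_{i,n}^2/C-\log 2>0$) closes this. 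What your route buys is an explicit upper bound on $m$ in terms of the energy bound alone, avoiding both the classification of the limit ODE on $\mathbb{R}$ and the Fatou argument.
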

\begin{proof} 
Using Lemmas \ref{RL}, \ref{newl1} and our assumptions \eqref{infty} and \eqref{bdd}, the proof is similar to that of  Lemma 4.3 in \cite{GN}. (Especially, see the argument in ``Case 1" there.) For the readers' convenience we show the proof in Appendix \ref{ap}.
\end{proof}

Now our final aim becomes to prove $l=0$. In order to prove this, the variational characterization of solutions by \cite{AY1} was useful in the previous work \cite{GN}. This allowed us to get the energy estimate in Lemma 2.1 (and also Lemma 2.5) in \cite{GN}. Using this, we could prove that the case $l\in(0,\infty)$ does not happen. (See the argument for ``Case 2" in the proof of Lemma 4.3 in \cite{GN}.) Since we only assume  the boundedness of the energy in this paper, we need a new argument. We accomplish the proof  with the aid of our new assumptions \eqref{a4} and \eqref{a7} as follows.
\begin{proof}[Proof of Proposition \ref{dd}] Without losing the generality, we may suppose $u_{i.n}\ge0$. Let $l,m$ and $z$ as in Lemma \ref{ap1}.  Then we get $l=m<\infty$ by the lemma. Let us prove $l=0$. If $i=1$, this is trivial. Hence, we suppose $i\ge2$ and $l>0$ on the contrary. Then, by \eqref{d1} and Lemma \ref{ap1}, the limit function $z$ satisfies 
\begin{equation*}
\begin{cases}
-z''-\frac1{l+r}z'=
e^{z},\ z\le0\ \ \hbox{ in }\left(-l,+\infty\right)\\
z(0)=0=z'(0).
\end{cases}
\end{equation*}
It follows  that
\begin{equation*}
z(r)=\log\frac{4A^2l^{A+2}(r+l)^{A-2}}
{\left((A+2)l^A+(A-2)(r+l)^A\right)^2},
\end{equation*}
where $A=\sqrt{2l^2+4}$. (See Proof of Proposition 3.1 in \cite{GGP} or the proof of Lemma 4.3 in \cite{GN}). Then, we use Lemma \ref{lem:id} to get 
\begin{equation}
\begin{split} 
&2\mu_{i,n}^2\\
&\ =2\mu_{i,n}\int_{r_{i-1,n}}^{\rho_{i,n}}\lambda_n f(u_{i,n})r\log{\frac{r}{r_{i-1,n}}}dr\\
&\ =\log{\frac{\ga_{i,n}}{r_{i-1,n}}}\int_{\frac{r_{i-1,n}-\rho_{i,n}}{\ga_{i,n}}}^{0}\left(\frac{z_{i,n}(r)}{2\mu_{i,n}^2}+1\right) 
\\&\ \ \ \ \ \ \ \ \ \ \ \ \  \ \ \ \ \times e^{z_{i,n}(r)+\frac{z_{i,n}^2(r)}{4\mu_{i,n}^2}+\alpha\mu_{i,n}^{\beta_n}\left\{\left(\frac{z_{i,n}(r)}{2\mu_{i,n}^2}+1\right)^{\beta_n}-1\right\}}\left(r+\frac{\rho_{i,n}}{\ga_{i,n}}\right)dr\\
&\ \ +\int_{\frac{r_{i-1,n}-\rho_{i,n}}{\ga_{i,n}}}^{0}\left(\frac{z_{i,n}(r)}{2\mu_{i,n}^2}+1\right) e^{z_{i,n}(r)+\frac{z_{i,n}^2(r)}{4\mu_{i,n}^2}+\alpha\mu_{i,n}^{\beta_n}\left\{\left(\frac{z_{i,n}(r)}{2\mu_{i,n}^2}+1\right)^{\beta_n}-1\right\}}
\\&\ \ \ \ \ \ \ \ \ \ \ \ \ \ \ \ \ \ \ \ \ \times \left(r+\frac{\rho_{i,n}}{\ga_{i,n}}\right)\log{\left(r+\frac{\rho_{i,n}}{\ga_{i,n}}\right)}dr.
\end{split}\label{d4}
\end{equation}
Noting that $l=m$ implies $r_{i-1,n}/\ga_{i,n}\to0$,  we apply the Lebesgue convergence theorem to obtain
\begin{equation}
\lim_{n\to \infty}\frac{2\mu_{i,n}^2 }{\log{\frac{\ga_{i,n}}{r_{i-1,n}}}}=\int_{-l}^{0}e^z(r+l)dr
=\sqrt{2l^2+4}-2.\label{d5}
\end{equation}
On the other hand, we have by the definition of $\ga_{i,n}$,
\begin{equation}
\begin{split}
\frac{2\mu_{i,n}^2}{\log{\frac{\ga_{i,n}}{r_{i-1,n}}}}&=\frac{4\mu_{i,n}^2}{2\log{\frac{1}{r_{i-1,n}}}-\log{2\la_n}-2\log{\mu_{i,n}}-\mu_{i,n}^2-\alpha \mu_{i,n}^{\beta_n}} \\
&=\frac{4}{\frac{\log{\frac{1}{\la_n r_{i-1,n}^2}}}{\mu_{i,n}^2}-1+o(1)}.
\end{split}\label{d6}
\end{equation}
Combining \eqref{d5} with \eqref{d6}, we get
\[
\lim_{n\to \infty} \frac{\log{\frac{1}{\la_n r_{i-1,n}^2}}}{\mu_{i,n}^2}=\frac{2\sqrt{2l^2+4}+4+l^2}{l^2}\in(0,\infty).
\]
Then since 
\[
\frac{\log{\frac{1}{\la r_{i-1,n}^2}}}{\mu_{i,n}^2}=\frac{\log{\frac{1}{\la r_{i-1,n}^2}}}{\mu_{i-1,n}^{\beta_n}}\left(\frac{\mu_{i-1,n}}{\mu_{i,n}}\right)^{\beta_n}\frac{1}{\mu_{i,n}^{2-\beta_n}},
\]
using our assumptions \eqref{infty} and \eqref{a4}, we deduce that 
\begin{equation}
\lim_{n\to \infty}\frac{\mu_{i,n}}{\mu_{i-1,n}}=0.\label{d7}
\end{equation}
On the other hand, since $u_n=u_n(r)$ satisfies $-u_n''-u_n'/r=\la_n f_n(u_n)$ on $(\rho_{i-1,n},\rho_{i,n})$, multiplying this equation by $r$ and integrating over $(\rho_{i-1,n},\rho_{i,n})$, we get
\[
\int_{\rho_{i-1,n}}^{r_{i-1,n}}\la f(u_{i-1,n})rdr=-\int_{r_{i-1,n}}^{\rho_{i,n}}\la f(u_{i,n})rdr.
\]
Then, it follows from \eqref{a7} and the similar scaling argument as in \eqref{d4} that 
\begin{equation}\label{d777}
\frac{\mu_{i,n}}{\mu_{i-1,n}}=-\frac{\mu_{i,n}\int_{r_{i-1,n}}^{\rho_{i,n}}\la f(u_{i,n})rdr}{\mu_{i-1,n}\int_{\rho_{i-1,n}}^{r_{i-1,n}}\la f(u_{i-1,n})rdr}\to \frac{\sqrt{2l^2+4}-2}{4}>0
\end{equation}
as $n\to \infty$. This contradicts \eqref{d7}. Hence we get $l=0$. Then \eqref{d1} and Lemma \ref{ap1} prove that $z$ satisfies  
\begin{equation*}
\begin{cases}
-z''-\frac1{r}z'=
e^{z},\ z\le0\ \ \hbox{ in }\left(0,+\infty\right)\\
z(0)=0=z'(0).
\end{cases}
\end{equation*}
After integration (see Proof of Proposition 3.1 in \cite{GGP}  or the proof of Lemma 4.3 in \cite{GN}), we conclude that $z$ satisfies \eqref{d00} and \eqref{d01}. We complete the proof.  
\end{proof}

In the proof above, we get the following.
\begin{lemma}\label{lem:d8} Assume \eqref{infty}-\eqref{a7}. Then we have
\begin{equation}
\lim_{n\to\infty}\frac{\rho_{i,n}}{\ga_{i,n}}=0,\label{d8}
\end{equation}
and
\begin{equation}
\lim_{n\to \infty}\frac{\mu_{i,n}}{\mu_{i-1,n}}=0.\label{d77}
\end{equation}
\end{lemma}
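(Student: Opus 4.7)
The plan is to observe that both conclusions are essentially byproducts of the proof of Proposition \ref{dd}, with one extra argument needed for \eqref{d77}. For \eqref{d8}, the case $i=1$ is trivial: $u_{1,n}$ is positive on $(0,r_{1,n})$ and $(ru_{1,n}')(0)=0$ with $(ru_{1,n}')'=-r\la_n f_n(u_{1,n})\le 0$, so $ru_{1,n}'\le 0$ and the maximum is attained at $r=0$, forcing $\rho_{1,n}=0$. For $i\ge 2$, Lemma \ref{ap1} gives $l=m$, and the proof of Proposition \ref{dd} shows $l=0$; consequently $\rho_{i,n}/\ga_{i,n}\to m=0$.

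The substantive part is \eqref{d77} in the case $i\ge 2$. The plan is to revisit the identity \eqref{d4}, which was derived in the proof of Proposition \ref{dd} without actually using the (ultimately false) hypothesis $l>0$, and apply it with the now-established fact $l=0$. Since $(\rho_{i,n}-r_{i-1,n})/\ga_{i,n}\to 0$ and the integrand on the right of \eqref{d4} is uniformly bounded on any fixed neighborhood of $0$ (thanks to the $C^2_{\text{loc}}$ convergence $z_{i,n}\to z$ from Proposition \ref{dd} together with $z_{i,n}(0)=0$), bounded convergence applied to \eqref{d4}, after dividing by $\log(\ga_{i,n}/r_{i-1,n})$, yields
\[
\lim_{n\to\infty}\frac{2\mu_{i,n}^2}{\log(\ga_{i,n}/r_{i-1,n})}=0,
\]
the analogue of \eqref{d5} in the present setting.

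Combining this with the explicit expansion
\[
2\log(\ga_{i,n}/r_{i-1,n})=\log\frac{1}{2\la_n r_{i-1,n}^2}-2\log\mu_{i,n}-\mu_{i,n}^2-\alpha\mu_{i,n}^{\beta_n}
\]
that comes from the definition of $\ga_{i,n}$ (and was already used in \eqref{d6}), the vanishing limit above is equivalent to
\[
\lim_{n\to\infty}\frac{\log\frac{1}{\la_n r_{i-1,n}^2}}{\mu_{i,n}^2}=\infty.
\]
At this point I would invoke the same factorization as in the proof of Proposition \ref{dd}, namely
\[
\frac{\log\frac{1}{\la_n r_{i-1,n}^2}}{\mu_{i,n}^2}=\frac{\log\frac{1}{\la_n r_{i-1,n}^2}}{\mu_{i-1,n}^{\beta_n}}\left(\frac{\mu_{i-1,n}}{\mu_{i,n}}\right)^{\beta_n}\frac{1}{\mu_{i,n}^{2-\beta_n}}.
\]
By assumption \eqref{a4} the first factor on the right converges to the positive constant $\alpha(1-\beta_*/2)$, and since $\beta_*\in(0,2)$ and $\mu_{i,n}\to\infty$ the third factor tends to $0$. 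Hence $(\mu_{i-1,n}/\mu_{i,n})^{\beta_n}$ must diverge to infinity, and since $\beta_n\to\beta_*>0$ this gives $\mu_{i,n}/\mu_{i-1,n}\to 0$, which is \eqref{d77}.

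The only delicate point is the bounded-convergence step on the shrinking interval of integration in \eqref{d4}, but this is routine given the $C^2_{\text{loc}}$ convergence supplied by Proposition \ref{dd} together with $z_{i,n}(0)=0$. No genuinely new analytical input is required beyond the scaling picture that has already been set up.
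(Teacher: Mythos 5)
Your proof reaches the correct conclusion and is built on the same underlying tools as the paper's, namely the scaling identities set up in the proof of Proposition~\ref{dd}, but it takes a slightly longer route to \eqref{d77}. The paper simply plugs $l=0$ into the limit \eqref{d777}, whose right-hand side $\tfrac{\sqrt{2l^2+4}-2}{4}$ vanishes at $l=0$, and is done. You instead re-run the chain \eqref{d4}, the analogue of \eqref{d5}, \eqref{d6}, and the factorization with \eqref{a4} --- exactly the chain that produced \eqref{d7} inside the contradiction argument --- now with $l=0$ rather than $l>0$; both routes are valid and give the same conclusion.

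One step in your justification is not quite right as stated. You claim the integrands in \eqref{d4} are ``uniformly bounded on any fixed neighborhood of $0$ thanks to the $C^2_{\text{loc}}$ convergence $z_{i,n}\to z$ together with $z_{i,n}(0)=0$.'' But Proposition~\ref{dd} gives convergence only in $C^{2}_{\text{loc}}((0,\infty))\cap C^{1}_{\text{loc}}([0,\infty))$, i.e.\ for $r\ge 0$, whereas the integration in \eqref{d4} is over the (shrinking) negative interval $\left[\frac{r_{i-1,n}-\rho_{i,n}}{\ga_{i,n}},0\right]$, on which $z_{i,n}$ is \emph{not} uniformly bounded: indeed $z_{i,n}\left(\frac{r_{i-1,n}-\rho_{i,n}}{\ga_{i,n}}\right)=-2\mu_{i,n}^2\to -\infty$ by \eqref{d1}, and by the mean value theorem $z_{i,n}'$ is unbounded there as well. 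The correct reason the integrand is bounded is the sign structure $z_{i,n}\le 0$: writing $s=\frac{z_{i,n}}{2\mu_{i,n}^2}+1\in[0,1]$, the exponent in the integrand is
\[
\mu_{i,n}^2(s^2-1)+\alpha\mu_{i,n}^{\beta_n}(s^{\beta_n}-1)\le 0,
\]
so the exponential factor is $\le 1$, the prefactor $s$ is $\le 1$, and the integrand is pointwise bounded by $r+\rho_{i,n}/\ga_{i,n}$ (and by $|(r+\rho_{i,n}/\ga_{i,n})\log(r+\rho_{i,n}/\ga_{i,n})|\le 1/e$ in the second integral). With this replacement, the rest of your argument --- both integrals vanish as the interval shrinks, hence $2\mu_{i,n}^2/\log(\ga_{i,n}/r_{i-1,n})\to 0$, hence by \eqref{d6} $\log\frac{1}{\la_n r_{i-1,n}^2}/\mu_{i,n}^2\to\infty$, hence by \eqref{a4} and $\mu_{i,n}^{2-\beta_n}\to\infty$ the ratio $\mu_{i,n}/\mu_{i-1,n}\to 0$ --- is sound.
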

\begin{proof}
In the previous proof, we get $l=m=0$. This proves \eqref{d8}. Using $l=0$ in \eqref{d777}, we obtain \eqref{d77}. This completes the proof.
\end{proof}
\section{Limit energy}\label{energy}
In this section, we study the limit energy of concentrating solutions. As in the previous section, we fix $i=1,\cdots,k+1$ and suppose \eqref{infty}--\eqref{a7}. Without loss of the generality we assume $u_{i,n}\ge0$. Moreover, we define $\ga_{i,n}$ and $z_{i,n}$ as in the previous section. Our main goal is to prove the next asymptotic energy expansion.
\begin{proposition}\label{e00}
Assume \eqref{infty}-\eqref{a7}. Moreover, if $i\not=1$, we suppose $\beta_*<3/2$. Then we have 
\[
\int_{r_{i-1,n}}^{r_{i,n}} u_{i,n}'(r)^2rdr=2-\frac{\alpha \beta_*}{\mu_{i,n}^{2-\beta_n}}+o\left(\frac{1}{\mu_{i,n}^{2-\beta_n}}\right),
\]
and 
\[
\mu_{i,n}\int_{r_{i-1,n}}^{r_{i,n}} \la_n f_n(u_{i,n})rdr=2-\frac{\alpha \beta_*}{\mu_{i,n}^{2-\beta_n}}+o\left(\frac{1}{\mu_{i,n}^{2-\beta_n}}\right),
\]
as $n\to \infty$.
\end{proposition}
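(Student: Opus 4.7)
The plan is to treat both expansions simultaneously through a Nehari-type identity. Assuming without loss of generality that $u_{i,n}\ge 0$, testing the radial equation \eqref{rad} against $r u_{i,n}$ and integrating by parts (the endpoint contributions vanish because $u_{i,n}$ vanishes at $r_{i-1,n}$ and $r_{i,n}$) yields
\[
\int_{r_{i-1,n}}^{r_{i,n}} u_{i,n}'(r)^2\, r\,dr=\int_{r_{i-1,n}}^{r_{i,n}}\lambda_n f_n(u_{i,n})u_{i,n}\, r\,dr.
\]
Writing $u_{i,n}=\mu_{i,n}+(u_{i,n}-\mu_{i,n})$ and noting that $u_{i,n}-\mu_{i,n}=z_{i,n}/(2\mu_{i,n})$ in the blow-up coordinates from Proposition \ref{dd}, the residual $\lambda_n\int f_n(u_{i,n})(u_{i,n}-\mu_{i,n})r\,dr$ becomes, after rescaling, $(4\mu_{i,n}^2)^{-1}$ times a bounded integral converging to $\int_0^\infty e^z z\, s\,ds$. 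Hence this residual is of order $O(1/\mu_{i,n}^2)=o(1/\mu_{i,n}^{2-\beta_n})$, and the two expansions in the proposition are equivalent; it therefore suffices to establish the one for $\mu_{i,n}\int\lambda_n f_n(u_{i,n})r\,dr$.

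For the central computation I substitute $r=\gamma_{i,n}s+\rho_{i,n}$ and use the defining relation $2\lambda_n\mu_{i,n}f_n(\mu_{i,n})\gamma_{i,n}^2=1$, obtaining
\[
\mu_{i,n}\!\int_{r_{i-1,n}}^{r_{i,n}}\!\lambda_n f_n(u_{i,n})\,r\,dr=\frac{1}{2}\!\int\!\frac{f_n(\mu_{i,n}+z_{i,n}(s)/(2\mu_{i,n}))}{f_n(\mu_{i,n})}\!\left(s+\frac{\rho_{i,n}}{\gamma_{i,n}}\right)ds.
\]
By Lemma \ref{lem:d8} the shift $\rho_{i,n}/\gamma_{i,n}$ tends to zero and the range of integration approaches $[0,\infty)$. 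Using $(1+y)^{\beta_n}-1=\beta_n y+O(y^2)$ with $y=z/(2\mu_{i,n}^2)$, the integrand ratio admits, for bounded $z$, the expansion
\[
\frac{f_n(\mu_{i,n}+z/(2\mu_{i,n}))}{f_n(\mu_{i,n})}=e^z\!\left(1+\frac{\alpha\beta_n z}{2\mu_{i,n}^{2-\beta_n}}+O\!\left(\tfrac{1}{\mu_{i,n}^2}\right)+O\!\left(\tfrac{1}{\mu_{i,n}^{4-2\beta_n}}\right)\right),
\]
and $\beta_*<2$ (together with $\beta_*<3/2$ when $i\ge 2$) ensures that every term beyond $\alpha\beta_n z/(2\mu_{i,n}^{2-\beta_n})$ contributes $o(1/\mu_{i,n}^{2-\beta_n})$ after integration. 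Combined with the $C^2_{\text{loc}}$-convergence $z_{i,n}\to z$ from Proposition \ref{dd} and the explicit Liouville-type integrals $\int_0^\infty e^z s\,ds$ and $\int_0^\infty e^z z\, s\,ds$ (the latter evaluated by the substitution $u=8+s^2$), this produces the leading constant $2$ together with the stated subleading correction of order $1/\mu_{i,n}^{2-\beta_n}$.

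The main obstacle is to justify that the tail of the integral, where $s$ leaves any fixed compact set and the local convergence $z_{i,n}\to z$ ceases to apply, contributes at most $o(1/\mu_{i,n}^{2-\beta_n})$. I would handle this via the pointwise upper bound supplied by Lemma \ref{le70}, the sign-changing and perturbation-aware extension of Lemma 5 of \cite{MM1} and Lemma 13 of \cite{MM2}, which yields uniform quartic decay of $f_n(u_{i,n})/f_n(\mu_{i,n})$ that dominates the tail integral. For $i\ge 2$ this tail control must be coupled with the inductive hypotheses \eqref{a4} and \eqref{a7} on the preceding bubble, which enter through the error estimate \eqref{e2} of Lemma \ref{lem:e}; the restriction $\beta_*<3/2$ is precisely what keeps this error manageable (cf.\ Remark \ref{rmk:e2}). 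Once the tail contribution is discarded, the interior expansion above delivers the asymptotic formula for $\mu_{i,n}\int \lambda_n f_n(u_{i,n})r\,dr$, and the Nehari identity from the first paragraph then transfers it to $\int u_{i,n}'(r)^2 r\,dr$.
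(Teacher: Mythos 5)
Your proposal correctly identifies the rescaling identity, the Nehari reduction of $\int u_{i,n}'^2 r\,dr$ to $\int \lambda_n f_n(u_{i,n})u_{i,n}r\,dr$ (with an $O(\mu_{i,n}^{-2})$ discrepancy), the Taylor expansion of the nonlinearity, the role of $\beta_*<3/2$ in bounding $\rho_{i,n}/\gamma_{i,n}$, and the need for a pointwise decay bound to handle the tail. But there is a genuine gap in the interior computation: carried out as written, it returns the wrong subleading constant.

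The issue is that the expansion
\[
\frac{f_n(\mu_{i,n}+z/(2\mu_{i,n}))}{f_n(\mu_{i,n})} = e^z\left(1 + \frac{\alpha\beta_n z}{2\mu_{i,n}^{2-\beta_n}} + \cdots\right)
\]
is evaluated at $z=z_{i,n}(s)$, and $e^{z_{i,n}}$ differs from $e^z$ at exactly the order $\mu_{i,n}^{-(2-\beta_n)}$ you are trying to resolve. The paper tracks this through Lemma \ref{e333}: $\phi_n := \mu_{i,n}^{2-\beta_n}(z_{i,n}-z)\to\phi$ in $C^2_{\mathrm{loc}}$ with an explicit $\phi$ solving $-\phi''-\phi'/r = e^z\bigl(\phi+\tfrac{\alpha\beta_*}{2}z\bigr)$ and $\lim_{r\to\infty}r\phi'(r)=2\alpha\beta_*$. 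If you use only the $C^2_{\mathrm{loc}}$ convergence $z_{i,n}\to z$ together with your two Liouville integrals $\int_0^\infty e^z s\,ds = 4$ and $\int_0^\infty e^z z\, s\,ds = -8$, you would obtain
\[
\frac12\left(4 + \frac{\alpha\beta_*}{2\mu_{i,n}^{2-\beta_n}}\cdot(-8)\right) = 2 - \frac{2\alpha\beta_*}{\mu_{i,n}^{2-\beta_n}},
\]
off by a factor of two in the subleading coefficient. The missing piece is the contribution $\tfrac{1}{2\mu_{i,n}^{2-\beta_n}}\int_0^\infty e^z\phi\, s\,ds = \alpha\beta_*/\mu_{i,n}^{2-\beta_n}$ coming from $e^{z_{i,n}}-e^z$. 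Adding the two and invoking the ODE for $\phi$ gives $\int_0^\infty e^z\bigl(\phi+\tfrac{\alpha\beta_*}{2}z\bigr)s\,ds = -\lim_{r\to\infty}r\phi'(r) = -2\alpha\beta_*$, which is precisely how the paper reaches $2-\alpha\beta_*/\mu_{i,n}^{2-\beta_n}$ in Proposition \ref{f5} (via integration by parts on a finite window $[0,s_n]$, rather than by term-by-term integration of a pointwise expansion). Note finally that Lemma \ref{le70}, which you rightly flag for the tail estimate, is itself built on Lemma \ref{e333}: the function $\psi_n$ it controls is $z_{i,n}-z-\phi/\mu_{i,n}^{2-\beta_n}$, so you cannot invoke it while bypassing the $\phi$-correction.
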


For the proof, we begin with the next lemma. 
\begin{lemma}\label{lem:e0} Let $i\ge2$ and suppose \eqref{infty}-\eqref{a7}. Then we get $r_{i-1,n}/\rho_{i,n}\to0$ and
\begin{equation}
\lim_{n\to\infty}\frac{\la_n \rho_{i,n}^2 f_n(\mu_{i,n})\log{\frac{\rho_{i,n}}{r_{i-1,n}}}}{\mu_{i,n}}=2=\lim_{n\to\infty}\frac{\la_n \rho_{i,n}^2 f_n(\mu_{i,n})}{r_{i-1,n} u_{i,n}'(r_{i-1,n})}.\label{e0}
\end{equation}
\end{lemma}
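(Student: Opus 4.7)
My plan is to combine an exact integral identity, a one-sided maximum-principle bound, and a Laplace-type concentration argument in logarithmic coordinates. Throughout I will take $u_{i,n}\ge 0$ without loss of generality (by oddness of $f_n$) and abbreviate $I_n:=r_{i-1,n}u_{i,n}'(r_{i-1,n})$ and $L_n:=\log(\rho_{i,n}/r_{i-1,n})$. Multiplying the radial equation in \eqref{rad} by $r$ and integrating from $r_{i-1,n}$ to $\rho_{i,n}$, together with $u_{i,n}'(\rho_{i,n})=0$, gives the exact identity $I_n=\int_{r_{i-1,n}}^{\rho_{i,n}}\la_n f_n(u_{i,n})\sigma\,d\sigma$; rearranging Lemma \ref{lem:id} produces the equivalent $I_nL_n-\mu_{i,n}=\int_{r_{i-1,n}}^{\rho_{i,n}}\la_n f_n(u_{i,n})\sigma\log(\rho_{i,n}/\sigma)\,d\sigma\ge 0$. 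A one-line maximum principle for $\omega(r):=u_{i,n}(r)-I_n\log(r/r_{i-1,n})$, which satisfies $\omega(r_{i-1,n})=\omega'(r_{i-1,n})=0$ and $(r\omega')'=-r\la_n f_n(u_{i,n})\le 0$, forces $\omega\le 0$ and in particular $\mu_{i,n}\le I_nL_n$. Combining this with \eqref{a7} and the fact that $\mu_{i-1,n}\to\infty$ (which follows from \eqref{infty} together with Lemma \ref{lem:d8}), so $I_n\sim 2/\mu_{i-1,n}\to 0$ while $\mu_{i,n}\to\infty$, yields $L_n\ge\mu_{i,n}/I_n\to\infty$, establishing $r_{i-1,n}/\rho_{i,n}\to 0$.

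For the two limit formulas I will switch to the logarithmic coordinate $\eta:=\log(\rho_{i,n}/\sigma)\in[0,L_n]$ and set $w(\eta):=u_{i,n}(\rho_{i,n}e^{-\eta})$. A direct computation from the radial equation yields $w''(\eta)=-\rho_{i,n}^2 e^{-2\eta}\la_n f_n(w(\eta))$ with $w(0)=\mu_{i,n}$, $w'(0)=0$, $w(L_n)=0$, and $w'(L_n)=-I_n$, and the two integrals rewrite as the Laplace-type expressions
\[
I_n=\rho_{i,n}^2\int_0^{L_n}\la_n f_n(w(\eta))e^{-2\eta}\,d\eta,\quad I_nL_n-\mu_{i,n}=\rho_{i,n}^2\int_0^{L_n}\la_n f_n(w(\eta))\eta e^{-2\eta}\,d\eta.
\]
Since $\la_n f_n(\mu_{i,n})=1/(2\mu_{i,n}\ga_{i,n}^2)$ by the definition of $\ga_{i,n}$, the lemma reduces to proving $\int_0^{L_n}\tilde E(\eta)e^{-2\eta}\,d\eta\to 1/2$ and $\int_0^{L_n}\tilde E(\eta)\eta e^{-2\eta}\,d\eta\to 1/4$, where $\tilde E(\eta):=f_n(w(\eta))/f_n(\mu_{i,n})\in[0,1]$; the latter then gives $I_nL_n-\mu_{i,n}=O(\rho_{i,n}^2/(\mu_{i,n}\ga_{i,n}^2))=o(\mu_{i,n})$, and combined with the former yields the first stated limit.

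The heart of the proof is the pointwise limit $\tilde E(\eta)\to 1$ for each fixed $\eta\ge 0$. Starting from $w''(0)=-\rho_{i,n}^2/(2\mu_{i,n}\ga_{i,n}^2)$ and using the bounds $|w''|\le \rho_{i,n}^2\la_n f_n(\mu_{i,n})e^{-2\eta}$, $|w'|\le\rho_{i,n}^2\la_n f_n(\mu_{i,n})/2$, and $\la_n f_n'(\mu_{i,n})=O(1/\ga_{i,n}^2)$ to control $|w'''|=O((\rho_{i,n}/\ga_{i,n})^2/\mu_{i,n})$, Taylor's theorem gives $\mu_{i,n}-w(\eta)=O(\eta^2\rho_{i,n}^2/(\mu_{i,n}\ga_{i,n}^2))$ uniformly on bounded $\eta$. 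Substituting into $\tilde E(\eta)=(w/\mu_{i,n})\exp(w^2-\mu_{i,n}^2+\alpha(w^{\beta_n}-\mu_{i,n}^{\beta_n}))$, the mean value theorem produces $w^2-\mu_{i,n}^2=O(\eta^2(\rho_{i,n}/\ga_{i,n})^2)\to 0$ and $\alpha(w^{\beta_n}-\mu_{i,n}^{\beta_n})=O(\eta^2\mu_{i,n}^{\beta_n-2}(\rho_{i,n}/\ga_{i,n})^2)\to 0$ (the factor $(\rho_{i,n}/\ga_{i,n})^2\to 0$ doing the work in both cases, the second also using $\beta_n<2$ so that $\mu_{i,n}^{\beta_n-2}$ is bounded). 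Once this is established, the dominated convergence theorem with the integrable dominants $e^{-2\eta}$ and $\eta e^{-2\eta}$ on $[0,\infty)$ (extending $\tilde E$ by zero beyond $L_n$) completes the proof.

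The main obstacle is precisely the subcritical correction $\alpha\mu_{i,n}^{\beta_n}[(w/\mu_{i,n})^{\beta_n}-1]$ in the exponential factor of $\tilde E$: naively, the large factor $\mu_{i,n}^{\beta_n}$ could overwhelm any fine estimate on $\mu_{i,n}-w$. The saving balance is the explicit appearance of $(\rho_{i,n}/\ga_{i,n})^2\to 0$ in the quadratic Taylor term of $w$ around $\eta=0$, which compensates for the growth of $\mu_{i,n}^{\beta_n}$ under the subcriticality $\beta_n<2$ inherited from the Trudinger-Moser setting.
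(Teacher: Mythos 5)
Your proof is correct, and while the engine driving the two limits is the same as in the paper --- the smallness of $(\rho_{i,n}/\ga_{i,n})^2=2\la_n\rho_{i,n}^2\mu_{i,n}f_n(\mu_{i,n})\to 0$ from \eqref{d8} forces $u_{i,n}$ to be essentially flat at height $\mu_{i,n}$ across the whole inner annulus, after which dominated convergence applied to the exact identities of Lemma \ref{lem:id} produces the factors $\int_0^\infty e^{-2\eta}d\eta=1/2$ and $\int_0^\infty \eta e^{-2\eta}d\eta=1/4$ --- you implement both halves differently. For $r_{i-1,n}/\rho_{i,n}\to 0$ the paper simply writes $\mu_{i,n}=\int_{r_{i-1,n}}^{\rho_{i,n}}u_{i,n}'\,dr$ and applies Cauchy--Schwarz against the energy bound \eqref{bdd}, whereas you build the barrier $u_{i,n}(r)\le I_n\log(r/r_{i-1,n})$ and invoke \eqref{a7} together with $\mu_{i,n}/\mu_{i-1,n}\to 0$ from Lemma \ref{lem:d8}; both are valid, the paper's being shorter and using fewer hypotheses, yours giving the quantitative lower bound $L_n\ge \mu_{i,n}\mu_{i-1,n}(1+o(1))/2$ as a by-product. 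For the limits themselves, the paper rescales to $\tilde z_n(r)=2\mu_{i,n}(u_{i,n}(\rho_{i,n}r)-\mu_{i,n})$ and deduces $\tilde z_n\to 0$ in $C^1_{\mathrm{loc}}((0,1])$ from the ODE, while you work in logarithmic coordinates and extract the explicit Taylor bound $0\le\mu_{i,n}-w(\eta)\le \tfrac{\eta^2}{4\mu_{i,n}}(\rho_{i,n}/\ga_{i,n})^2$ from the second-derivative estimate alone (your detour through $w'''$ is unnecessary but harmless). Your treatment of the perturbation term $\alpha(\mu_{i,n}^{\beta_n}-w^{\beta_n})=O(\eta^2\mu_{i,n}^{\beta_n-2}(\rho_{i,n}/\ga_{i,n})^2)$ correctly isolates where $\beta_n<2$ and \eqref{d8} enter, which is exactly the point the paper's qualitative $\tilde z_n\to 0$ argument also rests on, so the two proofs are interchangeable here; your version has the small advantage of yielding an explicit rate for $I_nL_n-\mu_{i,n}$.
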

\begin{proof} We put $\tilde{r}_n:=r_{i-1,n}/\rho_{i,n}$. Moreover, we write  $\mu_n=\mu_{i,n}$ and $\rho_n=\rho_{i,n}$ for simplicity. 
We first claim that $\lim_{n\to \infty}\tilde{r}_n=0$. Actually, we get 
\[
\mu_n=\int_{r_{i-1,n}}^{\rho_n} u_n'(r)dr\le \left(\int_{r_{i-1,n}}^{\rho_n}|u_n'(r)|^2rdr\right)^{\frac12}\left(\log{\frac{1}{\tilde{r}_n}}\right)^{\frac12}.
\]
Then the claim follows by \eqref{infty} and \eqref{bdd}. Next we define a scaled function $\tilde{z}_n(r):=2\mu_n(u_n(\rho_nr)-\mu_n)$ for $r\in(\tilde{r}_n,1)$. Then it satisfies
\[
\begin{cases}
-\tilde{z}_n''-\frac{1}{r}\tilde{z}_n'=2\la_n \rho_n^2\mu_nf_n(\mu_n)\frac{f_n\left(\frac{\tilde{z}_n}{2\mu_n}+\mu_n\right)}{f_n(\mu_n)} 
\text{ in }(\tilde{r}_n,1),\\
\tilde{z}_n(1)=0=\tilde{z}_n'(1).
\end{cases}
\]
Thanks to \eqref{d8}, we get that 
\[
-\tilde{z}_n''-\frac{1}{r}\tilde{z}_n'=o(1)
\]
where $o(1)\to0$ uniformly in $(\tilde{r}_{n},1)$. Integrating this formula and using $\tilde{z}_n'(1)=0=\tilde{z}_n(1)$, we see $\tilde{z}_n\to0$ in $C^1_{\text{loc}}((0,1])$. Then similarly to \eqref{d4}, we use Lemma \ref{lem:id} to derive
\begin{equation*}
\begin{split}
\mu_n&=
\la_n \rho_n^2f_n(\mu_n)\int_{\tilde{r}_n}^{1}\frac{f_n\left(\frac{\tilde{z}_n}{2\mu_n}+\mu_n\right)}{f_n(\mu_n)}r\log{\frac{r}{\tilde{r}_n}}dr\\
&=\la_n \rho_n^2f_n(\mu_n)\log{\frac{1}{\tilde{r}_n}}\int_{\tilde{r}_n}^{1}\left(\frac{\tilde{z}_n}{2\mu_n^2}+1\right)e^{\tilde{z}_n+\frac{\tilde{z}_n^2}{4 \mu_n^2}+\alpha \mu_n^{\beta_n}\left\{\left(\frac{\tilde{z}_n}{2\mu_n^2}+1\right)^{\beta_n}-1\right\}}
rdr\\
&\ \ \ \ +\la_n \rho_n^2f_n(\mu_n)\int_{\tilde{r}_n}^{1}\left(\frac{\tilde{z}_n}{2\mu_n^2}+1\right)e^{\tilde{z}_n+\frac{\tilde{z}_n^2}{4 \mu_n^2}+\alpha \mu_n^\beta\left\{\left(\frac{\tilde{z}_n}{2\mu_n^2}+1\right)^{\beta_n}-1\right\}}
r\log{r}dr.
\end{split}
\end{equation*}
Therefore, it follows from the Lebesgue convergence theorem and our first claim  that
\[
\lim_{n\to \infty}\frac{\mu_n}{\la_n \rho_n^2f_n(\mu_n)\log{\frac{1}{\tilde{r}_n}}}=\frac12.
\]
This shows  the first equality of \eqref{e0}. Finally, since 
\[
\begin{split}
 r_{i-1,n} &u_n'(r_{i-1,n})\\
&=\int_{r_{i-1,n}}^{\rho_n} \la_n f_n(u_n)rdr\\
&=\la_n \rho_n^2f_n(\mu_n)\int_{\tilde{r}_n}^{1}\left(\frac{\tilde{z}_n}{2\mu_n^2}+1\right)e^{\tilde{z}_n+\frac{\tilde{z}_n^2}{4 \mu_n^2}+\alpha \mu_n^{\beta_n}\left\{\left(\frac{\tilde{z}_n}{2\mu_n^2}+1\right)^{\beta_n}-1\right\}}rdr,
\end{split}
\] 
we similarly get the second one. This completes the proof.
\end{proof}

By the previous lemma, \eqref{a4} and \eqref{a7}, we get the following.
\begin{lemma}\label{lem:e}Let $i\ge2$ and assume \eqref{infty}-\eqref{a7}. Then we have  that
\begin{equation}
\limsup_{n\to \infty}\mu_{i,n}^{2-\beta_n}\left(\frac{\rho_{i,n}}{\ga_{i,n}}\right)^{2(\beta_n-1)}= 8^{\beta_*-1}\alpha\left(1-\frac{\beta_*}{2}\right),\label{e1}
\end{equation}
and 
\begin{equation}\label{f11}
\lim_{n\to \infty}\frac{\mu_{i,n}}{\mu_{i-1,n}^{\beta_n-1}}=\alpha\left(1-\frac{\beta_*}{2}\right).
\end{equation}
In particular,  if $\beta_*<3/2$, we get
\begin{equation}
\lim_{n\to \infty}\mu_{i,n}^{2-\beta_n}\frac{\rho_{i,n}}{\ga_{i,n}}=0.\label{e2}
\end{equation}
\end{lemma}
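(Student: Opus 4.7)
The strategy is to derive two independent asymptotic expressions for $\log(\rho_{i,n}/r_{i-1,n})$ — one from Lemma \ref{lem:e0} combined with \eqref{a7} and the defining identity $\ga_{i,n}^{-2}=2\la_n\mu_{i,n}f_n(\mu_{i,n})$, the other from assumption \eqref{a4} — and match them to extract \eqref{f11}. Then \eqref{e1} and \eqref{e2} will follow by direct algebraic substitution.

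First, combining the second equality of \eqref{e0} with \eqref{a7} and using continuity of $u_n'$ at the nodal point $r_{i-1,n}$ (so that $r_{i-1,n}u_{i,n}'(r_{i-1,n}) = r_{i-1,n}|u_{i-1,n}'(r_{i-1,n})| = (2+o(1))/\mu_{i-1,n}$), I would obtain
\[
\la_n\rho_{i,n}^2 f_n(\mu_{i,n}) = \frac{4+o(1)}{\mu_{i-1,n}}.
\]
Multiplying by $2\mu_{i,n}$ and using the definition of $\ga_{i,n}$ yields $(\rho_{i,n}/\ga_{i,n})^2 = (8+o(1))\,\mu_{i,n}/\mu_{i-1,n}$; together with \eqref{d77} and \eqref{infty} this forces $\mu_{i-1,n}\to\infty$. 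Inserting into the first equality of \eqref{e0} then gives the first expression
\[
\log\frac{\rho_{i,n}}{r_{i-1,n}} = \frac{1+o(1)}{2}\,\mu_{i,n}\mu_{i-1,n}.
\]

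For the second expression, I would take the logarithm of $\la_n\rho_{i,n}^2 f_n(\mu_{i,n})=(4+o(1))/\mu_{i-1,n}$, eliminate $\log\la_n$ via \eqref{a4}, and expand $\log f_n(\mu_{i,n}) = \log\mu_{i,n} + \mu_{i,n}^2 + \alpha\mu_{i,n}^{\beta_n}$, obtaining
\[
2\log\frac{\rho_{i,n}}{r_{i-1,n}} = \alpha\!\left(1-\tfrac{\beta_*}{2}\right)\mu_{i-1,n}^{\beta_n}(1+o(1)) - \mu_{i,n}^2 - \alpha\mu_{i,n}^{\beta_n} + O(\log\mu_{i-1,n}).
\]
Since $\mu_{i-1,n}\to\infty$ and $\mu_{i,n}/\mu_{i-1,n}\to0$, both $\alpha\mu_{i,n}^{\beta_n}$ and $O(\log\mu_{i-1,n})$ are $o(\mu_{i-1,n}^{\beta_n})$. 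Matching with the first expression and dividing by $\mu_{i,n}\mu_{i-1,n}$ produces
\[
1+o(1) = \alpha\!\left(1-\tfrac{\beta_*}{2}\right)\frac{\mu_{i-1,n}^{\beta_n-1}}{\mu_{i,n}} - \frac{\mu_{i,n}}{\mu_{i-1,n}} + o\!\left(\frac{\mu_{i-1,n}^{\beta_n-1}}{\mu_{i,n}}\right),
\]
and using $\mu_{i,n}/\mu_{i-1,n}\to0$ gives $\mu_{i,n}/\mu_{i-1,n}^{\beta_n-1}\to\alpha(1-\beta_*/2)$, which is \eqref{f11}.

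For \eqref{e1}, raising $(\rho_{i,n}/\ga_{i,n})^2=(8+o(1))\mu_{i,n}/\mu_{i-1,n}$ to the power $\beta_n-1$ and multiplying by $\mu_{i,n}^{2-\beta_n}$ gives $8^{\beta_n-1}\mu_{i,n}/\mu_{i-1,n}^{\beta_n-1}(1+o(1))\to 8^{\beta_*-1}\alpha(1-\beta_*/2)$ by \eqref{f11}. For \eqref{e2}, using \eqref{f11} to express $\mu_{i-1,n}$ in terms of $\mu_{i,n}$ and substituting into $\rho_{i,n}/\ga_{i,n}\sim\sqrt{8\mu_{i,n}/\mu_{i-1,n}}$ shows that the exponent of $\mu_{i-1,n}$ in $\mu_{i,n}^{2-\beta_n}\rho_{i,n}/\ga_{i,n}$ is $(2-\beta_n)(\beta_n-3/2)$, which is strictly negative for $\beta_*<3/2$ and $n$ large, so $\mu_{i,n}^{2-\beta_n}\rho_{i,n}/\ga_{i,n}\to 0$. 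The main obstacle is the error accounting in the matching step: the contributions $\alpha\mu_{i,n}^{\beta_n}$ and $O(\log\mu_{i-1,n})$ must be shown to be genuinely negligible compared to $\mu_{i-1,n}^{\beta_n}$ before the leading-order balance can be extracted, which is why establishing $\mu_{i-1,n}\to\infty$ in the first part is crucial.
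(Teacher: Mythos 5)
Your proof is correct and follows essentially the same route as the paper's, differing only in the order of presentation: you derive \eqref{f11} first by explicitly taking logarithms of $\la_n\rho_{i,n}^2 f_n(\mu_{i,n})=(4+o(1))/\mu_{i-1,n}$ (which is the paper's \eqref{e3-1}), eliminating $\log\la_n$ via \eqref{a4}, and matching against the first equality of \eqref{e0}, then deduce \eqref{e1} from \eqref{f11}; the paper first substitutes \eqref{e3-2} into \eqref{e3} to get \eqref{e1} directly, then extracts \eqref{f11}. Your explicit note that $\mu_{i-1,n}\to\infty$ (forced by \eqref{infty} and \eqref{d77}) and your careful accounting that the terms $\alpha\mu_{i,n}^{\beta_n}$ and $O(\log\mu_{i-1,n})$ are $o(\mu_{i-1,n}^{\beta_n})$ make transparent a step the paper leaves implicit; your derivation of \eqref{e2} by computing the exponent $(2-\beta_n)(\beta_n-3/2)<0$ of $\mu_{i-1,n}$ is a clean equivalent of the paper's factoring out $(\rho_{i,n}/\ga_{i,n})^{3-2\beta_n}$ and invoking \eqref{d8}.
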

\begin{proof}
From the first equality in \eqref{e0}, we get
\begin{equation}
\frac{\la_n \rho_{i,n}^2f_n(\mu_{i,n})}{\mu_{i,n}}\left(\log{(\la_n\rho_{i,n}^2)}+\log{\frac1{\la_n r_{i-1,n}^2}}\right)=4+o(1).\label{e3}
\end{equation}
On the other hand, using \eqref{a7} and \eqref{a4} for the second equality in \eqref{e0}  implies
\[
\begin{split}
2+o(1)&=\frac{\la_n \rho_{i,n}^2f_n(\mu_{i,n})\mu_{i-1,n}}{2+o(1)}\\
&=\frac{\la_n \rho_{i,n}^2f_n(\mu_{i,n})}{2+o(1)}\left\{\frac{\log{\frac{1}{\la_n r_{i-1,n}^2}}}{\alpha\left(1-\frac{\beta_*}{2}\right)+o(1)}\right\}^{\frac1{\beta_n}}.
\end{split}
\]
It follows from the first equality of this formula that
\begin{equation}\label{e3-1}
\la_n \rho_{i,n}^2=\frac{4+o(1)}{f_n(\mu_{i,n})\mu_{i-1,n}},
\end{equation}
and from the second one that
\begin{equation}\label{e3-2}
\log{\frac1{\la_n r_{i-1,n}^2}}=\frac{4^{\beta_*}\alpha\left(1-\frac{\beta_*}{2}\right)+o(1)}{\left\{\la_n \rho_{i,n}^2 f_n(\mu_{i,n})\right\}^{\beta_n}}.
\end{equation}
We substitute \eqref{e3-2} into \eqref{e3} and get
\[
\begin{split}
4+o(1)&=\frac{\la_n \rho_{i,n}^2f(\mu_{i,n})}{\mu_{i,n}}\left[\log{(\la_n\rho_{i,n}^2)}+\frac{4^{\beta_*}\alpha\left(1-\frac{\beta_n}{2}\right)+o(1)}{\left\{\la_n \rho_{i,n}^2 f_n(\mu_{i,n})\right\}^{\beta_n}}\right]\\
&=\frac{4+o(1)}{\mu_{i,n}\mu_{i-1,n}}\log{\frac{4+o(1)}{f_n(\mu_{i,n})\mu_{i-1,n}}}+\frac{4^{\beta_*}\alpha\left(1-\frac{\beta_*}{2}\right)+o(1)}{\mu_{i,n}\left\{\la_n \rho_{i,n}^2 f_n(\mu_{i,n})\right\}^{\beta_n-1}}
\end{split}
\]
where we used \eqref{e3-1} for the second equality. Notice that \eqref{d77} implies that the first term on the right hand side converges to zero as $n\to \infty$.  Consequently, by the definition of $\ga_{i,n}$, we obtain
\[
\mu_{i,n}^{2-\beta_n}\left(\frac{\rho_{i,n}}{\ga_{i,n}}\right)^{2(\beta_n-1)}= 8^{\beta_*-1}\alpha\left(1-\frac{\beta_*}{2}\right)+o(1).\]
This proves \eqref{e1}. Furthermore, substituting the definition of $\ga_{i,n}$ and \eqref{e3-1} into this formula, we see
\[
\mu_{i,n}^{2-\beta_n}\left(\frac{(8+o(1))\mu_{i,n}}{\mu_{i-1,n}}\right)^{\beta_n-1}= 8^{\beta_*-1}\alpha\left(1-\frac{\beta_*}{2}\right)+o(1).
\]
This ensures \eqref{f11}. Finally, if $\beta_*\in(0,3/2)$, \eqref{e1} and \eqref{d8} show that 
\[
\mu_{i,n}^{2-\beta_n}\frac{\rho_{i,n}}{\ga_{i,n}}= \left\{8^{\beta_*-1}\alpha\left(1-\frac{\beta_*}{2}\right) +o(1)\right\}\left(\frac{\rho_{i,n}}{\ga_{i,n}}\right)^{3-2\beta_n}\to0
\]
as $n\to \infty$. We finish the proof.
\end{proof}
\begin{remark}\label{rmk:e1} By \eqref{e1} and \eqref{d8}, we see that if $(\beta_n)\subset(0,1]$, then $\mu_{i,n}$ is uniformly bounded for all $n\in \mathbb{N}$. This contradicts our basic assumption \eqref{infty}. This will prove that, interestingly, if $\int_0^1u_n'(r)^2rdr$ is uniformly bounded and $\beta_n\le1$ for all $n\in \mathbb{N}$, then $u_{i,n}$ does never blow up for any $i\ge2$. For the detail, see Proof of Theorem \ref{a31} in Section \ref{proof}. This remark suggests that, in the rest of the argument in this section, we may restrict our attention only on the case $i=1$ if $(\beta_n)\subset(0,1]$.
\end{remark}
\begin{remark}\label{rmk:e2}
As in the statement, the assumption $\beta_*<3/2$ ensures \eqref{e2}. We will see that the next lemmas strongly depend on this fact. For example, it allows the assertion $\e_n=o(\mu_{i,n}^{2-\beta_n})$ in Lemma \ref{le70} below. On the other hand, if $i\not=1$ and $(\beta_n)\subset [3/2,2)$, this assertion fails by \eqref{e1} above. This implies that the effect of the error term $\rho_{i,n}/\ga_{i,n}$ would appear in the strong point wise estimate like \eqref{e70}. More precisely, the term $-\alpha \beta_*/(2\mu_{i,n}^{2-\beta_*})$ in \eqref{e70} would be modified to the one with $\mu_{i,n}^{-(2-\beta_n)/(2(\beta_n-1))}$ in view of  \eqref{e1}. This change would affect all the results, for example, the energy expansion in Theorem \ref{a10} and the asymptotic formulas in Theorem \ref{a31}, based on \eqref{e70}. 
\end{remark}

Notice that in the following lemmas, we additionally assume $\beta_*<3/2$ if $i\not=1$. We next prove the following.
\begin{lemma}\label{e333}
Assume $i\ge1$ and \eqref{infty}-\eqref{a7}. Moreover, we suppose $\beta_*<3/2$ if $i\not=1$. Let $z_{i,n},z$ be functions defined in Proposition \ref{dd} and put $\phi_n:=\mu_{i,n}^{2-\beta_n}(z_{i,n}-z)$. Then we get $\phi_n\to \phi$ in $C^2_{\text{loc}}(0,\infty)\cap C^0_{\text{loc}}([0,\infty))$ where $\phi$ satisfies 
\begin{equation}
-\phi''-\frac{1}{r}\phi'=e^{z}\left( \phi+\frac{\alpha \beta_*}{2}z\right)\text{ in }(0,\infty),\ \phi(0)=0.\label{e30}
\end{equation}
In particular, we obtain
\begin{equation}
\begin{split}
\phi(r)
=\alpha\beta_*\left(\log{(8+r^2)}+\frac{8}{8+r^2}-1-\log{8}\right)\ (r\in[0,\infty)).
\end{split}\label{e31}
\end{equation}
\end{lemma}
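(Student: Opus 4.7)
The plan is to subtract the Liouville equation \eqref{d01} satisfied by $z$ from the rescaled equation \eqref{d1} satisfied by $z_{i,n}$, multiply by $\mu_{i,n}^{2-\beta_n}$, pass to the limit to obtain a linear ODE for $\phi$, and then verify the explicit formula \eqref{e31} by direct substitution. The first step is to Taylor-expand the exponent on the right-hand side of \eqref{d1}. Setting $w_n:=z_{i,n}/(2\mu_{i,n}^2)$, the binomial expansion gives $(1+w_n)^{\beta_n}=1+\beta_n w_n+O(w_n^2)$ uniformly on any compact subset $K$ of $[0,\infty)$, since $z_{i,n}\to z$ locally uniformly by Proposition \ref{dd} forces $w_n=O(\mu_{i,n}^{-2})$. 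Together with $z_{i,n}^2/(4\mu_{i,n}^2)=O(\mu_{i,n}^{-2})$ and the prefactor $1+z_{i,n}/(2\mu_{i,n}^2)=1+O(\mu_{i,n}^{-2})$, this yields
\[
\Bigl(1+\tfrac{z_{i,n}}{2\mu_{i,n}^2}\Bigr)e^{z_{i,n}+\tfrac{z_{i,n}^2}{4\mu_{i,n}^2}+\alpha\mu_{i,n}^{\beta_n}\{(1+w_n)^{\beta_n}-1\}}=e^{z_{i,n}}\Bigl(1+\tfrac{\alpha\beta_n z_{i,n}}{2\mu_{i,n}^{2-\beta_n}}+O(\mu_{i,n}^{-2})\Bigr)
\]
uniformly on $K$.

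Subtracting $-z''-z'/r=e^z$ and multiplying by $\mu_{i,n}^{2-\beta_n}$ produces
\begin{equation*}
-\phi_n''-\frac{1}{r}\phi_n'+R_n(r)=\mu_{i,n}^{2-\beta_n}(e^{z_{i,n}}-e^z)+\frac{\alpha\beta_n e^{z_{i,n}}z_{i,n}}{2}+o(1),
\end{equation*}
with the radial correction
\[
R_n(r):=\mu_{i,n}^{2-\beta_n}\Bigl(\tfrac{1}{r}-\tfrac{1}{r+\rho_{i,n}/\ga_{i,n}}\Bigr)z_{i,n}'(r).
\]
On each compact subset of $(0,\infty)$, $|R_n|\le C\mu_{i,n}^{2-\beta_n}\rho_{i,n}/\ga_{i,n}\to 0$: this is immediate when $i=1$ since $\rho_{1,n}=0$ by radial symmetry, and follows from \eqref{e2} of Lemma \ref{lem:e} when $i\ge 2$, where the hypothesis $\beta_*<3/2$ is precisely used. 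A mean-value expansion rewrites $\mu_{i,n}^{2-\beta_n}(e^{z_{i,n}}-e^z)=e^{\xi_n}\phi_n$ with $\xi_n$ lying between $z$ and $z_{i,n}$, so $e^{\xi_n}\to e^z$ locally uniformly.

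The initial data $\phi_n(0)=0=\phi_n'(0)$ hold since $z_{i,n}(0)=z(0)=0$ and $z_{i,n}'(0)=2\mu_{i,n}\ga_{i,n}u_{i,n}'(\rho_{i,n})=0=z'(0)$. Recasting the equation in integral form $r\phi_n'(r)=-\int_0^r s\bigl(e^{\xi_n}\phi_n+\tfrac{\alpha\beta_n e^{z_{i,n}}z_{i,n}}{2}+o(1)-R_n\bigr)ds$ and applying a Gronwall estimate yields local uniform bounds on $\phi_n$ and $\phi_n'$; bootstrapping via the ODE then provides local $C^2$ bounds on $(0,\infty)$ and local $C^0$ bounds on $[0,\infty)$. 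Passing to a subsequence, $\phi_n\to\phi$ in $C^2_{\text{loc}}((0,\infty))\cap C^0_{\text{loc}}([0,\infty))$, and the limit satisfies \eqref{e30}.

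Finally, the explicit formula \eqref{e31} is verified by direct substitution. With $e^z=64/(8+r^2)^2$ and $z=\log 64-2\log(8+r^2)$, the candidate $\phi(r)=\alpha\beta_*(\log(8+r^2)+8/(8+r^2)-1-\log 8)$ gives $-\phi''-\phi'/r=-64\alpha\beta_*r^2/(8+r^2)^3$, while the $\log(8+r^2)$ terms cancel to produce $\phi+(\alpha\beta_*/2)z=-\alpha\beta_*r^2/(8+r^2)$, so $e^z(\phi+(\alpha\beta_*/2)z)$ equals the same expression. Since the linearized radial Liouville operator has a double indicial exponent $0$ at $r=0$, its regular homogeneous solutions form a one-parameter family spanned by $2(8-r^2)/(8+r^2)$, which is nonzero at the origin; hence $\phi(0)=0$ singles out this candidate uniquely among regular solutions. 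The principal obstacle is the vanishing of $R_n$: this is where $\beta_*<3/2$ enters when $i\ne 1$, because \eqref{e1} shows that otherwise $\mu_{i,n}^{2-\beta_n}\rho_{i,n}/\ga_{i,n}$ would admit a nonzero limit, forcing additional terms in \eqref{e30} and altering the profile \eqref{e31}, as foreshadowed in Remark \ref{rmk:e2}.
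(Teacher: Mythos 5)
Your proposal is correct, and it matches the paper's proof in its main architecture: the same Taylor/binomial expansion of the nonlinearity producing the term $\tfrac{\alpha\beta_n}{2}z_{i,n}e^{z_{i,n}}/\mu_{i,n}^{2-\beta_n}$ plus $O(\mu_{i,n}^{-2})$ errors, the same isolation of the radial correction term coming from $\rho_{i,n}/\ga_{i,n}$, the same use of \eqref{e2} (hence $\beta_*<3/2$ for $i\ne1$) to kill it, and the same explicit integration of the limit ODE with the regularity condition at the origin fixing the solution. The one place where you genuinely diverge is the a priori bound on $\phi_n$. The paper argues by contradiction: it normalizes $\tilde\phi_n=\phi_n/\phi_n(\xi_n)$ at a putative blow-up point, extracts a limit solving the homogeneous linearized Liouville equation with zero value at the origin, and uses the explicit basis $\frac{8-r^2}{8+r^2}$, $\frac{(8-r^2)\log r+16}{8+r^2}$ to force the limit to vanish, contradicting $\tilde\phi_n(\xi_n)=1$. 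You instead run a direct Volterra--Gronwall estimate on the integral form of the equation with zero data at $r=0$; since the integrating factor is $(r+\rho_{i,n}/\ga_{i,n})$ and the kernel $s\mapsto (s+\rho_{i,n}/\ga_{i,n})e^{\xi_n(s)}$ is locally integrable and uniformly bounded, this does close and is arguably more elementary, avoiding the structure of the homogeneous solutions at this stage (you still invoke it later for uniqueness of $\phi$, exactly where the paper uses it to fix $c_1,c_2$). Two small points of care: the integrating factor should be $(r+\rho_{i,n}/\ga_{i,n})$ rather than $r$ when $i\ge2$ if you keep the shifted operator, and your pointwise bound $|R_n|\le C\mu_{i,n}^{2-\beta_n}\rho_{i,n}/\ga_{i,n}$ degenerates as $r\to0$ unless you first use $|z_{i,n}'(r)|\le r$ from \eqref{d2}; both issues disappear in the integral form, which is how the paper handles them as well (cf.\ \eqref{c4}).
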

\begin{proof} We write  $\rho_n=\rho_{i,n}$, $\mu_n=\mu_{i,n}$, $\ga_n=\ga_{i,n}$ and $z_n=z_{i,n}$ for simplicity. Then using the definition of $\phi_n$ and the equations in \eqref{d1} and \eqref{d01}, we get 
\begin{equation}\label{eee}
\begin{split}
&\displaystyle -\phi_n''-\frac{1}{r+\frac{\rho_{n}}{\ga_{n}}}\phi_n'\\
&=\mu_{n}^{2-\beta_n}\Bigg\{\left(\frac{z_n}{2\mu_n^2}+1\right)e^{z_n+\frac{z_n^2}{4\mu_n^2}+\alpha \mu_n^{\beta_n}\left(\left(\frac{z_n}{2\mu_n^2}+1\right)^{\beta_n}-1\right)}\\
&\ \ \ \ \ \ \ \ \ \ \ \ \ \ \ \ \ \ \ \ \ \ \ \ \ \ \ \ \ \ \ \ \ \ \ \ \ \ \ \ \ \ \ \ \ \ \ \ -e^z+\left(\frac{1}{r+\frac{\rho_{n}}{\ga_{n}}}-\frac{1}{r}\right)z'\Bigg\},
\end{split}
\end{equation}
for all $r\in\left(0,\frac{r_{i,n}-\rho_{n}}{\ga_{n}}\right)$. Here recalling that $z_n$ is  locally uniformly bounded in $[0,\infty)$, we use the Taylor theorem to see 
\[
\left(\frac{z_n}{2\mu_n^2}+1\right)e^{z_n+\frac{z_n^2}{4\mu_n^2}+\alpha \mu_n^{\beta_n}\left\{\left(\frac{z_n}{2\mu_n^2}+1\right)^{\beta_n}-1\right\}}=e^{z_n}+\frac{\alpha \beta_n }{2\mu_n^{2-\beta_n}}z_ne^{z_n}+o\left(\frac{1}{\mu_n^{2-\beta_n}}\right)
\]
where $\mu_n^{2-\beta_n}\cdot o(1/\mu_n^{2-\beta_n})\to0$ locally uniformly in $[0,\infty)$. Then after substituting this into \eqref{eee}, for all $r\in\left(0,\frac{r_{i,n}-\rho_{n}}{\ga_{n}}\right)$ and $n\in \mathbb{N}$, we apply the mean value theorem to obtain a constant $\theta=\theta (n,r)\in(0,1)$ such that 
\begin{equation}
\begin{cases}\displaystyle -\phi_n''(r)-\frac{1}{r+\frac{\rho_{n}}{\ga_{n}}}\phi_n'(r)=e^{z(r)+\theta(z_n(r)-z(r))}\phi_n(r)\\
\ \ \ \ \ \ \ \ \ \ \ \ \ \ \ \ \ \ \ \ \ \ \ \ \ \ \ \ \ \ \ \ \  +\frac{\alpha \beta_n}{2}z_n(r) e^{z_n(r)}\\
\ \ \ \ \ \ \ \ \ \ \ \ \ \ \ \ \ \ \ \ \ \ \ \ \ \ \ \ \ \ \ \ \ +\mu_{n}^{2-\beta_n}\left(\frac{1}{r+\frac{\rho_{n}}{\ga_{n}}}-\frac{1}{r}\right)z'(r)+o(1),\\
\phi_n(0)=0=\phi_n'(0),
\end{cases}\label{e4}
\end{equation}
where $o(1)\to 0$ uniformly locally in $[0,\infty)$. Here, notice that third term of the right hand side of the equation is nontrivial if $i\ge2$. But thanks to  \eqref{e2}, we have 
\begin{equation}\label{e5}
\mu_{n}^{2-\beta_n}\left(\frac{1}{r+\frac{\rho_{n}}{\ga_{n}}}-\frac{1}{r}\right)z'(r)=\mu_{n}^{2-\beta_n}\frac{\rho_{n}}{\ga_{n}}\frac{4}{(8+r^2)\left(r+\frac{\rho_{n}}{\ga_{n}}\right)}\to0
\end{equation}
uniformly locally in $(0,\infty)$ as $n\to \infty$. Now we claim that $\phi_n$ is locally uniformly bounded in $[0,\infty).$ If not, there exist a constant $R>0$ and a sequence $(\xi_n)\subset [0,R]$ such that $\phi_n(\xi_n)=\max_{r\in[0,R]}|\phi_n(r)|$ and $\lim_{n\to \infty}|\phi_n(\xi_n)|=\infty$. Then putting $\tilde{\phi}_n:=\phi_n/\phi_n(\xi_n)$ and multiplying the equation in \eqref{e4} by $\phi(\xi_n)^{-1}$, we obtain that for all $r\in(0,R]$ 
\begin{equation}\label{e44}
\begin{cases}\displaystyle-\tilde{\phi}_n''(r)-\frac{1}{r+\frac{\rho_{n}}{\ga_{n}}}\tilde{\phi}_n'(r)=e^{z(r)+\theta(z_n(r)-z(r))}\tilde{\phi}_n(r)\\
\ \ \ \  \ \ \ \ \  \ \ \ \ \ \ \ \ \ \ \ \ \ \ \ \ \ \ \ \ \ \ \ \displaystyle +\frac{\mu_{n}^{2-\beta_n}}{\phi(\xi_n)}\frac{\rho_{n}}{\ga_{n}}\frac{4}{(8+r^2)\left(r+\frac{\rho_{n}}{\ga_{n}}\right)}
+o(1),\\
\tilde{\phi}_n(0)=0=\tilde{\phi}_n'(0),
\end{cases}
\end{equation}
where $o(1)\to0$ uniformly in $[0,R]$. It follows that
\[
-\tilde{\phi}_n''(r)-\frac{1}{r+\frac{\rho_{n}}{\ga_{n}}}\tilde{\phi}_n'(r)=O(1) +\frac{\mu_{n}^{2-\beta_n}}{\phi(\xi_n)}\frac{\rho_{n}}{\ga_{n}}\frac{4}{(8+r^2)\left(r+\frac{\rho_{n}}{\ga_{n}}\right)}\ (r\in(0,R]),
\]
where $O(1)$ is uniformly bounded in $[0,R]$. Then, for any $r\in(0,R]$, multiplying this formula by $(r+\rho_n/\ga_n)$ and integrating over  $(0,r)$ give
\begin{equation}\label{c4}
-\tilde{\phi}_n'(r)=O\left(\frac{\frac12r^2+\frac{\rho_{n}}{\ga_{n}}r}{r+\frac{\rho_{n}}{\ga_{n}}}\right)+\frac{\mu_{n}^{2-\beta_n}}{\phi_n(\xi_n)}\frac{\rho_{n}}{\ga_{n}}\left(r+\frac{\rho_n}{\ga_n}\right)^{-1}\int_0^r\frac{4}{(8+r^2)}dr.
\end{equation}
Consequently, with the aid of \eqref{e2}, we have that $\tilde{\phi}_n$ is uniformly bounded in $C^1([0,R])$. Therefore, it follows from the Ascoli-Arzel\`{a} theorem, the equation in \eqref{e44} and \eqref{e5} that there exists a function $\tilde{\phi}$ such that $\tilde{\phi}_n\to \tilde{\phi}$ in $C^2_{\text{loc}}((0,R])\cap C([0,R])$. Then $\tilde{\phi}$ satisfies 
\[
-\tilde{\phi}''-\frac{1}{r}\tilde{\phi}'=e^z\tilde{\phi} \text{ in }(0,R],\ \tilde{\phi}(0)=0.
\]
This implies
\[
\tilde{\phi}(r)=\tilde{c}_1\frac{8-r^2}{8+r^2}+\tilde{c}_2\frac{(8-r^2)\log{r}+16}{8+r^2}\ (r\in(0,R])
\]
for some constants $\tilde{c}_1,\tilde{c}_2\in \mathbb{R}$. Since $\lim_{r\to0+0}\tilde{\phi}(r)=0$, we get $\tilde{c}_1=\tilde{c}_2=0$ and thus, obtain $\tilde{\phi}=0$ in $[0,R]$. This contradicts the fact that $\tilde{\phi}_n(\xi_n)=1$ for all $n\in \mathbb{N}$. 
This proves the claim. Then, for any $r\in(0,\infty)$, multiplying the equation in \eqref{e4} by $(r+\rho_n/\ga_n)$ and integrating over $(0,r)$, a similar calculation as above gives that 
$\phi_n$ is uniformly bounded in $C_{\text{loc}}^1([0,\infty))$ thanks to \eqref{e2}. Then again by the Ascoli--Arzel\`{a} theorem, \eqref{e4} and \eqref{e5}, we obtain a function $\phi$ such that $\phi_n\to \phi$ in $C^2_{\text{loc}}((0,\infty))\cap C^0_{\text{loc}}([0,\infty))$. It follows from \eqref{e4} that
\begin{equation}\label{e6}
-\phi''-\frac{1}{r}\phi'=e^{z} \phi+\frac{\alpha \beta_*}{2}ze^{z}\text{ in }(0,\infty),\ \phi(0)=0.
\end{equation}
Then we compute that for all $r>0$,
\begin{equation}
\begin{split}
\frac{1}{\alpha \beta_*}\phi(r)&=c_1\frac{8-r^2}{8+r^2}+c_2\frac{(8-r^2)\log{r}+16}{8+r^2}\\&
\ \ +\log{(8+r^2)}+\frac{2(8-r^2)\log{r}+8(3-2\log{8})}{8+r^2},\\
\end{split}\label{e7}
\end{equation}
where $c_1,c_2\in \mathbb{R}$ are some constants. By $\lim_{r\to 0+0}\phi(r)=0$, we get $c_2=-2$ and then conclude $c_1=1+\log{8}$. This completes the proof.
\end{proof}

The next estimate is important. 
\begin{lemma}\label{le70} Let $i\ge1$ and assume \eqref{infty}-\eqref{a7}. In addition, if $i\not=1$, we suppose $\beta_*<3/2$. Then for any $R>0$, there exists a constant $C>0$ such that 
\begin{equation}\label{e70}
z_{i,n}(r)\le \left(1-\frac{\alpha\beta_*}{2\mu_{i,n}^{2-\beta_n}}\right)z(r)+C \e_n\log{r}
\end{equation}
for all $r\in[R,(r_{i,n}-\rho_{i,n})/\ga_{i,n}]$ and all large $n\in \mathbb{N}$ where we put
\[
\e_n:=\max\left\{\frac{1}{\mu_{i,n}^{4-2\beta_n}},\ \frac{1}{\mu_n^2},\ \frac{|\beta_n-\beta_*|}{\mu_n^{2-\beta_n}},\ \frac{\rho_{i,n}}{\ga_{i,n}}\right\}=o\left(\frac{1}{\mu_{i,n}^{2-\beta_n}}\right).\]
\end{lemma}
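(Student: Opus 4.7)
The plan is a supersolution-based comparison argument. Set $\de_n := \alpha\beta_*/(2\mu_{i,n}^{2-\beta_n})$, write $r_n^* := (r_{i,n}-\rho_{i,n})/\ga_{i,n}$, and define
\[
w_n(r) := (1-\de_n)z(r) + C\e_n\log r, \qquad r\in[R,r_n^*],
\]
with $C>0$ to be chosen large (depending on $R$). Write $\mathcal{L}_n v := -v''-v'/(r+\rho_{i,n}/\ga_{i,n})$ and denote the right-hand side of \eqref{d1} by $\mathcal{F}_n(v)$, so that $\mathcal{L}_n z_{i,n}=\mathcal{F}_n(z_{i,n})$. I would show (a) $\mathcal{L}_n w_n\ge\mathcal{F}_n(w_n)$ on $(R,r_n^*)$ and (b) $w_n\ge z_{i,n}$ at both endpoints. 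Since the $v$-derivative of $\mathcal{F}_n$ is strictly positive, the weak maximum principle applied to $z_{i,n}-w_n$ then yields \eqref{e70}.

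For (a), combining $-z''-z'/r=e^z$ (from \eqref{d01}) with the $2$D radial harmonicity of $\log r$ yields
\[
\mathcal{L}_n w_n = (1-\de_n)e^z + O\Big(\frac{\e_n}{r^2}\Big),
\]
uniformly for $r\in[R,r_n^*]$, the correction arising from the operator shift $\rho_{i,n}/\ga_{i,n}\le\e_n$. For $\mathcal{F}_n(w_n)$ I would Taylor expand the exponent, noting that
\[
(1-\de_n)\Big(1+\tfrac{\alpha\beta_n}{2\mu_n^{2-\beta_n}}\Big) = 1 + \tfrac{\alpha(\beta_n-\beta_*)}{2\mu_n^{2-\beta_n}} + O\Big(\tfrac{1}{\mu_n^{4-2\beta_n}}\Big) = 1+O(\e_n),
\]
so that the exponent collapses to $z + C\e_n(1+o(1))\log r + O(\e_n(|z|+z^2))$ and hence $\mathcal{F}_n(w_n) = e^z\cdot r^{C\e_n(1+o(1))}(1 + O(\e_n(1+|z|+z^2)))$. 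Using $|z(r)|+z(r)^2\lesssim 1+\log^2 r$ and $\e_n\log r_n^*\to 0$ (which relies on \eqref{e2}), the difference $\mathcal{L}_n w_n-\mathcal{F}_n(w_n)$ equals $-\de_n e^z$ plus errors of order $\e_n(\log^2 r)e^z$ and $\e_n/r^2$. Since $\de_n/\e_n\to\infty$, choosing $C$ large (depending on $R$) absorbs all error contributions into the slack supplied by $\de_n$, giving $\mathcal{L}_n w_n\ge\mathcal{F}_n(w_n)$ on $(R,r_n^*)$ for all large $n$.

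For (b), at $r=R$ Lemma \ref{e333} gives $z_{i,n}(R)-w_n(R) = \mu_n^{-(2-\beta_n)}[\phi(R)+(\alpha\beta_*/2)z(R)] + o(\mu_n^{-(2-\beta_n)}) - C\e_n\log R$; the explicit formula \eqref{e31} yields $\phi(R)+(\alpha\beta_*/2)z(R)=-\alpha\beta_* R^2/(8+R^2)<0$, an effect of order $1/\mu_n^{2-\beta_n}$ which dominates the $o(1/\mu_n^{2-\beta_n})$ term $C\e_n\log R$, so $z_{i,n}(R)\le w_n(R)$ for $n$ large. At $r=r_n^*$, one has $z_{i,n}(r_n^*)=-2\mu_n^2$; combining the defining relation $2\la_n\mu_n f_n(\mu_n)\ga_n^2=1$ with \eqref{a4} when $i\ge 2$ (or the analogous scaling identity obtained from Lemma \ref{lem:id} when $i=1$) delivers the asymptotic $\log r_n^* = \mu_n^2/2 + (\alpha\beta_*/4)\mu_n^{\beta_n}+O(\log\mu_n)$, whence $z(r_n^*) = -2\mu_n^2-\alpha\beta_*\mu_n^{\beta_n}+O(\log\mu_n)$ and consequently $(1-\de_n)z(r_n^*) = -2\mu_n^2 + o(\mu_n^{\beta_n})$. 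The leading $\mu_n^{\beta_n}$-terms cancel exactly, the lower-order remainder is swallowed by $C\e_n\log r_n^*$ for $C$ large, and $w_n(r_n^*)\ge z_{i,n}(r_n^*)$ follows.

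The decisive step is this cancellation at the right endpoint $r_n^*$: without the specific coefficient $\alpha\beta_*/4$ in the expansion of $\log r_n^*$ (itself a consequence of \eqref{a4}--\eqref{a7} combined with the defining relation of $\ga_{i,n}$), the boundary inequality would fail by a term of order $\mu_n^{\beta_n}$ that $C\e_n\log r_n^*$ cannot repair. Simultaneously, the standing hypothesis $\beta_*<3/2$ enters through \eqref{e2} to keep $\rho_{i,n}/\ga_{i,n}$ at order $o(1/\mu_n^{2-\beta_n})$, which both justifies the Taylor expansion uniformly up to $r_n^*$ and preserves the cancellation.
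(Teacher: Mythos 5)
Your supersolution strategy does not go through, and the obstruction is structural rather than technical. First, $w_n=(1-\de_n)z+C\e_n\log r$ is not a supersolution of \eqref{d1} on $[R,(r_{i,n}-\rho_{i,n})/\ga_{i,n}]$. The harmonic term $C\e_n\log r$ contributes nothing to $\mathcal{L}_nw_n$ but adds a \emph{positive} quantity to the exponent in $\mathcal{F}_n(w_n)$ for $r>1$; likewise $e^{-\de_n z}\ge1$ and $e^{w_n^2/(4\mu_n^2)}\ge1$. Hence $\mathcal{F}_n(w_n)\ge(1+o(1))e^{z}$ while $\mathcal{L}_nw_n=(1-\de_n)e^{z}+\dots$, so the differential inequality points the wrong way: there is no $-\de_n e^z$ of slack, contrary to your claim. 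Worse, the term $w_n^2/(4\mu_n^2)\sim 4\log^2r/\mu_n^2$ in the exponent is not an $O(\e_n\log^2 r)$ perturbation that $\de_n$ can absorb once $\log r\gtrsim\mu_n^{\beta_n/2}$, the operator-shift error $O((\rho_{i,n}/\ga_{i,n})r^{-2})$ is not dominated by $\de_ne^z\sim r^{-4}$ for large $r$, and your assertion $\e_n\log r_n^*\to0$ is false: $\e_n\ge\mu_n^{-2}$ while $\log r_n^*\sim\mu_n^2/2$, so $\e_n\log r_n^*\ge 1/2+o(1)$. Finally, even with a genuine supersolution, your maximum-principle step is misjustified: a \emph{positive} $v$-derivative of $\mathcal{F}_n$ is an obstruction to, not a reason for, the comparison principle, since one then needs the operator $\mathcal{L}_n-\partial_v\mathcal{F}_n$ to satisfy the maximum principle; the linearization $-\Delta-e^z$ has the sign-changing kernel element $(8-r^2)/(8+r^2)$, so this requires a separate argument (e.g.\ the positive solution $(r^2-8)/(r^2+8)$ on $r>\sqrt8$), and in any case the lemma is claimed for every $R>0$.

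There is also a circularity at the right endpoint. The expansion $\log r_n^*=\mu_n^2/2+(\alpha\beta_*/4)\mu_n^{\beta_n}+o(\mu_n^{\beta_n})$ on which your cancellation rests is equivalent to \eqref{f1} for the index $i$ (equivalently, to the sharp energy expansion \eqref{f6}); in the paper both of these are \emph{consequences} of Lemma \ref{le70}. The hypotheses \eqref{a4}--\eqref{a7} only concern $r_{i-1,n}$ and $\mu_{i-1,n}$, and the identity of Lemma \ref{lem:id} determines $\log(r_{i,n}/\ga_{i,n})$ to the required precision $o(\mu_n^{\beta_n})$ only once the concentrated mass is known to precision $o(\mu_n^{-(2-\beta_n)})$ --- which is exactly what the tail estimate \eqref{e70} is needed for. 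The paper avoids both problems by running a contraction-mapping argument for the remainder $\psi_n=z_n-z-\phi/\mu_n^{2-\beta_n}$ only on a truncated interval $[T,s_n]$ with $\log(8+s_n^2)\le\delta\mu_n^{\min\{1,2-\beta_n\}}$ (precisely so that the quadratic exponent terms stay small), and then propagates the bound to $[s_n,r_n^*]$ by integrating the monotone quantity $(r+\rho_n/\ga_n)z_n'$ outward from $s_n$, which requires no data at the far endpoint. To repair your proof you would need both a correct comparison function (incorporating the $\phi/\mu_n^{2-\beta_n}$ profile and valid only on an inner region) and a non-circular treatment of the outer region.
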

\begin{proof} We put $\rho_n=\rho_{i,n}$, $\mu_n=\mu_{i,n}$, $\ga_{n}=\ga_{i,n}$, and $z_n=z_{i,n}$ for simplicity. We apply the contraction mapping argument  in the proof of Lemma 5 in  \cite{MM1} (see also \cite{MM2}). We define a function $\psi_n$ on $[0,(r_{i,n}-\rho_n)/\ga_n]$ by
\[
\psi_n:=z_n-z-\frac{\phi}{\mu_n^{2-\beta_n}},
\]
where $\phi$ is taken from the previous lemma. Then from \eqref{d1}, \eqref{d01}, and \eqref{e30}, we get
\begin{equation}\label{eq:psi}
\begin{split}
&-\psi_n''-\frac{1}{r+\frac{\rho_n}{\ga_n}}\psi_n'\\
&=\left(\frac{z_n}{2\mu_n^2}+1\right)e^{z_n+\frac{z_n^2}{4\mu_n^2}+\mu_n^{\beta_n}\left\{\left(\frac{z_n}{2\mu_n^2}+1\right)^{\beta_n}-1\right\}}\\
&\ \ \ \ -e^z-\frac{1}{\mu_n^{2-\beta_n}}e^z\left(\phi+\frac{\alpha\beta_*}{2}z\right)+\left(\frac{1}{r+\frac{\rho_n}{\ga_n}}-\frac1r\right)\left(z'+\frac{\phi'}{\mu_n^{2-\beta_n}}\right)\\
&=\Phi_n(\psi_n)
\end{split}
\end{equation}
where we defined
\begin{equation}
\begin{split}
\Phi_n(\psi):=&e^z\Bigg[\left\{1+\frac{1}{2\mu_n^2}\left(z+\frac{\phi
}{\mu_n^{2-\beta_n}}+\psi\right)\right\}e^{h_n(\psi)}
\\
&\ \ \ \ \ \ \ \ \ \ \ \ \ \ \ \ \ \ \ \ \ \ \ \ \ \ \ \ \ \ -1-\frac{1}{\mu_n^{2-\beta_n}}\left(\phi+\frac{\alpha\beta_*}{2}z\right)\Bigg]\\
&+\frac{\rho_n}{\ga_n}\frac{4}{(r+\frac{\rho_n}{\ga_n})(r^2+8)}\left(1-\frac{\alpha\beta_*}{2\mu_n^{2-\beta_n}}\frac{r^2}{r^2+8}\right)
\end{split}\label{phi}
\end{equation} 
with
\begin{equation}\label{hn}
\begin{split}
h_n(\psi)&:=\psi+\frac{\phi}{\mu_n^{2-\beta_n}}+\frac{1}{4\mu_n^{2}}\left(z+\frac{\phi}{\mu_n^{2-\beta_n}}+\psi\right)^2\\&+\alpha\mu_n^{\beta_n}\left[\left\{\frac{1}{2\mu_n^{2}}\left(z+\frac{\phi}{\mu_n^{2-\beta_n}}+\psi\right)+1\right\}^{\beta_n}-1\right].
\end{split}
\end{equation}
 We first claim that for any $T>0$, there exists a constant $C(T)>0$ such that 
\begin{equation}\label{e80}
|\psi_n(r)|\le C(T)\e_n \text{ and }|\psi_n'(r)|\le C(T)\e_n\ (r\in[0,T]),
\end{equation}
for all large $n\in \mathbb{N}$ where $\e_n$ is defined as in the statement of this lemma and $\e_n=o\left(\mu_n^{-(2-\beta_n)}\right)$ by \eqref{e2}. To show the claim, fix any $T>0$. Then since $\psi_n\to0$ uniformly in $[0,T]$ by Lemma \ref{e333}, we have $|h_n(\psi_n)|\le1$ for large $n\in \mathbb{N}$. It follows that $e^{h_n(\psi_n)}=1+h_n(\psi_n)+O(h_n(\psi_n)^2)$ on $[0,T]$ by the Taylor theorem. Using this for \eqref{phi} with $\psi=\psi_n$, we compute that
\begin{equation}\label{ef}
\begin{split}
\Phi_n(\psi_n)
&=e^{z}\left[\left(1+O\left(\frac{1}{\mu_n^{2-\beta_n}}\right)\right)\psi_n+O\left(\psi_n^2\right)+O(\e_n)\right]\\
&+O\left(\frac{\rho_n}{\ga_n}\frac{1}{\left(r+\frac{\rho_n}{\ga_n}\right)\left(8+r^2\right)}\right)\text{ on } [0,T].
\end{split}
\end{equation}
Then, putting $\psi_n=\e_n\bar{\psi}_n$ in \eqref{eq:psi}, we get
\[
\begin{split}
-\bar{\psi}_n''-\frac{1}{r+\frac{\rho_n}{\ga_n}}\bar{\psi}_n'&=
e^z\left(1+O\left(\frac{1}{\mu_n^{2-\beta_n}}\right)\right)\bar{\psi}_n+O\left(\e_n\bar{\psi}_n^2\right)+O\left(1\right)\\
&\ \ \ \ +O\left(\frac{1}{\e_n}\frac{\rho_n}{\ga_n}\frac{1}{\left(r+\frac{\rho_n}{\ga_n}\right)\left(8+r^2\right)}\right) \text{ on } [0,T],
\end{split}
\]
and $\bar{\psi}_n(0)=0=\bar{\psi}'(0)$. Using this equation and noting $\e_n^{-1}\rho_n/\ga_n\le1$, we get that $\bar{\psi}_n$ is uniformly bounded in $C^1([0,T])$. (The detail of the proof is similar to the argument in confirming the locally uniformly boundedness of $\phi_n$ and $\phi_n'$ in the proof of Lemma \ref{e333}.) This ensures the claim. 
 
Next let us extend the estimate \eqref{e80} to a suitable expanding interval. To this end, we choose a sufficiently large number $T>0$ and a sequence $(s_n)\subset (T,(r_{i,n}-\rho_n)/\ga_n)$ so that $s_n\to \infty$. (More precise choice of $T$ and $(s_n)$ is given  later.) Then we consider an initial value problem
\[
-\psi''-\frac{1}{r+\frac{\rho_n}{\ga_n}}\psi'=\Phi_n(\psi)\text{ on }(T,s_n],\ \psi(T)=\psi_n(T),\ \psi'(T)=\psi_n'(T).
\]
Putting $\omega=\left(r+\frac{\rho_n}{\ga_n}\right)\psi'$, we get the 
 equivalent system,
\begin{equation}
\begin{cases}
\psi'=\frac{\omega}{\left(r+\frac{\rho_n}{\ga_n}\right)}\text{ on }(T,s_n],\\
\omega'=-\left(r+\frac{\rho_n}{\ga_n}\right)\Phi_n(\psi)\text{ on }(T,s_n],\\ \psi(T)=\psi_n(T),\ \omega(T)=\left(T+\frac{\rho_n}{\ga_n}\right)\psi_n'(T).
\end{cases}\label{e800}
\end{equation}
Notice that, by the uniqueness, the solution $(\psi,\omega)$ satisfies $(\psi,w)=(\psi_n,(r+\rho_n/\ga_n)\psi_n')$ on $[T,s_n]$. In order to construct the solution with an appropriate estimate, we reduce \eqref{e800} into an integral equation on a suitable function space. To do this, fix a constant $\tilde{C}>0$ such that
\begin{equation}\label{tildeC}
\tilde{C}\ge 4(C(T)(T+2)+1),
\end{equation}
where $C(T)>0$ is taken from \eqref{e80}. Then we define a complete metric space 
\[
\begin{split}
\mathcal{B}_{\tilde{C}}=&\Big\{(\psi,\omega)\in C^0([T,s_n])\times C^0([T,s_n])\ \big|\ 
\|\psi-\psi_n(T)\|_1\le \tilde{C} \e_n,\\&\ \ \ \ \|\omega\|_2\le \tilde{C}\e_n,\ 
                \psi(T)=\psi_n(T),\ \omega(T)=\left(T+\frac{\rho_n}{\ga_n}\right)\psi_n'(T)\Big\},
\end{split}
\]
with the norms $\|\cdot\|_1$ and $\|\cdot\|_2$ defined by 
\[
\|f\|_1=\sup_{r\in(T,s_n]}\left|\frac{f(r)}{\log{\left(r+\frac{\rho_n}{\ga_n}\right)}-\log{\left(T+\frac{\rho_n}{\ga_n}\right)}}\right|,\ \ \|f\|_2=2\sup_{r\in[T,s_n]}|f(r)|.
\]
Moreover, we set a map $\mathcal{F}:\mathcal{B}_{\tilde{C}}\to C^0([T,s_n])\times C^0([T,s_n])$ so that
$\mathcal{F}(\psi,\omega)=(F_{1}(\psi,\omega),F_{2}(\psi,\omega))$ and
\[
\begin{split}
&F_{1}(\psi,\omega)(r):=\psi(T)+\int_T^{r}\frac{\omega}{\left(s+\frac{\rho_n}{\ga_n}\right)}ds,\\
&F_{2}(\psi,\omega)(r):=\omega(T)-\int_T^{r}\left(s+\frac{\rho_n}{\ga_n}\right)\Phi_n(\psi)ds,
\end{split}
\]
for $r\in[T,s_n]$. We shall find a fixed point of $\mathcal{F}$ in $\mathcal{B}_{\tilde{C}}$. 

To this end, we fix a small number $0<\delta\ll1$ (independently of $T$) and choose the sequence $(s_n)$ so that $s_n\le \sqrt{e^{\delta \mu_n^{\min\{1,2-\beta_n\}}}-8}$  for all $n\in \mathbb{N}$ and $\liminf_{n\to \infty}\e_n^{1/2}s_n>0$. It follows that 
\begin{equation}\label{e81}
\frac{1}{\mu_n^{\min\{1,2-\beta_n\}}}\log{(8+r^2)}\le \delta
\end{equation}
for all $r\in [T,s_n]$ and $n\in \mathbb{N}$. Moreover, by \eqref{e80} and \eqref{e81}, there exists a number $n_0=n_0(T,\delta)$ such that if $n\ge n_0$, it holds that
\begin{equation}\label{e810}
\sup_{s\in[T,s_n]}|\psi(s)|\le \delta\ 
\end{equation}
for any $\psi \in C^0([T,S])$ with $\|\psi-\psi_n(T)\|_1\le \tilde{C}\e_n$. Then similarly to \eqref{ef}, we calculate by \eqref{d00}, \eqref{e31}, \eqref{e81} and \eqref{e810} that 
\begin{equation}
\begin{split}
\Phi_n(\psi)
&=e^z\left[\left(1+O\left(\delta
\right)\right)\psi+O\left(\e_n\log^2{(8+r^2)}\right)
\right]\\
&+O\left(\frac{\rho_n}{\ga_n}\frac{1}{(r+\frac{\rho_n}{\ga_n})(8+r^2)}\right)\text{  on }[T,s_n]
\end{split}\label{e811}
\end{equation}
for any $\psi \in C^0([T,S])$ such that $\|\psi-\psi_n(T)\|_1\le \tilde{C}\e_n$ and all $n\ge n_0$. Analogously, we compute that 
\begin{equation}\label{e812}
|\Phi_n(\psi)-\Phi_n(\bar{\psi})|\le \left(1+O\left(\delta\right)\right)e^z|\psi-\bar{\psi}|\text{ on }[T,s_n],
\end{equation}
for all $\psi, \bar{\psi} \in C^0([T,S])$ verifying $\|\psi-\psi_n(T)\|_1\le \tilde{C}\e_n$ and $\|\bar{\psi}-\psi_n(T)\|_1\le \tilde{C}\e_n$ and all $n\ge n_0$. After this we always assume $n\ge n_0$. 

Now, we first claim that $\mathcal{F}: \mathcal{B}_{\tilde{C}}\to \mathcal{B}_{\tilde{C}}$. In fact, for any $(\psi,\omega)\in \mathcal{B}_{\tilde{C}}$, we get
\[
\begin{split}
|F_{1}(\psi,\omega)(r)-\psi_n(T)|&\le\frac12\|\omega\|_2\left|\int_T^r\frac{1}{s+\frac{\rho_n}{\ga_n}}ds\right|.
\end{split}
\]
It follows that
\[
\|F_{1}(\psi,\omega)-\psi_n(T)\|_1\le \frac12\tilde{C}\e_n.
\] 
On the other hand, from our choice of $(\psi,\omega)$, \eqref{e80} and \eqref{e811}, we have for any $r\in[T,s_n]$ that
\[
\begin{split}
&|F_{2}(\psi,\omega)(r)|\\
&\le \e_n\Bigg[C(T)\left(T+\frac{\rho_n}{\ga_n}\right)+65\int_T^r\frac{\left(s+\frac{\rho_n}{\ga_n}\right)\left(C(T)+\tilde{C}\log{\frac{r+\frac{\rho_n}{\ga_n}}{T+\frac{\rho_n}{\ga_n}}}\right)}{(8+s^2)^2}ds\\
&\ \ \ \ \ \ \ \ \ +O\left(\int_T^r\frac{\left(s+\frac{\rho_n}{\ga_n}\right)\log^2(8+s^2)}{(8+s^2)^2}ds\right)
+O\left(\e_n^{-1}\frac{\rho_n}{\ga_n}\int_T^r\frac{1}{8+s^2}ds\right)\Bigg]\\
&\le \e_n\Bigg[C(T)\left(T+1+65\int_T^r\frac{\left(s+\frac{\rho_n}{\ga_n}\right)}{(8+s^2)^2}ds\right)+65\tilde{C}\int_T^r\frac{\left(s+\frac{\rho_n}{\ga_n}\right)\log{\frac{r+\frac{\rho_n}{\ga_n}}{T+\frac{\rho_n}{\ga_n}}}}{(8+s^2)^2}ds\\
&\ \ \ \ \ \ \ \ \ +O\left(\int_T^r\frac{\left(s+\frac{\rho_n}{\ga_n}\right)\log^2(8+s^2)}{(8+s^2)^2}ds\right)
+O\left(\e_n^{-1}\frac{\rho_n}{\ga_n}\int_T^r\frac{1}{8+s^2}ds\right)\Bigg].
\end{split}
\]
Since $\e_n^{-1}\rho_n/\ga_n\le1$, taking $T>0$ large enough, we get
\[
|F_{2}(\psi,\omega)(r)|\le \e_n\left[C(T)(T+2)+\frac{\tilde{C}}{4}+1\right].
\]
We fix this $T$. Then it follows from \eqref{tildeC} that
\[
\|F_{2}(\psi,\omega)\|_2\le \tilde{C}\e_n.
\]
This proves the claim. Next we shall show that $\mathcal{F}$ is a contraction mapping. Indeed, for any $(\psi,\omega),(\bar{\psi},\bar{\omega})\in \mathcal{B}_{\tilde{C}}$, we obtain 
\[
\begin{split}
|F_{1}(\psi,\omega)(r)-F_{1}(\bar{\psi},\bar{\omega})(r)|
&\le \frac12 \|\omega-\bar{\omega}\|_2\log{\frac{r+\frac{\rho_n}{\ga_n}}{T+\frac{\rho_n}{\ga_n}}}.
\end{split}
\]
This implies that 
\[
\|F_{1}(\psi,\omega)-F_{1}(\bar{\psi},\bar{\omega})\|_1 \le \frac12\|\omega-\bar{\omega}\|_2.
\]
Moreover, we get by \eqref{e812} that 
\[
\begin{split}
|F_{2}(\psi,\omega)(r)-&F_{2}(\bar{\psi},\bar{\omega})(r)|\\
&\le 65\int_T^r\left(s+\frac{\rho_n}{\ga_n}\right)\frac{|\psi-\bar{\psi}|}{(8+s^2)^2}ds\\
&\le 65 \sup_{s\in(T,s_n]}\left|\frac{\psi-\bar{\psi}}{\log{\frac{s+\frac{\rho_n}{\ga_n}}{T+\frac{\rho_n}{\ga_n}}}}\right|\int_T^r\left(s+\frac{\rho_n}{\ga_n}\right)\frac{\log{\frac{s+\frac{\rho_n}{\ga_n}}{T+\frac{\rho_n}{\ga_n}}}}{(8+s^2)^2}ds.
\end{split}
\]
Choosing $T>0$ larger if necessary, we see
\[
\|F_{1}(\psi,\omega)-F_1(\bar{\psi},\bar{\omega})\|_2 \le \frac12\|\psi-\bar{\psi}\|_1.
\]
Consequently, $\mathcal{F}$ is a contraction mapping from $\mathcal{B}_{\tilde{C}}$ to itself. This suggests that there exists a fixed point $(\psi,\omega)\in \mathcal{B}_{\tilde{C}}$ of $\mathcal{F}$. Then, as noted above, we get $(\psi,\omega)=(\psi_n, (r+\rho_n/\ga_n)\psi_n')$ on $[T,s_n]$. Since $(\psi,\omega)\in \mathcal{B}_{\tilde{C}}$, we have by \eqref{e80},
\begin{equation}
|\psi_n(r)|\le \e_n\left(C(T)+\tilde{C} \log{\frac{r+\frac{\rho_n}{\ga_n}}{T+\frac{\rho_n}{\ga_n}}}\right),\label{e820}
\end{equation}
and 
\begin{equation}
\left(r+\frac{\rho_n}{\ga_n}\right)|\psi_n'(r)|\le \frac{\tilde{C}}{2}\e_n\label{e82}
\end{equation}
for all $r\in[T,s_n]$. 

Let us finish the proof. Fix $T>0$ as above and choose any $R>0$. If $R< T$, we get by the definition of $\psi_n$, \eqref{d00}, \eqref{e31} and \eqref{e80} that 
\[
z_n(r)\le \left(1-\frac{\alpha \beta_*}{2\mu_n^{2-\beta_n}}\right)z(r)-\frac{\alpha\beta_n}{\mu_n^{2-\beta_n}}\frac{r^2}{r^2+8}+O(\e_n)<\left(1-\frac{\alpha \beta_*}{2\mu_n^{2-\beta_n}}\right)z(r)
\]
for all $r\in[R,T]$ if $n$ is large enough. Hence we may assume $T\le R$. Then, similarly, it follows from \eqref{e820} that
\begin{equation}\label{zn}
\begin{split}
&z_n(r)\\
&\le\left(1-\frac{\alpha \beta_*}{2\mu_n^{2-\beta_n}}\right)z(r)-\frac{\alpha\beta_n}{\mu_n^{2-\beta_n}}\frac{r^2}{r^2+8}+\left(C(T)+\tilde{C} \log{\frac{r+\frac{\rho_n}{\ga_n}}{T+\frac{\rho_n}{\ga_n}}}\right)\e_n\\
&\le\left(1-\frac{\alpha \beta_*}{2\mu_n^{2-\beta_n}}\right)z(r)+O(\e_n\log{r})
\end{split}
\end{equation}
for all $r\in[R,s_n]$. Moreover, for any $r\in[s_n,(r_{i,n}-\rho_n)/\ga_n]$, we have
\begin{equation}\label{znn}
\begin{split}
\left(r+\frac{\rho_n}{\ga_n}\right)z_n'(r)&\le\int_0^{s_n}\left\{\left(r+\frac{\rho_n}{\ga_n}\right)z_n'(r)\right\}'dr\\
&=\left(s_n+\frac{\rho_n}{\ga_n}\right)\left(z'(s_n)+\frac{\phi'(s_n)}{\mu_n^{2-\beta_n}}+\psi_n'(s_n)\right).
\end{split}
\end{equation}
Here we use \eqref{d00}, \eqref{e31} and \eqref{e82} to see
\[
\begin{split}
&\left(s_n+\frac{\rho_n}{\ga_n}\right)z'(s_n)=-4+O\left(\frac{1}{s_n^2}\right)+o\left(\frac{\rho_n}{\ga_n}\right),\\
&\left(s_n+\frac{\rho_n}{\ga_n}\right)\frac{\phi'(s_n)}{\mu_n^{2-\beta_n}}=\frac{2\alpha \beta_*}{\mu_n^{2-\beta_n}}+o\left(\frac{1}{s_n^2}\right)+o\left(\frac{\rho_n}{\ga_n}\right),
\end{split}
\]
and 
\[
\left|\left(s_n+\frac{\rho_n}{\ga_n}\right)\psi_n'(s_n)\right|\le \frac{\tilde{C}}{2}\e_n.
\] 
Substituting these formulas into \eqref{znn} and using  $\liminf_{n\to \infty}s_n\e_n^{1/2}>0$, we get
\[
\begin{split}
\left(r+\frac{\rho_n}{\ga_n}\right)z_n'(r)
&\le -4+\frac{2\alpha \beta_*}{\mu_n^{2-\beta_n}}+O(\e_n)
\end{split}
\]
for all $r\in[s_n,(r_{i,n}-\rho_n)/\ga_n]$.  Lastly, for any $r\in[s_n,(r_{i,n}-\rho_n)/\ga_n]$, dividing this inequality by $(r+\rho_n/\ga_n)$ and integrating over $[s_n,r]$, we compute that
\[
\begin{split}
z_n(r)&\le -\left(4-\frac{2\alpha \beta_*}{\mu_n^{2-\beta_n}}\right)\log{\left(r+\frac{\rho_n}{\ga_n}\right)}+\left(4-\frac{2\alpha \beta_*}{\mu_n^{2-\beta_n}}\right)\log{\left(s_n+\frac{\rho_n}{\ga_n}\right)}\\&\ \ \ +O(\e_n\log{r})+O(\e_n\log{s_n})+z_n(s_n)\\
&\le \left(1-\frac{\alpha \beta_*}{2\mu_n^{2-\beta_n}}\right)z(r)+O(\e_n\log{r})
\end{split}
\]
if $n\in \mathbb{N}$ is large enough. This completes the proof.
\end{proof}

Using the previous lemma, we deduce the following asymptotic expansion of the energy. 
\begin{proposition}\label{f5} Assume $i\ge1$ and \eqref{infty}-\eqref{a7}. Moreover, suppose $\beta_*<3/2$ if $i\not=1$. Then we get
\begin{equation}\label{f6}
\mu_{i,n}\int_{\rho_{i,n}}^{r_{i,n}}\la_nf(u_{i,n})rdr=2-\frac{\alpha \beta_*}{\mu_{i,n}^{2-\beta_n}}+o\left(\frac{1}{\mu_{i,n}^{2-\beta_n}}\right),
\end{equation}
and 
\begin{equation}\label{f7}
\int_{\rho_{i,n}}^{r_{i,n}}\la_nu_{i,n}f(u_{i,n})rdr=2-\frac{\alpha \beta_*}{\mu_{i,n}^{2-\beta_n}}+o\left(\frac{1}{\mu_{i,n}^{2-\beta_n}}\right).
\end{equation}
\end{proposition}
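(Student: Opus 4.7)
The plan is to change variables via $s = \gamma_{i,n} r + \rho_{i,n}$ and write both integrals in terms of the scaled function $z_{i,n}$. Using the defining relation $2\lambda_n \mu_{i,n} f_n(\mu_{i,n})\gamma_{i,n}^2 = 1$, the substitution $t = z_{i,n}/(2\mu_{i,n}^2)$ in $f_n(\mu_{i,n}(1+t))/f_n(\mu_{i,n})$, and absorbing the Jacobian, \eqref{f6} reduces to analyzing
\[
\frac12\int_0^{(r_{i,n}-\rho_{i,n})/\gamma_{i,n}} G_n(r)\left(r+\frac{\rho_{i,n}}{\gamma_{i,n}}\right)dr,
\]
where $G_n$ denotes the rescaled integrand. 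The integrand for \eqref{f7} differs only by an extra factor $u_{i,n}/\mu_{i,n} = 1 + z_{i,n}/(2\mu_{i,n}^2) = 1 + O(1/\mu_{i,n}^2)$, so since $\beta_n>0$ and $\beta_*<2$, the two formulas will agree up to an error of order $o(1/\mu_{i,n}^{2-\beta_n})$ and it suffices to treat \eqref{f6}.

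I would then Taylor expand $G_n$ along the lines of Lemma \ref{e333}. Writing $z_{i,n} = z + \phi/\mu_{i,n}^{2-\beta_n} + \psi_n$ with $\psi_n$ controlled locally by Lemma \ref{e333} and globally by Lemma \ref{le70}, a straightforward expansion of the exponent (noting that the cross terms $z_{i,n}/(2\mu_{i,n}^2)$ and $z_{i,n}^2/(4\mu_{i,n}^2)$ contribute only at order $O(1/\mu_{i,n}^2)=o(1/\mu_{i,n}^{2-\beta_n})$) yields
\[
G_n(r) = e^{z(r)} + \frac{1}{\mu_{i,n}^{2-\beta_n}}\left[\phi(r)e^{z(r)} + \frac{\alpha\beta_*}{2}\,z(r)\,e^{z(r)}\right] + o\!\left(\frac{1}{\mu_{i,n}^{2-\beta_n}}\right).
\]
The contribution of the term $\rho_{i,n}/\gamma_{i,n}$ in the measure $(r+\rho_{i,n}/\gamma_{i,n})dr$ is absorbed into the error thanks to \eqref{e2}, which guarantees $\mu_{i,n}^{2-\beta_n}\rho_{i,n}/\gamma_{i,n}=o(1)$; this is where the extra hypothesis $\beta_*<3/2$ when $i\neq 1$ enters.

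For the leading term, $\tfrac12\int_0^\infty e^z\, r\, dr = 2$ using the explicit formula for $z$. For the first order correction, I would exploit the linear equation \eqref{e30} for $\phi$, namely $e^z(\phi + \tfrac{\alpha\beta_*}{2}z) = -\phi'' - \phi'/r$, to write
\[
\int_0^\infty\!\left(\phi e^z + \tfrac{\alpha\beta_*}{2} z e^z\right) r\, dr = -\int_0^\infty (r\phi')'\,dr = -\lim_{r\to\infty} r\phi'(r),
\]
and a direct computation from \eqref{e31} gives $\phi'(r) = 2\alpha\beta_* r^3/(8+r^2)^2$, hence $\lim_{r\to\infty} r\phi'(r) = 2\alpha\beta_*$. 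Combining the two contributions yields $2 - \alpha\beta_*/\mu_{i,n}^{2-\beta_n} + o(1/\mu_{i,n}^{2-\beta_n})$, which is \eqref{f6}; the identical analysis for the extra factor in \eqref{f7} completes the proof.

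The main technical obstacle is justifying the interchange of limit and integral on the expanding rescaled interval: local uniform convergence is insufficient near the tail. This is exactly where the sharp pointwise estimate from Lemma \ref{le70} is essential: the bound $z_{i,n}(r) \le (1-\alpha\beta_*/(2\mu_{i,n}^{2-\beta_n}))z(r) + C\varepsilon_n\log r$ furnishes an integrable majorant for $e^{z_{i,n}(r)}(r+\rho_{i,n}/\gamma_{i,n})$ against Lebesgue measure on $[0,\infty)$, whose behavior at infinity is essentially $r^{-3+o(1)}$, so dominated convergence applies and the error terms in the Taylor expansion can be integrated against this majorant uniformly in $n$.
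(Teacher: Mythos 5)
Your overall structure matches the paper in spirit: rescale, decompose $z_{i,n} = z + \phi/\mu_{i,n}^{2-\beta_n} + \psi_n$, use the ODE for $\phi$ to evaluate the first-order correction as $-\lim_{r\to\infty} r\phi'(r)=-2\alpha\beta_*$, and absorb the $\rho_{i,n}/\gamma_{i,n}$ shift into the error via \eqref{e2}. Those parts are fine.

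The gap is in your treatment of the tail. You claim that Lemma \ref{le70} furnishes a majorant of the form $r^{-3+o(1)}$ and then invoke dominated convergence. But the bound $z_{i,n}(r)\le(1-\tfrac{\alpha\beta_*}{2\mu_{i,n}^{2-\beta_n}})z(r)+C\varepsilon_n\log r$ controls only $e^{z_{i,n}}$, whereas the actual integrand is
\[
G_n=\Bigl(\tfrac{z_{i,n}}{2\mu_{i,n}^2}+1\Bigr)\exp\Bigl(z_{i,n}+\tfrac{z_{i,n}^2}{4\mu_{i,n}^2}+\alpha\mu_{i,n}^{\beta_n}\bigl\{(\tfrac{z_{i,n}}{2\mu_{i,n}^2}+1)^{\beta_n}-1\bigr\}\Bigr),
\]
and the extra factors are not harmless on the expanding domain. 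Writing $a=z_{i,n}/(2\mu_{i,n}^2)+1\in[0,1]$, one has $z_{i,n}+\tfrac{z_{i,n}^2}{4\mu_{i,n}^2}=\mu_{i,n}^2(a^2-1)$, and for $a$ small (i.e.\ in the far tail, where $z_{i,n}$ approaches $-2\mu_{i,n}^2$) this exponent is only $\approx-\mu_{i,n}^2$, while $e^{z_{i,n}}\approx e^{-2\mu_{i,n}^2}$; so $G_n$ decays far more slowly than $e^{z_{i,n}}$ there, and the proposed $r^{-3+o(1)}$ majorant is simply false for $G_n$. Moreover, since the right endpoint of the rescaled interval is of order $e^{\mu_{i,n}^2/2}$, a bounded-by-$1$ integrand integrated against $r\,dr$ is wildly divergent; one genuinely needs the quantitative smallness of the tail, of order $o(\mu_{i,n}^{-(2-\beta_n)})$, and not merely uniform integrability. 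The paper obtains this (the estimate of $J_n^m$) via the inequality $(a)^{\beta_n}-1\le c_n(a-1)$, the pointwise bound from Lemma \ref{le70}, and a Gaussian change of variables that yields decay like $\mu_{i,n}^{2-\beta_n}e^{-c\mu_{i,n}^{\beta_n}}$. That computation cannot be replaced by a dominated-convergence shortcut. A related but lesser slip: the reduction from \eqref{f7} to \eqref{f6} via $u_{i,n}/\mu_{i,n}=1+O(1/\mu_{i,n}^2)$ is valid only on compact sets; globally $u_{i,n}/\mu_{i,n}\in[0,1]$, and the correct way to compare the two tails is to observe that the extra factor is $\le 1$, so $J_n^2\le J_n^1$, as the paper does.

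Also worth noting: for the local part, the paper does not Taylor-expand and integrate the expansion; it rewrites the integrand as $-\{(r+\rho_{i,n}/\gamma_{i,n})z_{i,n}'\}'$ (times a lower-order factor for $m=2$) and integrates by parts, so the error control comes from the derivative bound \eqref{e82} rather than from integrating an $o(\mu_{i,n}^{-(2-\beta_n)})$ pointwise remainder. Your Taylor route for the local piece can likely be made to work, but you would still need to check that the locally uniform $o(\mu_{i,n}^{-(2-\beta_n)})$ remainder integrates to $o(\mu_{i,n}^{-(2-\beta_n)})$ on $[0,s_n]$, which again requires a quantitative tail-type estimate rather than bare local uniformity.
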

\begin{proof} We write $\mu_n=\mu_{i,n}$, $\rho_n=\rho_{i,n}$,  $r_n=r_{i,n}$,  $\ga_n=\ga_{i,n}$, and $z_n=z_{i,n}$. We refer to the proof of Theorem 1 in \cite{MM2}. We first note that
\[
\begin{split}
&\mu_n\int_{\rho_{n}}^{r_{n}}\la_nf(u_{n})rdr
\\&=\frac12\int_{0}^{\frac{r_n-\rho_{n}}{\ga_{n}}}\left(\frac{z_n}{2\mu_n^2}+1\right) e^{z_n+\frac{z_n^2}{4\mu_n^2}+\alpha\mu_n^{\beta_n}\left\{\left(\frac{z_n}{2\mu_n^2}+1\right)^{\beta_n}-1\right\}}\left(r+\frac{\rho_{n}}{\ga_{n}}\right)dr
\end{split}
\]
and
\[
\begin{split}
&\int_{\rho_{n}}^{r_{n}}\la_nu_{n}f(u_{n})rdr
\\&=\frac12\int_{0}^{\frac{r_n-\rho_{n}}{\ga_{n}}}\left(\frac{z_n}{2\mu_n^2}+1\right)^2 e^{z_n+\frac{z_n^2}{4\mu_n^2}+\alpha\mu_n^{\beta_n}\left\{\left(\frac{z_n}{2\mu_n^2}+1\right)^{\beta_n}-1\right\}}\left(r+\frac{\rho_{n}}{\ga_{n}}\right)dr.
\end{split}
\]
So it suffices to show that
\begin{equation}
\begin{split}
&\frac12\int_{0}^{\frac{r_{n}-\rho_{n}}{\ga_{n}}}\left(\frac{z_n}{2\mu_n^2}+1\right)^m e^{z_n+\frac{z_n^2}{4\mu_n^2}+\alpha\mu_n^{\beta_n}\left\{\left(\frac{z_n}{2\mu_n^2}+1\right)^{\beta_n}-1\right\}}\left(r+\frac{\rho_{n}}{\ga_{n}}\right)dr\\
&=2-\frac{\alpha \beta_*}{\mu_n^{2-\beta_n}}+o\left(\frac{1}{\mu_n^{2-\beta_n}}\right)
\end{split}\label{ee}
\end{equation}
for $m=1,2$. To prove \eqref{ee}, we first claim
\begin{align}
I_n&:=\frac12\int_{0}^{s_n}\left(\frac{z_n}{2\mu_n^2}+1\right)^m e^{z_n+\frac{z_n^2}{4\mu_n^2}+\alpha\mu_n^{\beta_n}\left\{\left(\frac{z_n}{2\mu_n^2}+1\right)^{\beta_n}-1\right\}}\left(r+\frac{\rho_{n}}{\ga_{n}}\right)dr\notag\\
&=2-\frac{\alpha \beta_*}{\mu_n^{2-\beta_n}}+o\left(\frac{1}{\mu_n^{2-\beta_n}}\right)\label{e11}
\end{align}
for $m=1,2$ where $s_n>0$ is chosen as in the proof of Lemma \ref{le70}. 
 In fact, using the equation in \eqref{d1} and noting $z_n=O(\log{(8+r^2)})$ on $[0,s_n]$ by \eqref{d00}, \eqref{e31}, \eqref{e80} and \eqref{e820}, we get
\[
\begin{split}
I_n&=-\frac12\int_0^{s_n}\left\{1+O\left(\frac{\log{(8+r^2)}}{\mu_n^{2}}\right)\right\}^{m-1}\\
&\ \ \ \ \ \ \ \ \ \ \ \ \ \ \ \ \ \ \ \ \ \ \ \ \ \ \ \ \ \ \ \ \times \left\{\left(r+\frac{\rho_{n}}{\ga_{n}}\right)\left(z'+\frac{\phi'}{\mu_n^{2-\beta_n}}+\psi_n'\right)\right\}'dr,
\end{split}
\]
for $m=1,2$. Here, we estimate by \eqref{d00}, our choice of $(s_n)$ and \eqref{e2} that
\[
\begin{split}
I_{1,n}:=\int_0^{s_n}\left\{\left(r+\frac{\rho_{n}}{\ga_{n}}\right)z'\right\}'dr=-4+o\left(\frac{1}{\mu_n^{2-\beta_n}}\right),
\end{split}
\]
and by \eqref{e31} that
\[
\begin{split}
I_{2,n}:=\frac{1}{\mu_n^{2-\beta_n}}\int_0^{s_n}\left\{\left(r+\frac{\rho_{n}}{\ga_{n}}\right)\phi'\right\}'dr
=\frac{2\alpha \beta_*}{\mu_n^{2-\beta_n}}+o\left(\frac{1}{\mu_n^{2-\beta_n}}\right),
\end{split}
\]
and further, from \eqref{e82}, that
\[
\begin{split}
I_{3,n}:=\int_0^{s_n}\left\{\left(\frac{\rho_{n}}{\ga_{n}}+r\right)\psi_n'\right\}'dr=o\left(\frac{1}{\mu_n^{2-\beta_n}}\right).
\end{split}
\]
Moreover we assert
\[
\begin{split}
\int_0^{s_n}&\frac{\log{(8+r^2)}}{\mu_n^2} \left\{\left(r+\frac{\rho_{n}}{\ga_{n}}\right)\left(z'+\frac{\phi'}{\mu_n^{2-\beta_n}}+\psi_n'\right)\right\}'dr=o\left(\frac{1}{\mu_n^{2-\beta_n}}\right).
\end{split}
\]
Indeed, if $\beta_*>1$, we have $\mu_n^{2-\beta_n}\cdot\mu_n^{-2}\log{(8+r^2)}\to0$ uniformly on $[0,s_n]$ by \eqref{e81} and then, using the previous three formulas for $I_{1,n},I_{2,n}$ and $I_{3,n}$ above we easily get the assertion. On the other hand, if $\beta_*\le1$, we get by integrating by parts and \eqref{d00}, \eqref{e31}, \eqref{e80} and  \eqref{e82} that
\[
\begin{split}
\int_0^{s_n}&\frac{\log{(8+r^2)}}{\mu_n^2} \left\{\left(r+\frac{\rho_{n}}{\ga_{n}}\right)\left(z'+\frac{\phi'}{\mu_n^{2-\beta_n}}+\psi_n'\right)\right\}'dr\\
&=\frac{1}{\mu_n^2}\bigg\{\log{(8+s_n^2)}\left(I_{1,n}+I_{2,n}+I_{3,n}\right)\\
&\ \ \ \ \ \ \ \ -\int_0^{s_n}\frac{2r}{8+r^2}\left(r+\frac{\rho_n}{\ga_n}\right)\left(\frac{-4r}{(8+r^2)^2}+O\left(\frac{1}{\mu_n^{2-\beta_n}(1+r)}\right)\right)dr\bigg\}\\
&=o\left(\frac{1}{\mu_n^{2-\beta_n}}\right)
\end{split}
\]
by the three formulas for $I_{1,n},I_{2,n}$ and $I_{3,n}$ above and a direct calculation. This  proves the assertion. As a consequence, we get 
\[
I_n=-\frac12(I_{1,n}+I_{2,n}+I_{3,n})+o\left(\frac{1}{\mu_n^{2-\beta_n}}\right).
\] 
This shows the claim.  Next we claim that
\begin{equation}
\begin{split}
J_n^m&:=\int_{s_n}^{\frac{r_n-\rho_{n}}{\ga_{n}}}\left(\frac{z_n(r)}{2\mu_n^2}+1\right)^m e^{z_n+\frac{z_n^2}{4\mu_n^2}+\alpha\mu_n^{\beta_n}\left\{\left(\frac{z_n}{2\mu_n^2}+1\right)^{\beta_n}-1\right\}}rdr\\
&=o\left(\frac{1}{\mu_n^{2-\beta_n}}\right)
\end{split}\label{e12}
\end{equation} 
for $m=1,2$. For the proof, it suffices to consider the case $m=1$ since $J_n^2\le J_n^1$. Hence let us show \eqref{e12} for $m=1$. To this end, we first set a sequence $(c_n)\subset (0,1]$ so that $c_n:=1$ if $n\in \mathbb{N}$ satisfies  $\beta_n\ge1$ and $c_n:=\beta_n$ otherwise. Then, we define a value $c_*>0$ by $\lim_{n\to \infty}c_n=c_*$. Furthermore, we fix a small constant  $\eta>0$ so that $c_*\alpha-(\alpha \beta_*+\eta)/2>0$. Noting \eqref{e70} we may assume 
\begin{equation}\label{e770}
z_n(r)\le\left(1-\frac{\alpha \beta_*+\eta}{2\mu_n^{2-\beta_n}}\right)z(r)
\end{equation}
for all $r\in[s_n,(r_n-\rho_n)/\ga_n]$. In addition, we have that 
\[
\left(\frac{z_n(r)}{2\mu_n^2}+1\right)^{\beta_n}-1\le \frac{c_n z_n(r)}{2\mu_n^2}
\]
for any $r\in[s_n,(r_n-\rho_n)/\ga_n]$. This is clearly obtained by noting  $z_n/(2\mu_n^2)+1\in[0,1]$ in the case $\beta_n\ge1$ and by using the mean value theorem if $\beta_n\in(0,1)$. It follows that
\begin{equation}
\begin{split}
J_n^1&=\int_{s_n}^{\frac{r_n-\rho_{n}}{\ga_{n}}}\left(\frac{z_n}{2\mu_n^2}+1\right) e^{z_n+\frac{z_n^2}{4\mu_n^2}+\alpha\mu_n^{\beta_n}\left\{\left(\frac{z_n}{2\mu_n^2}+1\right)^{\beta_n}-1\right\}}rdr\\
&\le \int_{s_n}^{\frac{r_n-\rho_{n}}{\ga_{n}}}\left(\frac{z_n}{2\mu_n^2}+1\right) e^{\left(1+\frac{\alpha c_n}{2\mu_n^{2-\beta_n}}\right)z_n+\frac{z_n^2}{4\mu_n^2}}rdr\\
&\le \int_{s_n}^{\frac{r_n-\rho_{n}}{\ga_{n}}}\left\{\frac{\left(1-\frac{\alpha \beta_*+\eta}{2\mu_n^{2-\beta_n}}\right)z}{2\mu_n^2}+1\right\}\\
& \ \ \ \  \ \ \ \ \ \times e^{\left(1+\frac{\alpha c_n}{2\mu_n^{2-\beta_n}}\right)\left(1-\frac{\alpha \beta_*+\eta}{2\mu_n^{2-\beta_n}}\right)z+\frac{\left(1-\frac{\alpha \beta_*+\eta}{2\mu_n^{2-\beta_n}}\right)^2z^2}{4\mu_n^2}}rdr
\end{split}\label{e10}
\end{equation}
by \eqref{e770}. Here we note that $\left(1-\frac{\alpha \beta_*+\eta}{2\mu_n^{2-\beta_n}}\right)z(r)/(2\mu_n^2)+1\ge 0$ if and only if 
\[
r\le\sqrt{8\left(e^{\frac{\mu_n^2}{1-\frac{\alpha \beta_*+\eta}{2\mu_n^{2-\beta_n}}}}-1\right)}=:R_n.
\]
Then it is clear by \eqref{e770} that $R_n\ge (r_n-\rho_n)/\ga_n$. On the other hand since $\liminf_{n\to \infty}s_n\e_n^{\frac12}>0$,  it follows from \eqref{e2} that there exists a sequence $(M_n)\subset (0,\infty)$  such that $M_n\to \infty$ and $s_n\ge \sqrt{ M_n\mu_n^{2-\beta_n}}$ for all $n\in \mathbb{N}$. Then from \eqref{e10} we compute with changing the variable by $\tau=-\left(1-\frac{\alpha \beta_*+\eta}{2\mu_n^{2-\beta_n}}\right)z(r)$ and putting $\tilde{s}_n:=-\left(1-\frac{\alpha \beta_*+\eta}{2\mu_n^{2-\beta_n}}\right)z(s_n)$ and $\tilde{R}_n:=-\left(1-\frac{\alpha \beta_*+\eta}{2\mu_n^{2-\beta_n}}\right)z(R_n)$ that
\[
\begin{split}
J_n^1&\le\int_{s_n}^{R_n}\left\{\frac{\left(1-\frac{\alpha \beta_*+\eta}{2\mu_n^{2-\beta_n}}\right)z}{2\mu_n^2}+1\right\}\\
& \ \ \ \  \ \ \ \ \ \times e^{\left(1+\frac{\alpha c_n}{2\mu_n^{2-\beta_n}}\right)\left(1-\frac{\alpha \beta_n+\eta}{2\mu_n^{2-\beta_n}}\right)z+\frac{\left(1-\frac{\alpha \beta_*+\eta}{2\mu_n^{2-\beta_n}}\right)^2z^2}{4\mu_n^2}}rdr\\
&= \frac{2}{\left(1-\frac{\alpha \beta_*+\eta}{2\mu_n^{2-\beta_n}}\right)}\int_{\tilde{s}_n}^{\tilde{R}_n}\left(1-\frac{\tau}{2\mu_n^2}\right)e^{\frac{\tau^2}{4\mu_n^2}+\left\{\frac{1}{2\left(1-\frac{\alpha \beta_*+\eta}{2\mu_n^{2-\beta_n}}\right)}-1-\frac{\alpha c_n}{2\mu_n^{2-\beta_n}}\right\}\tau}d\tau\\
\end{split}
\]
Here notice that $\tilde{s}_n=4\log{s_n}+O(1)$ by \eqref{e81} and $\tilde{R}_n=2\mu_n^2$ from the definition.  Again changing the variable by
\[
t=\frac{\tau}{2\mu_n}+\mu_n\left\{\frac{1}{2\left(1-\frac{\alpha \beta_*+\eta}{2\mu_n^{2-\beta_n}}\right)}-1-\frac{\alpha c_n}{2\mu_n^{2-\beta_n}}\right\}
\]
and setting
\[
\begin{split}
\bar{R}_n&=\frac{\tilde{R}_n}{2\mu_n}+\mu_n\left\{\frac{1}{2\left(1-\frac{\alpha \beta_*+\eta}{2\mu_n^{2-\beta_n}}\right)}-1-\frac{\alpha c_n}{2\mu_n^{2-\beta_n}}\right\}
\end{split}
\] 
and 
\[
\bar{s}_n=\frac{\tilde{s}_n}{2\mu_n}+\mu_n\left\{\frac{1}{2\left(1-\frac{\alpha \beta_*+\eta}{2\mu_n^{2-\beta_n}}\right)}-1-\frac{\alpha c_n}{2\mu_n^{2-\beta_n}}\right\},
\]
we get
\begin{equation}\label{p1}
\begin{split}
J_n^1&\le \frac{4\mu_n}{\left(1-\frac{\alpha \beta_*+\eta}{2\mu_n^{2-\beta_n}}\right)\exp{\left[\mu_n^2\left\{\frac{1}{2\left(1-\frac{\alpha \beta_*+\eta}{2\mu_n^{2-\beta_n}}\right)}-1-\frac{\alpha c_n}{2\mu_n^{2-\beta_n}}\right\}^2\right]}}\\
&\ \ \ \ \ \ \ \times\int_{\bar{s}_n}^{\bar{R}_n}\left\{-\frac{t}{\mu_n}+\frac{1}{2\left(1-\frac{\alpha \beta_*+\eta}{2\mu_n^{2-\beta_n}}\right)}-\frac{\alpha c_n}{2\mu_n^{2-\beta_n}}\right\}e^{t^2}dt.
\end{split}
\end{equation}
Now we calculate by \eqref{e81} and our choices of $(M_n)$ and $\eta$ that   
\begin{equation}\label{p2}
\begin{split}
&\mu_n^{2-\beta_n}\frac{4\mu_n}{\left(1-\frac{\alpha \beta_*+\eta}{2\mu_n^{2-\beta_n}}\right)\exp{\left[\mu_n^2\left\{\frac{1}{2\left(1-\frac{\alpha \beta_*+\eta}{2\mu_n^{2-\beta_n}}\right)}-1-\frac{\alpha c_n}{2\mu_n^{2-\beta_n}}\right\}^2\right]}}\\
&\ \ \ \times\int_{\bar{s}_n}^{\bar{R}_n}\frac{t}{\mu_n}e^{t^2}dt\\
&=O\left(\mu_n^{2-\beta_n}\exp{\left\{-\left(c_*\alpha-\frac{\alpha\beta_*+\eta}{2}+o(1)\right)\mu_n^{\beta_n}\right\}}\right)+O\left(\frac{1}{M_n}\right)\\
&\to0,
\end{split}
\end{equation}
as $n\to \infty$. Similarly, we get 
\begin{equation}\label{p3}
\begin{split}
A_n:&=\mu_n^{2-\beta_n}\frac{4\mu_n}{\left(1-\frac{\alpha \beta_*+\eta}{2\mu_n^{2-\beta_n}}\right)\exp{\left[\mu_n^2\left\{\frac{1}{2\left(1-\frac{\alpha \beta_*+\eta}{2\mu_n^{2-\beta_n}}\right)}-1-\frac{\alpha c_n}{2\mu_n^{2-\beta_n}}\right\}^2\right]}}\\
&\ \ \ \times\int_{\bar{s}_n}^{\bar{R}_n}e^{t^2}dt\\
&\to0
\end{split}
\end{equation}
as $n\to \infty$. In fact, noting $\bar{s}_n=-(1+o(1))\mu_n/2+O(1)$ by \eqref{e81} and $\bar{R}_n=(1+o(1))\mu_n/2$ we decompose 
\[
A_n=A_{1,n}+A_{2,n}
\]
where
\[
\begin{split}
A_{1,n}&:=\mu_n^{2-\beta_n}\frac{4\mu_n}{\left(1-\frac{\alpha \beta_*+\eta}{2\mu_n^{2-\beta_n}}\right)\exp{\left[\mu_n^2\left\{\frac{1}{2\left(1-\frac{\alpha \beta_*+\eta}{2\mu_n^{2-\beta_n}}\right)}-1-\frac{\alpha c_n}{2\mu_n^{2-\beta_n}}\right\}^2\right]}}\\
&\ \ \ \ \times\int_{\{|t|\le \mu_n/4\}}e^{t^2}dt.
\end{split}
\]
and
\[
\begin{split}
A_{2,n}&:=\mu_n^{2-\beta_n}\frac{4\mu_n}{\left(1-\frac{\alpha \beta_*+\eta}{2\mu_n^{2-\beta_n}}\right)\exp{\left[\mu_n^2\left\{\frac{1}{2\left(1-\frac{\alpha \beta_*+\eta}{2\mu_n^{2-\beta_n}}\right)}-1-\frac{\alpha c_n}{2\mu_n^{2-\beta_n}}\right\}^2\right]}}\\
&\ \ \ \ \times\int_{[\bar{s}_n,\bar{R}_n]\cap\{|t|\ge\mu_n/4\}}e^{t^2}dt.
\end{split}
\]
Since 
\[
A_{1,n}=O\left(\frac{\mu_n^{3-\beta_n}}{e^{(1+o(1))\mu_n^2/4}}\cdot \frac{\mu_n}{2} e^{\mu_n^2/16}\right),
\]
we easily get $A_{1,n}\to0$. On the other hand, since  
\[
\begin{split}
A_{2,n}&\le\mu_n^{2-\beta_n}\frac{4\mu_n}{\left(1-\frac{\alpha \beta_*+\eta}{2\mu_n^{2-\beta_n}}\right)\exp{\left[\mu_n^2\left\{\frac{1}{2\left(1-\frac{\alpha \beta_*+\eta}{2\mu_n^{2-\beta_n}}\right)}-1-\frac{\alpha c_n}{2\mu_n^{2-\beta_n}}\right\}^2\right]}}\\
&\ \ \ \ \times 4\int_{[\bar{s}_n,\bar{R}_n]\cap\{|t|\ge\mu_n/4\}}\frac{|t|}{\mu_n}e^{t^2}dt,
\end{split}
\]
we get $A_{2,n}\to0$ similarly to the calculation above. Consequently, using \eqref{p2} and \eqref{p3} for \eqref{p1}, we get $J_n^1=o(\mu_n^{-(2-\beta_n)})$. This proves  our claim \eqref{e12}. Then by \eqref{e11} and \eqref{e12}, we readily obtain \eqref{ee}.  This finishes the proof.
\end{proof}

We also get the following.
\begin{lemma}
Suppose $i\ge1$ and \eqref{infty}-\eqref{a7} and further, if $i\not=1$, let $\beta_*<3/2$. Then we have 
\begin{equation}\label{f66}
\mu_{i,n}\int^{\rho_{i,n}}_{r_{i-1,n}}\la_nf(u_{i,n})rdr=o\left(\frac{1}{\mu_{i,n}^{2-\beta_n}}\right),
\end{equation}
and 
\begin{equation}\label{f77}
\int^{\rho_{i,n}}_{r_{i-1,n}}\la_nu_{i,n}f(u_{i,n})rdr=o\left(\frac{1}{\mu_{i,n}^{2-\beta_n}}\right).
\end{equation}
\end{lemma}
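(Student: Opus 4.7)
The plan is first to dispose of the case $i=1$, which is trivial: by the standard maximum principle for radial positive solutions, the function $u_{1,n}$ on $[0,r_{1,n}]$ attains its maximum at the origin, so $\rho_{1,n}=0=r_{0,n}$ and both integrals in \eqref{f66} and \eqref{f77} vanish identically. Hence it suffices to treat $i\ge 2$.

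For $i\ge 2$, the essential observation is that the first integral admits an exact evaluation directly from the ODE. Multiplying the radial equation in \eqref{rad} by $r$ gives $-(ru_{i,n}')'=r\la_n f_n(u_{i,n})$, and integrating from $r_{i-1,n}$ to $\rho_{i,n}$ while using $u_{i,n}'(\rho_{i,n})=0$ yields
\[
\int_{r_{i-1,n}}^{\rho_{i,n}}\la_n f_n(u_{i,n})r\,dr=r_{i-1,n}u_{i,n}'(r_{i-1,n}).
\]
Assuming WLOG $u_{i,n}\ge 0$, one has $u_{i,n}'(r_{i-1,n})>0$, and since $u_n\in C^1$, $|u_{i,n}'(r_{i-1,n})|=|u_{i-1,n}'(r_{i-1,n})|$. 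Hypothesis \eqref{a7} therefore gives
\[
r_{i-1,n}u_{i,n}'(r_{i-1,n})=\frac{2+o(1)}{\mu_{i-1,n}}.
\]

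Multiplying by $\mu_{i,n}^{3-\beta_n}$ and invoking \eqref{f11}---which says $\mu_{i,n}=(\alpha(1-\beta_*/2)+o(1))\mu_{i-1,n}^{\beta_n-1}$, with a strictly positive prefactor since $\beta_*<3/2<2$---I would estimate
\[
\mu_{i,n}^{2-\beta_n}\cdot\mu_{i,n}\int_{r_{i-1,n}}^{\rho_{i,n}}\la_n f_n(u_{i,n})r\,dr=O\bigl(\mu_{i-1,n}^{(\beta_n-1)(3-\beta_n)-1}\bigr)=O\bigl(\mu_{i-1,n}^{-(\beta_n-2)^2}\bigr).
\]
Since $(\beta_n-2)^2$ is bounded below by a positive constant for large $n$ (again using $\beta_*<3/2<2$), and $\mu_{i-1,n}\to\infty$ (combine \eqref{infty} with \eqref{d77} to see $\mu_{i-1,n}>\mu_{i,n}\to\infty$), the right-hand side tends to $0$. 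This gives \eqref{f66}.

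For \eqref{f77}, the pointwise bound $0\le u_{i,n}\le\mu_{i,n}$ on $[r_{i-1,n},\rho_{i,n}]$ yields
\[
\int_{r_{i-1,n}}^{\rho_{i,n}}\la_n u_{i,n} f_n(u_{i,n})r\,dr\le \mu_{i,n}\int_{r_{i-1,n}}^{\rho_{i,n}}\la_n f_n(u_{i,n})r\,dr,
\]
and the conclusion is immediate from \eqref{f66}. The whole argument is quite routine; the only real point is recognizing the algebraic identity $(\beta_n-1)(3-\beta_n)-1=-(\beta_n-2)^2$, so the main ``obstacle'' is just correctly assembling the inductive data \eqref{a7} and the output \eqref{f11} of the previous lemma. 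No new analysis (in particular, no scaling argument) is needed on the left-side interval $[r_{i-1,n},\rho_{i,n}]$: the integrated ODE together with the induction hypotheses does all the work.
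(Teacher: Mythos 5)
Your proof is correct, and it takes a genuinely different route from the paper's. The paper disposes of the left-side integral by working in the blow-up variable: it combines $r_{i-1,n}/\rho_{i,n}\to0$ (Lemma \ref{lem:e0}) with \eqref{e2} to get $(\rho_{i,n}-r_{i-1,n})/\ga_{i,n}=o(\mu_{i,n}^{-(2-\beta_n)})$, and then bounds the scaled integrals (the integrand being at most $r+\rho_{i,n}/\ga_{i,n}$ on an interval of that length). You instead evaluate the integral exactly as the flux $r_{i-1,n}u_{i,n}'(r_{i-1,n})$, feed in the inductive hypothesis \eqref{a7} and the asymptotic relation \eqref{f11}, and reduce everything to the algebraic identity $(\beta_n-1)(3-\beta_n)-1=-(\beta_n-2)^2$; the conclusion then follows from $\mu_{i-1,n}\to\infty$ (which, as you note, follows from \eqref{infty} and \eqref{d77}) and $\beta_*<2$. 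All the ingredients you invoke (\eqref{a7}, \eqref{d77}, \eqref{f11}) are established earlier under exactly the hypotheses of the lemma, so there is no circularity, and the $C^1$ matching $|u_{i,n}'(r_{i-1,n})|=|u_{i-1,n}'(r_{i-1,n})|$ and the $i=1$ case ($\rho_{1,n}=0$ by Gidas--Ni--Nirenberg on the innermost nodal ball, as the paper also implicitly uses) are handled correctly. What your approach buys is that it bypasses \eqref{e2} entirely --- the only place $\beta_*<3/2$ enters the paper's proof --- so your argument for \eqref{f66}--\eqref{f77} would in fact survive for any $\beta_*\in(0,2)$; what the paper's approach buys is uniformity with the surrounding scaling machinery (the same computation pattern as \eqref{e11} and \eqref{e12}), at no extra cost since \eqref{e2} is needed elsewhere anyway.
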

\begin{proof}
By the first assertion in Lemma \ref{lem:e0} and \eqref{e2}, we get $(\rho_{i,n}-r_{i-1,n})/\ga_{i,n}=o(\mu_{i,n}^{-(2-\beta_n)})$. It folows that 
\[
\begin{split}
&\frac12\int_{\frac{r_{i-1,n}-\rho_{i.n}}{\ga_{i,n}}}^0\left(\frac{z_{i,n}}{2\mu_n^2}+1\right)^m e^{z_{i,n}+\frac{z_{i,n}^2}{4\mu_n^2}+\alpha\mu_n^{\beta_n}\left\{\left(\frac{z_{i,n}}{2\mu_n^2}+1\right)^{\beta_n}-1\right\}}\left(r+\frac{\rho_{n}}{\ga_{n}}\right)dr\\
&=o\left(\frac1{\mu_{i,n}^{2-\beta_n}}\right),
\end{split}
\]
for $m=1,2$. This proves \eqref{f66} and \eqref{f77}. We finish the proof.
\end{proof}

We get the proof of Proposition \ref{e00}.
\begin{proof}[Proof of Proposition \ref{e00}] The first formula follows from \eqref{rad}, \eqref{f7} and \eqref{f77}. The second one is proved by \eqref{f6} and \eqref{f66}. This finishes the proof. 

\end{proof}

We end this section by proving the next key lemma. 
\begin{proposition}\label{f0}
We assume $i\ge1$ and \eqref{infty}-\eqref{a7}. Moreover, suppose $\beta_*<3/2$ if $i>1$. Then we get 
\begin{align}
\lim_{n\to \infty}\frac{\log{\frac{1}{\la_nr_{i,n}^2}}}{\mu_{i,n}^{\beta_n}}=\alpha\left(1-\frac{\beta_*}2\right),
\label{f1}
\end{align}
and
\begin{align}
\lim_{n\to \infty}\mu_{i,n}r_{i,n} |u_{i,n}'(r_{i,n})|=2.
\label{f4}
\end{align}
\end{proposition}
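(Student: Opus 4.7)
Formula \eqref{f4} is a near-immediate corollary of the energy expansion in Proposition \ref{f5}. Multiplying the radial equation in \eqref{rad} by $r$ and integrating from $\rho_{i,n}$ to $r_{i,n}$ (using $u_{i,n}'(\rho_{i,n})=0$) gives
\[
r_{i,n}|u_{i,n}'(r_{i,n})|=\int_{\rho_{i,n}}^{r_{i,n}}\la_n f_n(u_{i,n})r\,dr.
\]
Multiplying both sides by $\mu_{i,n}$ and invoking \eqref{f6} immediately yields $\mu_{i,n}r_{i,n}|u_{i,n}'(r_{i,n})|=2-\alpha\beta_*/\mu_{i,n}^{2-\beta_n}+o(\mu_{i,n}^{-(2-\beta_n)})\to 2$.

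\textbf{The serious work is \eqref{f1}.} I would start from Lemma \ref{lem:id}, that is, $\mu_{i,n}=\int_{\rho_{i,n}}^{r_{i,n}}\la_n f_n(u_{i,n})r\log(r_{i,n}/r)\,dr$, and perform the blow-up change of variables $r=\ga_{i,n}s+\rho_{i,n}$. Using the definition $2\la_n\mu_{i,n}f_n(\mu_{i,n})\ga_{i,n}^2=1$, this rewrites as
\begin{equation*}
2\mu_{i,n}^2=\int_0^{(r_{i,n}-\rho_{i,n})/\ga_{i,n}} G_n(s)\Big(s+\tfrac{\rho_{i,n}}{\ga_{i,n}}\Big)\log\frac{r_{i,n}/\ga_{i,n}}{s+\rho_{i,n}/\ga_{i,n}}\,ds,
\end{equation*}
where $G_n(s)$ denotes the factor $(z_{i,n}/(2\mu_{i,n}^2)+1)\exp(\cdots)$ appearing on the right-hand side of \eqref{d1}. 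Splitting the logarithm as $\log(r_{i,n}/\ga_{i,n})-\log(s+\rho_{i,n}/\ga_{i,n})$, the first piece equals $\log(r_{i,n}/\ga_{i,n})\cdot 2\mu_{i,n}\int_{\rho_{i,n}}^{r_{i,n}}\la_n f_n(u_{i,n})r\,dr=\log(r_{i,n}/\ga_{i,n})\bigl(4-2\alpha\beta_*/\mu_{i,n}^{2-\beta_n}+o(\mu_{i,n}^{-(2-\beta_n)})\bigr)$ by \eqref{f6}, while the second converges to the finite constant $\int_0^\infty e^z s\log s\,ds$ by dominated convergence (using Proposition \ref{dd}, which gives $G_n\to e^z$ locally and $\rho_{i,n}/\ga_{i,n}\to 0$, together with the tail bound \eqref{e70} of Lemma \ref{le70}). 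Solving for $\log(r_{i,n}/\ga_{i,n})$ therefore gives
\begin{equation*}
\log\frac{r_{i,n}}{\ga_{i,n}}=\frac{\mu_{i,n}^2}{2}+\frac{\alpha\beta_*}{4}\mu_{i,n}^{\beta_n}+o(\mu_{i,n}^{\beta_n}).
\end{equation*}

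To conclude, I would take the logarithm of the defining relation $\ga_{i,n}^{-2}=2\la_n\mu_{i,n}^2 e^{\mu_{i,n}^2+\alpha\mu_{i,n}^{\beta_n}}$, which produces $2\log(r_{i,n}/\ga_{i,n})=\log(2\la_n r_{i,n}^2\mu_{i,n}^2)+\mu_{i,n}^2+\alpha\mu_{i,n}^{\beta_n}$, and match it with the expansion above. The $\mu_{i,n}^2$ terms cancel and the $\alpha\mu_{i,n}^{\beta_n}$ combines with the $\alpha\beta_*\mu_{i,n}^{\beta_n}/2$ contribution to leave $\log(1/(\la_n r_{i,n}^2))=\alpha(1-\beta_*/2)\mu_{i,n}^{\beta_n}+o(\mu_{i,n}^{\beta_n})+O(\log\mu_{i,n})$. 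Dividing by $\mu_{i,n}^{\beta_n}$ and using $\mu_{i,n}\to\infty$ delivers \eqref{f1}. The only delicate point is justifying dominated convergence for the $\log(s+\rho_{i,n}/\ga_{i,n})$ piece: near $s=0$ the integrand is $O(G_n(s)\,s\log s)$, which is harmless since $s\log s\to 0$ and $G_n$ is locally bounded, while on the tail the pointwise estimate \eqref{e70} forces $G_n(s)\le Cs^{-4+o(1)}$, yielding a uniform $L^1$-majorant of order $\log s/s^{3-o(1)}$. This is essentially the same tail computation already carried out in the bound for $J_n^m$ in the proof of Proposition \ref{f5}, so no new difficulty arises.
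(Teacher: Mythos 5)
Your argument for \eqref{f4} is the same as the paper's: multiply the radial equation by $r$, integrate, and invoke \eqref{f6}.

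For \eqref{f1}, you take a genuinely different route, and it works, though the claimed precision of one step is slightly overstated. The paper proves \eqref{f1} by a two-sided bound: the \emph{lower} bound on $\log(1/(\la_n r_{i,n}^2))/\mu_{i,n}^{\beta_n}$ comes from evaluating \eqref{e70} at the endpoint $r=(r_{i,n}-\rho_{i,n})/\ga_{i,n}$ where $z_{i,n}=-2\mu_{i,n}^2$; the \emph{upper} bound comes from \eqref{f8} (i.e., Lemma \ref{lem:id} after scaling) combined with \eqref{f6} and the elementary observation that the second, $\log(1/(r+\rho_{i,n}/\ga_{i,n}))$--weighted integral is negative for large $n$ (truncate at a fixed $R_0$ where $\int_0^{R_0}e^z r\log(1/r)\,dr<0$). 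Your argument replaces both halves with a single direct asymptotic expansion of $\log(r_{i,n}/\ga_{i,n})$ from the same identity \eqref{f8}, using \eqref{f6} for the leading factor and using \eqref{e70}/the $J_n^m$ tail estimate to control the second integral. The decomposition, the change of variable, and the matching against $-2\log\ga_{i,n}=\log(2\la_n\mu_{i,n}^2)+\mu_{i,n}^2+\alpha\mu_{i,n}^{\beta_n}$ are all correct and do deliver \eqref{f1}. What each approach buys: the paper's version is leaner --- the upper bound needs only a sign, no dominated convergence, and no tail estimate beyond $J_n^m$ which is already needed elsewhere; your version produces the sharper intermediate asymptotic $\log(r_{i,n}/\ga_{i,n})=\mu_{i,n}^2/2+\alpha\beta_*\mu_{i,n}^{\beta_n}/4+o(\mu_{i,n}^{\beta_n})$ as a by-product, at the cost of having to justify a uniform majorant on an expanding interval.

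Two small corrections. First, a sign: the limit of the second piece (when it exists) is $\int_0^\infty e^z s\log(1/s)\,ds$, not $\int_0^\infty e^z s\log s\,ds$. Second, and more to the point, asserting that the second integral \emph{converges} to that constant is stronger than what \eqref{e70} delivers cleanly on the full expanding domain. What you actually need, and what the $J_n^1$ computation in Proposition \ref{f5} gives you after multiplying by the crude bound $|\log(s+\rho_{i,n}/\ga_{i,n})|\le\log(r_{i,n}/\ga_{i,n})=O(\mu_{i,n}^2)$ on $[s_n,(r_{i,n}-\rho_{i,n})/\ga_{i,n}]$, is that the second integral is $O(1)+o(\mu_{i,n}^{\beta_n})=o(\mu_{i,n}^{\beta_n})$. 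That estimate is all the final division by $\mu_{i,n}^{\beta_n}$ requires, and it avoids having to make the dominated-convergence claim airtight on the whole half-line.
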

\begin{proof}
We denote $\mu_n=\mu_{i,n}$, $\rho_n=\rho_{i,n}$, $r_n=r_{i,n}$, $\ga_n=\ga_{i,n}$,   and $z_n=z_{i,n}$ as usual. To deduce \eqref{f1}, we first claim that there exists a constant $C>0$ such that 
\[
\la_n r_n^2 \mu_n^2e^{\left\{\alpha\left(1-\frac{\beta_*}2\right)+o(1)\right\}\mu_n^{\beta_n}}\le C
\]
for all $n\in \mathbb{N}$. In fact, \eqref{e70} implies that for any $r\in [1,(r_{n}-\rho_n)/\ga_n]$,
\[
z_n(r)\le \left(1-\frac{\alpha \beta_*+o(1)}{2\mu_n^{2-\beta_n}}\right)z(r),
\]
if $n\in \mathbb{N}$ is large enough. Choosing $r=(r_n-\rho_n)/\ga_n$, we get by the first and second  assertions in Lemma \ref{ap1} that
\[
\begin{split}
0& \le \mu_n^2-\left(1-\frac{\alpha \beta_*+o(1)}{2\mu_n^{2-\beta_n}}\right)\log{2\la_nr_{n}^2\mu_n^2e^{\mu_n^2+\alpha\mu_n^{\beta_n}}}+O(1).\end{split}
\]
This implies that there exists a constant $\bar{C}>0$ such that
\[
\log{\la_nr_{n}^2\mu_n^2e^{\left\{\alpha\left(1-\frac{\beta_*}{2}\right)+o(1)\right\}\mu_n^{\beta_n}}}\le \bar{C}
\]
for all large $n\in \mathbb{N}$. This proves the claim. It follows that
\begin{equation}
\liminf_{n\to \infty}\frac{\log{\frac{1}{\la_nr_{n}^2}}}{\mu_n^{\beta_n}}\ge \alpha\left(1-\frac{\beta_*}{2}\right).\label{f9}
\end{equation}
Next, we shall show
\begin{equation}
\limsup_{n\to \infty}\frac{\log{\frac{1}{\la_nr_{n}^2}}}{\mu_n^{\beta_n}}\le \alpha\left(1-\frac{\beta_*}{2}\right).\label{f10}
\end{equation}
To do this, we use the first formula in Lemma \ref{lem:id} to obtain 
\begin{equation}
\begin{split}
&2\mu_n^2\\
&=\log{\frac{r_{n}}{\ga_n}}\\
&\times\int_0^{\frac{r_{n}-\rho_n}{\ga_n}}\left(\frac{z_n}{2\mu_n^2}+1\right) e^{z_n+\frac{z_n^2}{4\mu_n^2}+\alpha\mu_n^{\beta_n}\left\{\left(\frac{z_n}{2\mu_n^2}+1\right)^{\beta_n}-1\right\}}\left(r+\frac{\rho_{n}}{\ga_{n}}\right)dr\\
&+\int_0^{\frac{r_{n}-\rho_n}{\ga_n}}\left(\frac{z_n}{2\mu_n^2}+1\right) e^{z_n+\frac{z_n^2}{4\mu_n^2}+\alpha\mu_n^{\beta_n}\left\{\left(\frac{z_n}{2\mu_n^2}+1\right)^{\beta_n}-1\right\}}\\
&\ \ \ \ \ \ \ \ \ \ \ \ \ \ \ \ \ \ \ \ \ \ \ \ \ \ \ \ \ \ \ \ \ \ \ \ \ \ \ \ \ \ \ \times\left(r+\frac{\rho_{n}}{\ga_{n}}\right)\log{\frac{1}{\left(r+\frac{\rho_{n}}{\ga_{n}}\right)}}dr.
\end{split}\label{f8}
\end{equation}
Here, we observe that for any value $R_0>1$,
\[
\begin{split}
&\int_0^{\frac{r_{n}-\rho_n}{\ga_n}}\left(\frac{z_n}{2\mu_n^2}+1\right) e^{z_n+\frac{z_n^2}{4\mu_n^2}+\alpha\mu_n^{\beta_n}\left\{\left(\frac{z_n(r)}{2\mu_n^2}+1\right)^{\beta_n}-1\right\}}\\
&\ \ \ \ \ \ \ \ \ \ \ \ \ \ \ \ \ \ \ \ \ \ \ \ \ \ \ \ \ \ \ \ \ \ \ \ \ \ \ \ \ \ \ \times\left(r+\frac{\rho_{n}}{\ga_{n}}\right)\log{\frac1{\left(r+\frac{\rho_{n}}{\ga_{n}}\right)}}dr\\
&\le \int_0^{R_0}e^zr\log{\frac1r}dr+o(1)
\end{split}
\]
where $o(1)\to0$ as $n\to \infty$. Then, since we can choose $R_0>1$ so large that $\int_0^{R_0}e^zr\log{\frac1r}dr<0$, the second term of the right hand side of \eqref{f8} is negative value for all large $n\in \mathbb{N}$. Hence using this and \eqref{f6} for \eqref{f8}, we get 
\[
2\mu_n^2\le \left(2-\frac{\alpha \beta_*}{\mu_n^{2-\beta_n}}+o\left(\frac{1}{\mu_n^{2-\beta_n}}\right)\right)\log{2\la_n r_{n}^2\mu_nf_n(\mu_n)}.
\]
It follows that
\[
\frac{\log{\frac{1}{\la_nr_{n}^2}}}{\mu_n^{\beta_n}}\le \alpha\left(1-\frac{\beta_*}{2}\right)+o(1).
\]
This proves \eqref{f10}. Then \eqref{f9} and \eqref{f10} show \eqref{f1}. Finally we shall ensure \eqref{f4}. For any $r\in(\rho_n,r_n)$, multiplying the equation in \eqref{rad} by $r$ and integrating by parts over $(\rho_n,r_n)$, we get
\[
-r_{n}u_n'(r_{n})=\la_n\int_{\rho_{n}}^{r_{n}}f(u_n)rdr.
\]
Hence we obtain from \eqref{f6} that
\[
\lim_{n\to \infty}\mu_nr_{n}|u_n'(r_{n})|=2.
\]
This gives \eqref{f4}. We complete the proof. 
\end{proof}

\section{Behavior of non-concentrating parts}\label{more}
In this section we mainly discuss the behavior of $(u_{i,n})$ which does not blow up. This is useful to deduce the precise information of the weak limit. Especially, Lemmas \ref{b21} and \ref{lem:g6} will be important for the proof in the case of (ii) of Theorems \ref{a10} and \ref{a31}. We begin with the next basic lemma. Let $\mathcal{N}_{\la,\beta}$ be the Nehari manifold defined in Section \ref{main}. 
\begin{lemma}\label{b1} For any $\Lambda_0\in(0,\Lambda_1)$ and $\beta_0\in(0,2)$, we have a constant $K>0$ such that
\[
\int_B|\nabla u|^2dx\ge K
\]
for all $u\in \mathcal{N}_{\la,\beta}$ and all $(\la,\beta)\in(0,\Lambda_0]\times[\beta_0,2]$.
\end{lemma}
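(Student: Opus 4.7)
The plan is to argue by contradiction. Suppose that no such $K$ exists; then there is a sequence $(\la_n,\beta_n)\subset (0,\Lambda_0]\times[\beta_0,2]$ and $u_n\in\mathcal{N}_{\la_n,\beta_n}$ with $\|u_n\|_{H^1_0(B)}^2\to 0$. The Nehari identity $\langle I_{\la_n,\beta_n}'(u_n),u_n\rangle=0$ reads
\[
\|u_n\|_{H^1_0(B)}^2=\la_n\int_B u_n^2 e^{u_n^2+\alpha|u_n|^{\beta_n}}dx.
\]
First I would isolate the linear part by writing $e^{u_n^2+\alpha|u_n|^{\beta_n}}=1+(e^{u_n^2+\alpha|u_n|^{\beta_n}}-1)$ and using the Poincar\'e (Rayleigh) inequality $\int_B u_n^2\,dx\le \Lambda_1^{-1}\|u_n\|_{H^1_0(B)}^2$, together with $\la_n\le\Lambda_0<\Lambda_1$, to absorb the linear contribution into the left-hand side. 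This yields
\[
\left(1-\frac{\Lambda_0}{\Lambda_1}\right)\|u_n\|_{H^1_0(B)}^2\le \Lambda_0\int_B u_n^2\bigl(e^{u_n^2+\alpha|u_n|^{\beta_n}}-1\bigr)dx.
\]

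Next I would control the right-hand side by a small positive power of $\|u_n\|_{H^1_0(B)}$. The elementary bound $e^x-1\le x e^x$ for $x\ge 0$ gives
\[
u_n^2\bigl(e^{u_n^2+\alpha|u_n|^{\beta_n}}-1\bigr)\le (u_n^4+\alpha\,u_n^2|u_n|^{\beta_n})e^{u_n^2+\alpha|u_n|^{\beta_n}}.
\]
Since $|t|^{\beta}\le 1+t^2$ for all $t\in\R$ and $\beta\in[0,2]$, the exponential factor is bounded by $e^{\alpha}e^{(1+\alpha)u_n^2}$ \emph{uniformly} in $\beta_n\in[\beta_0,2]$. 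Picking any H\"older conjugate pair $(p,p')$ with $p>1$ close to $1$, and using $\|u_n\|_{H^1_0(B)}\to 0$ so that eventually $p(1+\alpha)\|u_n\|_{H^1_0(B)}^2\le 4\pi$, Trudinger--Moser \eqref{TM} bounds $\int_B e^{p(u_n^2+\alpha|u_n|^{\beta_n})}dx$ uniformly. The Sobolev embeddings $H^1_0(B)\hookrightarrow L^{4p'}(B)$ and $H^1_0(B)\hookrightarrow L^{(2+\beta_n)p'}(B)$ then give
\[
\left(\int_B (u_n^4+\alpha\,u_n^2|u_n|^{\beta_n})^{p'}dx\right)^{1/p'}\le C\bigl(\|u_n\|_{H^1_0(B)}^{4}+\|u_n\|_{H^1_0(B)}^{2+\beta_n}\bigr).
\]

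Combining the last two displays, and noting that $\|u_n\|_{H^1_0(B)}^{2+\beta_n}\le \|u_n\|_{H^1_0(B)}^{2+\beta_0}+\|u_n\|_{H^1_0(B)}^{4}$ (split by whether $\|u_n\|_{H^1_0(B)}\lessgtr 1$), we get
\[
\|u_n\|_{H^1_0(B)}^2\le C'\bigl(\|u_n\|_{H^1_0(B)}^{4}+\|u_n\|_{H^1_0(B)}^{2+\beta_0}\bigr),
\]
and dividing by $\|u_n\|_{H^1_0(B)}^2$ yields $1\le C'(\|u_n\|_{H^1_0(B)}^{2}+\|u_n\|_{H^1_0(B)}^{\beta_0})$, contradicting $\|u_n\|_{H^1_0(B)}\to 0$ since $\beta_0>0$.

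The only delicate point is ensuring that every constant appearing is independent of $\beta_n\in[\beta_0,2]$ and $\la_n\in(0,\Lambda_0]$. The $\la_n$--uniformity is automatic from $\la_n\le\Lambda_0$ and the gap $\Lambda_0<\Lambda_1$, while the $\beta_n$--uniformity is the main technical point and is handled by the crude majorant $|t|^{\beta}\le 1+t^2$, which allows a single choice of the H\"older exponent $p$ and reduces the problem to a standard Trudinger--Moser estimate with exponent $p(1+\alpha)$ that applies once $\|u_n\|_{H^1_0(B)}$ is small enough.
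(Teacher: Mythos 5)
Your proof is correct and follows essentially the same strategy as the paper: exploit the spectral gap $\Lambda_0<\Lambda_1$ via Poincar\'e to absorb the (near-)linear part, then control the superlinear remainder with H\"older, Trudinger--Moser, and Sobolev, all uniformly in $(\la,\beta)\in(0,\Lambda_0]\times[\beta_0,2]$. The only differences are cosmetic: the paper argues directly (shows $\langle I_{\la,\beta}'(u),u\rangle>0$ whenever $\|u\|_{H^1_0(B)}$ is small, so small $u$ cannot lie on $\mathcal{N}_{\la,\beta}$) and uses the pointwise decomposition $|t^2e^{t^2+\alpha|t|^\beta}|\le(1+\e)t^2+C_1|t|^pe^{(1+\alpha)t^2}$ with any fixed $p>2$, whereas you set up a contradiction and extract the sharper superlinear exponent $2+\beta_0$ via $e^x-1\le xe^x$ and $|t|^{\beta}\le1+t^2$; both routes yield the needed uniform superlinear smallness once $\|u\|_{H^1_0(B)}$ is small enough for the Trudinger--Moser bound to apply.
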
 
\begin{proof} The proof is standard. For the readers' convenience we show the proof. First fix $\Lambda_0\in(0,\Lambda_1)$ and $\beta_0\in(0,2)$ and assume $\la\in(0,\Lambda_0]$ and $\beta\in[\beta_0,2]$. Next  choose $\e>0$ so that $(1+\e)\Lambda_0<\Lambda_1$. Then for any $p>2$, we can find a constant $C_1>0$ independently of $\beta\in[\beta_0,2]$ such that $|t^2e^{t^2+\alpha|t|^\beta}|\le (1+\e)t^2+C_1t^pe^{(1+\alpha)t^2}$ for all $t\in \mathbb{R}$. Then the H\"{o}lder,   Poincare, Sobolev, and Trudinger-Moser \eqref{TM} inequalities suggest that there exists a constant $C_2>0$, which is independent of $\beta\in[\beta_0,2]$, such that 
\[
\begin{split}
\int_Bu^2e^{u^2+\alpha|u|^\beta}dx&\le (1+\e)\int_Bu^2dx+C_1\left(\int_B|u|^{2p}dx\right)^{\frac{1}{2}}\left(\int_Be^{2(1+\alpha)u^2}dx\right)^{\frac12}\\
&\le \frac{1+\e}{\Lambda_1}\|u\|^2_{H^1_0(B)}+C_2\|u\|_{H^1_0(B)}^p,
\end{split}
\]
for all $u\in H^1_0(B)$ with $\|u\|_{H^1_0(B)}^2\le 4\pi/(2(1+\alpha))$. Hence, it follows  that
\[
\langle I_{\la,\beta}'(u),u\rangle\ge \left(1-\frac{(1+\e)\Lambda_0}{\Lambda_1}\right)\|u\|_{H^1_0(B)}^2-\Lambda_0C_2\|u\|_{H^1_0(B)}^p
\]
for any $u\in H^1_0(B)$ satisfying $\|u\|_{H^1_0(B)}^2\le 4\pi/(2(1+\alpha))$. Since $2<p$,  we get a constant $C_3>0$, which is independent of $\la\in(0,\Lambda_0]$ and $\beta\in[\beta_0,2]$, such that $\langle I_{\la,\beta}(u),u\rangle>0$ for all $u\in H^1_0(B)$ with $\|u\|_{H^1_0(B)}\le C_3$. Therefore $u\in \mathcal{N}_{\la,\beta}$ implies $\|u\|_{H^1_0(B)}\ge C_3$. This finishes the proof. 
\end{proof}

From now on, as usual, let $k\in\{0\}\cup\mathbb{N}$ and $\{(\la_n,\beta_n)\}\subset (0,\infty)\times (0,2)$ be a sequence such that $(\la_n,\beta_n)\to (\la_*,\beta_*)$ as $n\to \infty$ for some value $(\la_*,\beta_*)\in [0,\infty)\times (0,2)$. Moreover, assume that $(u_n)$ is a sequence of solutions satisfying $u_n\in S_{k,\la_n,\beta_n}$ for all $n\in \mathbb{N}$.  In the following lemmas, we always suppose 
\begin{equation}\label{bdd4}
\int_{B}|\nabla u_n|^2dx\text{ uniformly bounded for all $n\in \mathbb{N}$},
\end{equation}
if $k\not=0$. All the other notations below are defined as in the main theorems. We get the following. 
\begin{lemma}\label{b20} Assume \eqref{bdd4}. If $\mu_{i,n}\to \infty$ as $n\to \infty$ for some $i\in\{1,\cdots,k+1\}$, then we have $\lim_{n\to \infty}\rho_{i,n}=0.$ On the other hand, if $\lim_{n\to \infty}r_{i,n}=0$ for some $i\in\{1,\cdots,k\}$, then we get $\mu_{j,n}\to \infty$ for all $j=1,\cdots,i$. Finally, if $\la_*=0$, then we obtain $\mu_{k+1,n}\to \infty$.
\end{lemma}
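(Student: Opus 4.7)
The plan is to handle the three assertions separately, two of them by a short contradiction argument of Poincaré type. For the first claim, the case $k=0$ is immediate: by the Gidas-Ni-Nirenberg symmetry in \cite{GNN}, the unique maximum of the (radial, positive or negative) solution $u_n$ is at the origin, so $\rho_{1,n}=0$ automatically. For $k\geq 1$, the assumption \eqref{bdd4} provides a uniform $H^1_0(B)$-bound on $(u_n)$, and the Radial Lemma \ref{RL} gives the pointwise estimate
\[
\mu_{i,n}=|u_n(\rho_{i,n})|\le c\,\rho_{i,n}^{-1/2}\|u_n\|_{H^1_0(B)}.
\]
If $\rho_{i,n}$ stayed bounded away from $0$ along a subsequence, this would contradict the hypothesis $\mu_{i,n}\to\infty$, so $\rho_{i,n}\to0$.

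For the second and third claims I would use a single Nehari/Poincaré argument. Multiplying the radial equation \eqref{rad} by $u_{j,n}$ and integrating yields
\[
\int_{r_{j-1,n}}^{r_{j,n}}|u_{j,n}'(r)|^2 r\,dr=\la_n\int_{r_{j-1,n}}^{r_{j,n}}u_{j,n}f_n(u_{j,n})r\,dr.
\]
Suppose for contradiction that $\mu_{j,n}$ stays bounded along a subsequence. Then the factor $e^{u_{j,n}^2+\alpha|u_{j,n}|^{\beta_n}}$ is bounded uniformly, so $|u_{j,n}f_n(u_{j,n})|\le C u_{j,n}^2$ with $C$ independent of $n$. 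The extension of $u_{j,n}$ by zero lies in $H^1_0(B_{r_{j,n}})$ when $j\le k$, and in $H^1_0(B)$ when $j=k+1$, so Poincaré gives $\|u_{j,n}\|_{L^2}^2\le c_n\|\nabla u_{j,n}\|_{L^2}^2$ with $c_n=r_{j,n}^2/\Lambda_1$ in the first case and $c_n=1/\Lambda_1$ in the second. Combining,
\[
\int_B|\nabla u_{j,n}|^2\,dx\le C\la_n c_n\int_B|\nabla u_{j,n}|^2\,dx.
\]
In Part 2 we have $r_{j,n}\le r_{i,n}\to 0$ and $\la_n$ bounded, so $C\la_n r_{j,n}^2/\Lambda_1<1$ for large $n$; in Part 3 the hypothesis $\la_*=0$ makes $C\la_n/\Lambda_1<1$ for large $n$. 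Either way the coefficient is eventually smaller than $1$, forcing $u_{j,n}\equiv 0$, which contradicts the sign-definiteness and non-triviality of $u_{j,n}$ on $(r_{j-1,n},r_{j,n})$ built into the definition of $S_{k,\la_n,\beta_n}$.

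The only delicate point is verifying the quadratic bound $|u_{j,n}f_n(u_{j,n})|\le C u_{j,n}^2$ with a uniform constant; it relies crucially on the \emph{assumption} that $\mu_{j,n}$ is bounded along the contradicting subsequence, which controls the exponential factor. Once this is in hand the Poincaré step is routine because we are working on a shrinking ball (Part 2) or a small-$\la$ regime (Part 3), and no further blow-up analysis is needed.
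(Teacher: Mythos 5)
Your proof of the first assertion via the Radial Lemma is precisely the argument the paper uses (the case split at $k=0$ is unnecessary but harmless, since for $k=0$ one has $\rho_{1,n}=0$ by definition and nothing to prove). For the second and third assertions you take a genuinely different route: instead of rescaling $u_{j,n}$ to the unit ball and invoking the uniform Nehari lower bound of Lemma~\ref{b1} (which itself rests on the Trudinger--Moser inequality), you stay on the original domain, observe that boundedness of $\mu_{j,n}$ gives $|u_{j,n}f_n(u_{j,n})|\le C u_{j,n}^2$ with $C$ uniform, and then close the argument with the scaled Poincar\'e inequality on $B_{r_{j,n}}$ (or $B$). Both arguments exploit the same mechanism --- the effective linear coefficient $\la_n r_{j,n}^2$ (resp.\ $\la_n$) tends to zero, which is incompatible with a nontrivial bounded solution --- but your Poincar\'e version is more elementary and self-contained: it needs only the $L^\infty$-control from the hypothesis, whereas the paper's version routes through the Nehari-manifold norm estimate. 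The paper's choice pays off elsewhere, since Lemma~\ref{b1} is reused in other proofs, but as a proof of this particular lemma your argument is complete and correct.
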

\begin{proof} First assume $\mu_{i,n}\to \infty$ as $n\to \infty$ for some $i\in\{1,\cdots,k+1\}$. Then Lemma \ref{RL} implies that there exists a constant $c>0$ such that
\[
\rho_{i,n}\mu_{i,n}^2\le c^2 \int_B|\nabla u_n|^2dx,
\]
for all $n\in \mathbb{N}$. Hence, by our assumptions, we get $\rho_{i,n}\to0$ as $n\to \infty$. This shows the first assertion. Next, we suppose $r_{i,n}\to0$ for some $i\in\{1,\cdots,k\}$. Then, assume there exists a number $j\in\{1,\cdots,i\}$ such that $\mu_{j,n}$ is uniformly bounded up to a subsequence on the contrary. Then for any $x\in B$, we put  $\tilde{u}_{j,n}(x):=u_n(r_{j,n}x)$ if $r_{j-1,n}/r_{j,n}<|x|<1$ and $\tilde{u}_{j,n}(x):=0$ otherwise. It follows that $\tilde{u}_{j,n}\in \mathcal{N}_{\la_n r_{j,n}^2,\beta_n}$. Then since $\la_n r_{i,n}^2\to0$, we get by Lemma \ref{b1} that there exists a constant $K>0$ such that
\[
K\le \int_B\la_nr_{i,n}^2 \tilde{u}_{j,n}f_n(\tilde{u}_{j,n})dx\to0,
\]
since $\tilde{u}_{j,n}$ is uniformly bounded. This is a contradiction. Hence we prove the second assertion. Finally, assume $\la_*=0$ and $\mu_{k+1,n}$ is uniformly bounded up to a subsequence on the contrary.  Then for all $x\in B$, we put $\bar{u}_{k+1,n}(x):=u_n(x)$ if $r_{k,n}<|x|<1$ and $\bar{u}_{k+1,n}(x):=0$ otherwise. Since $\bar{u}_{k+1,n}\in \mathcal{N}_{\la_n,\beta_n}$ and $\la_*=0$, we can apply  Lemma \ref{b1} again and get a constant $K>0$ such that
\[
K\le \la_n\int_B \bar{u}_{k+1,n}f_n(\bar{u}_{k+1,n})dx\to0
\]
since $\bar{u}_{k+1,n}$ is uniformly bounded. This is a contradiction.  This ensures the last assertion. We complete the proof.
\end{proof} 

After this, we regard $u_n=u_n(|x|)$ and study the behavior of the function $u_n(r)$ ($r\in[0,1]$). Let us give the next three standard lemmas. 
\begin{lemma}\label{b22} Suppose \eqref{bdd4}. Assume that there exists a number $i\in\{1,\cdots,k\}$ such that $\mu_{i,n}$ is uniformly bounded for all $n\in\mathbb{N}$. Then $\mu_{i+1,n}$ is also uniformly bounded. Furthermore,  there exist constants $0\le r_{i-1}\le\rho_i<r_i<\rho_{i+1}<1$ such that $r_{i-1,n}\to r_{i-1}$, $r_{i,n}\to r_i$, $\rho_{i,n}\to \rho_i$ and $\rho_{i+1,n}\to \rho_{i+1}$ by  subtracting a subsequence if necessary. Moreover, $r_{i-1}=\rho_i$ if and only if $\rho_i=0$. 
\end{lemma}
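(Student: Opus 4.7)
The plan is to first establish the boundedness of $\mu_{i+1,n}$, then extract subsequential limits of all four endpoint sequences, and finally eliminate every degenerate coalescence except $r_{i-1}=\rho_i=0$. For the boundedness I would argue by contradiction: if $\mu_{i+1,n}\to\infty$ along a subsequence, the first assertion of Lemma~\ref{b20} forces $\rho_{i+1,n}\to 0$, hence $r_{i,n}<\rho_{i+1,n}\to 0$, and then the second assertion of the same lemma yields $\mu_{j,n}\to\infty$ for all $j\le i$, contradicting the hypothesis that $\mu_{i,n}$ is bounded. With $\mu_{i,n}$ and $\mu_{i+1,n}$ both bounded, the four sequences $r_{i-1,n},\rho_{i,n},r_{i,n},\rho_{i+1,n}\in[0,1]$ admit a common convergent subsequence, and the weak inequalities $0\le r_{i-1}\le\rho_i\le r_i\le\rho_{i+1}\le 1$ pass to the limit from the $n$-level orderings.

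To upgrade $\rho_i<r_i$, $r_i<\rho_{i+1}$, and $\rho_{i+1}<1$ to strict inequalities (and to exclude $r_{i-1}=\rho_i>0$ for the dichotomy), I would suppose that two adjacent points coincide at a common value $\tau$. If $\tau=0$ then $r_{i,n}\to 0$ (or $\rho_{i+1,n}\to 0$, which likewise forces $r_{i,n}\to 0$), so Lemma~\ref{b20} again yields $\mu_{i,n}\to\infty$, contradicting the hypothesis. Assume therefore $\tau>0$. Since concentration of any nodal piece can only occur at the origin by the first assertion of Lemma~\ref{b20}, every nodal maximum $\mu_{j,n}$ whose nodal interval meets a small neighborhood of $\tau$ remains bounded, so $u_n$ is uniformly bounded on a fixed neighborhood of $\tau$ for large $n$. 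When the relevant maximum $\mu_{\bullet,n}$ admits a positive lower bound along a subsequence, elliptic regularity produces $u_n\to u_0$ in $C^1_{\mathrm{loc}}$ on that neighborhood, and the coalescence then gives simultaneously $u_0(\tau)=\lim u_n(\text{zero endpoint})=0$ and $u_0(\tau)=\lim u_n(\text{max endpoint})=\lim\mu_{\bullet,n}>0$, a contradiction. The case $\rho_{i+1}=1$ is handled identically at $\tau=1$, using that $r_{i+1,n}\in(\rho_{i+1,n},1]$ is squeezed to $1$ as well.

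If instead $\mu_{\bullet,n}\to 0$ along a subsequence, I would rescale $v_n:=u_n/\mu_{\bullet,n}$ on the relevant nodal subinterval. Because $|u_n|\le\mu_{\bullet,n}\to 0$ there, the exponential factor in the nonlinearity collapses to $1+o(1)$ and $v_n$ satisfies the linearized equation $-v_n''-v_n'/r=\lambda_n(1+o(1))v_n$ with $|v_n|\le 1$; integrating this equation from the critical point (where $v_n'$ vanishes) and using $r\ge\tau/2>0$ delivers a uniform $C^1$-bound near $\tau$. Arzel\`a--Ascoli then produces $v_n\to v_0$ in $C^0$ up to $\tau$, and the coalescence simultaneously gives $v_0(\tau)=0$ (from the zero endpoint) and $|v_0(\tau)|=1$ (from the normalization at the maximum), again a contradiction. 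For the final dichotomy, the ``if'' part is trivial since $\rho_i=0$ together with $0\le r_{i-1}\le\rho_i$ forces $r_{i-1}=\rho_i=0$, while the ``only if'' direction is precisely the exclusion of $r_{i-1}=\rho_i=\tau>0$ carried out above. The main obstacle will be the sub-case $\mu_{\bullet,n}\to 0$: the direct $C^1$-limit of $u_n$ becomes trivial, and one must carefully verify enough uniform compactness for the rescaled sequence $v_n$ up to the collapse point $\tau$ for the incompatible boundary conditions on $v_0$ to actually register.
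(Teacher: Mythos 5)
Your proof is correct in its conclusion and the first step (boundedness of $\mu_{i+1,n}$) is identical to the paper's, but the treatment of the strict inequalities is a genuinely different and somewhat more roundabout route, and it contains one flawed justification.

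The paper handles all four exclusions ($\rho_i<r_i$, $r_i<\rho_{i+1}$, $\rho_{i+1}<1$, $r_{i-1}<\rho_i$ unless $\rho_i=0$) by one uniform mechanism: the identity of Lemma~\ref{lem:id} gives $1=\int_{\rho_{i,n}}^{r_{i,n}}\la_n\,|f_n(u_n)/\mu_{i,n}|\,r\log(r_{i,n}/r)\,dr$, and since $\la_n|f_n(\mu_{i,n})/\mu_{i,n}|$ is bounded (because $\mu_{i,n}$ is bounded) the right-hand side is $O(r_{i,n}-\rho_{i,n})\to0$, a contradiction. This argument needs no elliptic regularity, no compactness, and no case distinction on whether $\mu_{\bullet,n}\to0$; it works at the purely one-dimensional level in one line. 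Your argument reaches the same contradiction, but splits into the cases $\mu_{\bullet,n}\ge c>0$ (elliptic regularity and a $C^1_{\mathrm{loc}}$ limit with incompatible boundary values) and $\mu_{\bullet,n}\to0$ (rescaling $v_n=u_n/\mu_{\bullet,n}$, an ODE $C^1$-bound, Arzel\`a--Ascoli). When stripped down, the second sub-case is essentially the paper's estimate written differently; the first sub-case introduces machinery that the paper avoids.

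The flaw: the sentence ``Since concentration of any nodal piece can only occur at the origin by the first assertion of Lemma~\ref{b20}, every nodal maximum $\mu_{j,n}$ whose nodal interval meets a small neighborhood of $\tau$ remains bounded'' is not a valid inference. Lemma~\ref{b20} only says that $\rho_{j,n}\to0$ whenever $\mu_{j,n}\to\infty$; it does not prevent the corresponding nodal \emph{interval} $(r_{j-1,n},r_{j,n})$ from reaching past $\tau$, e.g.\ one can have $\rho_{j,n}\to0$ while $r_{j,n}\to\tau$. So the claim that all nodal maxima meeting a neighborhood of $\tau$ are bounded does not follow. The conclusion you actually need — that $u_n$ is uniformly bounded on a fixed neighborhood of $\tau>0$ — is nevertheless true, but the correct justification under \eqref{bdd4} is the Radial Lemma~\ref{RL}: $|u_n(x)|\le c|x|^{-1/2}\|u_n\|_{H^1_0(B)}\le C|x|^{-1/2}$, uniformly away from the origin. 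With that substitution, or better still by dropping the $\mu_{\bullet,n}\ge c>0$ sub-case entirely and running the rescaled integral estimate in all cases (which is what the paper effectively does), the proof is complete.
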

\begin{proof} First, assume that $\mu_{i+1,n}\to \infty$ up to a subsequence on the contrary. Then the first assertion in Lemma \ref{b20} suggests that $r_{i,n}<\rho_{i+1,n}\to0$. Then the second assertion in the same lemma implies $\mu_{i,n}\to \infty$ which is a contradiction. This proves the first assertion in the present lemma. Next, we choose  constants $0\le r_{i-1}\le\rho_i\le r_{i}\le \rho_{i+1}\le1$  so that $r_{i-1,n}\to r_{i-1}$, $r_{i,n}\to r_i$, $\rho_{i,n}\to \rho_i$, and $\rho_{i+1,n}\to \rho_{i+1}$ by taking a subsequence if necessary. We claim $\rho_i<r_{i}$. In fact, if $\rho_i=r_i$ on the contrary,  Lemma \ref{lem:id} shows that 
\[
\begin{split}
1&=\int_{\rho_{i,n}}^{r_{i,n}}\la_n \left|\frac{f_n(u_n)}{\mu_{i,n}}\right|r\log{\frac{r_{i,n}}{r}}dr\le \la_n \left|\frac{f_n(\mu_{i,n})}{\mu_{i,n}}\right|\max_{r\in(0,1]}\left|r\log{\frac{1}{r}}\right|(r_{i,n}-\rho_{i,n})\\
&\to0.
\end{split}
\]
This is a contradiction. 
 Next we show $r_{i}<\rho_{i+1}$. Otherwise, we get $0<r_i=\rho_{i+1}$. Then again Lemma \ref{lem:id} implies 
\[
\begin{split}
1&=\int_{r_{i,n}}^{\rho_{i+1,n}}\la_n \left|\frac{f_n(u_n)}{\mu_{i+1,n}}\right|r\log{\frac{r}{r_{i,n}}}dr\le \la_n\left|\frac{f_n(\mu_{i+1,n})}{\mu_{i+1,n}}\right|\log{\frac{\rho_{i+1,n}}{r_{i,n}}}\frac{\rho_{i+1,n}^2-r_{i,n}^2}{2}\\
&\to 0,
\end{split}
\]
which is a contradiction. 
Next we ensure $\rho_{i+1}<1$. If not, we have $1\ge r_{i+1,n}>\rho_{i+1,n}\to1$ and then analogously, we get
\[
\begin{split}
1&=\int_{\rho_{i+1,n}}^{r_{i+1,n}}\la_n \left|\frac{f_n(u_n)}{\mu_{i+1,n}}\right|r\log{\frac{r_{i+1,n}}{r}}dr=O\left(\log{\frac{r_{i+1,n}}{\rho_{i+1,n}}}\frac{r_{i+1,n}^2-\rho_{i+1,n}^2}{2}\right)\to0.
\end{split}
\]
This is a contradiction. Finally, we suppose $r_{i-1}=\rho_i>0$ on the contrary. Then similarly, we see that
\[
\begin{split}
1&=\int^{\rho_{i,n}}_{r_{i-1,n}}\la_n \left|\frac{f_n(u_n)}{\mu_{i,n}}\right|r\log{\frac{r}{r_{i-1,n}}}dr=O\left(\log{\frac{\rho_{i,n}}{r_{i-1,n}}}\frac{\rho_{i,n}^2-r_{i-1,n}^2}{2}\right)\to0.
\end{split}
\]
This is a contradiction. This completes the proof. 
\end{proof}
\begin{lemma}\label{lem:q1}
Assume \eqref{bdd4}. Suppose that for some $i\in\{1,\cdots,k+1\}$, there exist constants $\mu_i\ge 0$, $r_{i-1}\le\rho_i<r_i\le1$ such that $\mu_{i,n}\to \mu_i$, $\rho_{i,n}\to \rho_i$, and $r_{j,n}\to r_j$ for $j=i-1,i$. Then  we have a nontrivial function $w_i$  in $(r_{i-1},r_i)$ such that $u_{i,n}/\mu_{i,n}\to w_i$ in $C^2_{\text{loc}}((r_{i-1},r_i))$. Furthermore, if $r_{i-1}<\rho_i$ (which yields $i\not=1$), then we get $r_{i-1}>0$. Finally, $\rho_i=0$ implies $\lim_{r\to 0+0}w_i(r)=1$.
\end{lemma}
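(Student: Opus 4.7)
The plan is to apply a standard ODE compactness argument to the normalized sequence $v_n := u_{i,n}/\mu_{i,n}$. This function satisfies $|v_n|\le 1$ and
\[
-v_n'' - \tfrac{1}{r}\,v_n' = \lambda_n\, v_n\, \exp\!\left(u_{i,n}^2 + \alpha |u_{i,n}|^{\beta_n}\right)
\]
on $(r_{i-1,n}, r_{i,n})$, with a right-hand side uniformly bounded since $|u_{i,n}|\le \mu_{i,n}\to \mu_i<\infty$. On any compact subinterval of $(r_{i-1}, r_i)$ the equation is uniformly non-degenerate, so Ascoli--Arzel\`a combined with a diagonal argument will yield a subsequence along which $v_n\to w_i$ in $C^2_{\mathrm{loc}}((r_{i-1}, r_i))$; passing to the limit gives the ODE $-w_i''-w_i'/r=\lambda_* w_i\exp(\mu_i^2 w_i^2+\alpha\mu_i^{\beta_*}|w_i|^{\beta_*})$.

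Next I would treat the final assertion. Supposing $\rho_i=0$ and taking $u_n\ge 0$ without loss of generality so that $v_n(\rho_{i,n})=1$ and $v_n'(\rho_{i,n})=0$, I would let $M$ be a uniform bound on the right-hand side of the equation for $v_n$ and integrate $-(rv_n')'=r\cdot(\mathrm{RHS})$ from $\rho_{i,n}$ to $r>\rho_{i,n}$ to get $|v_n'(r)|\le Mr/2$, then integrate once more to obtain $|v_n(r)-1|\le Mr^2/4$. Since $\rho_{i,n}\to 0$, this bound persists in the limit, giving $|w_i(r)-1|\le Mr^2/4$ on $(0, r_i)$, and hence $\lim_{r\to 0^+} w_i(r)=1$.

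For the second assertion I would argue by contradiction: assume $i\ge 2$, $r_{i-1}=0$, and $\rho_i>0$. A crucial preliminary is that $\lambda_*>0$, since iterating Lemma~\ref{b22} from the bounded $\mu_{i,n}$ forces $\mu_{j,n}$ to be bounded for every $j=i,\dots,k+1$, whereas $\lambda_*=0$ would force $\mu_{k+1,n}\to\infty$ by Lemma~\ref{b20}. Taking $u_n\ge 0$, so $w_i\ge 0$, the $C^2_{\mathrm{loc}}$ convergence at the interior point $\rho_i\in(0,r_i)$ gives $w_i(\rho_i)=1=\max w_i$ and $w_i'(\rho_i)=0$, and by continuity $w_i\ge 1/2$ on some $[\rho_i-\varepsilon,\rho_i]$. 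Multiplying the limit ODE by $r$ and integrating from $r<\rho_i-\varepsilon$ to $\rho_i$ yields $rw_i'(r)\ge C$ for a constant $C>0$, so $w_i'(r)\ge C/r$ on $(0,\rho_i-\varepsilon)$; a further integration forces $w_i(r)\to -\infty$ as $r\to 0^+$, contradicting $w_i\ge 0$.

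Finally, for the nontriviality of $w_i$: the case $\rho_i>r_{i-1}$ is immediate, since $C^2_{\mathrm{loc}}$ convergence at the interior point $\rho_i$ gives $w_i(\rho_i)=\pm 1$. The case $\rho_i=r_{i-1}>0$ is ruled out by applying Lemma~\ref{lem:id} with the bound $r\log(r/r_{i-1,n})\le \rho_{i,n}\log(\rho_{i,n}/r_{i-1,n})\to 0$ on the shrinking interval $[r_{i-1,n},\rho_{i,n}]$, which forces the right-hand side of the identity $1=\int_{r_{i-1,n}}^{\rho_{i,n}}\lambda_n|f_n(u_{i,n})|\,r\log(r/r_{i-1,n})\,dr/\mu_{i,n}$ to vanish, a contradiction; the only remaining case $\rho_i=r_{i-1}=0$ is covered by the second paragraph. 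The main obstacle throughout is the contradiction in the third paragraph: one has to combine the ODE structure near $r=0$, which produces a logarithmic Liouville-type singularity for any nonnegative solution attaining its maximum away from the origin, with the sign constraint $w_i\ge 0$ inherited from $u_{i,n}$, and the preliminary reduction to $\lambda_*>0$ is easy to overlook but essential for the comparison to have any bite.
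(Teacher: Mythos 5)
Your proof is correct and follows essentially the same route as the paper: normalize by $\mu_{i,n}$, extract a $C^2_{\text{loc}}$ limit from the uniformly bounded radial ODE, and exploit the logarithmic singularity of the radial Laplacian at the origin to rule out $r_{i-1}=0<\rho_i$ and to identify the limit near $0$ when $\rho_i=0$. The only cosmetic differences are that for the second assertion you integrate the limit equation directly (using $\la_*\neq0$ and $f_*(t)/t\ge1$) instead of passing to the limit in the identity of Lemma \ref{lem:id} as the paper does, and for the last assertion your second integration yields $|w_i(r)-1|\le Mr^2/4$ directly where the paper stops at the derivative bound and argues by contradiction via the mean value theorem; both variants are sound.
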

\begin{proof} We may suppose $u_{i,n}\ge0$. Put $w_{i,n}:=u_{i,n}/\mu_{i,n}$. Then it satisfies 
\begin{equation}\label{wi}
\begin{cases}
-w_{i,n}''-\frac{1}{r}w_{i,n}'=\la_n w_{i,n}\frac{f_n(\mu_{i,n}w_{i,n})}{\mu_{i,n}w_{i,n}},\ 0< w_{i,n}\le 1\text{ in }(r_{i-1,n},r_{i,n}),\\
w_{i,n}(r_{i,n})=0=w_{i,n}'(\rho_{i,n}),\ w_{i,n}(\rho_{i,n})=1,\\
w_{i,n}(r_{i-1,n})=0\text{ if }i\not=1.
\end{cases}
\end{equation}
Notice that $\la_n w_{i,n}f_n(\mu_{i,n}w_{i,n})/(\mu_{i,n}w_{i,n})$ is uniformly bounded in $(r_{i-1,n},r_{i,n})$.  Then using the equation and conditions in \eqref{wi}, we clearly get  a function $w_i \ge0$ such that $w_{i,n}\to w_i$ in $C^2_{\text{loc}}((r_{i-1},r_i))$. Now, let us assume $r_{i-1}<\rho_i$. We may suppose $i\not=1$. Then we have $\rho_i\in(r_{i-1},r_i)$ and thus, we obviously see $w_i(\rho_i)=1$. It follows that $r_{i-1}>0$. Otherwise, Lemma \ref{lem:id} shows that 
\[
\begin{split}
1&=\log{\frac{1}{r_{i-1,n}}}\left(\int_{0}^{\rho_{i}}\la_* w_i \frac{f_*(\mu_i w_i)}{\mu_i w_i}rdr+o(1)\right)+\int_{0}^{\rho_{i}}\la_* w_i\frac{f_*(\mu_i w_{i})}{\mu_i w_i}r\log{r}dr\\
&\ \ +o(1)
\end{split}
\]
where we defined $f_*(t)/t=1$ if $t=0$.  Since $\la_*\not=0$ by Lemma \ref{b20}, $f(t)/t\ge1$ for any $t\ge0$ and $w_i(\ge0)$ is nontrivial on $(0,\rho_i)$, we get that the right hand side of the formula above diverges to infinity. This is a contradiction. This proves the second assertion of the lemma. Finally, let us suppose $0=\rho_i=r_{i-1}$.  Then we claim that there exists a constant $C>0$ such that
\begin{equation}\label{q2}
|w_{i,n}'(r)|\le C
\end{equation}
for all $r\in[\rho_{i,n},r_{i,n}]$ and all $n\in \mathbb{N}$. To see this, for any $r\in[\rho_{i,n},r_{i,n}]$, we multiply the equation in \eqref{wi} by $r$ and integrate over $[\rho_{i,n},r]$ and get
\begin{equation}\label{q22}
-rw_{i,n}'(r)=O\left(\frac{r^2-\rho_{i,n}^2}{2}\right),
\end{equation}
for all $r\in[\rho_{i,n},r_{i,n}]$. This readily proves the claim. Then we confirm that $\lim_{r\to 0+0}w_i(r)=1$. If not, we have a sequence $(\sigma_n)\subset (0,r_i)$ and a constant $\e_0\in(0,1]$ such that $\sigma_n\to 0$ and $w_i(\sigma_n)\to 1-\e_0$ as $n\to \infty$. Then we can choose a sequence $(\tilde{\sigma}_n)\subset (\rho_{i,n},r_{i,n})$ so that $\tilde{\sigma}_n\to 0$ and $w_{i,n}(\tilde{\sigma}_n)\to 1-\e_0$ by selecting a suitable subsequence. Consequently, it follows from  the mean value theorem that there exists a sequence $(\bar{\sigma}_n)\subset (\rho_{i,n},\tilde{\sigma}_{n})$ such that $\bar{\sigma}_n\to 0$ and 
\[
w_{i,n}'(\bar{\sigma}_n)=\frac{w_{i,n}(\tilde{\sigma}_n)-w_{i,n}(\rho_{i,n})}{\tilde{\sigma}_n-\rho_{i,n}}\to -\infty.
\]
This contradicts \eqref{q2}. This finishes the proof. 
\end{proof}
\begin{lemma}\label{b23}
We suppose \eqref{bdd4}. Assume that for some $i\in \{1,\cdots,k\}$, there exists a value $\mu_i\ge0$ such that $\mu_{i,n}\to\mu_i$. Then by subtracting a subsequence, we have  a constant $\mu_{i+1}\ge0$ such that $\mu_{i+1,n}\to\mu_{i+1}$ and $\lim_{n\to \infty}(\mu_{i+1,n}/\mu_{i,n})\in(0,\infty)$. Especially,  $\mu_i>0$ ($=0$) yields $\mu_{i+1}>0$ ($=0$ respectively).
\end{lemma}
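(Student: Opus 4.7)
The plan is to rescale by the relevant amplitude and use ODE uniqueness to rule out the degenerate ratios $L=0$ and $L=\infty$. First, applying Lemma \ref{b22} to $i$ gives $\mu_{i+1,n}$ uniformly bounded, and after a subsequence I obtain $\mu_{i+1,n}\to\mu_{i+1}\ge 0$ together with convergence of all neighboring radii $r_{i-1,n},\rho_{i,n},r_{i,n},\rho_{i+1,n}$ to limits with $r_{i-1}\le\rho_i<r_i<\rho_{i+1}<1$; I also get $r_{i+1,n}\to r_{i+1}\in(\rho_{i+1},1]$ either by applying Lemma \ref{b22} a second time to index $i+1$ when $i+1\le k$ or simply from $r_{k+1,n}=1$. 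After extracting once more, I may assume $L:=\lim \mu_{i+1,n}/\mu_{i,n}$ exists in $[0,\infty]$; the statement then reduces to showing $L\in(0,\infty)$, since then $\mu_{i+1}=L\mu_i$ settles both conclusions at once.

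Next I would treat $L=0$ (the case $L=\infty$ being entirely symmetric). Put $w_n:=u_n/\mu_{i,n}$ on $[r_{i-1,n},r_{i+1,n}]$, so that $|w_n|\le 1$ on $[r_{i-1,n},r_{i,n}]$ and $|w_n|\le \mu_{i+1,n}/\mu_{i,n}\to 0$ on $[r_{i,n},r_{i+1,n}]$. Since $\mu_{i,n}$ stays bounded, the rescaled equation
\[
-w_n''-\tfrac{1}{r}w_n'=\lambda_n w_n\,e^{\mu_{i,n}^{2}w_n^{2}+\alpha\mu_{i,n}^{\beta_n}|w_n|^{\beta_n}}
\]
has a uniformly bounded right-hand side, so the usual ODE estimates (integrate against $r$, exactly as in the proof of Lemma \ref{lem:q1}) yield convergence $w_n\to W$ in $C^2_{\mathrm{loc}}((r_{i-1},r_{i+1})\setminus\{0\})$, with $W$ solving the limit equation whose nonlinearity is locally Lipschitz in $W$. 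On $(r_{i-1},r_i)$, Lemma \ref{lem:q1} identifies $W$ with the nontrivial function $w_i$, while on $(r_i,r_{i+1})$ the uniform decay forces $W\equiv 0$. Passing to the $C^1$ limit at the interior point $r_i$ (which lies in $(0,1)$ since $r_i>\rho_i\ge 0$) gives $W(r_i)=W'(r_i)=0$, and ODE uniqueness on any interval around $r_i$ disjoint from $0$ then propagates $W\equiv 0$ across $r_i$, contradicting the nontriviality of $w_i$. Rescaling instead by $\mu_{i+1,n}$ and invoking Lemma \ref{lem:q1} on the $(i+1)$-th nodal segment handles $L=\infty$ identically.

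The main obstacle I anticipate is setting up the uniqueness step cleanly. The limit equation is a radial nonlinear ODE whose $1/r$-coefficient is singular at $r=0$, so the uniqueness argument must be restricted to an interval containing $r_i$ but avoiding the origin; the crucial input that makes this feasible is $r_i>0$, which Lemma \ref{b22} provides through $\rho_i<r_i$ and $\rho_i\ge 0$. Once $L\in(0,\infty)$ is established, the equivalence $\mu_i>0\Leftrightarrow \mu_{i+1}>0$ follows immediately from $\mu_{i+1}=L\mu_i$.
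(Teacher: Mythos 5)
Your proof is correct, but it takes a genuinely different route from the paper's. Where you rule out the degenerate ratios $L=0$ and $L=\infty$ by a $C^{1}$-matching/ODE-uniqueness contradiction, the paper computes the ratio directly: multiplying the equation for $u_n$ by $r$ and integrating over $(\rho_{i,n},\rho_{i+1,n})$ (using that $u_n'$ vanishes at both local extrema) gives
\[
\frac{\mu_{i+1,n}}{\mu_{i,n}}=-\frac{\displaystyle\int_{\rho_{i,n}}^{r_{i,n}}\la_n\frac{f_n(u_n)}{\mu_{i,n}}\,r\,dr}{\displaystyle\int_{r_{i,n}}^{\rho_{i+1,n}}\la_n\frac{f_n(u_n)}{\mu_{i+1,n}}\,r\,dr},
\]
and then passes to the limit via Lemma \ref{lem:q1}, observing that $f_*(t)/t\ge1$ makes the limiting numerator and denominator finite, nonzero, and of opposite sign, so the ratio lies in $(0,\infty)$ in one step. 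The paper's argument is shorter and quantitative: it produces an explicit formula for the limit and sidesteps the uniqueness machinery. Your version is valid but requires one extra verification you leave implicit: local Lipschitz continuity of the limit nonlinearity $t\mapsto te^{\mu_i^{2}t^{2}+\alpha\mu_i^{\beta_*}|t|^{\beta_*}}$ near $t=0$ when $\beta_*<1$. It does hold — the derivative equals $e^{\cdots}\bigl(1+2\mu_i^{2}t^{2}+\alpha\beta_*\mu_i^{\beta_*}|t|^{\beta_*}\bigr)$, continuous at $0$, even though $|t|^{\beta_*}$ itself is only H\"older — but this is the kind of detail worth making explicit, since without it the Cauchy-problem uniqueness at $r_i$ does not follow. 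Your contradiction approach is more robust in spirit and would survive in situations lacking a clean integral identity, but here the paper's direct computation is the cleaner and more informative choice.
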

\begin{proof}
We assume $u_i\ge0$. Put $w_{j,n}:=u_{j,n}/\mu_{j,n}$ for $j=i,i+1$. Note that, Lemmas \ref{b20} and \ref{b22} imply $\la_*\not=0$ and there exist values $\mu_{i+1}\ge0$ and $r_{i-1}\le\rho_i<r_i<\rho_{i+1}<r_{i+1}$ such that $\mu_{i+1,n}\to \mu_{i+1}$, $r_{j,n}\to r_j$ for $j=i-1,i,i+1$ and $\rho_{j,n}\to\rho_j$ for $j=i,i+1$ up to a subsequence. Moreover, by Lemma \ref{lem:q1}, there exist continuous functions $w_i\ge0$ in $[\rho_i,r_i)$ and $w_{i+1}\le 0$ in $(r_{i},r_{i+1})$ such that  $w_{j,n}\to w_j$ in $C^2_{\text{loc}}((r_{j-1},r_j))$ and $|w_j(\rho_j)|=1$ for $j=i,i+1$. Then as usual, multiplying the equation for $u_n$ by $r$ and integrating over $(\rho_{i,n},\rho_{i+1,n})$, we get
\[
\frac{\mu_{i+1,n}}{\mu_{i,n}}=-\frac{\int_{\rho_{i,n}}^{r_{i,n}}\la_n\frac{f_n(u_n)}{\mu_{i,n}}rdr}{\int_{r_{i,n}}^{\rho_{i+1,n}}\la_n\frac{f_n(u_n)}{\mu_{i+1,n}}rdr}\to -\frac{\int_{\rho_{i}}^{r_{i}}\la_*w_i\frac{f_*(\mu_i w_i)}{\mu_i w_i}rdr}{\int_{r_{i}}^{\rho_{i+1}}\la_*w_{i+1}\frac{f_*(\mu_{i+1} w_{i+1})}{\mu_{i+1} w_{i+1}}rdr}\in(0,\infty),
\]
since $f_*(t)/t\ge1$ for any $t\ge0$ where we again defined $f_*(t)/t=1$ for $t=0$. This completes the former assertion. Then the latter one is clearly confirmed. This ends the proof. 
\end{proof} 

Finally, we prove the next two important  lemmas.  
\begin{lemma}\label{b21}
Suppose \eqref{bdd4}. Let $k\ge1$ and choose $N\in \{1,\cdots,k\}$. Assume that $\mu_{N,n}\to \infty$ and the formula \eqref{f1} holds for $i=N$. Moreover, suppose there exists a constant $\mu_{N+1}\ge 0$ such that $\lim_{n\to \infty}\mu_{N+1,n}= \mu_{N+1}$. Then we get $\la_*\not=0$ and, taking a subsequence if necessary, we have $\lim_{n\to \infty}\rho_{N+1,n}=0$, $\lim_{n\to \infty}(r_{N,n}/\rho_{N+1,n})=0$, and further,
\begin{equation}\label{g1}
\lim_{n\to\infty}\rho_{N+1,n}^2\frac{f_n(\mu_{N+1,n})}{\mu_{N+1,n}}\log{\frac{1}{r_{N,n}}}=\frac{2}{\la_*  },
\end{equation}
and
\begin{equation}\label{g2}
\lim_{n\to\infty}\frac{\rho_{N+1,n}^2f_n(\mu_{N+1,n})}{r_{N,n}u_{N,n}'(r_{N,n})}=\frac2{\la_*  }.
\end{equation}
\end{lemma}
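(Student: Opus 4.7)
My approach is essentially the scaling argument of Lemma~\ref{lem:e0}, adapted to the case where $\mu_{N+1,n}$ stays bounded rather than blows up. Without loss of generality I will assume $u_{N+1,n}\ge 0$ on $(r_{N,n},r_{N+1,n})$, so that $u_{N,n}'(r_{N,n})\ge 0$ and the signs in \eqref{g2} match.

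First I would settle the three preliminary facts. Since $\mu_{N+1,n}$ is bounded, iterating Lemma~\ref{b23} upward shows $\mu_{j,n}$ is bounded for every $j\in\{N+1,\ldots,k+1\}$; in particular $\mu_{k+1,n}$ is bounded, so by the last assertion of Lemma~\ref{b20} we obtain $\la_*\neq 0$. For $\rho_{N+1,n}\to 0$, the hypothesis \eqref{f1} with $i=N$ combined with $\mu_{N,n}\to\infty$ and $\la_n\to\la_*>0$ yields $\log(1/r_{N,n}^2)\sim\alpha(1-\beta_*/2)\mu_{N,n}^{\beta_n}\to\infty$, hence $r_{N,n}\to 0$, i.e.\ $r_N=0$. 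If $\rho_{N+1}:=\lim\rho_{N+1,n}>0$, then $r_N<\rho_{N+1}$, and Lemma~\ref{lem:q1} applied with $i=N+1$ would force $r_N>0$, a contradiction.

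Next, setting $\tilde r_n:=r_{N,n}/\rho_{N+1,n}$ and $w_n(s):=u_{N+1,n}(\rho_{N+1,n}s)/\mu_{N+1,n}$, one checks that
\[
-w_n''-\tfrac{1}{s}w_n'=\rho_{N+1,n}^2\la_n\,w_n\,e^{\mu_{N+1,n}^2 w_n^2+\alpha\mu_{N+1,n}^{\beta_n}w_n^{\beta_n}},\quad w_n(1)=1,\ w_n'(1)=0,\ 0\le w_n\le 1.
\]
Since $\mu_{N+1,n}$ is bounded and $\rho_{N+1,n}^2\la_n\to 0$, the right-hand side tends to zero uniformly, so $w_n\to 1$ in $C^1_{\text{loc}}((0,\infty))$. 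Substituting $r=\rho_{N+1,n}s$ in the second identity of Lemma~\ref{lem:id} (with $i=N+1$) and splitting $\log(s/\tilde r_n)=\log s+\log(1/\tilde r_n)$ gives
\[
1=\rho_{N+1,n}^2\la_n\frac{f_n(\mu_{N+1,n})}{\mu_{N+1,n}}\bigl[A_n+B_n\log(1/\tilde r_n)\bigr],
\]
where $A_n:=\int_{\tilde r_n}^1 R_n(s)\,s\log s\,ds$, $B_n:=\int_{\tilde r_n}^1 R_n(s)\,s\,ds$, and $R_n(s):=f_n(u_{N+1,n}(\rho_{N+1,n}s))/f_n(\mu_{N+1,n})=w_n(s)\exp[\mu_{N+1,n}^2(w_n^2-1)+\alpha\mu_{N+1,n}^{\beta_n}(w_n^{\beta_n}-1)]\in[0,1]$. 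Elementary estimates give $A_n=O(1)$ and dominated convergence yields $B_n\to 1/2$. Because $\la_n f_n(\mu_{N+1,n})/\mu_{N+1,n}\to\la_* e^{\mu_{N+1}^2+\alpha\mu_{N+1}^{\beta_*}}\in(0,\infty)$ while $\rho_{N+1,n}^2\to 0$, the bracket must diverge; hence $\log(1/\tilde r_n)\to\infty$ (so $\tilde r_n\to 0$) and
\[
\lim_{n\to\infty}\rho_{N+1,n}^2\la_n\frac{f_n(\mu_{N+1,n})}{\mu_{N+1,n}}\log\frac{1}{\tilde r_n}=2.
\]
The displayed relation forces $\log(1/\tilde r_n)\asymp\rho_{N+1,n}^{-2}\gg\log(1/\rho_{N+1,n})$, so $\log(1/\tilde r_n)\sim\log(1/r_{N,n})$, and dividing by $\la_n$ yields \eqref{g1}.

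For \eqref{g2}, multiplying the equation for $u_{N+1,n}$ by $r$ and integrating from $r_{N,n}$ to $\rho_{N+1,n}$, using $u_{N+1,n}'(\rho_{N+1,n})=0$ and the $C^1$-matching $u_{N,n}'(r_{N,n})=u_{N+1,n}'(r_{N,n})$, gives $r_{N,n}u_{N,n}'(r_{N,n})=\int_{r_{N,n}}^{\rho_{N+1,n}}\la_n f_n(u_{N+1,n})r\,dr$. The same rescaling produces $r_{N,n}u_{N,n}'(r_{N,n})=\rho_{N+1,n}^2\la_n f_n(\mu_{N+1,n})\cdot\tfrac{1}{2}(1+o(1))$, which rearranges to \eqref{g2}. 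The main technical point is controlling $R_n$ uniformly in the borderline case $\mu_{N+1}=0$, where the exponential factor in $R_n$ degenerates; however, the elementary bound $R_n(s)\in[0,1]$ (coming from $w_n^2\le 1$ and $w_n^{\beta_n}\le 1$) makes the dominated convergence routine in both cases.
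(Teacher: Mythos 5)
Your proof is correct and follows essentially the same route as the paper's: deduce $\lambda_*\neq 0$, $r_{N,n}\to 0$, $\rho_{N+1,n}\to 0$ from Lemmas \ref{b20}--\ref{lem:q1}, then rescale by $\rho_{N+1,n}$ and apply Lemma \ref{lem:id} to extract \eqref{g1} and \eqref{g2}. The only presentational caveat is that $\tilde r_n\to 0$ should be obtained first (using only the crude bound $0\le R_n\le 1$, the boundedness of $A_n$, and the vanishing prefactor) before asserting $w_n\to 1$ in $C^1_{\text{loc}}((0,1])$ and invoking dominated convergence for $B_n\to 1/2$, which is precisely why the paper proves $r_{N,n}/\rho_{N+1,n}\to 0$ before the blow-up step.
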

\begin{proof}
Without losing the generality we may assume $u_{N+1,n}\ge0$. From Lemma \ref{b20}, we get $\la_*\not=0$. Then \eqref{f1} with $i=N$ and our assumption $\mu_{N,n}\to \infty$ imply $r_{N,n}\to0$. Then Lemmas \ref{b22} and \ref{lem:q1} yield $\rho_{N+1,n}\to0$. Moreover, we claim $r_{N,n}/\rho_{N+1,n}\to0$. In fact, using Lemma \ref{lem:id}, we get
\[
1= \int_{r_{N,n}}^{\rho_{N+1,n}}\la_n\frac{f_n(u_n)}{\mu_{N+1,n}}r\log{\frac{r}{r_{N,n}}}dr\le \la_n \frac{f_n(\mu_{N+1,n})}{\mu_{N+1,n}}\log{\frac{\rho_{N+1,n}}{r_{N,n}}}\frac{\rho_{N+1,n}^2-r_{N,n}^2}{2}.
\]
This formula implies $\log{(\rho_{N+1,n}/r_{N,n})}\to \infty$. This shows the claim. Now, let us deduce \eqref{g1} and \eqref{g2}. To this end, we put $\hat{r}_n:=r_{N,n}/\rho_{N+1,n}$ and $\hat{w}_n(r):=u_{N+1,n}(\rho_{N+1,n}r)/\mu_{N+1,n}$ for all $r\in[\hat{r}_n,1]$. Then again using the equation in \eqref{rad} with the conditions $\hat{w}_n(1)=1$, $\hat{w}_n'(1)=0$ and previous claims, we find a function $\hat{w}_0$ such that $\hat{w}_n\to \hat{w}_0$ in $C^2_{\text{loc}}((0,1])$ and get   
\begin{equation}\label{eq:w0}
\begin{cases}
-\hat{w}_0''(r)-\frac1r \hat{w}_0'(r)=0,\ 0\le \hat{w}_0\le1,\ \hat{w}_0'\ge0\text{ in }(0,1),\\
\hat{w}_0(1)=1,\ \hat{w}_0'(1)=0.
\end{cases}
\end{equation}
We readily compute that $\hat{w}_0=1$. Finally, we use Lemma \ref{lem:id} to see
\[
\begin{split}
\mu_{N+1,n}&=\int_{r_{N,n}}^{\rho_{N+1,n}}f_n(u_n)r\log{\frac{r}{r_{N,n}}}dr\\
&=\la_n \rho_{N+1,n}^2f_n(\mu_{N+1,n})\log{\frac{\rho_{N+1,n}}{r_{N,n}}}\int_{\hat{r}_n}^{1}\frac{f_n(\mu_{N+1,n}\hat{w}_n)}{f_n(\mu_{N+1,n})}rdr\\
&\ \ \ +\la_n \rho_{N+1,n}^2f_n(\mu_{N+1,n})\int_{\hat{r}_n}^{1}\frac{f_n(\mu_{N+1,n}\hat{w}_n)}{f_n(\mu_{N+1,n})}r\log{r}dr
\end{split}
\]
Since $f_n(\mu_{N+1,n}\hat{w}_n)/f_n(\mu_{N+1,n})\to1$ on $(0,1)$, the Lebesque convergence theorem and previous claims give 
\[
\frac{\mu_{N+1,n}}{\rho_{N+1,n}^2f_n(\mu_{N+1,n})\log{\frac{\rho_{N+1,n}}{r_{N,n}}}}=\frac{\la_*}{2}+o(1).
\]
This proves \eqref{g1}. On the other hand, multiplying the equation in \eqref{rad} with $i=N+1$ by $r$ and integrating over $(r_{N,n},\rho_{N+1,n})$, we see
\[
\begin{split}
r_{N,n}u_n'(r_{N,n})&=\int_{r_{N,n}}^{\rho_{N+1,n}}\la_nf_n(u_n)rdr \\
&=\la_n \rho_{N+1,n}^2f_n(\mu_{N+1,n})\int_{\hat{r}_n}^{1}\frac{f_n(\mu_{N+1,n}\hat{w}_n)}{f_n(\mu_{N+1,n})}rdr.
\end{split}
\] 
Hence similarly we obtain \eqref{g2}. This finishes the proof.
\end{proof} 

By the previous lemma, we deduce the following.
\begin{lemma}\label{lem:g6} Suppose as in the previous lemma. In addition,  we assume that \eqref{f4} with $i=N$ is true. Then we get
\begin{equation}\label{g6}
\lim_{n\to \infty}\mu_{N+1,n}\left(\log{\frac1{r_{N,n}}}\right)^{\frac{1-\beta_n}{\beta_n}}=2^{\frac{\beta_*-1}{\beta_*}} \left\{\alpha\left(1-\frac{\beta_*}{2}\right)\right\}^{\frac{1}{\beta_*}},
\end{equation}
and 
\begin{equation}\label{g66}
\lim_{n\to \infty}\mu_{N+1,n}f_n(\mu_{N+1,n})^{\beta_n-1}\rho_{N+1,n}^{2(\beta_n-1)}=\frac{4^{\beta_*-1}\alpha\left(1-\frac{\beta_*}{2}\right)}{\la_*^{\beta_*-1}}.
\end{equation}
\end{lemma}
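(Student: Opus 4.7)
The plan is to chain the four hypotheses \eqref{f1}, \eqref{f4}, \eqref{g1}, \eqref{g2} so as to eliminate the auxiliary quantities $\mu_{N,n}$, $r_{N,n}u_{N,n}'(r_{N,n})$ and $\rho_{N+1,n}$ one after another, leaving at the end only the two asserted relations between $\mu_{N+1,n}$, $\log(1/r_{N,n})$ and $\rho_{N+1,n}^{2}f_n(\mu_{N+1,n})$. Since $\la_*\neq 0$ by Lemma \ref{b21}, the term $\log(1/\la_n)$ is bounded, so splitting $\log\frac{1}{\la_n r_{N,n}^{2}}=\log\frac{1}{\la_n}+2\log\frac{1}{r_{N,n}}$ in \eqref{f1} (with $i=N$) gives
\[
\log\frac{1}{r_{N,n}}=\frac{\alpha(1-\beta_*/2)+o(1)}{2}\,\mu_{N,n}^{\beta_n},
\]
and, inverting,
\[
\mu_{N,n}^{\beta_n-1}=\left(\frac{2}{\alpha(1-\beta_*/2)}\right)^{(\beta_*-1)/\beta_*}(1+o(1))\left(\log\frac{1}{r_{N,n}}\right)^{(\beta_n-1)/\beta_n}.
\]

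To prove \eqref{g6}, I would divide \eqref{g1} by \eqref{g2}: the factor $\la_*^{-1}\rho_{N+1,n}^{2}f_n(\mu_{N+1,n})$ cancels and yields $\mu_{N+1,n}=(1+o(1))\,r_{N,n}u_{N,n}'(r_{N,n})\log(1/r_{N,n})$. Feeding in \eqref{f4} with $i=N$, namely $r_{N,n}u_{N,n}'(r_{N,n})=(2+o(1))/\mu_{N,n}$ (the signs being consistent because the WLOG convention $u_{N+1,n}\geq 0$ of Lemma \ref{b21} forces $u_{N,n}'(r_{N,n})\geq 0$), I obtain $\mu_{N+1,n}=(2+o(1))\mu_{N,n}^{-1}\log(1/r_{N,n})=(1+o(1))\alpha(1-\beta_*/2)\,\mu_{N,n}^{\beta_n-1}$ after substituting the first displayed formula. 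Plugging in the second displayed formula, and using the identity $\alpha(1-\beta_*/2)\cdot(2/(\alpha(1-\beta_*/2)))^{(\beta_*-1)/\beta_*}=2^{(\beta_*-1)/\beta_*}(\alpha(1-\beta_*/2))^{1/\beta_*}$, produces exactly \eqref{g6}.

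For \eqref{g66}, I would solve \eqref{g1} for $\rho_{N+1,n}^{2}f_n(\mu_{N+1,n})=(2/\la_*+o(1))\mu_{N+1,n}/\log(1/r_{N,n})$, raise the identity to the $(\beta_n-1)$-th power, and multiply by $\mu_{N+1,n}$ to obtain
\[
\mu_{N+1,n}f_n(\mu_{N+1,n})^{\beta_n-1}\rho_{N+1,n}^{2(\beta_n-1)}=\left(\frac{2}{\la_*}\right)^{\beta_n-1}(1+o(1))\,\frac{\mu_{N+1,n}^{\beta_n}}{(\log(1/r_{N,n}))^{\beta_n-1}}.
\]
Writing $C:=2^{(\beta_*-1)/\beta_*}(\alpha(1-\beta_*/2))^{1/\beta_*}$, the already proved \eqref{g6} raised to the $\beta_n$-th power gives $\mu_{N+1,n}^{\beta_n}/(\log(1/r_{N,n}))^{\beta_n-1}\to C^{\beta_*}=2^{\beta_*-1}\alpha(1-\beta_*/2)$, so the right-hand side converges to $(2/\la_*)^{\beta_*-1}\cdot 2^{\beta_*-1}\alpha(1-\beta_*/2)=4^{\beta_*-1}\alpha(1-\beta_*/2)/\la_*^{\beta_*-1}$, which is \eqref{g66}. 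The only delicate point — and, I expect, the main obstacle in writing up a clean version — is the handling of the variable exponents under the joint limit $\beta_n\to\beta_*$: each time a positive quantity $a_n\to A>0$ is raised to an exponent $\beta_n-1$ or $\beta_n$, one must invoke continuity of $(x,y)\mapsto x^y$ at $(A,\beta_*)$, and in the degenerate regime $\mu_{N+1,n}\to 0$ (which, by \eqref{g6} itself, forces $\beta_*<1$) one has to verify that the two sides vanish with the same rate rather than becoming indeterminate. Once these checks are made, the proof is a purely algebraic cascade of substitutions and requires no further analytic input beyond the four hypotheses.
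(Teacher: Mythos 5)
Your argument is correct and essentially reproduces the paper's proof: you chain \eqref{f1}, \eqref{f4}, \eqref{g1}, \eqref{g2} in the same way, except that you eliminate $\rho_{N+1,n}^{2}f_n(\mu_{N+1,n})$ by taking the ratio of \eqref{g1} to \eqref{g2} before invoking \eqref{f4}, while the paper first solves \eqref{g2} together with \eqref{f4} and \eqref{f1} for that quantity to obtain the intermediate formula \eqref{q1} and then substitutes it into \eqref{g1}. One small imprecision in your closing remark: $\mu_{N+1,n}\to 0$ only forces $\beta_n<1$ eventually, hence $\beta_*\le 1$ (and $\beta_*=1$ does occur in case (b)), but this does not affect the proof since, as you correctly observe, \eqref{g6} keeps $\mu_{N+1,n}(\log(1/r_{N,n}))^{(1-\beta_n)/\beta_n}$ bounded away from $0$ and $\infty$, so the variable-exponent powers are handled by joint continuity.
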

\begin{proof} Noting $\la_*\not=0$ by the previous lemma, we combine \eqref{g2} together with \eqref{f4} and  \eqref{f1} for $i=N$ and get 
\[
\begin{split}
\frac{2}{\la_*}+o(1)&=\frac{\rho_{N+1,n}^2f_n(\mu_{N+1,n}) \mu_{N,n}}{2+o(1)}\\
&=\frac{\rho_{N+1,n}^2f_n(\mu_{N+1,n})}{2+o(1)}\left(\frac{2\log{\frac1{r_{N,n}}}(1+o(1))}{\alpha\left(1-\beta_*/2\right)+o(1)}\right)^{\frac{1}{\beta_n}}.
\end{split}
\]
Then it holds that
\begin{equation}\label{q1}
\begin{split}
\rho_{N+1,n}^2&f_n(\mu_{N+1,n})\\
&=\left\{2^{2-\frac1{\beta_*}}\left(\alpha\left(1-\frac{\beta_*}{2}\right)\right)^{\frac1{\beta_*}}\la_*^{-1} +o(1)\right\}\left(\log{\frac1{r_{N,n}}}\right)^{-\frac1{\beta_n}}.
\end{split}
\end{equation}
Substituting this into \eqref{g1}, we obtain
\[
\begin{split}
\frac2{\la_* }+o(1)&=\frac1{\mu_{N+1,n}}\left\{2^{2-\frac1{\beta_*}}\left(\alpha\left(1-\frac{\beta_*}{2}\right)\right)^{\frac1{\beta_*}}\la_*^{-1} +o(1)\right\}\left(\log{\frac1{r_{N,n}}}\right)^{1-\frac1{\beta_n}}.
\end{split}
\]
It follows that
\[
\mu_{N+1,n}\left(\log{\frac1{r_{N,n}}}\right)^{\frac{1-\beta_n}{\beta_n}}=2^{\frac{\beta_*-1}{\beta_*}} \left\{\alpha\left(1-\frac{\beta_*}{2}\right)\right\}^{\frac{1}{\beta_*}}+o(1).
\]
This proves \eqref{g6}. Using this and \eqref{q1}, we get  
\[
f_n(\mu_{N+1,n})^{\beta_n-1}\rho_{N+1,n}^{2(\beta_n-1)}=\frac{1}{\mu_{N+1,n}}\left(\frac{4^{\beta_*-1}\alpha(1-\beta_*/2)}{\la_*^{\beta_*-1}}+o(1)\right).
\]
This shows \eqref{g66}. We finish the proof.
\end{proof}
\section{Proof of main theorems}\label{proof}

Let us complete the proof of main theorems. We shall first show Theorems \ref{a10} and \ref{a31}. Then Theorem \ref{a1} will readily follow from them.  We begin with the proof of  Theorem \ref{a10}. 

\begin{proof}[Proof of Theorem \ref{a10}] We first note that the standard argument shows that the weak limit $u_0$ of $(u_n)$ is a radially symmetric smooth solution of \eqref{p} with $(\la,\beta)=(\la_*,\beta_*)$. In particular, writing $u_0=u_0(|x|)$, we have that $u_0(r)$ ($r\in[0,1]$) satisfies 
\begin{equation}\label{wl}
-u_0''-\frac1ru_0'=\la_* f_*(u_0)\text{ in }(0,1)\text{ and }u_0'(0)=0=u_0(1).
\end{equation}

We begin with the case $\max_{r\in[0,1]}|u_n(r)|\to \infty$. Set $N:=\max\{i=1,\cdots,k+1\ |\ (\mu_{i,n})\text{ is unbounded}\}$. Suppose $N=k+1$. Then we have $\mu_{k+1}\to \infty$ as $n\to \infty$ by subtracting a subsequence if necessary. We shall confirm all the assertions in  the case of (i). By Lemma \ref{b20}, we get $\rho_{k+1,n}\to0$ and $\mu_{i,n}\to \infty$ for all $i=1,\cdots,k+1$.  Especially, the assumption \eqref{infty} is satisfied for $i=1$. As a consequence, Propositions  \ref{dd}, \ref{e00} and \ref{f0} hold true for $i=1$. It follows that all of the assumptions \eqref{infty}-\eqref{a7} are verified for $i=2$. Consequently,  the assertions in Propositions  \ref{dd}, \ref{e00} and \ref{f0} are true for $i=2$. Repeating the same argument, we ensure all the assertions in Propositions  \ref{dd}, \ref{e00} and \ref{f0} for any  $i=1,\cdots,k+1$. This completes the former assertions in (i). Finally, by \eqref{f1} with $i=k+1$ and the facts that $r_{k+1,n}=1$ and $\mu_{k+1,n}\to \infty$, we get $\la_*=0$. This yields that $u_0=0$ by \eqref{wl}. Moreover, Lemma \ref{RL} implies that $u_n$ is locally uniformly  bounded in $(0,1]$. Then using the equation for $u_n$ as usual, it is easy to  show that $u_n\to 0\text{ in }C^2_{\text{loc}}((0,1])$ up to a subsequence. This finishes the case of  (i).

Next we assume $N<k+1$. Then similarly to the previous argument, we get $\rho_{N,n}\to 0$ and $\mu_{i,n}\to \infty$ for all $i=1,\cdots,N$ up to a subsequence. Then, analogously, we have that all the assertions in Propositions \ref{dd}, \ref{e00} and \ref{f0} are true for all $i=1,\cdots,N$. On the other hand, by the definition of $N$, for each $i=N+1,\cdots,k+1$, there exists a value $\mu_i\ge0$ such that $\mu_{i,n}\to \mu_i$ up to a subsequence.  Then from Lemma \ref{b22}, we get numbers $0\le r_N\le \rho_{N+1} <r_{N+1}<\cdots<\rho_{k+1}< r_{k+1}=1$ if $N<k$ and $0\le r_N\le \rho_{N+1}<r_{N+1}=1$ if $N=k$ such that $r_{i,n}\to r_i$ for all $i=N,\cdots,k+1$ and $\rho_{i,n}\to \rho_i$ for all $i=N+1,\cdots,k+1$ by taking a subsequence again if necessary. Moreover, from Lemmas \ref{b21}, we get $\la_*\not=0$, and $r_N=\rho_{N+1}=0$. Furthermore, a usual argument shows that $u_n|_{[r_{N,n},1]}\to u_0$ in $C^2_{\text{loc}}((0,1])$ and $\lim_{r\to 0+0}(-1)^Nu_0(r)=\mu_{N+1}$ by Lemma \ref{lem:q1}. It follows that
\begin{equation}\label{q4}
\begin{split}
\int_{r_{N,n}}^1u_n'(r)^2rdr&=\int_{r_{N,n}}^1\la_nf_n(u_n)u_nrdr\\
&\to \int_{0}^1\la_*f_*(u_0)u_0rdr=\int_0^1u_0'(r)^2rdr
\end{split}
\end{equation}
by \eqref{wl}. This proves the former part of (ii). 

Now, we assume  $\mu_{N+1}>0$. Then noting \eqref{wl} and Lemmas \ref{lem:q1} and \ref{b23}, we get that $u_0(r_i)=0$, $(-1)^{i-1}u_0\ge0$ on $[r_{i-1},r_i]$, $u_0'(\rho_i)=0$ and $(-1)^{i-1}u_0(\rho_i)=\mu_i>0$ for all $i=N+1,\cdots,k+1$. Moreover, by \eqref{wl}, we readily see $(-1)^{i-1}u_i>0$ on $(r_{i-1},r_i)$ for all $i=N+1,\cdots,k+1$. This completes the case of (a). Next we suppose $\mu_{N+1}=0$. By Lemma \ref{b23}, it is obvious that $u_0=0$. Put $w_n:=u_n|_{[r_{N,n},1]}/\mu_{N+1,n}$ on $[r_{N,n},1]$. By Lemma \ref{b23} again, for every $i=N+1,\dots,k+1$, we have a constant $\mu_i^*>0$ such that  $\max_{r\in[r_{i-1,n},r_{i,n}]}w_n(r)=\mu_{i,n}/\mu_{N+1,n}\to \mu_i^*$  up to a subsequence. In particular, $w_n$ is uniformly bounded in $[r_{N,n},1]$. Then, by the standard argument and Lemma \ref{lem:q1}, we get a continuous function $w_0$ in $[0,1]$ such that $w_n\to w_0$ in $C^2_{\text{loc}}((0,1])$ and
\[
\begin{cases}
-w_0''-\frac1r w_0'=\la_* w_0\text{ in }(0,1),\\
(-1)^Nw_0(0)=1,\ w_0(r_i)=0,\\
(-1)^{i-1}w_0>0 \text{ on }(r_{i-1},r_i)\ (i=N+1,\cdots,k+1).
\end{cases}
\]
Using the equation and the condition $(-1)^Nw_0(0)=1$, we obtain $w_0(r)=(-1)^N J_0(\sqrt{\la_*}r)$ in $[0,1]$ where $J_0$ is the first kind Bessel function of order zero  defined in Section \ref{org}. Moreover, since $w_0$ has just $(k-N)$ interior zero points in $(0,1)$ and $w_0(1)=0$, we get that $\sqrt{\la_*}$ coincides with  the $(k-N+1)$--th zero point of $J_0$ on $(0,\infty)$, i.e., $\sqrt{\la_*}=t_{N-k+1}$. It follows that $w_0=(-1)^N\varphi_{k-N+1}$ and $\la_*=\Lambda_{k-N+1}$.  This completes the case of (ii). 

 Finally, if $u_n$ is uniformly bounded in $[0,1]$, repeating the similar (and simpler) argument based on Lemmas \ref{b20}-\ref{b23} as above, we can confirm all the assertions in (iii). This finishes the proof. 

\end{proof}

Next we prove Theorem \ref{a31}.

\begin{proof}[Proof of Theorem \ref{a31}]
 We first assume that $k>0$, $(\beta_n)\subset(0,1]$, and $\mu_{1,n}\to \infty$. Then we claim that for every $i=2,\cdots,k+1$, $\mu_{i,n}$ is bounded uniformly  for all $n\in \mathbb{N}$. To see this, we shall show that $\mu_{2,n}$ is uniformly bounded. Otherwise, we get $\mu_{2,n}\to \infty$ up to a subsequence. Then arguing as in the previous proof, we ensure  that all the assumptions \eqref{infty}-\eqref{a7} are satisfied for $i=2$. Then we get \eqref{d8} and \eqref{e1} for $i=2$ by Lemmas \ref{lem:d8} and  \ref{lem:e} respectively. But if $\beta_n\le1$ for all $n\in \mathbb{N}$, \eqref{d8} and \eqref{e1} with $i=2$ yield that $\mu_{2,n}$ is uniformly bounded. This contradicts \eqref{infty}. Consequently, Lemma \ref{b22} proves the claim. 
  
Now, we assume that (i) of Theorem \ref{a10} occurs. Then the first conclusion follows by the previous claim. Moreover, if $k\in \mathbb{N}\cup\{0\}$, arguing as in the previous proof again, we get that all  the assertions in Lemma \ref{lem:e} and Proposition \ref{f0} hold true for any $i=1,\cdots,k+1$. It follows from \eqref{f1} with $i=k+1$ that \eqref{aaa1} holds true. Then if $k\ge1$, it follows from \eqref{f11} that
\[
\frac{\left\{\alpha\left(1-\frac{\beta_*}2\right)+o(1)\right\}^{-\frac1{\beta_n}}\left(\log{\frac1{\la_n}}\right)^{\frac1{\beta_n}}}{\mu_{k,n}^{(\beta_n-1)}}=\alpha\left(1-\frac{\beta_*}{2}\right)+o(1).
\] 
This gives \eqref{aa1} with $i=k$ after an easy calculation. Then we get \eqref{aa1} for all $i=1,\cdots,k$ by induction. In fact, we assume \eqref{aa1} is true for some $i=j\in \{2,\cdots,k\}$. Then using \eqref{f11} with $i=j$, we similarly get 
\[
\begin{split}
&\frac{\left[\left\{\alpha \left(1-\frac{\beta_*}{2}\right)\right\}^{\frac{2-\beta_*(\beta_*-1)^{k-j+1}}{2-\beta_*}}+o(1)\right]^{-\frac1{\beta_n(\beta_n-1)^{k-j+1}}}\left(\log{\frac1{\la_n}}\right)^{\frac1{\beta_n(\beta_n-1)^{k-j+1}}}}{\mu_{j-1,n}^{\beta_n-1}}\\
&=\alpha \left(1-\frac{\beta_*}{2}\right)+o(1).
\end{split}
\]
It follows that
\[
\lim_{n\to \infty}\frac{\log{\frac1{\la_n}}}{\mu_{j-1,n}^{\beta_n(\beta_n-1)^{k-j+2}}}=\left\{\alpha \left(1-\frac{\beta_*}{2}\right)\right\}^{\frac{2-\beta_*(\beta_*-1)^{k-j+2}}{2-\beta_*}}.
\]
This is \eqref{aa1} with $i=j-1$. This shows the desired conclusion. Moreover, since \eqref{f1} and \eqref{f4} with $i=k+1$ imply 
\[
\mu_{k+1,n}=\left(\frac{\log{\frac1{\la_n}}}{\alpha(1-\beta_*/2)+o(1)}\right)^{\frac1{\beta_n}},
\]
and 
\[
\mu_{k+1,n}|u_{k+1,n}'(1)|=2+o(1)
\]
respectively, combining these two formulas, we get \eqref{aa44}. Next assume $k\ge1$ and  $i=1,\cdots,k$. Noting the first conclusion, we may assume $\beta_n>1$ for all $n\in \mathbb{N}$. Then by \eqref{aa1}, we have that 
\[
\frac{\log{\frac{1}{\la_n}}}{\mu_{i,n}^{\beta_n}}=\frac{\log{\frac{1}{\la_n}}}{\mu_{i,n}^{\beta_n(\beta_n-1)^{k-i+1}}}\mu_{i,n}^{\beta_n\{(\beta_n-1)^{k-i+1}-1\}}\to0
\]
since $1\le k-i+1\le k$ and $1\le\beta_*<2$. Using this and \eqref{f1}, we obtain
\[
\frac{\alpha}{2}\left(1-\frac{\beta_*}{2}\right)+o(1)=\frac{\log{\frac1{r_{i,n}}}}{\mu_{i,n}^{\beta_n}}.
\]
Therefore, it follows from \eqref{aa1} that
\[
\begin{split}
&\left\{\frac{\alpha}{2}\left(1-\frac{\beta_*}{2}\right)+o(1)\right\}^{(\beta_n-1)^{k-i+1}}\\
&\ \ \ \ \ \ \ \ \ \ \ \ \ =\frac{\left(\log{\frac1{r_{i,n}}}\right)^{(\beta_n-1)^{k-i+1}}}{\log{\frac{1}{\la_n}}}\left[\left\{\alpha \left(1-\frac{\beta_*}{2}\right)\right\}^{\frac{2-\beta_*(\beta_*-1)^{k-i+1}}{2-\beta_*}}+o(1)\right].
\end{split}
\]
Using this formula, we readily get \eqref{aa2}. It follows that
\begin{equation}\label{gg1}
\frac{\log{ \frac1{r_{i,n}}}}{\log{\frac1{\la_n}}}=\frac{\left(\log{\frac1{r_{i,n}}}\right)^{(\beta_n-1)^{k-i+1}}}{\log{\frac1{\la_n}}}\left(\log{\frac1{r_{i,n}}}\right)^{1-(\beta_n-1)^{k-i+1}}\to \infty
\end{equation}
as $n\to \infty$.  Then we get by \eqref{f4}, \eqref{f1} and \eqref{gg1} that,
\[
\begin{split}
\log{|u_{i,n}'(r_{i,n})|}&=\log{\frac{1}{r_{i,n}}}-\log{\mu_{i,n}}+O(1)\\
&=\log{\frac{1}{r_{i,n}}}-\frac{1}{\beta_n}\log{\log{\frac1{r_{i,n}}}}-\frac{1}{\beta_n}\log{\left(1+\frac{\log{\frac1{\la_n}}}{\log{\frac{1}{r_{i,n}^2}}}\right)}+O(1)\\
&=\log{\frac{1}{r_{i,n}}}(1+o(1))\\
&=\left(\frac{\log{\frac1{\la_n}}}{2^{(\beta_*-1)^{k-i+1}}\left\{\alpha \left(1-\frac{\beta_*}{2}\right)\right\}^{\frac{2-2(\beta_*-1)^{k-i+1}}{2-\beta_*}}+o(1)}\right)^{\frac1{(\beta_n-1)^{k-i+1}}}\\
&\ \ \ \times(1+o(1))
\end{split}
\]
by \eqref{aa2}. This proves \eqref{aa4}. Next, for any $i=2,\cdots,k+1$, from \eqref{e1} and the definition of $\ga_{i,n}$, we get 
\begin{equation}\label{q9}
\begin{split}
\log{\left(8^{\beta_*-1}\alpha(1-\beta_*/2)+o(1)\right)}&=(\beta_n-1)\mu_{i,n}^2\left(1+o(1)\right)+\beta_n \log{\mu_{i,n}}\\
&-2(\beta_n-1)\log{\frac1{\rho_{i,n}}}
\end{split}
\end{equation}
where we noted
\[
\frac{\log{\frac1{\la_n}}}{\mu_{i,n}^2}\le\frac{\log{\frac1{\la_nr_{i,n}^2}}}{\mu_{i,n}^{\beta_n}}\frac1{\mu_{i,n}^{2-\beta_n}}\to0
\]
as $n\to \infty$ by \eqref{f1}. Now, we suppose $k\ge2$ and $i=2,\cdots,k$. Then we have by \eqref{aa1} that , 
\[
\begin{split}
&\frac{\log{\mu_{i,n}}}{(\beta_n-1)\mu_{i,n}^2}\\
&=\frac{(1+o(1))\log{\log{\frac1{\la_n}}}}{\beta_n(\beta_n-1)^{k-i+2}\left(\frac{\log{\frac1{\la_n}}}{(\alpha(1-\beta_*/2))^{(2-\beta_*(\beta_*-1)^{k-i+1})/(2-\beta_*)}+o(1)}\right)^{\frac2{\beta_n(\beta_n-1)^{k-i+1}}}}.
\end{split}
\]
Since $i<k+1$, we get $(\log{\mu_{i,n}})/((\beta_n-1)\mu_{i,n}^2)\to0$. Therefore, we obtain  from \eqref{q9} and \eqref{aa1} that 
\[
\begin{split}
\frac12+o(1)&=\frac{\log{\frac1{\rho_{i,n}}}}{\mu_{i,n}^2}\\
&=\left(\frac{(\alpha(1-\beta_*/2))^{(2-\beta_*(\beta_*-1)^{k-i+1})/(2-\beta_*)}+o(1)}{\log{\frac1{\la_n}}}\right)^{\frac{2}{\beta_n(\beta_n-1)^{k-i+1}}}\\
&\ \ \ \times \log{\frac1{\rho_{i,n}}}.
\end{split}
\]
This proves \eqref{aa3}. On the other hand, if $k\ge1$ and $i=k+1$, we use \eqref{aaa1} and see
\[
\frac{\log{\mu_{k+1,n}}}{(\beta_n-1)\mu_{k+1,n}^2}= \left\{\alpha\left(1-\frac{\beta_*}{2}\right)+o(1)\right\}^{\frac2{\beta_n}} \frac{\log{\log{\frac1{\la_n}}}}{\beta_n(\beta_n-1)\left(\log{\frac1{\la_n}}\right)^{\frac2{\beta_n}}}.
\]
Hence if there exists a constant $L\ge0$ such that
\[
\frac{\log{\log{\frac1{\la_n}}}}{(\beta_n-1)\left(\log{\frac1{\la_n}}\right)^{\frac2{\beta_n}}}=L,
\]
we get from \eqref{q9} and \eqref{aaa1} that
\[
\begin{split}
1+\left\{\alpha\left(1-\frac{\beta_*}{2}\right)\right\}^{\frac2{\beta_*}} L+o(1)&=\frac{2\log{\frac1{\rho_{k+1,n}}}}{\mu_{k+1,n}^2}\\
&=2\left(\frac{\alpha(1-\beta_*/2)+o(1)}{\log{\frac1{\la_n}}}\right)^{\frac{2}{\beta_n}}\log{\frac1{\rho_{k+1,n}}}.
\end{split}
\]
This ensures \eqref{ab1}. On the other hand, if 
\[
\frac{\log{\log{\frac1{\la_n}}}}{(\beta_n-1)\left(\log{\frac1{\la_n}}\right)^{\frac2{\beta_n}}}\to \infty,
\]
we necessarily have $\beta_*=1$ and then, by \eqref{q9} and \eqref{aaa1}, we get
\[
1+o(1)=\frac{2(\beta_n-1)\log{\frac1{\rho_{k+1,n}}}}{\beta_n \log{\mu_{k+1,n}}}=\frac{2(\beta_n-1)\log{\frac1{\rho_{k+1,n}}}}{ (1+o(1))\log{\log{\frac1{\la_n}}}}.
\]
This proves \eqref{ab2}. This completes the case of (i). 

Next we assume that (ii) of Theorem \ref{a10} happens. Then, since $\mu_{N+1,n}$ is uniformly bounded and $r_{N,n}\to0$, we have $\beta_*\le1$ by \eqref{g6} in Lemma \ref{lem:g6}. Then, the first claim above completes the first assertion in the case of (ii). Let us suppose $\mu_{N+1}>0$ and complete the case of (a). Again by \eqref{lem:g6}, we see $\beta_*=1$. We shall prove \eqref{aa5}--\eqref{aa8}. Multiplying the equation in \eqref{rad} with $i=N+1$ by $r$ and integrating over $(\rho_{N+1,n},r_{N+1,n})$, we get
\[
r_{N+1,n}u_{N+1,n}'(1)=-\int_{\rho_{N+1,n}}^{r_{N+1,n}}\la_n f_n(u_{N+1,n})rdr.
\]
Then recalling the assertions in (ii) of Theorem \ref{a10}, we ensure \eqref{aa10}. Moreover, noting $\beta_*=1$, we get by \eqref{g66} that
\[
\rho_{N+1,n}^{\beta_n-1}=\sqrt{\frac{\alpha}{2\mu_{N+1}}}+o(1).
\]
This gives \eqref{aa9}. Next we see by \eqref{g6} that  
\begin{equation}\label{g3}
\left(\log{\frac1{r_{N,n}}}\right)^{\beta_n-1}=\frac{2\mu_{N+1}}{\alpha}+o(1).
\end{equation} 
Then we use  \eqref{f1} with $i=N$ and \eqref{g3} to obtain
\[
\mu_{N,n}^{\beta_n-1}=(1+o(1))\left(\log{\frac{1}{r_{N,n}}}\right)^{\frac{\beta_n-1}{\beta_n}}=\frac{2\mu_{N+1}}{\alpha}+o(1).
\]
This gives \eqref{aa5} with $i=N$. Then if $N\ge2$, \eqref{aa5} is true for all $i=1,\cdots,N$ by induction. Indeed, assuming \eqref{aa5} is true for some $i=j\in \{2,\cdots,N\}$, we use \eqref{f11} with $i=j$ to see
\[
\mu_{j-1,n}^{\beta_n-1}=\left(\frac2\alpha+o(1)\right)\mu_{j,n}.
\]
This and the assumption suggest
\[
\mu_{j-1,n}^{(\beta_n-1)^{N-j+2}}=(1+o(1))\mu_{j,n}^{(\beta_n-1)^{N-j+1}}=\frac{2\mu_{N+1}}{\alpha}+o(1).
\]
This shows \eqref{aa5} with $i=j-1$. This finishes  \eqref{aa5}. This and \eqref{f1} show
\[
\left(\log{\frac{1}{r_{i,n}}}\right)^{(\beta_n-1)^{N-i+1}}=(1+o(1))\mu_{i,n}^{\beta_n(\beta_n-1)^{N-i+1}}=\frac{2\mu_{N+1}}{\alpha}+o(1)
\]
for all $i=1,\cdots,N$. This proves \eqref{aa6}. Moreover, for any $i=1,\cdots,N$, we get by \eqref{f4} and \eqref{f1}  that
\[
\begin{split}
\log{|u_{i,n}'(r_{i,n})|}&=\log{\frac1{r_{i,n}}}\left(1-\frac{\log{\mu_{i,n}}}{\log{\frac1{r_{i,n}}}}+o(1)\right)=\log{\frac1{r_{i,n}}}\left(1+o(1)\right),
\end{split}
\]
where we noted
\[
\frac{\log{\mu_{i,n}}}{\log{\frac1{r_{i,n}}}}=\frac{\mu_{i,n}^{\beta_n}}{\frac12(1+o(1))\log{\frac1{\la_n r_{i,n}^2}}}\frac{\log{\mu_{i,n}}}{\mu_{i,n}^{\beta_n}}\to0
\] 
by \eqref{f1}. Then it follows from \eqref{aa6} that 
\[
\left(\log{|u_{i,n}'(r_{i,n})|}\right)^{(\beta_n-1)^{N-i+1}}=\frac{2\mu_{N+1}}{\alpha}(1+o(1)).
\]
This proves \eqref{aa7}.  In particular, \eqref{aa5} clearly shows that $2\mu_{N+1}/\alpha>1$ ($\in (0,1)$) yields $\beta_n>1$ ($<1$ respectively) for all $n\in \mathbb{N}$.  On the other hand,  $\beta_n>1$ ($=1$, $<1$) for all $n\in \mathbb{N}$ suggests $2\mu_{N+1}/\alpha\ge 1$ ($=1$, $\le1$ respectively). Finally, suppose $1<N\le k$ and $2\mu_{N+1}/\alpha>1$. Then we have $\beta_n>1$ for all $n\in \mathbb{N}$ by the first claim above.  Then, for any $i=2,\dots,N$, we use \eqref{e1} and the definition of $\ga_{i,n}$ to deduce 
\[
\mu_{i,n}f_n(\mu_{i,n})^{\beta_n-1}\rho_{i,n}^{2(\beta_n-1)}=\frac{\alpha}{2}+o(1).
\]
It follows that 
\begin{equation}\label{q6}
\log{\frac1{\rho_{i,n}}}=\frac{\mu_{i,n}^2}{2}\left(1+\frac{\log{\mu_{i,n}}}{\mu_{i,n}^2(\beta_n-1)}(1+o(1))+o(1)\right).
\end{equation}
Here note that \eqref{aa5} implies 
\[
\frac{\log{\mu_{i,n}}}{\mu_{i,n}^2(\beta_n-1)}=\frac{\log{(2\mu_{N+1}/\alpha+o(1))}}{(\beta_n-1)^{N-i+2}(2\mu_{N+1}/\alpha+o(1))^{2/(\beta_n-1)^{N-i+1}}}\to 0
\]
since $2\mu_{N+1}/\alpha>1$. Consequently, \eqref{q6} and \eqref{aa5} ensure
\[
\left(\log{\frac1{\rho_{i,n}}}\right)^{(\beta_n-1)^{N-i+1}}=\mu_{i,n}^{2(\beta_n-1)^{N-i+1}}\left(1+o(1)\right)=\left(\frac{2\mu_{N+1}}{\alpha}\right)^2+o(1).
\]
This gives \eqref{aa8}.  This completes the case of $(a)$.

Lastly, if $\mu_{N+1}=0$, Lemma \ref{b23} shows $\mu_i=0$ for all $i=N+1,\cdots,k+1$. Moreover, \eqref{g6} confirms that $\beta_n<1$ for all $n\in \mathbb{N}$ since $r_{N,n}\to0$. This completes the case of (b). This finishes the proof. 

\end{proof}

Let us complete the proof of Theorem \ref{a1}. 

\begin{proof}[Proof of Theorem \ref{a1}] 
Assume $\max_{n\to \infty}|u_n(x)|\to \infty$. Then writing $u_n=u_n(|x|)$, the function $u_n(r)$ $(r\in[0,1])$ verifies (i) or (ii) of Theorem \ref{a10}. If (i) occurs and $\beta_n>1$ for all $n\in \mathbb{N}$, the assertions in (i) of Theorem \ref{a10} completes (i) of Theorem \ref{a1}. On the other hand, if (i) of Theorem \ref{a10} happens and $\beta_n\le 1$ for all $n\in \mathbb{N}$, by the first conclusion in Theorems \ref{a31}, we get $k=0$. This shows (ii) of Theorem \ref{a1}. Next, we suppose (ii) of Theorem \ref{a10} occurs. Then if $\beta_n>1$ for all $n\in \mathbb{N}$, we have $\beta_*=1$ and $(-1)^Nu_0(0)\ge \alpha/2$ by Theorem \ref{a31}. This implies (iii) of Theorem \ref{a1} occurs. If $\beta_n=1$ for all $n\in \mathbb{N}$, we have $N=1$ and $-u_0(0)=\alpha/2$ by Theorem \ref{a31}. This is (iv) of Theorem \ref{a1}. Lastly, if $\beta_n < 1$ for all $n\in \mathbb{N}$, we have two cases. The first case is $u_0\not=0$. Since this case corresponds to (a) of Theorems \ref{a10} and \ref{a31}, we have $u_0(0)\not=0$. Then, we see $\beta_n\uparrow 1$, $N=1$ and $-u_0(0)\in(0,\alpha/2]$ by Theorem \ref{a31}. This confirms  (v) of Theorem \ref{a1}. The second case is $u_0=0$. This suggests that (b) of Theorems \ref{a10} and \ref{a31} occurs. Hence we have $\la_*=\Lambda_{k}$ since $N=1$ by Theorem \ref{a31}. This shows that (vi) occurs. Finally, if $u_n$ is uniformly bounded, we get (iii) of Theorem \ref{a10}. This completes (vii) of Theorem \ref{a1}. This finishes the proof. 

\end{proof}

Corollary \ref{cor:ne} immediately follows from Theorem \ref{a1}.

\begin{proof}[Proof of Corollary \ref{cor:ne}]
We assume that there exist such sequences of values $\{(\la_n,\beta_n)\}\subset (0,\infty)\times (0,1]$ and nodal radial solutions $(u_n)$ on the contrary. Then, in view of the fact that $\la_n\to0$ and $\beta_n\le1$ for all $n\in \mathbb{N}$, we have that (ii) of Theorem \ref{a1} occurs. But then we get $k=0$ which is a contradiction. This finishes the proof.  

\end{proof}

Next, we shall prove Corollary \ref{cor:final}. We recall  Lemma 2.1 in \cite{GN} with slight generalization.
\begin{lemma}\label{crl1}
Assume $k\in \mathbb{N}\cup\{0\}$, $\{(\la_n,\beta_n)\}\subset (0,\Lambda_1)\times (0,2)$, and $(\la_n,\beta_n)\to (\la_*,\beta_*)\in(0,\Lambda_1)\times (0,2)$. 
 Then we have 
\[
\limsup_{n\to \infty}c_{k,\la_n,\beta_n}\le 2\pi k+c_{0,\la_*,\beta_*},
\]
where the number $c_{k,\la,\beta}$ is defined as in Section \ref{main}. 
\end{lemma}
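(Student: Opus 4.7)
I would prove the bound by constructing, for each $n$, an admissible competitor $v_n$ for $c_{k,\la_n,\beta_n}$ whose energy is $\le 2\pi k + c_{0,\la_*,\beta_*} + o(1)$. The structure is the standard one for sub/critical Trudinger--Moser energy estimates: $k$ Moser concentration bubbles placed on concentric annular shells near the origin, each contributing $\approx 2\pi$, together with a single ``ground state'' piece on the remaining outer annulus contributing $\approx c_{0,\la_*,\beta_*}$. Since $\la_*\in(0,\Lambda_1)$, by \cite{A} one can fix $u_*\in S_{0,\la_*,\beta_*}$ realizing $I_{\la_*,\beta_*}(u_*)=c_{0,\la_*,\beta_*}$; I may assume $u_*>0$ in $B$.

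\textbf{Construction.} Choose radii $0<r_{1,n}<\cdots<r_{k,n}=s_n\downarrow 0$ with $r_{i,n}/r_{i+1,n}\to 0$ fast enough that the Moser-type bubbles on the annuli $A_{i,n}=\{r_{i-1,n}<|x|<r_{i,n}\}$ (with $r_{0,n}=0$) decouple. On each $A_{i,n}$ ($i=1,\dots,k$), place a rescaled Moser function $m_{i,n}$ (suitably translated in log-coordinates so as to be supported and concentrated inside $A_{i,n}$), with sign $(-1)^{i-1}$ and Dirichlet energy normalized. On the outer annulus $\{s_n<|x|<1\}$, define $w_n(x):=(-1)^k u_*\bigl((|x|-s_n)/(1-s_n)\bigr)$, zero-extended, so that $w_n\to(-1)^k u_*$ in $H^1_0(B)$. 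By the standard superlinearity of $f_n$, for each piece $\phi$ there is a unique positive scalar $t(\phi)>0$ with $t(\phi)\phi\in\mathcal{N}_{\la_n,\beta_n}$ (by zero extension); let $t_{i,n}$ and $\tau_n$ be the corresponding scalars for $m_{i,n}$ and $w_n$. Setting $v_n:=\tau_n w_n+\sum_{i=1}^k t_{i,n}m_{i,n}$, the disjoint supports and alternating signs make $v_n$ an admissible competitor in the definition of $c_{k,\la_n,\beta_n}$, and the energy splits as a sum $I_{\la_n,\beta_n}(v_n)=I_{\la_n,\beta_n}(\tau_n w_n)+\sum_{i=1}^k I_{\la_n,\beta_n}(t_{i,n}m_{i,n})$.

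\textbf{Energy accounting and main obstacle.} For the outer piece, since $w_n\to(-1)^k u_*$ strongly in $H^1_0$ and $(\la_n,\beta_n)\to(\la_*,\beta_*)$ with $u_*$ already on $\mathcal{N}_{\la_*,\beta_*}$, continuity of the Nehari projection yields $\tau_n\to 1$ and $I_{\la_n,\beta_n}(\tau_n w_n)\to I_{\la_*,\beta_*}(u_*)=c_{0,\la_*,\beta_*}$. The heart of the proof is the bound $\limsup_{n\to\infty} I_{\la_n,\beta_n}(t_{i,n}m_{i,n})\le 2\pi$ for each bubble: this is the classical Adimurthi--Moser concentration level computation, which I would carry out by optimizing the concentration scale $\e_{i,n}$ of $m_{i,n}$ within $A_{i,n}$ (equivalent, via radial-to-log change of variables, to a Moser function in an interval) and showing that $\max_{t>0}I_{\la_n,\beta_n}(tm_{i,n})\to 2\pi$. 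The only delicate point compared with Lemma~2.1 of \cite{GN} is uniformity as $\beta_n\to\beta_*\in(0,2)$: the perturbation $e^{\alpha|u|^{\beta_n}}$ must be shown to contribute $o(1)$ to this optimal level. This follows because on the Moser profile $t_{i,n}m_{i,n}$ of height $\sim\sqrt{2\pi\log(1/\e_{i,n})}$, the factor $\alpha|t_{i,n}m_{i,n}|^{\beta_n}$ is of order $(\log(1/\e_{i,n}))^{\beta_n/2}=o(\log(1/\e_{i,n}))$ since $\beta_n<2$, so the perturbation is dominated by $e^{u^2}$ and does not alter the limit $2\pi$ of the critical Moser functional. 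Summing the $k$ bubble contributions with the outer contribution gives the asserted bound on $c_{k,\la_n,\beta_n}$.
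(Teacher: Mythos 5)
Your overall strategy — disjoint Moser-type bubbles on nested annular shells near the origin, each contributing energy $\to 2\pi$, plus a copy of the ground state $u_*\in S_{0,\la_*,\beta_*}$ on the remaining outer annulus — is the standard one and is, as far as the paper's terse proof (``repeat the argument of Lemma 2.1 in \cite{GN}'') indicates, essentially the approach being invoked. You also correctly identify the two crucial analytic points: continuity of the Nehari projection in $(\la,\beta)$ for the outer piece, and subcriticality of the perturbation $e^{\alpha|u|^{\beta_n}}$ (since $\beta_n<2$) so that the maximum of $I_{\la_n,\beta_n}$ over each bubble ray still tends to the Adimurthi--Moser level $2\pi$. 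However, two points should be addressed.

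First, your outer piece is wrong as written. You set $w_n(x):=(-1)^k u_*\bigl((|x|-s_n)/(1-s_n)\bigr)$ and zero-extend. At $|x|=s_n$ the argument is $0$, so $w_n=(-1)^k u_*(0)\neq 0$ there (since $u_*(0)$ is the maximum of $u_*$), while $w_n\equiv 0$ for $|x|\le s_n$. This creates a jump discontinuity across $\{|x|=s_n\}$, so $w_n\notin H^1_0(B)$ and the claimed strong convergence $w_n\to(-1)^ku_*$ is vacuous. The usual fix is a capacity cutoff: take $w_n:=(-1)^k u_*\cdot\chi_n$ with $\chi_n(r)=0$ for $r\le s_n$, $\chi_n(r)=\log(r/s_n)/\log(\sqrt{s_n}/s_n)$ for $s_n\le r\le\sqrt{s_n}$, and $\chi_n(r)=1$ for $r\ge\sqrt{s_n}$; then $\|w_n-(-1)^ku_*\|_{H^1_0}\to 0$ because the 2D Dirichlet energy of the correction is of order $u_*(0)^2/\log(1/s_n)\to 0$.

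Second, ``$r_{i,n}/r_{i+1,n}\to 0$ fast enough that the bubbles decouple'' undersells the quantitative condition actually required. For a bubble supported in the annulus $\{r_{i-1,n}<|x|<r_{i,n}\}$, the Adimurthi computation gives $\max_{t>0}I_{\la_n,\beta_n}(tm_{i,n})\to 2\pi$ only if the log-width of the shell dominates the log-distance of its outer boundary to $\partial B$, i.e.\ $\log(r_{i,n}/r_{i-1,n})\gg\log(1/r_{i,n})$, together with a compatible placement of the concentration radius inside the shell; equivalently, one needs $\log(1/r_{i-1,n})/\log(1/r_{i,n})\to\infty$ for each $i$. Mere disjointness of supports does not yield this. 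A related smaller point: since the definition of $c_{k,\la,\beta}$ requires strict sign-definiteness $(-1)^{i-1}u_i>0$ on each annulus, the bubble $m_{i,n}$ and the outer piece must vanish only on the boundary circles of their respective shells, not on any interior region -- worth stating explicitly given how you describe the Moser functions as ``supported and concentrated inside $A_{i,n}$.''
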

\begin{proof} Noting $\la_*\not=0$ and $\beta_*>0$, we can repeat the completely same argument with the proof of Lemma 2.1 in \cite{GN}. This ensures the proof.
\end{proof}

Using this, we give the proof.  

\begin{proof}[Proof of Corollary \ref{cor:final}]
We first assume that the first conclusion does not hold. Then we have sequences of positive values $(\la_n)$, natural numbers $(k_n)$ and nodal radial solutions $(u_n)$ such that $\la_n\to0$ as $n\to \infty$, $u_n\in S_{k_n,\la_n,\beta_n}$, $u_n(0)>0$, and $\int_B|\nabla u_n|^2dx$ is uniformly  bounded for all $n\in \mathbb{N}$. Then, we claim that, up to a suitable subsequence, there exists a number $k\in \mathbb{N}$ such that $u_n\in S_{k,\la_n,\beta}$ for all $n\in \mathbb{N}$. Otherwise, we get $k_n\to \infty$ as $n\to \infty$. Then choose numbers $0=r_{0,n}<r_{1,n}<\cdots<r_{k+1,n}=1$ so that $u_n(x)=0$ if $|x|=r_{i,n}$ and $(-1)^{i-1}u_n(x)>0$ if $r_{i-1,n}<|x|<r_{i,n}$ for all $i=1,\cdots,k+1$. Moreover,   for all $i=1,\cdots,k+1$, define a function $u_{i,n}\in \mathcal{N}_{\la_n,\beta}$ by $u_{i,n}:=u_n|_{\{r_{i-1,n}<|x|<r_{i,n}\}}$ with zero extension to whole $B$. Then since $\la_n\to 0$, Lemma \ref{b1} implies that there exists a constant $K>0$ such that 
\[
\int_{B}|\nabla u_n|^2dx=\sum_{i=1}^{k+1}\int_{B}|\nabla u_{i,n}|^2dx\ge (k_n+1)K
\]
for all $n \in \mathbb{N}$. Since the right hand side diverges to infinity, we get  a contradiction. This proves the claim. But, then the existence of such sequence $(u_n)$ contradicts Corollary \ref{cor:ne} since $\beta\le1$. This proves the first assertion. Next we suppose the latter conclusion fails on the contrary. Then there exists a number $k\in \mathbb{N}$ and sequences of positive values $(\la_n)$ and solutions $(u_n)$ such that $\la_n\to0$, $u_n\in S_{k,\la_n,\beta}$, and $I_{\la_n,\beta}(u_n)=c_{k,\la_n,\beta}$ for all $n\in\mathbb{N}$. In addition, for any $k\in \mathbb{N}$ and $\la\in(0,\Lambda_1)$, it holds that $c_{k,\la,\beta}\le 2\pi k+c_{0,\la,\beta}<2\pi(k+1)$. In fact, the first inequality is obtained by just choosing $\la_n=\la$ and $\beta_n=\beta$ for all $n\in \mathbb{N}$ in Lemma \ref{crl1} and the second one comes from the fact that $c_{0,\la,\beta}<2\pi$ by \cite{A}. In particular, we get $I_{\la_n,\beta}(u_n)<2\pi(k+1)$ for all $n\in \mathbb{N}$. Consequently, the standard argument shows that $(u_n)$ is bounded in $H^1_0(B)$. But this is again impossible in view of Corollary \ref{cor:ne} since $\beta\le1$. This completes the proof.

\end{proof}

Finally we prove Corollary \ref{expansion}.

\begin{proof}[Proof of Corollary \ref{expansion}] Assume as in the corollary. We write $u_n=u_n(|x|)$ $(x\in \overline{B})$ and consider the function $u_n(r)$ $(r\in[0,1])$. Then we get all the assertions in (i) of Theorems \ref{a10} and \ref{a31}. It follows from \eqref{aaa1} that  
\[
\mu_{k+1,n}=\left(\frac{\log{\frac1{\la_n}}}{\alpha\left(1-\frac{\beta_*}{2}\right)+o(1)}\right)^{\frac1{\beta_n}}.
\]
Moreover, if $k\ge1$, we also have by \eqref{aa1} (or \eqref{d77}) that $\mu_{k+1,n}/\mu_{i,n}\to 0$ as $n\to \infty$ for all $i=1,\cdots,k$. Then from Theorem \ref{a10}, we derive
\[
\begin{split}
\int_0^1u_n'(r)^2rdr&=\sum_{i=1}^{k+1}\int_{r_{i-1,n}}^{r_{i,n}}u_{i,n}'(r)^2rdr=2(k+1)-\frac{\alpha \beta_*}{\mu_{k+1,n}^{2-\beta_n}}+o\left(\frac{1}{\mu_{k+1,n}^{2-\beta_n}}\right)\\
&=2(k+1)-\frac{\alpha^{\frac{2}{\beta_*}}\beta_*\left(1-\frac{\beta_*}{2}\right)^{\frac{2-\beta_*}{\beta_*}}}{\left(\log\frac1{\la_n}\right)^{\frac{2-\beta_n}{\beta_n}}}+o\left(\frac1{{\left(\log\frac1{\la_n}\right)^{\frac{2-\beta_n}{\beta_n}}}}\right).
\end{split}
\]
This finishes the proof.

\end{proof}

\section{Counterparts}\label{sec:cor}
In this final section, we discuss the counterparts of our classification result, Theorem \ref{a1}. In the following, we suppose $k\in \{0\}\cup \mathbb{N}$. Then we first remark that for any sequences $(\la_n)\subset (0,\Lambda_1)$ and $(\beta_n)\subset (1,2)$ ($(0,2)$ if $k=0$) of values, there exists a sequence $(u_n)$ of radial solutions which satisfies the assumptions in the theorem. To see this,  for any such sequences $(\la_n)$ and $(\beta_n)$, we define the sequence $(u_n)$ of solutions so that $u_n\in S_{k,\la_n,\beta_n}$ and $I_{\la_n,\beta_n}(u_n)=c_{k,\la_n,\beta_n}$ for all $n\in \mathbb{N}$ where $c_{k,\la,\beta}$ is the number defined in Section \ref{main}. This choice is possible by \cite{A} and \cite{AY1}. Consequently, since $I_{\la_n,\beta_n}(u_n)<2\pi k+c_{0,\la_n,\beta_n}<2\pi(k+1)$ for any $n\in \mathbb{N}$, which is proved in the same papers, a standard argument shows that $(u_n)$ is bounded in $H^1_0(B)$. Hence $(u_n)$ satisfies all the assumptions in the theorem.

Then we can immediately show some easy examples with this sequence $(u_n)$. Indeed, let us suppose $(\la_n,\beta_n)\to (\la_*,\beta_*)$ for  $(\la_*,\beta_*)\in[0,\Lambda_1)\times(0,2)$ if $k=0$ and $(\la_*,\beta_*)\in[0,\Lambda_1)\times[1,3/2)$ if $k\ge1$. Then, it follows from Theorem \ref{a1} that if $\la_*=0$ and $\beta_n>1$ ($\beta_n\le1$) for all $n\in \mathbb{N}$, then  $(u_n)$ behaves as in (i) ((ii) respectively) of the theorem. On the other hand, if $k=0$ and $\la_*\not=0$,  or $k\ge1$, $\la_*\not=0$, and $\beta_*>1$, then $(u_n)$ behaves as in (vii) with $u_0\not=0$. 

We shall find more  examples for (iii) and (vii). To this end, recalling $S_{0,\la,1}\not=\emptyset$ if and only if $\la\in(0,\Lambda_1)$ (\cite{A}),  we define
\[
\begin{split}
\Lambda^*:=\inf\{\Lambda\in(0,\Lambda_1)\ |\ u(0)<\alpha/2\text{ for any }u\in S_{0,\la,1}\text{ with }&I_{\la,1}(u)=c_{0,\la,1}\\
&\text{ if } \la\in(\Lambda,\Lambda_1)\}.
\end{split}
\]
It follows that $\Lambda^*\in(0,\Lambda_1)$. (See Lemma \ref{posi} below.) On the other hand, noting the nonexistence result by \cite{AY2}, we define 
\[
\begin{split}
\Lambda_*:=\inf\{\Lambda>0\ |\ S_{1,\Lambda,1}\not=\emptyset\}(=\sup\{\Lambda>0\ |\ S_{k,\Lambda,1}=\emptyset \text{ for any $k\in \mathbb{N}$}\}),
\end{split}
\]
and get $\Lambda_*>0$. Moreover, our necessary condition on the weak limit in (iii) of Theorem \ref{a1} allows to see $\Lambda_*\le\Lambda^*$ as follows.
\begin{corollary}\label{cr:0}
Assume $\{(\la_n,\beta_n)\}\subset (0,\Lambda_1)\times (1,2)$ and let $(u_n)$ be a sequence of solutions such that $u_n\in S_{1,\la_n,\beta_n}$ and $I_{\la_n,\beta_n}(u_n)=c_{1,\la_n,\beta_n}$ for all $n\in \mathbb{N}$. Moreover, suppose $(\la_n,\beta_n)\to (\la_*,\beta_*)\in(0,\Lambda_1)\times\{1\}$ and $|u(0)|<\alpha/2$ for all $u\in S_{0,\la_*,1}$ with $I_{\la_*,1}(u)=c_{0,\la_*,1}$ (which is verified if $\la_*\in(\Lambda^*,\Lambda_1)$). Then $(u_n)$ behaves as in (vii) of Theorem \ref{a1} with $u_0\not=0$. In particular, there exists at least one pair of solutions $u^{\pm}\in S_{1,\la,1}$ such that $u^-(0)<0<u^+(0)$, $u^+=-u^-$, and $I_{\la,1}(u^{\pm})\le 2\pi+c_{0,\la,\beta}$, for all $\la\in (\Lambda^*,\Lambda_1)$ and thus, it holds that $0<\Lambda_*\le\Lambda^*<\Lambda_1$. 
\end{corollary}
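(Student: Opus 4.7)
The plan is to apply Theorem \ref{a1} to the minimizing sequence $(u_n)$ with $k=1$, systematically rule out every non-compact alternative (i)--(vi), and then transfer the resulting compactness statement across $\la \in (\Lambda^*,\Lambda_1)$.

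First I would establish the uniform $H^1_0(B)$-boundedness of $(u_n)$. Since $u_n$ realizes $c_{1,\la_n,\beta_n}$, Lemma \ref{crl1} yields $c_{1,\la_n,\beta_n} \le 2\pi + c_{0,\la_*,1} + o(1) < 4\pi$ for large $n$, and a standard Nehari argument turns this energy bound into a Dirichlet bound. After passing to a subsequence and (using the odd symmetry of the equation) replacing $u_n$ by $-u_n$ if necessary, the full hypotheses of Theorem \ref{a1} are in force, producing a weak limit $u_0$ and an integer $N \in \{0,1,2\}$.

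Next I would inspect the seven alternatives. Cases (i) and (ii) demand $\la_*=0$, excluded by $\la_* > 0$; cases (iv) and (v) demand $\beta_n \le 1$, excluded by $(\beta_n) \subset (1,2)$; case (vi) demands $\la_* = \Lambda_k = \Lambda_1$, excluded by $\la_* < \Lambda_1$. The critical case is (iii), in which $N=1$, $\beta_n \downarrow 1$, $u_0 \in S_{0,\la_*,1}$ and $-u_0(0) \ge \alpha/2$. The key observation here is that minimality of $(u_n)$ forces $u_0$ to be itself a minimizer for $c_{0,\la_*,1}$: combining the energy quantization $I_{\la_n,\beta_n}(u_n) \to 2\pi + I_{\la_*,1}(u_0)$ from Theorem \ref{a1} with $I_{\la_n,\beta_n}(u_n) = c_{1,\la_n,\beta_n}$ and Lemma \ref{crl1} gives $I_{\la_*,1}(u_0) \le c_{0,\la_*,1}$, while $u_0 \in S_{0,\la_*,1} \subset \mathcal{N}_{\la_*,1}$ supplies the reverse inequality. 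The hypothesis of the corollary then forces $|u_0(0)| < \alpha/2$, directly contradicting the necessary condition $|u_0(0)| \ge \alpha/2$ from (iii). Consequently only (vii) remains; moreover $u_0 = 0$ would require $\la_* = \Lambda_{k+1} = \Lambda_2 > \Lambda_1$, so $u_0 \in S_{1,\la_*,1}$ and $u_n \to u_0$ in $C^2(\overline{B})$, which proves the first assertion.

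For the consequence concerning $(\Lambda^*,\Lambda_1)$, I would fix $\la$ there (so that the amplitude hypothesis automatically holds by definition of $\Lambda^*$), choose any $\beta_n \downarrow 1$, and take $u_n \in S_{1,\la,\beta_n}$ realizing $c_{1,\la,\beta_n}$, which exist by \cite{A,AY1}. With $u_n(0) > 0$ arranged by sign, the first part produces $u^+ := u_0 \in S_{1,\la,1}$, $u^+(0) > 0$, and $u^- := -u^+$ by the symmetry of \eqref{p}; the energy bound $I_{\la,1}(u^\pm) \le 2\pi + c_{0,\la,1}$ follows from $C^2$-convergence together with Lemma \ref{crl1}. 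The nonemptiness of $S_{1,\la,1}$ for every $\la \in (\Lambda^*,\Lambda_1)$ forces $\Lambda_* \le \Lambda^*$; the inequalities $0 < \Lambda_*$ and $\Lambda^* < \Lambda_1$ are inherited from \cite{AY2} and the forthcoming Lemma \ref{posi}, respectively. The main obstacle in the plan is the elimination of case (iii): one must exploit minimality to upgrade $u_0$ to an $S_{0,\la_*,1}$-minimizer, which is the only way to bring the hypothesis on amplitudes into sharp conflict with the necessary condition produced by Theorem \ref{a1}; all remaining alternatives are excluded by elementary parameter constraints.
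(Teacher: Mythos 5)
Your proposal is correct and follows essentially the same route as the paper's proof: uniform $H^1_0$-boundedness via Lemma \ref{crl1}, elimination of alternatives (i), (ii), (iv), (v), (vi) by the parameter constraints $\la_*\ne 0$, $\beta_n>1$, $\la_*<\Lambda_1$, elimination of (iii) by using minimality and Lemma \ref{crl1} to force $u_0$ to achieve $c_{0,\la_*,1}$ and then contradicting the amplitude condition $|u_0(0)|\ge\alpha/2$, and finally concluding (vii) with $u_0\ne 0$ because $\la_*<\Lambda_2$. The transfer to all $\la\in(\Lambda^*,\Lambda_1)$ is likewise the same.
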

\begin{remark}
The latter assertion is not covered by Theorem 1.3 in \cite{AY1} since the nonlinearity $f(t)=ts^{t^2+\alpha|t|}$ does not satisfy (2) of Theorem 1.2 there. 
\end{remark}

Then we can give the next result. Notice that by the previous corollary and the argument in the first paragraph of this section, we  ensure the existence of a sequence satisfying each assumption of (a)-(d) below. 
\begin{proposition}\label{cr:1} Let $k\in\{0\}\cup \mathbb{N}$ and $\{(\la_n,\beta_n)\}\subset  (0,\Lambda_1)\times (0,2)$ and suppose $(\la_n,\beta_n)\to(\la_*,\beta_*)\in (0,\Lambda_1]\times (0,2)$. Moreover, we assume $(u_n)$ is a sequence of solutions such that $u_n\in S_{k,\la_n,\beta_n}$ and $\int_B|\nabla u_n|^2dx$ is uniformly bounded for all $n\in \mathbb{N}$. 
 Then we have the next assertion.
\begin{enumerate} 
\item[(a)] Let $\la_*=\Lambda_1$. Then, if $k=0$, (vii) of Theorem \ref{a1}  occurs with $u_0=0$. Moreover, assume $\beta_n\ge1$ for all $n\in \mathbb{N}$. Then if $k\ge1$ and $\beta_*\in(1,3/2)$, or $k=1$ and $\beta_*=1$, we have that $(u_n)$ behaves as in (vii) with $u_0\not=0$. 
\end{enumerate}
Moreover, we assume $k\ge1$, $\beta_n>1$ for all $n\in \mathbb{N}$, $\beta_*=1$ and $\la_*\in(0,\Lambda_1)$. Then we get the following. 
\begin{enumerate}
\item[(b)] If $S_{\kappa,\la_*,1}=\emptyset$ for all $1\le \kappa\le k$, (which is satisfied if $\la_*\in(0,\Lambda_*)$,) then $(u_n)$ behaves as in (iii) of Theorem \ref{a1} with $N=k$.
\end{enumerate}
In addition, we suppose $I_{\la_n,\beta_n}(u_n)=c_{k,\la_n,\beta_n}$ for any $n\in \mathbb{N}$. Then we obtain the following.
\begin{enumerate}
\item[(c)] For any $\la_*\in[\Lambda_*,\Lambda_1)$, there exists a natural number $k_{\la_*}$ such that if $k\ge k_{\la_*}$, $(u_n)$ behaves as in (iii) of Theorem \ref{a1} with some natural number  $k-k_{\la_*}< N\le k$. Moreover, we have $k_{\la_*}\ge2$ if $\la_*> \Lambda^*$.  
\item[(d)] Assume that any solution $u\in S_{0,\la_*,1}$ with $I_{\la_*,1}(u)=c_{0,\la_*,1}$ satisfies $|u(0)|<\alpha/2$ (which is verified if $\la_*\in(\Lambda^*,\Lambda_1)$). Then if $k\ge2$ and (iii) of Theorem \ref{a1} holds true, then $N\not =k$. In particular, for any $\la_*\in(\Lambda^*,\Lambda_1)$, chossing the number $k_{\la_*}\ge2$ from (c) above, we get that for all $k\ge k_{\la_*}$, $(u_n)$ behaves as in (iii) of Theorem \ref{a1} with some number $k-k_{\la_*}< N<k$.
\end{enumerate}
\end{proposition}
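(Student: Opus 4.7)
Throughout, the strategy is case-by-case elimination within Theorem \ref{a1}, supplemented (when minimality is assumed) by the asymptotic decomposition $I_{\la_n,\beta_n}(u_n)=2\pi N+I_{\la_*,1}(u_0)+o(1)$ from Theorem \ref{a1} together with the upper bound $\limsup_{n\to\infty}c_{k,\la_n,\beta_n}\le 2\pi k+c_{0,\la_*,1}$ from Lemma \ref{crl1}. One auxiliary fact will be used repeatedly: $S_{0,\Lambda_1,\beta}=\emptyset$ for every $\beta\in(0,2)$. Indeed, testing any putative $u\in S_{0,\Lambda_1,\beta}$ against the positive first eigenfunction $\varphi_1$ yields $\int_B u\,\varphi_1(e^{u^2+\alpha|u|^\beta}-1)\,dx=0$, which forces $u\equiv 0$.

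For (a), $\la_*=\Lambda_1\ne 0$ and $\beta_n\ge 1$ exclude (i), (ii), (v), (vi) directly. When $\beta_*\in(1,3/2)$, the requirement $\beta_*=1$ additionally rules out (iii) and (iv), so (vii) must hold; $u_0=0$ there would force $\la_*=\Lambda_{k+1}\ne\Lambda_1$ for $k\ge 1$, giving $u_0\ne 0$. In the sub-case $k=1,\beta_*=1$, any weak limit appearing in (iii) or (iv) would lie in $S_{0,\Lambda_1,1}$, empty by the auxiliary fact; hence again only (vii) with $u_0\ne 0$ survives. For $k=0$, cases (i)--(vi) are automatically excluded (all require $k\ne 0$ or $\la_*=0$), so (vii) holds, and the same auxiliary fact forces $u_0=0$.

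For (b), the parameter assumptions $\la_*\in(0,\Lambda_1)$ and $\beta_n>1$ exclude (i), (ii), (iv), (v), (vi); in (vii), $u_0=0$ would demand $\la_*=\Lambda_{k+1}\ge\Lambda_1$ and $u_0\ne 0$ would contradict $S_{k,\la_*,1}=\emptyset$. Hence (iii) holds, and the vanishing of $S_{\kappa,\la_*,1}$ for $1\le\kappa\le k$ forces $k-N=0$. For (d), the same elimination combined with the displayed energy inequality $I_{\la_*,1}(u_0)\le 2\pi(k-N)+c_{0,\la_*,1}$ shows that if $N=k$, then $u_0\in S_{0,\la_*,1}\subset\mathcal N_{\la_*,1}$ satisfies $I_{\la_*,1}(u_0)\le c_{0,\la_*,1}$; the reverse inequality holds by the definition of $c_{0,\la_*,1}$, so $u_0$ achieves the minimum. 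The hypothesis $|u_0(0)|<\alpha/2$ then contradicts the necessary condition $(-1)^k u_0(0)\ge\alpha/2$ of (iii).

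For (c), the plan is to define
\begin{equation*}
k_{\la_*}:=1+\max\bigl\{\kappa\ge 0:\exists\,u\in S_{\kappa,\la_*,1}\text{ with }I_{\la_*,1}(u)\le 2\pi\kappa+c_{0,\la_*,1}\bigr\}
\end{equation*}
(provided the maximum is finite). With this definition, a weak limit arising in (iii) with $k-N\ge k_{\la_*}$, or in (vii) with $k\ge k_{\la_*}$, would violate the bound in the definition, so only (iii) with $k-N<k_{\la_*}$ can survive. The chief obstacle will be verifying that this maximum is indeed finite. The plan there is to decompose any admissible $u\in S_{\kappa,\la_*,1}$ into its $\kappa+1$ radial nodal components, rescale the innermost ball (and apply Lemma \ref{b1}) together with an annular analogue on the outer components, and exploit the fact that $c_{0,\la_*r^2,1}\to 2\pi$ as $r\downarrow 0$ to obtain a per-component lower bound $I^{(i)}_{\la_*,1}(u_i)\ge 2\pi-o_i(1)$ for the inner components. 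Summing yields $I_{\la_*,1}(u)\ge(\kappa+1)(2\pi-o(1))$, which exceeds $2\pi\kappa+c_{0,\la_*,1}<2\pi(\kappa+1)$ once $\kappa$ is large, producing the required contradiction.
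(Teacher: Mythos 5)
Your treatments of (a), (b) and (d) are correct and essentially coincide with the paper's: case-by-case elimination in Theorem \ref{a1} using $S_{0,\Lambda_1,\beta}=\emptyset$ and $\la_*<\Lambda_{k+1}$, plus, for (d), the level bound $2\pi N+I_{\la_*,1}(u_0)\le 2\pi k+c_{0,\la_*,1}$ from Lemma \ref{crl1} and the necessary condition $|u_0(0)|\ge\alpha/2$ from (iii). (Minor omission: you do not address the claim $k_{\la_*}\ge2$ for $\la_*>\Lambda^*$; with your definition of $k_{\la_*}$ this requires exhibiting some $u\in S_{1,\la_*,1}$ with $I_{\la_*,1}(u)\le2\pi+c_{0,\la_*,1}$, which is exactly what Corollary \ref{cr:0} supplies.)

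The genuine gap is in (c), precisely at the point you identify as "the chief obstacle": the finiteness of your maximum, i.e.\ the nonexistence, for large $\kappa$, of $u\in S_{\kappa,\la_*,1}$ with $I_{\la_*,1}(u)\le2\pi\kappa+c_{0,\la_*,1}$. Your per-component bound $I(u_i)\ge2\pi-o_i(1)$ is only available when the outer nodal radius $r_i$ of the $i$-th component is small, since after rescaling the component lies in $\mathcal N_{\la_*r_i^2,1}$ and the bound comes from $c_{0,\mu,1}\to2\pi$ as $\mu\downarrow0$. Nothing forces the $r_i$ to be small: already the innermost component only yields $I(u_1)\ge c_{0,\la_*r_1^2,1}$, which by monotonicity is merely $\ge c_{0,\la_*,1}$ and can stay far below $2\pi$; and for large $\kappa$ the nodes may cluster in thin annuli away from the origin, where your estimate says nothing. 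Even granting $2\pi-o_i(1)$ for every component, the conclusion needs $\sum_i o_i(1)<2\pi-c_{0,\la_*,1}$, a summability requirement your sketch does not deliver. The paper closes this gap by a completely different mechanism (Lemmas \ref{h}, \ref{crl2} and \ref{lem1}): a Liouville change of variables followed by a Sturm-type oscillation estimate gives the quadratic lower bound $\int_B|\nabla u|^2dx\ge L(\la_*)(\kappa-2)^2-1$ for $u\in S_{\kappa,\la_*,1}$, which is incompatible for large $\kappa$ with the linear upper bound $\int_B|\nabla u|^2dx\le4\pi(\kappa+1)+M$ extracted from the energy level. Note also that the hypothesis $\la_*\ge\Lambda_*$ is used essentially there, to keep the second nodal radius bounded away from the origin (otherwise the transformed interval degenerates); your sketch makes no use of this hypothesis, which is a warning sign. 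You would need to either reproduce an oscillation-type argument or find a quantitative substitute for the per-component energy quantization of a single fixed solution with many nodes.
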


From the conclusion in (a), we get an additional existence result for $\la=\Lambda_1$ as follows.
\begin{corollary}\label{cr:2}
Let $\beta\in[1,3/2)$. Then  for any $k\in \mathbb{N}$ if $\beta>1$ and for $k=1$ if $\beta=1$, there exists at least one pair of solutions $u_{k,\beta}^{\pm}\in S_{k,\Lambda_1,\beta}$ such that $u_{k,\beta}^-(0)<0<u_{k,\beta}^+(0)$, $u_{k,\beta}^+=-u_{k,\beta}^-$ and $I_{\Lambda_1,\beta}(u_{k,\beta}^\pm)\le 2\pi k$. Moreover, choosing a suitable sequence  $(\beta_n)\subset (1,3/2)$ such that $\beta_n\to 1$, we have that $(u_{1,\beta_n}^+)$ behaves as in (vii) of Theorem \ref{a1} with $u_0\not=0$  and there exists a natural number $k_{\Lambda_1}\ge2$, such that if $k\ge k_{\Lambda_1}$, $(u_{k,\beta_n}^+)$ behaves as in (iii)  of Theorem \ref{a1} with $k-k_{\Lambda_1}<N<k$.
\end{corollary}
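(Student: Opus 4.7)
The proof splits into the existence of the pair $u_{k,\beta}^\pm \in S_{k,\Lambda_1,\beta}$ with the stated energy bound, and the asymptotic analysis along $\beta_n \downarrow 1$.

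For the existence with $\beta \in (1,3/2)$ and $k \in \mathbb{N}$, I would fix $(k,\beta)$, pick $(\la_m) \subset (0,\Lambda_1)$ with $\la_m \uparrow \Lambda_1$, and invoke \cite{A} and \cite{AY1} to obtain minimizers $w_m \in S_{k,\la_m,\beta}$ with $w_m(0) > 0$ attaining $c_{k,\la_m,\beta}$. The gluing construction underlying Lemma \ref{crl1} yields $c_{k,\la_m,\beta} \le 2\pi k + c_{0,\la_m,\beta}$, and testing the Nehari manifold with $t\varphi_1$ for small $t>0$ shows $c_{0,\la_m,\beta} \to 0$ as $\la_m \uparrow \Lambda_1$. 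Hence $I_{\la_m,\beta}(w_m) \le 2\pi k + o(1)$, and a standard Ambrosetti-Rabinowitz estimate bounds $\|w_m\|_{H_0^1(B)}$ uniformly. Proposition \ref{cr:1}(a) then places $(w_m)$ in case (vii) of Theorem \ref{a1} with nontrivial strong $C^2$ limit $w_0 \in S_{k,\Lambda_1,\beta}$. Setting $u_{k,\beta}^+ := w_0$ and $u_{k,\beta}^- := -w_0$ (valid since $F_\beta$ is even) and passing the energy inequality to the limit give $I_{\Lambda_1,\beta}(u_{k,\beta}^\pm) \le 2\pi k$. The borderline case $k=1,\beta=1$ will emerge as a by-product of the Part 2a analysis below by extracting a strong limit from the already-constructed family $u_{1,\beta_n}^+$.

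For Part 2a, I fix any $(\beta_n) \subset (1,3/2)$ with $\beta_n \downarrow 1$ and apply Theorem \ref{a1} to $(u_{1,\beta_n}^+)$ at $\la_n \equiv \Lambda_1$. The hypotheses $\la_* = \Lambda_1 \ne 0$, $\beta_n > 1$, and $\beta_n \to 1$ exclude (i), (ii), (iv), (v), and (vi), so only (iii) or (vii) can occur. Case (iii) would put $u_0$ in $S_{0,\Lambda_1,1}$, which is empty: testing $-\Delta u = \Lambda_1 u e^{u^2+\alpha u}$ against the positive first eigenfunction $\varphi_1$ yields $0 = \int_B \Lambda_1 u(e^{u^2+\alpha u}-1)\varphi_1$, whose integrand is strictly positive wherever $u > 0$. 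Hence (vii) holds, and since $\Lambda_1 \ne \Lambda_2$ the weak limit is nontrivial. For Part 2b, I apply Theorem \ref{a1} to $(u_{k,\beta_n}^+)$ for each $k \ge 2$; only (iii) or (vii) can occur, and in (iii) necessarily $N < k$ because $S_{0,\Lambda_1,1} = \emptyset$. Let $u_0^{(k)}$ denote the resulting weak limit, a nontrivial radial solution of $-\Delta u = \Lambda_1 u e^{u^2+\alpha|u|}$ on $B$. The radial ODE energy $E(r) = \tfrac12 (u_0^{(k)\prime}(r))^2 + \Lambda_1 F_1(u_0^{(k)}(r))$ satisfies $E'(r) = -(u_0^{(k)\prime}(r))^2/r \le 0$, and combined with evenness and monotonicity of $F_1$ on $[0,\infty)$ this forces $\|u_0^{(k)}\|_{L^\infty([0,1])} = |u_0^{(k)}(0)|$. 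Sturm comparison against $-v''-v'/r = \Lambda_1 e^{|u_0^{(k)}(0)|^2+\alpha|u_0^{(k)}(0)|} v$ then bounds the number of interior zeros of $u_0^{(k)}$ by an explicit function of $|u_0^{(k)}(0)|$ alone.

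The hard part will be producing a uniform upper bound on $|u_0^{(k)}(0)|$ across $k$; once in hand, the Sturm estimate gives a uniform bound on $k - N$, and $k_{\Lambda_1}$ is taken to be one more than that bound, yielding $k - k_{\Lambda_1} < N < k$ as required. To obtain the uniform bound I would iterate the blow-up analysis of Sections \ref{le} and \ref{energy}: assuming for contradiction that $|u_0^{(k)}(0)| \to \infty$ along a subsequence, I extract bubbles from $(u_0^{(k)})_k$ viewed as a sequence of solutions at $(\Lambda_1,1)$. Each bubble contributes $2\pi$ to the energy, while the energy is bounded by $2\pi(k - N_k)$ via the decomposition inherited from Part 2. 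If the final residual were trivial, every nodal domain of $u_0^{(k)}$ would produce a bubble and yield energy $2\pi(k - N_k + 1) \le 2\pi(k - N_k)$, a contradiction. Hence the residual is nontrivial with bounded $L^\infty$ norm (and thus bounded zero count by Sturm), so the total number of interior zeros of $u_0^{(k)}$ is bounded uniformly in $k$. A final diagonal choice of $(\beta_n)$ produces a single sequence that works for all $k \ge 2$ simultaneously, completing the proof.
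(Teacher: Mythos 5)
Your Part 1 and Part 2a match the paper's strategy; the paper obtains the $(k,\beta)=(1,1)$ solution directly from Corollary \ref{cr:0} along $\la_n\uparrow\Lambda_1$ while you derive it from the $\beta_n\downarrow 1$ family at fixed $\la=\Lambda_1$, but both routes are valid. Part 2b, however, has a genuine gap. You propose to bound $|u_0^{(k)}(0)|$ uniformly in $k$ by running the blow-up analysis of Sections \ref{le}--\ref{energy} on the sequence $(u_0^{(k)})_k$ of weak limits at fixed $(\Lambda_1,1)$, and then convert this into a zero-count bound by Sturm comparison. This is circular: the blow-up machinery (Theorem \ref{a1} and the lemmas behind it) is formulated for sequences with a \emph{fixed} number of nodal domains and a \emph{uniform} $H^1_0$ bound, whereas for $(u_0^{(k)})_k$ the interior zero count is $\kappa_k=k-N_k$ — exactly the quantity you need to bound — and the available Dirichlet energy bound $\int_B|\nabla u_0^{(k)}|^2\lesssim 4\pi\kappa_k$ scales linearly with $\kappa_k$. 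If $\kappa_k\to\infty$ along a subsequence, one cannot extract a further subsequence with constant nodal count, the $H^1_0$ bound is not uniform, and none of the blow-up results apply. Moreover, even for a subsequence with constant $\kappa_k$, case (iv) of Theorem \ref{a1} (blow-up with $\beta_n\equiv 1$) produces exactly \emph{one} bubble with a nontrivial residual in $S_{\kappa_k-1,\Lambda_1,1}$ — no contradiction; your step ``if the final residual were trivial, every nodal domain would produce a bubble'' is the all-bubble configuration of case (i), which requires $\la_*=0$ and cannot occur here.

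What your outline is missing is Lemma \ref{crl2}, which the paper proves via the Liouville change of variable $r=1/(1-\log t)$, $v(r)=ru(t)$, and Hartman's oscillation estimate (Lemma \ref{h}): every $u\in S_{\kappa,\la,1}$ with $\la\ge\Lambda_*$ satisfies the unconditional quadratic lower bound $\int_B|\nabla u|^2\ge L(\la)(\kappa-2)^2-1$ uniformly in $\kappa$ and $u$, with no a priori control on $|u(0)|$ or on blow-up. Combined with the linear-in-$\kappa$ Dirichlet energy upper bound that follows from $I_{\Lambda_1,1}(u_0^{(k)})\le 2\pi\kappa_k$ via the Ambrosetti--Rabinowitz estimate (this is the content of Lemma \ref{lem1}), it immediately caps $\kappa_k$ and defines $k_{\Lambda_1}$. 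That quantitative Sturm--Liouville estimate is precisely what breaks the circularity, and it is not recovered by the bubbling iteration you outline; after obtaining $k_{\Lambda_1}$ from Lemma \ref{lem1}, the remainder of Part 2b follows as in the proof of (d) of Proposition \ref{cr:1} and Corollary \ref{cr:0}, which is essentially what you describe.
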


The behavior in (b) of Proposition \ref{cr:1} has already been observed in \cite{GN} for low energy nodal radial solutions. Our present work gives new information by Theorem \ref{a31} without imposing the low energy characterization. Moreover, notice that our necessary condition in (iii) of Theorem \ref{a1} suggests that such behavior (i.e., (iii)  with $k=N$) can happen only if $0<\la_*<\Lambda_1$ and $\la_*$ is not too closed to $\Lambda_1$ (by Lemma \ref{posi} below). %In particular, \eqref{aa5} gives the necessary condition on the amplitude of the weak limit at the origin as stated in (iii) of Theorem \ref{a1}.
 On the other hand, in view of (v) of Theorem \ref{a1}, a similar phenomenon seems possibly to occur also in the case of $\beta_n\uparrow1$. Interestingly, the corresponding necessary condition has the inequality opposite to that in the case of $\beta_n\downarrow 1$. It leads us to expect the following. 
\begin{conjecture}\label{con:1}
Let $(\la_*,\beta_*)\in(\Lambda^*,\Lambda_1)\times \{1\}$. Then there exist sequences  of values $(\la_n)\subset (0,\Lambda_1)$ and $(\beta_n)\subset (0,1)$  and solutions $(u_n)$  such that $(\la_n,\beta_n)\to(\la_*,\beta_*)$, $u_n\in S_{1,\la_n,\beta_n}$ for all $n\in \mathbb{N}$, and $(u_n)$ behaves as in (v) of Theorem \ref{a1}.
\end{conjecture}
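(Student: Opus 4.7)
The plan is to construct the sequence by a Lyapunov--Schmidt reduction with an explicit ansatz. Choose as background the negative sign-definite solution $u_*\in S_{0,\la_*,1}$ with $u_*(0)<0$ and $-u_*(0)\in(0,\alpha/2)$; such $u_*$ exists for $\la_*\in(\Lambda^*,\Lambda_1)$ by the definition of $\Lambda^*$. Then look for a solution of the form
\[
u_n=u_*+W_n+\phi_n,
\]
where $W_n$ is a positive bubble concentrating at the origin, modeled on $z(r)=\log(64/(8+r^2)^2)$ with peak height $\mu_n$ and scale $\ga_n$ linked by the standard relation $1=2\la_n\mu_n f_n(\mu_n)\ga_n^2$, suitably cut off near the (to-be-determined) first interior zero $r_{1,n}$ of $u_n$, and $\phi_n$ is a remainder to be controlled by a contraction argument.

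The asymptotic matching conditions are dictated by Theorem \ref{a31}(ii)(a) with $N=1$ and $\mu_{N+1}=-u_*(0)$: one must have
\[
\mu_n^{\beta_n-1}\longrightarrow\frac{-2u_*(0)}{\alpha}\in(0,1),\qquad \rho_{2,n}^{\beta_n-1}\longrightarrow\sqrt{\frac{\alpha}{-2u_*(0)}}>1,
\]
together with $(\log(1/r_{1,n}))^{\beta_n-1}\to -2u_*(0)/\alpha$. Since the first limit lies in $(0,1)$ and $\mu_n\to\infty$, these relations are only consistent with $\beta_n\uparrow 1$; prescribing any such sequence $(\beta_n)\subset(0,1)$ at a carefully chosen rate then uniquely determines $\mu_n\to\infty$, $r_{1,n}\to 0$, $\rho_{2,n}\to 0$, $\ga_n\to 0$, and a leading-order candidate $\la_n\to\la_*$, leaving one free parameter to be tuned by the reduced equation.

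Next, linearize \eqref{p} at $u_*+W_n$ to obtain $L_n\phi=-\Delta\phi-\la_nf_n'(u_*+W_n)\phi$. Its approximate kernel is one-dimensional, spanned by the dilation mode $\partial_{\mu_n}W_n$ (translations are ruled out by radial symmetry). Assuming the nondegeneracy of $u_*$ for the limit problem at $(\la_*,1)$, as was done in \cite{GMNP}, one obtains bijectivity of $L_n$ modulo this kernel in weighted Sobolev spaces adapted to the scale $\ga_n$; this is standard once sharp pointwise estimates on $u_*+W_n$ (in the spirit of Lemma \ref{le70}) are in place. A fixed-point argument then produces $\phi_n$ of order strictly smaller than the error $-\Delta(u_*+W_n)-\la_nf_n(u_*+W_n)$, and a one-parameter adjustment of $\la_n$ kills the Lagrange multiplier projecting onto $\partial_{\mu_n}W_n$, producing an actual solution $u_n\in S_{1,\la_n,\beta_n}$ whose blow-up analysis matches case (v) of Theorem \ref{a1} by construction.

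The main obstacle is the delicate interplay between the bubble and the background in the regime $\beta_n<1$: the perturbation $\alpha|t|^{\beta_n}$ contributes corrections of sign opposite to the $\beta_n>1$ case treated in \cite{GMNP}, and one must verify that, for an appropriate rate of $\beta_n\uparrow 1$, the reduced equation actually has a solution with $\la_n\to\la_*$, rather than drifting toward some $\tilde\la_*\neq\la_*$ or into one of the other regimes (iii), (iv), (vi) of Theorem \ref{a1}. The strict inequality $-2u_*(0)/\alpha<1$, available precisely thanks to $\la_*>\Lambda^*$, is what should make the matching conditions consistent with $\beta_n<1$, mirroring the role of $-2u_*(0)/\alpha>1$ in \cite{GMNP}. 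A secondary difficulty is verifying the nondegeneracy hypothesis on $u_*$ at every $\la_*\in(\Lambda^*,\Lambda_1)$; one might instead have to restrict to a subrange of $\la_*$, or to a one-parameter family obtained by perturbing a single nondegenerate $\la_*$.
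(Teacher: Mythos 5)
This statement is explicitly labeled as a \emph{conjecture} in the paper; there is no proof in the text, only the heuristic remarks surrounding it. So there is no ``paper proof'' to compare against, and your write-up must be judged on its own terms.

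Your sketch goes in the direction the paper itself points to: you correctly read off from Theorem~\ref{a31}(ii)(a) with $N=1$ the matching relations $\mu_n^{\beta_n-1}\to -2u_*(0)/\alpha\in(0,1)$, $(\log(1/r_{1,n}))^{\beta_n-1}\to -2u_*(0)/\alpha$, and $\rho_{2,n}^{\beta_n-1}\to\sqrt{\alpha/(-2u_*(0))}$, and you correctly observe that these force $\beta_n\uparrow1$ and are compatible precisely when $-2u_*(0)/\alpha<1$, i.e.\ when $\la_*>\Lambda^*$; this mirrors the role of the opposite inequality in \cite{GMNP}. A Lyapunov--Schmidt reduction built on a negative background $u_*\in S_{0,\la_*,1}$ plus a positive concentrating bubble is the natural framework.

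However, as written your argument has two genuine gaps, which you flag but do not close. First, nondegeneracy of $u_*$ at $(\la_*,1)$ is a hypothesis, not a theorem: in \cite{GMNP} it was assumed (their (A1)--(A2)), while the conjecture claims existence for every $\la_*\in(\Lambda^*,\Lambda_1)$; restricting to a subrange or using degree/genericity would change the statement being proved. Second, and more seriously, the reduced one-dimensional equation is never written down or solved. In the regime $\beta_n<1$ the correction generated by $\alpha|t|^{\beta_n}$ enters with the opposite sign to the $\beta_n>1$ case, the peak height $\mu_n$, scale $\ga_n$, first zero $r_{1,n}$ and second local extremum $\rho_{2,n}$ are coupled through the delicate iterated-exponential relations of Theorem~\ref{a31}, and one must actually verify that the remaining free parameter lands on $\la_n\to\la_*$ (rather than drifting into regimes (iii), (iv), (vi)) and produces a bubble with a genuine interior zero $r_{1,n}>0$. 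Neither a quantitative error estimate for the ansatz $u_*+W_n$ nor a sign/monotonicity analysis of the reduced functional is supplied, and this is exactly where the difficulty of the conjecture lives. As it stands this is a plausible and well-aimed research program, not a proof; the conjecture remains open.
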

\begin{remark}
Similar behavior would also occur in the general bounded domain case. 
\end{remark}
\begin{remark}
We also expect that there exist sequences of concentrating solutions which behave as in (iv) with $k=1$ and $0<\la_*<\Lambda_1$ and (vi) with $k=1$, $\la_*=\Lambda_1$,  and $\beta_*\in(0,1]$ respectively. The corresponding phenomena on the Brezis-Nirenberg problem are observed in \cite{IP} and \cite{IV}.
\end{remark}

We also remark on the final assertions in (d) of Proposition \ref{cr:1} and in Corollary \ref{cr:2}. Since $0<N<k$, these assertions prove the existence of a concentrating sequence of solutions which weakly converges to a sign-changing solution of \eqref{p}. We emphasize that this conclusion holds true when $\la_*\le \Lambda_1$ is sufficiently closed to $\Lambda_1$.  This phenomenon is new in view of the previous works, \cite{GN} and \cite{GMNP}, where the authors observed a concentrating sequence of solutions which weakly converges to a sign-definite solution of \eqref{p} with sufficiently small $\la>0$. We naturally expect that we can choose $k_{\la_*}=2$ for any $\la_*\in(\Lambda^*,\Lambda_1]$. 

We finally conjecture that  more counterparts of (iii)-(vi) would exist in the case $\la_*>\Lambda_1$ and $\beta_*\le1$. Our necessary condition on the weak limit will be useful to detect such sequences of solutions.  

\subsection{Proofs}
Let us prove the results above. We first put a basic lemma. Recall that $\max_{x\in\overline{B}}|u(x)|=|u(0)|$ for any  $u\in S_{0,\la,\beta}$ and all $(\la,\beta)\in(0,\Lambda_1)\times (0,2)$ by \cite{GNN}.
\begin{lemma}\label{posi} Fix any $\beta\in(0,2)$. Then, for any constants $M>\e>0$, there exist values $0<\bar{\la}\le\tilde{\la}<\Lambda_1$ such that if $\la\in(0,\bar{\la})$, then $|u(0)|\ge M$ for any $u\in S_{0,\la,\beta}$, and if $\la\in(\tilde{\la},\Lambda_1)$, then $|u(0)|<\e$  for all $u\in S_{0,\la,\beta}$. 
\end{lemma}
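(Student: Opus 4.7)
The plan is to prove the two inequalities independently by contradiction, then take $\bar\la$ smaller if necessary so that $\bar\la\le\tilde\la$. In both cases the radial symmetry of the equation lets me assume without loss of generality that the solutions under consideration are positive, so that by \cite{GNN}, $\|u\|_{L^\infty(B)}=u(0)$.

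For the first inequality, fix $M>0$ and suppose there exist a sequence $(\la_n)\subset(0,\Lambda_1)$ with $\la_n\to 0$ and $u_n\in S_{0,\la_n,\beta}$ with $0<u_n(0)\le M$ for all $n$. Then $\|u_n\|_{L^\infty(B)}\le M$, and since $u_n\in\mathcal{N}_{\la_n,\beta}$ I can integrate the equation against $u_n$ to obtain
\[
\int_B|\nabla u_n|^2\,dx=\la_n\int_Bu_n f_n(u_n)\,dx\le \pi\la_n M^2 e^{M^2+\alpha M^\beta}\longrightarrow 0.
\]
On the other hand, for all large $n$ we have $\la_n\in(0,\Lambda_1/2]$, so Lemma \ref{b1} (applied with $\Lambda_0=\Lambda_1/2$ and $\beta_0=\beta$) gives a constant $K>0$ independent of $n$ with $\|u_n\|_{H_0^1(B)}^2\ge K$. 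This contradiction supplies the desired $\bar\la>0$.

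For the second inequality, fix $\e>0$ and suppose instead that there exist $(\la_n)\subset(0,\Lambda_1)$ with $\la_n\uparrow\Lambda_1$ and $u_n\in S_{0,\la_n,\beta}$ with $u_n(0)\ge\e$. The sequence $(u_n)$ meets the hypotheses of Theorem \ref{a1} with $k=0$, $\la_*=\Lambda_1$, $\beta_*=\beta$; since $k=0$ and $\la_*\ne 0$, the ``In particular'' clause of that theorem forces case (vii), so after subsequence $u_n\to u_0$ in $C^2(\overline{B})$ with $u_0\in S_{0,\Lambda_1,\beta}\cup\{0\}$. I claim $S_{0,\Lambda_1,\beta}=\emptyset$: indeed, any positive solution $u$ of \eqref{p} at $\la=\Lambda_1$ would yield, upon testing against the strictly positive eigenfunction $\varphi_1$,
\[
\Lambda_1\int_B u\,\varphi_1\,dx=\int_B(-\Delta u)\varphi_1\,dx=\Lambda_1\int_B u\,e^{u^2+\alpha u^\beta}\varphi_1\,dx>\Lambda_1\int_B u\,\varphi_1\,dx,
\]
an absurdity. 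Therefore $u_0\equiv 0$, but then $0=u_0(0)=\lim_{n\to\infty}u_n(0)\ge\e>0$, again a contradiction. This delivers $\tilde\la<\Lambda_1$ with the required property, and replacing $\bar\la$ by $\min(\bar\la,\tilde\la)$ secures $\bar\la\le\tilde\la$.

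The two parts are genuinely asymmetric. The first is essentially elementary: it only needs the uniform Nehari lower bound (Lemma \ref{b1}) against the trivial decay of the right-hand side when $u$ is bounded and $\la\to 0$. The real content of the lemma is the second part, and its main obstacle is excluding any nontrivial weak limit at $\la_*=\Lambda_1$. Here the classification in Theorem \ref{a1} does most of the work by eliminating blow-up (cases (i)--(vi) all force $\la_*=0$ or $k\ne 0$), reducing the problem to the $\varphi_1$-test identity above, whose strict inequality comes from $e^{u^2+\alpha u^\beta}>1$ wherever $u>0$.
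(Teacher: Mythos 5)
Your proof is correct. The second half follows the paper's own route exactly: invoke Theorem \ref{a1} with $k=0$, $\la_*=\Lambda_1\neq 0$ to land in case (vii), and then rule out a nontrivial limit via $S_{0,\Lambda_1,\beta}=\emptyset$. The paper merely asserts this emptiness, whereas you supply the standard $\varphi_1$-test argument for it; that is a welcome addition, not a deviation. The first half is where you genuinely diverge: the paper again appeals to Theorem \ref{a1} (``$\la_n\to 0$ yields $u_n(0)\to\infty$,'' i.e.\ the classification forces case (i)/(ii) and hence blow-up when $\la_*=0$), while you bypass the classification entirely by combining the uniform Nehari lower bound of Lemma \ref{b1} with the trivial estimate $\int_B|\nabla u_n|^2\,dx=\la_n\int_Bu_nf_n(u_n)\,dx\le\pi\la_nM^2e^{M^2+\alpha M^\beta}\to0$. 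Your version is more elementary and self-contained (it does not need the blow-up analysis at all for that half), at the cost of being specific to the bounded-amplitude contradiction rather than extracting the stronger qualitative fact $u_n(0)\to\infty$ that the classification provides. One small bookkeeping point: since $u\in S_{0,\la,\beta}$ need not have $u(0)>0$ a priori, your reduction to positive solutions via oddness of $f_n$ and the \cite{GNN} identity $\max_{\overline B}|u|=|u(0)|$ is needed and correctly stated.
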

\begin{proof} 
If the former assertion fails, there exists a constant $M>0$ and sequences $(\la_n)\subset (0,\Lambda_1)$ and $(u_n)$ such that $\la_n\to 0$, $u_n\in S_{0,\la_n,\beta}$ and $0<u_n(0)\le M$ for all $n\in \mathbb{N}$. This is impossible since by Theorem \ref{a1}, we get that $\la_n\to 0$ yields  $u_n(0)\to \infty$. On the other hand, if the latter conclusion does not hold,  there exists a constant $\e_0>0$   and sequences $(\la_n)\subset (0,\Lambda_1)$ and $(u_n)$ such that $\la_n\to \Lambda_1$, $u_n\in S_{0,\la_n,\beta}$ and $u_n(0)\ge \e_0$ for any $n\in \mathbb{N}$. This is again impossible since by Theorem \ref{a1} and the fact that $S_{0,\Lambda_1,\beta}=\emptyset$, we get that $\la_n\to \Lambda_1$ implies  $u_n\to 0$ in $C^2(\overline{B})$.  This finishes the proof. 
\end{proof}

We show Corollary \ref{cr:0}.

\begin{proof}[Proof of Corollary \ref{cr:0}]
Assume as in the corollary. Then we claim that $(u_n)$ behaves as in (vii) of Theorem \ref{a1}. If not, since $\la_*\not=0$, (iii) would happen for $N=1$ and then, by Lemma \ref{crl1}, the weak limit $u_0$ of $(u_n)$ would verify $u_0\in S_{0,\la_*,1}$, $I_{\la_*,1}(u_0)=c_{0,\la_*,1}$ and $|u_0(0)|\ge \alpha/2$. This contradicts our choice of $\la_*$. This proves the claim. Then, since $\la_*<\Lambda_1$, the weak limit $u_0$ of $(u_n)$ is nontrivial. This ensures the former conclusion. Moreover notice that by Lemma \ref{crl1}, we get $I_{\la_*,1}(u_0)\le 2\pi+c_{0,\la_*,1}$. Then, since there exists a  sequence of solutions $(u_n)$ verifying the assumption of this corollary for any $\la_*\in(\Lambda^*,\Lambda_1)$ by the argument in the first paragraph of this section, the latter conclusion clearly follows. This finishes the proof.

\end{proof}

In order to prove (d) of Proposition \ref{cr:1}, we use Lemma \ref{lem1} below which ensures the nonexistence of low energy solutions with many nods. To show the lemma, we refer to the argument in \cite{K} and obtain the a priori lower estimate of the energy of elements in $S_{k,\la,1}$. We apply the next lemma. (See Corollary 5.2 on p346 in \cite{H} or Lemma 3 in \cite{K}.) In the following we let $a,b\in \mathbb{R}$ be constants such that $a<b$.
\begin{lemma}\label{h}  Let $q(t)$ be a continuous function on $[a,b]$. Let $v(t)\not=0$ be a solution of the equation:
\[
v''+q(t)v=0,\ \ t\in[a,b].
\]
Assume that $v(t)$ has exactly $k$ zeros in $(a,b]$. Then we have
\[
k<\frac12\left((b-a)\int_a^bq^+(t)dt\right)^{\frac12}+1
\]
where $q^+(t)\equiv \max\{q(t),0\}$.
\end{lemma}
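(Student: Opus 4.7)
My plan is to reduce the statement to the classical Lyapunov-type inequality applied between consecutive zeros of $v$ and then combine the resulting estimates via Cauchy--Schwarz. Let $a<t_1<t_2<\cdots<t_k\le b$ be the zeros of $v$ in $(a,b]$. The case $k\le1$ is immediate: for $k=0$ the stated bound $0<\frac12\sqrt{(b-a)\int_a^bq^+dt}+1$ is trivial since the right-hand side is at least $1$. So I focus on $k\ge2$.

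The core step is to establish the strict Lyapunov bound
\[
\int_{t_i}^{t_{i+1}}q^+(t)\,dt>\frac{4}{t_{i+1}-t_i}
\]
for each $i=1,\dots,k-1$. The argument runs as follows: by ODE uniqueness (with $q$ continuous), $v$ does not vanish on the open interval $(t_i,t_{i+1})$, so $M:=\max_{[t_i,t_{i+1}]}|v|>0$ is attained at some interior point $c$. From $v(t_i)=0$ and the fundamental theorem of calculus, Cauchy--Schwarz gives $M^2\le(c-t_i)\int_{t_i}^c(v')^2dt$, and symmetrically $M^2\le(t_{i+1}-c)\int_c^{t_{i+1}}(v')^2dt$. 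Adding the reciprocal inequalities, applying AM--HM ($\frac{1}{c-t_i}+\frac{1}{t_{i+1}-c}\ge\frac{4}{t_{i+1}-t_i}$), and using the integration-by-parts identity $\int_{t_i}^{t_{i+1}}(v')^2dt=\int_{t_i}^{t_{i+1}}q\,v^2\,dt$ (valid because $v(t_i)=v(t_{i+1})=0$) together with $v^2\le M^2$, one obtains $4/(t_{i+1}-t_i)\le\int_{t_i}^{t_{i+1}}q^+dt$. Strictness is recovered by tracing the equality cases: Cauchy--Schwarz equality would force $v$ linear on each side of $c$, contradicting the $C^2$ regularity of $v$ as a solution of a second-order ODE with continuous coefficient.

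Finally, summing the $k-1$ strict Lyapunov inequalities and using the Cauchy--Schwarz bound $(k-1)^2\le\bigl(\sum_{i=1}^{k-1}(t_{i+1}-t_i)\bigr)\bigl(\sum_{i=1}^{k-1}\frac{1}{t_{i+1}-t_i}\bigr)\le(b-a)\sum_{i=1}^{k-1}\frac{1}{t_{i+1}-t_i}$ yields
\[
\int_a^bq^+(t)\,dt\ge\sum_{i=1}^{k-1}\int_{t_i}^{t_{i+1}}q^+(t)\,dt>4\sum_{i=1}^{k-1}\frac{1}{t_{i+1}-t_i}\ge\frac{4(k-1)^2}{b-a},
\]
and rearrangement gives the claim. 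The main technical point I expect to watch is the strictness in the Lyapunov step: any single non-strict link in the chain (Cauchy--Schwarz, AM--HM, or the integration by parts) would only produce the weak bound $k\le\frac12\sqrt{(b-a)\int_a^bq^+dt}+1$ rather than the strict inequality asserted. Fortunately, each equality case can be ruled out using the $C^2$-regularity of $v$, so the rest of the argument is just rearrangement.
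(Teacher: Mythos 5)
The paper does not prove this lemma at all; it cites Corollary 5.2 (p.\ 346) of Hartman's book and Lemma 3 of Kajikiya \cite{K}, so there is no internal proof to compare against. Your argument is the standard Lyapunov-inequality proof of such zero-counting bounds, and the steps for $k\ge2$ are all correct: the strict Lyapunov estimate $\int_{t_i}^{t_{i+1}}q^+\,dt>4/(t_{i+1}-t_i)$ on each nodal interval, the Cauchy--Schwarz packing $(k-1)^2\le(b-a)\sum_{i=1}^{k-1}(t_{i+1}-t_i)^{-1}$, and the rearrangement. Your strictness justification is phrased a little loosely (``contradicting the $C^2$ regularity''); the cleaner observation is that Cauchy--Schwarz equality forces $v$ linear on $[t_i,c]$, which together with $v(t_i)=0$ and $v'(c)=0$ (interior maximum of $|v|$) forces $v\equiv0$ there, contradicting $M>0$. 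But the idea you describe is essentially this and it does close the equality case.

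The genuine gap is $k=1$, which you dismiss as ``immediate'' while only justifying $k=0$. For $k=1$ the claimed inequality reduces to $\int_a^bq^+\,dt>0$, and this does not follow from the hypotheses: take $q\equiv0$ on $[a,b]$ and $v(t)=t-t_1$ with $t_1\in(a,b]$. Then $v$ is a nontrivial solution of $v''+qv=0$ with exactly one zero in $(a,b]$, yet $q^+\equiv0$ and the asserted bound becomes $1<1$, which is false. So the lemma as transcribed is not provable for $k=1$ in full generality (the cited sources almost certainly carry a slightly different normalization or an implicit assumption such as $q^+\not\equiv0$); this is harmless for the paper, whose only application (Lemma \ref{crl2}) has $q>0$ pointwise, but a correct write-up should flag this edge case rather than silently assert that $k\le1$ is immediate.
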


Using this, we prove the next lemma.
\begin{lemma}\label{crl2} Assume $k\in \mathbb{N}$, $k\ge2$ and $\la\ge\Lambda_*$. Then there exists a number $L=L(\la)>0$ which depends on $\la$ and is independent of $k$ such that 
\[
\int_B|\nabla u|^2dx\ge L(k-2)^2-1
\]
 for any $u\in S_{k,\la,1}$.
\end{lemma}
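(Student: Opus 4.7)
The plan is to transform the radial ODE into Sturm-Liouville form on the half-line and to apply the zero-counting Lemma~\ref{h}. Setting $t:=\log(1/r)$ and $U(t):=u(e^{-t})$ for $t\in[0,\infty)$ and multiplying the radial equation by $r^2$, I obtain
\[
U''(t)+Q(t)\,U(t)=0,\quad Q(t):=\la e^{-2t}e^{U(t)^2+\alpha|U(t)|}\ge 0,
\]
with zeros precisely at $t_i:=\log(1/r_i)$ satisfying $t_{k+1}=0<t_k<\cdots<t_1$.

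The crucial preliminary step is a uniform upper bound on $t_2=\log(1/r_2)$. The rescaled function $u_*(y):=u(r_2 y)$ for $y\in B$ solves $-\Delta u_*=(\la r_2^2)\,u_* e^{u_*^2+\alpha|u_*|}$ on $B$ with homogeneous Dirichlet boundary values and possesses exactly one interior nodal surface $|y|=r_1/r_2$; hence $u_*\in S_{1,\la r_2^2,1}$. By the very definition of $\Lambda_*$ this forces $\la r_2^2\ge\Lambda_*$, so
\[
t_2\le \tfrac{1}{2}\log(\la/\Lambda_*).
\]
I then apply Lemma~\ref{h} on $[0,t_2]$: since $U$ has exactly the $k-1$ zeros $t_2,t_3,\ldots,t_k$ in $(0,t_2]$, it gives $(k-2)^2<\tfrac{t_2}{4}\int_0^{t_2}Q(t)\,dt$. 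Reversing the substitution produces $\int_0^{t_2}Q(t)\,dt=\la\int_{r_2}^1 r\,e^{u^2+\alpha|u|}\,dr$, which I estimate by splitting according to $|u|\ge 1$ (using $e^{u^2+\alpha|u|}\le u^2 e^{u^2+\alpha|u|}$) versus $|u|<1$ (where the integrand is bounded by $e^{1+\alpha}$), combined with the energy identity $\int_B|\nabla u|^2\,dx=2\pi\la\int_0^1 r u^2 e^{u^2+\alpha|u|}\,dr$, to conclude
\[
\int_0^{t_2}Q(t)\,dt\le \frac{1}{2\pi}\int_B|\nabla u|^2\,dx+\frac{\la e^{1+\alpha}}{2}.
\]

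Plugging back yields the quadratic lower bound $\int_B|\nabla u|^2\,dx\ge \tfrac{16\pi}{\log(\la/\Lambda_*)}(k-2)^2-\pi\la e^{1+\alpha}$, from which the claimed form $\int_B|\nabla u|^2\,dx\ge L(k-2)^2-1$ follows by choosing $L=L(\la)>0$ sufficiently small, using the a priori lower bound from Lemma~\ref{b1} (which ensures $\int_B|\nabla u|^2\,dx\ge K(\la)>0$) to absorb the $\la$-dependent additive constant for the finitely many values of $k$ at which the quadratic term does not yet dominate. The main obstacle is the selection of the interval for Lemma~\ref{h}: the naive choice $[0,t_1]$ fails because $r_1$ need not be bounded away from $0$, and it is precisely by passing to $[0,t_2]$ and invoking the meaning of $\Lambda_*$ on the once-rescaled inner disk $\{|x|<r_2\}$ that the argument closes; this is also what forces the quadratic count to begin at $k-2$ rather than $k-1$.
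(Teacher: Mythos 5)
Your argument is correct and follows essentially the same route as the paper: both rescale by the second zero to invoke the definition of $\Lambda_*$ (you obtain the explicit bound $r_2\ge\sqrt{\Lambda_*/\la}$ where the paper argues by contradiction that $\inf r_{u,2}>0$), both pass to a one-dimensional Sturm--Liouville form (you via $t=\log(1/r)$, the paper via the Liouville transformation $r=1/(1-\log t)$, $v=ru$) and apply Lemma~\ref{h} to count the $k-1$ zeros, and both control $\int q^{+}$ by the Dirichlet energy through the same splitting of $e^{u^2+\alpha|u|}$ according to $|u|\gtrless 1$. One cosmetic remark: the appeal to Lemma~\ref{b1} in the final absorption step is unnecessary (and its hypothesis $\la<\Lambda_1$ need not hold here); the trivial bound $\int_B|\nabla u|^2\,dx\ge 0$ already handles the finitely many $k$ with $L(k-2)^2\le 1$.
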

\begin{proof} Fix $\la\ge\Lambda_*$ and suppose $k\ge2$. For any $u\in S_{k,\la,1}$ with $u(0)>0$, we write $u=u(|x|)$ and define a number $r_{u,2}\in(0,1)$ so that there exists a value $r_{u,1}\in(0,r_{u,2})$ such that $u(r_{u,i})=0$ for $i=1,2$ and $u(r)>0$ for all $r\in[0,r_{u,1})$ and $u(r)<0$ for all $r\in(r_{u,1},r_{u,2})$. Then we claim that 
\[
r_\la:=\inf_{k\ge2}\inf_{u\in S_{k,\la,1},u(0)>0}r_{u,2}>0.
\]
Assume that $r_\la=0$ on the contrary. Then for any $\e\in(0,1)$, we can choose $k_\e\ge2$ and a nodal radial solution $u_\e\in S_{k_\e,\la,1}$ with $u_\e(0)>0$ such that $r_{u_{\e},2}\in(0,\e)$. Here we fix $\e\in(0,1)$ so small that $\e^2\la \in(0,\Lambda_*)$ and  define $v_\e(x):=u_\e(r_{u_{\e},2} |x|)$ ($x\in B$). Then we get $v_\e\in S_{1,r_{u_{\e},2}^2\la,1}$. This is impossible by the definition of $\Lambda_*$. Hence we ensure the claim. Choose any $u\in S_{k,\la,1}$ with $u(0)>0$ and regard $u=u(|x|)$. Then we perform the Liouville transformation $r=1/(1-\log{t})$, $v(r)=ru(t)$ for $t\in(r_{u,2},1]$. It follows that 
\[
v''+q(r)v=0,\ \ \ \ r\in\left[\frac{1}{1-\log{r_{u,2}}},1\right]
\] 
where 
\[
q(r)=t^2\left(\log{\frac{e}{t}}\right)^4\frac{\la f(u(t))}{u(t)}\ \ (t=e^{1-1/r}),
\]
and $f(u)=ue^{u^2+\alpha|u|}$. Notice that $v$ has exactly $k-1$ zeros in $\left(1/(1-\log{r_{u,2}}),1\right]$. Then noting $f(t)/t\le e^{1+\alpha}+t^2e^{t^2+\alpha|t|}$ for any $t\in \mathbb{R}$, we apply Lemma \ref{h} and get
\[
\begin{split}
&k-1\\
&<\frac12\left(\left(1-\frac{1}{1-\log{r_{u,2}}}\right)\int_{\frac{1}{1-\log{r_{u,2}}}}^1 q(r)dr\right)^{\frac12}+1\\
&=\frac12\left(\left(1-\frac{1}{1-\log{r_{u,2}}}\right)\int_{r_{u,2}}^1 \frac{\left(\log{\frac{e}{t}}\right)^4}{(1-\log{t})^2}\frac{\la f(u(t))}{u(t)}tdt\right)^{\frac12}+1\\
&\le \frac12\left(1-\frac{1}{1-\log{r_\la}}\right)^{\frac12}\left(\log{\frac{e}{r_\la}}\right)^2\left(\int_{r_{u,2}}^1 \la (e^{1+\alpha}+u(t)f(u(t)))tdt\right)^\frac12+1\\
&\le \frac1{\sqrt{L}}\left(1+\int_B|\nabla u|^2dx\right)^{\frac12}+1
\end{split}
\] 
for some constant $L=L(\la)>0$. This completes the proof.
\end{proof}

Then we can prove the next key lemma. 
\begin{lemma}\label{lem1}
For any $\la\in[\Lambda_*,\Lambda_1]$, there exists a natural number $k_\la$ such that for any $k\ge k_\la$, there exists no radial nodal solution $u\in S_{k,\la,1}$ satisfying $I_{\la,1}(u)\le 2\pi k+c_{0,\la,1}$. 
\end{lemma}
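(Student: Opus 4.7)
The plan is to argue by contradiction, combining the quadratic lower bound of Lemma~\ref{crl2} with a linear lower bound for $I_{\la,1}$ in terms of the Dirichlet norm. Suppose on the contrary that for each $n\in\mathbb{N}$ there exists $k_n\ge n$ and a radial nodal solution $u_n\in S_{k_n,\la,1}$ with $I_{\la,1}(u_n)\le 2\pi k_n+c_{0,\la,1}$. Lemma~\ref{crl2} immediately gives $\int_B|\nabla u_n|^2\,dx\ge L(\la)(k_n-2)^2-1$, which grows quadratically in $k_n$. The strategy is to show that the energy bound conversely forces $\int_B|\nabla u_n|^2\,dx=O(k_n)$, yielding a contradiction once $k_n$ is large.

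The key is a pointwise lower bound on $\psi(t):=\tfrac12 tf_*(t)-F_1(t)$, where $f_*(t)=te^{t^2+\alpha|t|}$. Integrating the identity $\tfrac{d}{ds}e^{s^2+\alpha s}=(2s+\alpha)e^{s^2+\alpha s}$ yields, for $t>0$,
\[
F_1(t)=\tfrac12(e^{t^2+\alpha t}-1)-\tfrac\alpha2\int_0^t e^{s^2+\alpha s}\,ds\le\tfrac12(e^{t^2+\alpha t}-1),
\]
and the same estimate holds for $t<0$ by evenness of $F_1$. Hence $\psi(t)\ge\tfrac12(t^2-1)e^{t^2+\alpha|t|}+\tfrac12$, which, after comparing with $tf_*(t)=t^2 e^{t^2+\alpha|t|}$, gives $\psi(t)\ge\bigl(\tfrac12-\tfrac{1}{2T^2}\bigr)\,tf_*(t)$ for every $|t|\ge T\ge\sqrt{2}$.

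Splitting $B=\{|u|\ge T\}\cup\{|u|<T\}$ and bounding $uf_*(u)\le T^2 e^{T^2+\alpha T}$ on the sublevel set, the Nehari identity $\la\int_B uf_*(u)\,dx=\|u\|_{H^1_0(B)}^2$ (satisfied by any solution of \eqref{p}) yields
\[
I_{\la,1}(u)=\la\int_B\psi(u)\,dx\ge\bigl(\tfrac12-\tfrac1{2T^2}\bigr)\bigl(\|u\|_{H^1_0(B)}^2-\la T^2e^{T^2+\alpha T}|B|\bigr).
\]
Choosing $T=2$ produces $I_{\la,1}(u)\ge\tfrac38\|u\|_{H^1_0(B)}^2-M$ for a constant $M=M(\alpha,\Lambda_1)$ uniform on $\la\in[\Lambda_*,\Lambda_1]$. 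Inserting Lemma~\ref{crl2} into this estimate gives
\[
\tfrac{3L(\la)}{8}(k_n-2)^2-M'\le I_{\la,1}(u_n)\le 2\pi k_n+c_{0,\la,1},
\]
with $M'=M+3/8$, which is violated for every $k_n$ exceeding a threshold $k_\la$ depending only on $L(\la)$ and $M'$, contradicting the assumption.

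The delicate point is to verify that the linear bound $I_{\la,1}(u)\ge c\|u\|_{H^1_0(B)}^2-C$ holds with $c>0$ bounded away from zero and $C$ bounded above, uniformly in $\la\in[\Lambda_*,\Lambda_1]$. Because the inequality on $\psi$ is purely pointwise and does not invoke any Poincar\'e-type spectral comparison that would degenerate as $\la\uparrow\Lambda_1$, the constants can be chosen independently of $\la$ on the closed interval $[\Lambda_*,\Lambda_1]$, and no further obstacle is anticipated.
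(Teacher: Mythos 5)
Your proof is correct and follows essentially the same strategy as the paper: both combine the quadratic-in-$k$ lower bound of Lemma~\ref{crl2} with a bound of the form $I_{\la,1}(u)\ge(\tfrac12-\e)\|u\|_{H^1_0(B)}^2-C$ obtained from the Nehari identity together with the superquadraticity estimate $\tfrac12 tf_*(t)-F_1(t)\ge(\tfrac12-\e)tf_*(t)$ for large $|t|$ (the paper cites this as the Ambrosetti--Rabinowitz-type inequality from \cite{FMR}, whereas you derive it explicitly by integration by parts). The only cosmetic caveat is that your displayed inequality silently uses $\psi\ge0$ on $\{|u|<T\}$, which indeed holds since $\psi'(t)=\tfrac12 t^2(2|t|+\alpha)e^{t^2+\alpha|t|}\,\mathrm{sgn}(t)$ and $\psi(0)=0$, so nothing is lost.
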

\begin{proof}
Assume $\la\in[\Lambda_*,\Lambda_1]$, $k\in \mathbb{N}$ and $u\in S_{k,\la,1}$ satisfies  $I_{\la,1}(u)\le 2\pi k+c_{0,\la,1}$. We claim that there exists a constant $M=M(\la)>0$ which depends only on $\la$ and independent of $k$ and $u$ such that  
\begin{equation}\label{cr1lem2}
\int_B|\nabla u|^2dx\le 4\pi (k+1)+M.
\end{equation}
To see this, choose $u$ as above and put $f(t)=te^{t^2+|t|}$ and $F(t)=\int_0^t f(s)ds$. Then an elementary calculation shows that for any $\e>0$, there exists a constant $t_\e>0$ such that $\e f(t)t-F(t)\ge0$ for any $|t|\ge t_\e$. (See formula (2.3) in \cite{FMR}). We choose $\e\in(0,1/2)$ so small that $(2\pi k+c_{0,\la,1})/(1/2-\e)\le 4\pi(k+1)$. This choice is possible since $c_{0,\la,1}<2\pi$ (\cite{A}). Then there exists a constant $C_\e>0$ which is independent of $k$ and $u$ such that 
\[
\begin{split}
2\pi k+c_{0,\la,1}&\ge I_{\la,\beta}(u)-\e\langle I_{\la,\beta}'(u),u\rangle\\
&\ge \left(\frac{1}{2}-\e\right)\int_B|\nabla u|^2dx+\la \int_{B\cap\{u\le t_\e\}}(\e f(u)u-F(u))dx\\
&\ge \left(\frac{1}{2}-\e\right)\int_B|\nabla u|^2dx-\la C_\e.
\end{split}
\]
This proves \eqref{cr1lem2} by putting $M=\la C_\e/(1/2-\e)$. Then take a constant  $L=L(\la)>0$ from the previous lemma. Choose a natural number $\tilde{k}_\la\ge2$ so that $L(k-2)^2-1> 4\pi(k+1)+M$ for all $k\ge \tilde{k}_\la$. As a consequence, it follows from the previous lemma and \eqref{cr1lem2} that $k\ge \tilde{k}_\la$ and $u\in S_{k,\la,1}$ yield $I_{\la,1}(u)> 2\pi k+c_{0,\la,1}$. This finishes the proof.
\end{proof}

Let us complete the proof of Proposition \ref{cr:1}.

\begin{proof}[Proof of Proposition \ref{cr:1}] Assume as in the proposition. First suppose $\la_*=\Lambda_1$. Then if $k=0$, since $\la_*\not=0$ and $S_{0,\Lambda_1,\beta_*}=\emptyset$, we clearly have that (vii) of Theorem \ref{a1} occurs  with $u_0=0$. In addition, assume $\beta_n\ge1$ for all $n\in \mathbb{N}$ and suppose $k\ge1$ and $\beta_*\in(1,3/2)$ or $k=1$ and $\beta_*=1$. Then in the former case, noting $\beta_*>1$ and on the other hand, in the latter case, using $S_{0,\Lambda_1,1}=\emptyset$, we see that $(u_n)$ does not blow up and thus,  $(u_n)$ behaves as in (vii) with $u_0\not=0$ since $0<\la_*=\Lambda_1<\Lambda_{k+1}$ in both cases. This completes (a). Next suppose $k\ge1$, $\la_*\in(0,\Lambda_1)$, $\beta_n>1$ for all $n\in \mathbb{N}$ and $\beta_*=1$. Then, since $\la_*\not=0$, $(u_n)$ behaves as in (iii) or (vii) of Theorem \ref{a1}. Hence, if we additionally suppose $S_{\kappa,\la_*,1}=\emptyset$ for all $0<\kappa\le k$, the only possibility is (iii) with $N=k$ to happen. This completes  (b).  Finally, suppose $I_{\la_n,\beta_n}(u_n)=c_{k,\la_n,\beta_n}$ for all $n\in \mathbb{N}$. Set $\la_*\in[\Lambda_*,\Lambda_1)$ and $k\ge k_{\la_*}$ where $k_\la$ is chosen from Lemma \ref{lem1}. It follows that $(u_n)$ behaves as in (iii) with a natural number $k-k_{\la_*}<N\le k$. Otherwise, there would be integers $0\le N\le k-k_{\la_*}$,  $\kappa=k-N$ and an element $u_0\in S_{\kappa,\la_*,1}$ such that
\[
I_{\la_n,\beta_n}(u_n)\to 2\pi N+I_{\la_*,1}(u_0)\le 2\pi k+c_{0,\la_*,1}
\]
by Lemma \ref{crl1}. This implies $I_{\la_*,1}(u_0)\le 2\pi \kappa+c_{0,\la_*,1}$. Since $\kappa\ge k_{\la_*}$, this is impossible by Lemma \ref{lem1}. This proves the former conclusion of (c). Then noting the first conclusion of Corollary \ref{cr:0}, we get the latter one. Lastly let us show (d). Set $\la_*$  as in the assumption. Suppose $k\ge2$ and $(u_n)$ behaves as in (iii). If $N=k$ on the contrary, then by Lemma \ref{crl1}, the weak limit $u_0$ of $(u_n)$ satisfies $u_0\in S_{0,\la_*,1}$, $I_{\la_*,1}(u_0)=c_{0,\la_*,1}$ and $|u_0(0)|\ge \alpha/2$. This is a contradiction. Hence we get $N\not=k$. This proves the first assertion. Especially, combining (c) with the previous conclusion, we obviously get the final conclusion. This completes the proof.

\end{proof}

Finally, we complete the proof of Corollary \ref{cr:2}.

\begin{proof}[Proof of Corollary \ref{cr:2}] Fix $\beta\in[1,3/2)$. Note that  the first assertion in (a) of the previous proposition implies $c_{0,\la_n,\beta}\to 0$ if $\la_n\uparrow\Lambda_1$. Then, we set $k=1$ if $\beta=1$ and $k\ge1$ if $\beta>1$ and choose sequences $(\la_n)\subset (0,\Lambda_1)$ and $(u_n)$ so that $\la_n\uparrow \Lambda_1$, $u_n\in S_{k,\la_n,\beta}$ and $I_{\la_n,\beta}(u_n)\le 2\pi k+c_{0,\la_n,\beta}$ for all $n\in \mathbb{N}$. This choice is valid by the argument in the first paragraph of this section and Corollary \ref{cr:0}. Then from the latter assertion in (a) of the previous proposition, we have a solution $u_0\in S_{k,\Lambda_1,\beta}$ such that $I_{\la_n,\beta}(u_n)\to I_{\Lambda_1,\beta}(u_0)\le 2\pi k$. This proves the former conclusion.  The latter assertion is clear by noting $S_{0,\Lambda_1,1}=\emptyset$, choosing $k_{\Lambda_1}$ from Lemma \ref{lem1} and arguing as in the proof of Corollary \ref{cr:0} and (d) of Proposition \ref{cr:1}. This finishes the proof.  

\end{proof}

\appendix
\section{Proof of Lemma \ref{ap1}}\label{ap}
In this appendix we show the proof of Lemma \ref{ap1}.

\begin{proof}[Proof of Lemma \ref{ap1}]
First we claim that $r_{i,n}/\ga_{i,n}\to\infty$. In fact, putting $v_{n}(r)=u_{i,n}(r_{i,n}r)$ for $r\in[r_{i-1,n}/r_{i,n},1]$ and noting  Lemma \ref{newl1}, we have
a constant $K_0>0$ such that
\begin{equation}
K_0\le \la_n r_{i,n}^2 \int_{r_{i-1,n}/r_{i,n}}^{1}v_nf_n(v_n)rdr\le \la_n r_{i,n}^2 e^{\mu_{i,n}^2+\alpha \mu_{i,n}^{\beta_n}}\int_{r_{i-1,n}/r_{i,n}}^{1}v_{n}^2rdr,\label{d0}
\end{equation}
up to a subsequence. Then, since $\int_{r_{i-1,n}/r_{i,n}}^{1}v_{n}^2rdr$ is uniformly bounded from above by the Poincare inequality and our assumption \eqref{bdd}, we get a constant $C>0$ such that \[
\la_n  r_{i,n}^2e^{\mu_{i,n}^2+\alpha \mu_{i,n}^{\beta_n}}\ge C.
\]
Multiplying this inequality by $\mu_{i,n}^2$ and noting the definition of $\ga_{i,n}$, we see that 
\[
\frac12\left(\frac{r_{i,n}}{\ga_{i,n}}\right)^2\ge C\mu_{i,n}^2.
\]
Consequently, \eqref{infty} ensures the claim. In particular, we get $\ga_{i,n}\to0$. Next,  we claim $\rho_{i,n}/r_{i,n}\to0$. This is trivial for $i=1$. 
 Hence we assume $i\ge 2$. Define $v_n$ as above.
 It follows from our assumption \eqref{bdd} and Lemma  \ref{RL} that there exist constants $c,C>0$ such that 
\[
C\ge \int_{r_{i-1,n}/r_{i,n}}^{1} v_n'(r)^2rdr\ge\frac{\mu_{i,n}^2}{2\pi c^2}\left(\frac{\rho_{i,n}}{r_{i,n}}\right).
\]
Then \eqref{infty} shows the claim. In particular, we get $\rho_{i,n}\to 0$ and  $(r_{i,n}-\rho_{i,n})/\ga_{i,n}\to \infty$ by the first claim. Next, by the definition of $z_{i,n}$ and \eqref{d1}, we get that
\[
0\le -z_{i,n}''-\frac1{r+\frac{\rho_{i,n}}{\ga_{i,n}}}z_{i,n}'\le1 \text{ on }\left[\frac{r_{i-1,n}-\rho_{i,n}}{\ga_{i,n}},\frac{r_{i,n}-\rho_{i,n}}{\ga_{i,n}}\right].
\]
Then, for any $r\in \left[\frac{r_{i-1,n}-\rho_{i,n}}{\ga_{i,n}},\frac{r_{i,n}-\rho_{i,n}}{\ga_{i,n}}\right]$, multiplying the equation by $r+\frac{\rho_{i,n}}{\ga_{i,n}}$ and integrating over $(0,r)$ if $r\ge0$ and over $(r,0)$ if $r<0$ give
\begin{equation}
\begin{cases}
0\le -z_{i,n}'(r)\le \ \frac{\frac{r^2}2+\frac{\rho_{i,n}}{\ga_{i,n}}r}{r+\frac{\rho_{i,n}}{\ga_{i,n}}} \text{ if }r\ge0, \text{ and }\\
0\le z_{i,n}'(r)\le -\frac{\frac{r^2}2+\frac{\rho_{i,n}}{\ga_{i,n}}r}{r+\frac{\rho_{i,n}}{\ga_{i,n}}} \text{ if }r<0.
\end{cases}
\label{d2}
\end{equation}
Integrating this again, we get
\begin{equation}
\begin{split}
0\le-z_{i,n}(r)&\le \int_0^r\frac{\frac{r^2}2+\frac{\rho_{i,n}}{\ga_{i,n}}r}{r+\frac{\rho_{i,n}}{\ga_{i,n}}}dr\\
&=
\begin{cases}
\frac{r^2}{4}\text{ if }i=1,\\
\frac{r^2}{4}+\left(\frac{\rho_{i,n}}{\ga_{i,n}}\right)\frac r2-\frac12\left(\frac{\rho_{i,n}}{\ga_{i,n}}\right)^2\log{\frac{\frac{\rho_{i,n}}{\ga_{i,n}}+r}{\frac{\rho_{i,n}}{\ga_{i,n}}}},\text{ if }i\ge2,
\end{cases}\label{d3}
\end{split}
\end{equation}
for any $r\in \left[\frac{r_{i-1,n}-\rho_{i,n}}{\ga_{i,n}},\frac{r_{i,n}-\rho_{i,n}}{\ga_{i,n}}\right]$. Then from \eqref{d2} and \eqref{d3}, we get that $z_{i,n}$ is uniformly bounded in $C^1_{\text{loc}}((-l,\infty))$ ($C^1_{\text{loc}}([0,\infty))$ if $l=0$). Furthermore, since \eqref{d2} implies $|z_{i,n}'(r)/(r+\rho_{i,n}/\ga_{i,n})|$ is locally uniformly bounded in $(-l,\infty)$ ($[0,\infty)$ if $l=0$), using the equation in \eqref{d1}, we get that $z_{i,n}$ is uniformly bounded in $C^2_{\text{loc}}((-l,\infty))$ ($C^2_{\text{loc}}([0,\infty))$ if $l=0$). Then it follows from the Ascoli-Arzel\`{a} theorem and the equation in \eqref{d1} that there exists a function $z$ such that $z_{i,n}\to z$  in $C^2_{\text{loc}}((-l,\infty))$  ($C^2_{\text{loc}}((0,\infty))\cap C^1_{\text{loc}}([0,\infty))$ if $l=0$). 

Now our final aim is to show $l=m<\infty$. This is clear if $i=1$. Hence after this we assume $i\ge2$. We first claim $l<\infty$. We suppose $l=\infty$ on the contrary. Then $m=\infty$. It follows from \eqref{d1} that $z$ satisfies 
\[
\begin{cases}
-z''=e^z\hbox{ in }\mathbb{R},\\
z(0)=0=z'(0).
\end{cases}
\]
This implies $z(r)=\log\frac{4e^{\sqrt2r}}{\left(1+e^{\sqrt2r}\right)^2}$ $(r\in \mathbb{R})$. Then by \eqref{bdd}, there exits a constant $C>0$ such that
\begin{align}
C&\ge \la_n\int_{\rho_{i,n}}^{r_{i,n}}u_nf_n(u_n)rdr\notag\\
&\ge\frac12 \int_{0}^{{\frac{r_{i,n}-\rho_{i,n}}{\ga_{i,n}}}}\left(\frac{z_{i,n}}{2\mu_{i,n}^2}+1\right)e^{z_{i,n}+\frac{z_{i,n}^2}{4\mu_{i,n}^2}+\alpha \mu_{i,n}^{\beta_n}\left\{\left(\frac{z_{i,n}}{2\mu_{i,n}^2}+1\right)^{\beta}-1\right\}}\left(r+\frac{\rho_{i,n}}{\ga_{i,n}}\right)dr\notag\\
&\ge\frac12 \frac{\rho_{i,n}}{\ga_{i,n}}\int_{0}^{{\frac{r_{i,n}-\rho_{i,n}}{\ga_{i,n}}}}\left(\frac{z_{i,n}}{2\mu_{i,n}^2}+1\right)e^{z_{i,n}+\frac{z_{i,n}^2}{4\mu_{i,n}^2}+\alpha \mu_{i,n}^{\beta_n}\left\{\left(\frac{z_{i,n}}{2\mu_{i,n}^2}+1\right)^{\beta}-1\right\}}dr.\label{linfty}
\end{align}
Here the Fatou lemma implies
\[
\liminf_{n\to \infty}\int_{0}^{{\frac{r_{i,n}-\rho_{i,n}}{\ga_{i,n}}}}\left(\frac{z_{i,n}}{2\mu_{i,n}^2}+1\right)e^{z_{i,n}+\frac{z_{i,n}^2}{4\mu_{i,n}^2}+\alpha \mu_{i,n}^{\beta_n}\left\{\left(\frac{z_{i,n}}{2\mu_{i,n}^2}+1\right)^{\beta}-1\right\}}dr\ge \int_0^\infty e^zdr.
\]
Since the right hand side of the inequality above is positive value and $m= \infty$, we get that the right hand side of \eqref{linfty} diverges to infinity which is a contradiction. This proves the claim. Finally we show $l=m$. Let us  assume $l<m$ on the contrary. Then we claim that there exists a constant $C>0$ such that 
\[
|z_{i,n}'(r)|\le C \text{ for all $n\in \mathbb{N}$ and $r\in \left[\frac{r_{i-1,n}-\rho_{i,n}}{\ga_{i,n}},0\right]$.}
\]   In fact, if $m=\infty$, the claim follows easily by the latter formula in \eqref{d2}. If $m<\infty$, using \eqref{d2} again we get a constant $C>0$ such that
\[
0\le z_{i,n}'(r)\le \frac{C}{m-l+o(1)}
\]
uniformly for all $r\in\left[\frac{r_{i-1,n}-\rho_{i,n}}{\ga_{i,n}},0\right]$. This proves the claim. On the other hand, by the mean value theorem, we have a sequence $(\xi_n)\subset \left(\frac{r_{i-1,n}-\rho_{i,n}}{\ga_{i,n}},0\right)$ such that
\[
z_{i,n}'(\xi_n)=\frac{-z_{i,n}\left(\frac{r_{i-1,n}-\rho_{i,n}}{\ga_{i,n}}\right)}{\frac{\rho_{i,n}-r_{i-1,n}}{\ga_{i,n}}}=\frac{2\mu_{i,n}^2}{l+o(1)}\to \infty
\]
since $l\in[0,\infty)$. This is a contradiction. We finish the proof.

\end{proof}

\subsection*{acknowledgements}
The author sincerely thanks Prof. Massimo Grossi at Sapienza Universit\`{a} di Roma since some important questions and ideas in the present paper are inspired by the extensive discussion on the previous works with him. This work is supported by JSPS KAKENHI Grant-in-Aid for Young Scientists (B), Grant Number 17K14214. This work is also partly supported by Osaka City University Advanced Mathematical Institute (MEXT Joint Usage/Research Center on Mathematics and Theoretical Physics).

\end{document}